\DeclareSymbolFont{EulerExtension}{U}{euex}{m}{n}
\DeclareMathSymbol{\euintop}{\mathop} {EulerExtension}{"52}
\DeclareMathSymbol{\euointop}{\mathop} {EulerExtension}{"48}
\def \id{\operatorname{id}}
\def \C{\mathcal{C}}
\def \Z{\mathbb{Z}}
\def \k{\Bbbk}
\def \dim{\operatorname{dim}}
\def \Ext{\operatorname{Ext}}
\def \C{\mathcal{C}}
\def \D{\Delta}
\def \span{\operatorname{span}}
\def \A{\mathcal{A}}
\def \B{\mathcal{B}}
\def \C{\mathcal{C}}
\def \D{\mathcal{D}}
\def \E{\mathcal{E}}
\def \F{\mathcal{F}}
\def \G{\mathcal{G}}
\def \X{\mathcal{X}}
\def \Y{\mathcal{Y}}
\def \Z{\mathcal{Z}}
\def \W{\mathcal{W}}
\numberwithin{equation}{section}
\newtheorem{theorem}{Theorem}[section]
\newtheorem{lemma}[theorem]{Lemma}
\newtheorem{proposition}[theorem]{Proposition}
\newtheorem{corollary}[theorem]{Corollary}
\newtheorem{definition}[theorem]{Definition}
\newtheorem{example}[theorem]{Example}
\newtheorem{remark}[theorem]{Remark}
\newtheorem{conjecture}[theorem]{Conjecture}
\newtheorem{notation}[theorem]{Notation}
\begin{document}
\title[Hopf algebras of finite corepresentation type]{Hopf algebras with the dual Chevalley property of finite corepresentation type}
\author[J. Yu]{Jing Yu}
\author[K. Li]{Kangqiao Li$^\dag$}
\author[G. Liu]{Gongxiang Liu}
\address{Department of Mathematics, Nanjing University, Nanjing 210093, China}
\email{dg21210018@smail.nju.edu.cn}
\address{School of Mathematics, Hangzhou Normal University, Hangzhou 311121, China}
\email{kqli@hznu.edu.cn}
\address{Department of Mathematics, Nanjing University, Nanjing 210093, China}
\email{gxliu@nju.edu.cn}

\thanks{2020 \textit{Mathematics Subject Classification}. 16T05, 16G60 (primary), 16G20, 16G10 (secondary).}
\thanks{$^\dag$ Corresponding author}
\keywords{Hopf algebras, Dual Chevalley property, Link quiver, Finite corepresentation type}
\maketitle

\date{}

\begin{abstract}
Let $H$ be a finite-dimensional Hopf algebra over an algebraically closed field $\k$ with the dual Chevalley property. We prove that $H$ is of finite corepresentation type if and only if it is coNakayama, if and only if the link quiver $\mathrm{Q}(H)$ of $H$ is a disjoint union of basic cycles, if and only if the link-indecomposable component $H_{(1)}$ containing $\k1$ is a pointed Hopf algebra and the link quiver of $H_{(1)}$ is a basic cycle.
\end{abstract}
\maketitle
\section{Introduction}
In the view point of representation type, every finite-dimensional algebra exactly belongs to one of following three kinds of algebras: algebras of finite representation type, algebras of tame types and wild algebras (See \cite{Dro86}). From then on, the classification for a given kind of algebras according to their representation type has received considerable attention. See, for example, \cite{Ari05, Ari17, Ari21, DEMN99, EN01, KOS11, Rin75, Rin78}.
In the case of Hopf algebras, much effort was put in pointed Hopf algebras or their dual, that is, elementary Hopf algebras. In the case of modular group algebras of finite groups, the authors in \cite{Ben98, BD82, Erd90, Hig54} show that a block of such modular group algebra is of finite representation type if and only if the corresponding defect groups are cyclic and while it is tame if and only if $\operatorname{char}\k=2$ and its defects groups are dihedral, semidihedral and generalized quaternion. The classification of small quantum groups according to their representation type can be found in \cite{Cil97, Sut94, Xia97}. They show that the only tame one is $u_q(\mathfrak{sl}_2)$ and others are all wild. For cocommutative Hopf algebras, Farnsteiner and his collaborators have classify all finite-dimensional cocommutative Hopf algebras, i.e., finite algebraic groups, of finite representation type and tame type \cite{Far06, FS02, FS07, FV00, FV03}. The third author and his collaborators get the classification of elementary Hopf algebras according to their representation type from 2006 to 2013 \cite{LL07,HL09, Liu06, Liu13}. We note that there is indeed a common point in above classification: a finite-dimensional (cocommutative, elementary) Hopf algebra is of finite representation type if and only if it is a Nakayama algebra. We cannot help but hope that this observation holds true for more Hopf algebras.

Among of these results, constructing Hopf algebras structures through using quivers was shown to be a very effective way, which is due to the works of Cibils-Rosso for pointed case and  Green-Solberg for elementary case \cite{Cil93, CR97, CR02, GS98}.
As a development, in \cite{CHYZ04, OZ04}, the authors give a classification of non-semisimple monomial Hopf algebras and get more.
In 2007, the third author and Li \cite{LL07} have classified all finite-dimensional pointed Hopf algebras of finite corepresentation type and show that they are all monomial Hopf algebras \cite[Theorem 4.6]{LL07}.
At the same time, it is well known that in the representation of finite-dimensional algebras, the Ext quiver is a fundamental tool. The Ext quiver of a coalgebra has been introduced by Chin and Montgomery in \cite{CM97} too. Montgomery also introduced the link quiver of coalgebra $H$ by using the wedge of simple subcoalgebras of $H$ (see \cite[Definition 1.1]{Mon95}). In \cite[Definition 4.1]{CHZ06}, the definition of link quiver has been modified. In addition, the authors of \cite{CHZ06} unified the link quiver of a coalgebra with the Ext quiver. Obviously, these quivers are not limited to elementary or pointed Hopf algebras.

 We know that the Hopf algebras with the (dual) Chevalley property is a kind of natural generalization of elementary (pointed) Hopf algebras.
 These Hopf algebras have been studied by many authors. See, for example, \cite{ABM12, AEG01, Li22a, Li22b, LL22, LZ19}. In \cite{AGM17, CDMM04, Mom13, ZGH21}, the authors present some explicit examples of Hopf algebras with the dual Chevalley property.

 The aim of this paper and subsequent works is to classify finite-dimensional Hopf algebras with the dual Chevalley property according to their corepresentation type. Here by the dual Chevalley property we mean that the coradical $H_0$ is a Hopf subalgebra. The main tool we want to use is the link quiver. One of key points of this paper is that one can describe the structure of the link quiver by applying multiplicative matrices and primitive matrices now, which are developed by the second author and his collaborator \cite{LZ19,LL22,Li22a,Li22b}. Besides, we attempt to generalize above stated result \cite[Theorem 4.6]{LL07} in order to give the structure of finite-dimensional Hopf algebras with the dual Chevalley property of finite corepresentation type. We also conjecture (see Conjecture \ref{4.11}) that the link quiver of $H$ should share similar symmetry with a Hopf quiver (\cite{CR02}), which will guide our future research as well. For example, in the future, we can consider the question that on which quivers one can construct graded Hopf algebras with the dual Chevalley property.

Our main results are Theorems \ref{coro:equ}, \ref{thm:finitecorep}, \ref{thm:finitecorepp} and Corollary \ref{coro:conakayama}, stating that:
\begin{theorem}
Let $H$ be a finite-dimensional non-cosemisimple Hopf algebra over an algebraically closed field $\k$ with the dual Chevalley property and $\mathrm{Q}(H)$ be the link quiver of $H$. Then the following statements are equivalent:
\begin{itemize}
  \item[(1)]$H$ is of finite corepresentation type;
  \item[(2)]Every vertex in $\mathrm{Q}(H)$ is both the start vertex of only one arrow and the end vertex of only one arrow, that is, $\mathrm{Q}(H)$ is a disjoint union of basic cycles;
  \item[(3)]There is only one arrow $C\rightarrow \k1$ in $\mathrm{Q}(H)$ whose end vertex is $\k1$ and $\dim_{\k}(C)=1$;
  \item[(4)]There is only one arrow $\k1\rightarrow D$ in $\mathrm{Q}(H)$ whose start vertex is $\k1$ and $\dim_{\k}(D)=1$.
\end{itemize}
\end{theorem}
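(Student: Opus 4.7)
The plan is to prove the equivalences by establishing $(1)\Leftrightarrow(2)$, $(2)\Leftrightarrow(3)$ and (by a symmetric argument) $(2)\Leftrightarrow(4)$. As a preliminary reduction, note that under the dual Chevalley hypothesis one may pass to the coradically graded Hopf algebra $\gr H$, which has the same coradical, the same link quiver and the same corepresentation type as $H$; this should already be a lemma in the paper. Therefore I assume throughout that $H$ is coradically graded.

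For $(2)\Rightarrow(1)$, when $\mathrm{Q}(H)$ is a disjoint union of basic cycles, every indecomposable injective $H$-comodule is couniserial, so $H$ is coNakayama and in particular of finite corepresentation type; this is the coalgebra dual of the classical Nakayama characterization via the Ext quiver. For $(1)\Rightarrow(2)$, I would argue contrapositively: if some vertex $C$ of $\mathrm{Q}(H)$ is the start of two or more arrows, or the end of two or more arrows, I would construct an infinite family of pairwise non-isomorphic indecomposable $H$-comodules via a string/band comodule construction, generalizing the pointed arguments in \cite{LL07,Liu06} to arbitrary simple subcoalgebras by means of the multiplicative and primitive matrix machinery developed in \cite{LZ19,LL22,Li22a,Li22b}.

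For $(2)\Rightarrow(3)$ (and analogously $(4)$), existence and uniqueness of the arrows $C\to\k1$ and $\k1\to D$ follow immediately from the basic-cycle structure, so it remains only to prove $\dim_{\k}C=\dim_{\k}D=1$. I would analyze an element $x\in H_1\setminus H_0$ corresponding to the arrow $C\to\k1$: its coproduct has the form $\Delta(x)=x\otimes 1+c\otimes x$ for some $c\in C$, and the uniqueness of this arrow in the basic-cycle component through $\k1$, together with the Hopf algebra unit/counit axioms applied as one traverses the cycle back to $\k1$, forces $C$ to be spanned by a single group-like. The symmetric analysis yields $\dim_{\k}D=1$.

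The hardest step is $(3)\Rightarrow(2)$ (and symmetrically $(4)\Rightarrow(2)$). Writing $C=\k g$ for a group-like $g$, the strategy is to propagate the local structure at $\k1$ to every vertex of $\mathrm{Q}(H)$ using the Hopf bimodule structure of $H_1/H_0$ over $H_0$. Left and right multiplication by $g^{\pm1}$ and by other group-likes transports the unique incoming arrow at $\k1$ to a unique incoming arrow at each vertex of the link-indecomposable component $H_{(1)}$, forcing $\mathrm{Q}(H_{(1)})$ to be a basic cycle of one-dimensional vertices, and hence $H_{(1)}$ to be pointed. For the remaining link-indecomposable components, a Hopf-theoretic transport argument from $H_{(1)}$ shows their link quivers are basic cycles as well. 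The main obstacle is making this propagation rigorous when the other link-indecomposable components contain simple subcoalgebras of dimension strictly greater than one, which is precisely where the multiplicative matrix calculus of \cite{LZ19,LL22,Li22a,Li22b} is the essential technical tool.
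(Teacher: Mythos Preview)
Your overall logical scheme $(1)\Leftrightarrow(2)$, $(2)\Leftrightarrow(3)$, $(2)\Leftrightarrow(4)$ is reasonable, but several of the individual steps are either unjustified or take a much harder route than the paper, and one of them is incorrect as written.

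First, the reduction to $\gr H$ is not in the paper and your claim that $\gr H$ has the same corepresentation type as $H$ is not proved there (nor is it obvious in general). The paper never needs it: conditions $(2)$, $(3)$, $(4)$ depend only on $\mathrm{Q}(H)$, and the equivalence with $(1)$ is established directly for $H$. You should drop this step.

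Second, for the direction ``not $(3)$ implies not $(1)$'' the paper does \emph{not} build string/band comodules. Instead it passes to the surjection $\mathcal{B}(H^*)\twoheadrightarrow \k\mathrm{Q}(H)/J^2$, an artinian algebra with radical square zero, and invokes the classical criterion that such an algebra is of finite type if and only if its separated quiver is a disjoint union of Dynkin diagrams. Two cases are treated: if $\lvert{}^1\mathcal{P}\rvert\geq 2$ (Proposition~\ref{prop:2arrow=infin}) one quickly finds a Kronecker or $\tilde{A}_n$ subquiver; if $\lvert{}^1\mathcal{P}\rvert=1$ but the unique $C\in{}^1\mathcal{S}$ has $\dim_\k C\geq 4$ (Proposition~\ref{dimC4,9}), a detailed fusion-ring computation in $\mathbb{Z}\mathcal{S}$ (decomposing products $S(C)\cdot C$, $D\cdot C$, etc.) locates a Euclidean subquiver $\tilde{D}_5$, $\tilde{E}_6$, $\tilde{E}_7$ or $\tilde{E}_8$. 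Your string/band sketch would require you to redo all of this structure theory by hand in comodule language, and you give no indication of how to handle the delicate case $\lvert{}^1\mathcal{P}\rvert=1$, $\dim_\k C=4$.

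Third, your argument for $\dim_\k C=1$ in $(2)\Rightarrow(3)$ is wrong as stated. An arrow $C\to\k1$ corresponds to a non-trivial $(1,\C)$-primitive \emph{row} $\X=(x_1,\ldots,x_r)$ with $\Delta(x_j)=1\otimes x_j+\sum_t x_t\otimes c_{tj}$, not to a single element with $\Delta(x)=x\otimes 1+c\otimes x$. The paper's actual argument (Corollary~\ref{lemma:Pc=cP=1}) is the one you need: if $\dim_\k C>1$ then $S(C)\cdot C$ in $\mathbb{Z}\mathcal{S}$ has more than one summand, whence by Lemma~\ref{lem:numberinPX} the vertex $S(C)$ has $\lvert{}^{S(\C)}\mathcal{P}\rvert>1$ incoming arrows, contradicting $(2)$.

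Finally, your idea for $(3)\Rightarrow(2)$ is along the right lines, but the paper packages it cleanly as Proposition~\ref{proposition:link1dim===}: when the unique $C\in{}^1\mathcal{S}$ is $\k g$, one has $\lvert{}^\C\mathcal{P}\rvert=\lvert\mathcal{P}^\C\rvert=\lvert{}^1\mathcal{P}\rvert=1$ for every $\C\in\mathcal{M}$, because ${}^\C\mathcal{P}_\Y=\{\C y\}$ is a singleton for each $\Y=(y)\in{}^1\mathcal{P}$. The equivalence with $(4)$ then follows from Lemma~\ref{lemma:P^1=1^P}, which gives $\lvert{}^1\mathcal{P}\rvert=\lvert\mathcal{P}^1\rvert$ and $C\in{}^1\mathcal{S}\Leftrightarrow S(C)\in\mathcal{S}^1$.
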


\begin{corollary}
A finite-dimensional Hopf algebra $H$ over an algebraically closed field $\k$ with the dual Chevalley property is of finite corepresentation type if and only if $H$ is coNakayama.
\end{corollary}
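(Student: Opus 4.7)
The plan is to derive this corollary directly from the preceding theorem by identifying the coNakayama condition with condition~(2), namely that $\mathrm{Q}(H)$ is a disjoint union of basic cycles. Once this equivalence is established, the corollary is immediate.

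First I would reduce the question to a statement about the dual algebra $H^*$. Since $H$ is finite-dimensional, the category of right $H$-comodules is isomorphic to the category of all right $H^*$-modules, and injective $H$-comodules correspond to injective $H^*$-modules. Thus $H$ is coNakayama precisely when $H^*$ is a Nakayama algebra. Because $H$ is a finite-dimensional Hopf algebra, both $H$ and $H^*$ are Frobenius and hence self-injective. Moreover, the link quiver $\mathrm{Q}(H)$ coincides, up to the standard dictionary, with the Gabriel (Ext) quiver of the basic algebra Morita-equivalent to $H^*$: vertices correspond to isomorphism classes of simple subcoalgebras of $H$, equivalently, to isomorphism classes of simple right $H^*$-modules.

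With this dictionary in place, the forward direction is classical: for a self-injective Nakayama algebra, every connected component of its Gabriel quiver is a basic oriented cycle. Applied to $H^*$ this yields condition~(2), so by the theorem $H$ is of finite corepresentation type. Conversely, if condition~(2) holds, then the quiver of the basic algebra of $H^*$ is a disjoint union of oriented cycles; using the description of the coradical filtration through the link quiver developed earlier in the paper by means of multiplicative and primitive matrices, each indecomposable injective right $H$-comodule $E(C)$ inherits a uniserial socle filtration traced along the unique cycle containing $C$. Equivalently, every indecomposable projective $H^*$-module is uniserial, so $H^*$ is Nakayama and $H$ is coNakayama.

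The main obstacle is not conceptual but bookkeeping: one must carefully align the link quiver $\mathrm{Q}(H)$, whose vertices are simple subcoalgebras of possibly dimension greater than one, with the Gabriel quiver of the basic algebra of $H^*$ under the dual Chevalley hypothesis. This is precisely the content of the link-quiver framework built up earlier in the paper for Hopf algebras with the dual Chevalley property; once that bridge is invoked, both implications reduce to the standard structure theorem for self-injective Nakayama algebras together with the theorem above.
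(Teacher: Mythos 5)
Your overall strategy --- pass to $H^*$, identify the link quiver $\mathrm{Q}(H)$ with the Ext quiver of the basic algebra of $H^*$, and match the coNakayama condition against condition (2) of Theorem \ref{coro:equ} --- is essentially the paper's route; the paper packages the same reduction through Theorem \ref{thm:nakayama} applied to $H^*$, which has the Chevalley property. However, two of your justifications are shakier than necessary and one has a genuine (though easily repaired) hole.

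For the implication ``coNakayama $\Rightarrow$ finite corepresentation type'' you claim that every connected component of the Gabriel quiver of a self-injective Nakayama algebra is a basic oriented cycle and then invoke condition (2). This is not quite right: a block of $H^*$ could a priori be simple, contributing an isolated vertex rather than a cycle, and condition (2) as stated requires \emph{exactly} one arrow in and out of every vertex. Ruling out isolated vertices for a non-cosemisimple $H$ is a consequence of Theorem \ref{coro:equ} itself, so as written this direction is circular. The repair is to drop condition (2) here entirely: a Nakayama algebra is of finite representation type by \cite[\S VI, Theorem 2.1]{ARS95}, so $H^*$ Nakayama immediately gives $H$ of finite corepresentation type, with no appeal to self-injectivity or to the quiver. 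For the converse, your plan to verify uniseriality of the indecomposable injective comodules by tracing socle filtrations along the cycles via the coradical filtration is not actually carried out and is more than is needed: the classical criterion that a basic algebra is Nakayama if and only if every vertex of its Ext quiver is the source of at most one arrow and the target of at most one arrow (\cite[\S V.2, Theorem 2.6]{ASS06}, quoted in the paper) converts condition (2) directly into the Nakayama property of the basic algebra of $H^*$, and Morita invariance finishes. With these two substitutions your argument closes up and coincides with the paper's.
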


\begin{theorem}
Let $\k$ be an algebraically closed field of characteristic 0. Then a finite-dimensional Hopf algebra $H$ over $\k$ with the dual Chevalley property is of finite corepresentation type if and only if either of the following conditions is satisfied:
\begin{itemize}
  \item[(1)]$H$ is cosemisimple;
  \item[(2)]$H$ is not cosemisimple and $H_{(1)}\cong A(n, d, \mu, q)$.
  \end{itemize}
\end{theorem}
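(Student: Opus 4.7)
The plan is to derive this theorem as a direct consequence of Theorem~\ref{coro:equ} combined with the Liu--Li classification of finite-dimensional pointed Hopf algebras of finite corepresentation type \cite[Theorem 4.6]{LL07}. The bulk of the work has already been done in establishing Theorem~\ref{coro:equ}, which identifies the link-indecomposable component $H_{(1)}$ containing $\k 1$ as a pointed Hopf subalgebra whose link quiver is a basic cycle whenever $H$ is non-cosemisimple and of finite corepresentation type; the remaining input is the characteristic-zero classification of such pointed Hopf algebras.

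For the forward direction I would first dispose of the cosemisimple case: if $H = H_0$ then every indecomposable right $H$-comodule is simple and, $H$ being finite-dimensional, there are only finitely many isomorphism classes, so $H$ is trivially of finite corepresentation type and condition (1) holds. If $H$ is of finite corepresentation type but not cosemisimple, Theorem~\ref{coro:equ} guarantees that $H_{(1)}$ is a pointed Hopf subalgebra whose link quiver is a basic cycle. In particular $H_{(1)}$ is a finite-dimensional pointed Hopf algebra of finite corepresentation type, so by \cite[Theorem 4.6]{LL07} it is isomorphic to some $A(n,d,\mu,q)$, yielding condition (2).

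For the converse, the cosemisimple case is immediate. Suppose instead that $H$ is not cosemisimple and $H_{(1)} \cong A(n,d,\mu,q)$. Then $H_{(1)}$ is pointed and, by \cite[Theorem 4.6]{LL07}, of finite corepresentation type, so its link quiver is a basic cycle. In particular the unique arrow of $\mathrm{Q}(H_{(1)})$ starting at $\k 1$ ends at a one-dimensional group-like subcoalgebra; since the link-indecomposable component $H_{(1)}$ by definition contains every arrow of $\mathrm{Q}(H)$ incident to $\k 1$, condition (4) of Theorem~\ref{coro:equ} is satisfied and $H$ is of finite corepresentation type.

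The main obstacle I anticipate is not logical but organizational: one must ensure that the equivalence \emph{pointed Hopf algebra of finite corepresentation type $\Longleftrightarrow$ link quiver is a basic cycle} invoked for $H_{(1)}$ is exactly what \cite[Theorem 4.6]{LL07} provides, and that the parameter conventions for $A(n,d,\mu,q)$ match those used there. Once this interface is clarified, the argument reduces to a bookkeeping reformulation of Theorem~\ref{coro:equ} in terms of $H_{(1)}$, together with the appeal to \cite{LL07}; the characteristic-zero hypothesis enters only through the classification result.
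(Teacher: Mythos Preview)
Your approach is essentially the paper's: reduce to the link-indecomposable component $H_{(1)}$, observe it is pointed with basic-cycle link quiver, and invoke the known classification of pointed Hopf algebras of finite corepresentation type. The paper routes the last step through Corollary~\ref{coro:comonomial} and the group-datum classification of comonomial Hopf algebras \cite{CHYZ04} rather than citing \cite[Theorem 4.6]{LL07} directly, but the logic is the same.

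One small point deserves care: \cite[Theorem 4.6]{LL07} asserts only that a finite-dimensional pointed Hopf algebra of finite corepresentation type is \emph{comonomial}; it does not by itself single out the form $A(n,d,\mu,q)$. The classification in \cite{CHYZ04} produces $A(\alpha)$ for an arbitrary group datum $\alpha=(G,g,\chi,\mu)$, and $A(n,d,\mu,q)$ is the special case $G=\Bbb Z_n$. What forces $G(H_{(1)})$ to be cyclic here is precisely that $H_{(1)}$ is link-indecomposable: its link quiver is a \emph{single} basic cycle, so the group-likes are the powers of the generator $g$ attached to the unique $(1,g)$-primitive. The paper makes this step explicit; you should too, rather than folding it into the citation. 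With that adjustment your argument is complete and matches the paper's.
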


\begin{theorem}
Let $\k$ be an algebraically closed field of positive characteristic $p$. Then a finite-dimensional Hopf algebra $H$ over $\k$ with the dual Chevalley property is of finite corepresentation type if and only if either of the following conditions is satisfied:
\begin{itemize}
  \item[(1)]$H$ is cosemisimple;
  \item[(2)]$H$ is not cosemisimple and $H_{(1)}\cong C_d(n)$.
  \end{itemize}
\end{theorem}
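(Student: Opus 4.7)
The plan is to use Theorem~\ref{coro:equ} to reduce the problem to a classification of pointed Hopf algebras and then identify that classification with the family $C_d(n)$ in positive characteristic, in direct analogy with the characteristic-zero classification yielding $A(n,d,\mu,q)$ in Theorem~\ref{thm:finitecorep}.

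First, I would invoke Theorem~\ref{coro:equ} together with its equivalent formulation highlighted in the abstract: $H$ is of finite corepresentation type if and only if $H_{(1)}$ is a pointed Hopf algebra whose link quiver is a basic cycle, if and only if $\mathrm{Q}(H)$ is a disjoint union of basic cycles. The cosemisimple case (1) of the current theorem is then immediate, since every finite-dimensional cosemisimple coalgebra is of finite corepresentation type. Consequently the nontrivial content reduces to showing that, for non-cosemisimple $H$, finite corepresentation type is equivalent to $H_{(1)} \cong C_d(n)$.

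Second, for the forward direction I would classify all finite-dimensional pointed Hopf algebras over $\k$ of characteristic $p$ whose link quiver is a single basic cycle of length $n$. By the Cibils--Rosso theory, such a Hopf algebra is a Hopf-algebra quotient of the path coalgebra of a Hopf quiver over a cyclic group. Using the multiplicative-matrix and primitive-matrix calculus of \cite{LZ19,LL22,Li22a,Li22b} one can explicitly parametrise the admissible Hopf structures on such a path coalgebra. The characteristic-zero case of this classification recovers the family $A(n,d,\mu,q)$ of Theorem~\ref{thm:finitecorep}; in characteristic $p$, the $p$-divisibility of certain binomial coefficients permits additional relations of divided-power type, and the resulting classification produces precisely the family $C_d(n)$. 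For the converse, I would check directly that $C_d(n)$ is a finite-dimensional pointed Hopf algebra whose link quiver is a single basic cycle, so that any $H$ with $H_{(1)} \cong C_d(n)$ has $\mathrm{Q}(H)$ a disjoint union of basic cycles and is therefore of finite corepresentation type by Theorem~\ref{coro:equ}.

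The main obstacle I anticipate is the classification step in positive characteristic. The moduli of pointed Hopf structures on the path coalgebra of a basic cycle is genuinely richer than in characteristic zero, since extra degrees of freedom from divided-power relations appear precisely when $p$ divides the relevant binomial coefficients appearing in the comultiplication of primitive matrices. Establishing that every such structure is isomorphic to some $C_d(n)$, with no exotic families escaping this list, will require a careful exhaustion of the admissible cocycles within the Hopf quiver framework and constitutes the genuinely new analytic content beyond Theorem~\ref{thm:finitecorep}.
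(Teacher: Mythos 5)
Your reduction via Theorem~\ref{coro:equ} and Corollary~\ref{coro:H(1)pointedcycle} matches the paper's, and your verification of the converse (that $C_d(n)$ is pointed with a basic-cycle link quiver, hence $H$ is of finite corepresentation type) is fine. The problem is the forward direction: you propose to classify from scratch all finite-dimensional pointed Hopf algebras in characteristic $p$ whose link quiver is a basic cycle, via Hopf quivers and an ``exhaustion of admissible cocycles,'' and you explicitly flag this classification as the main obstacle without carrying it out. As written, the assertion that ``the resulting classification produces precisely the family $C_d(n)$'' is exactly the content of the theorem in the non-cosemisimple case, so the proposal begs the question at its central step. This is a genuine gap, not merely a stylistic difference.

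The paper closes this gap without any new classification work. From Corollary~\ref{coro:H(1)pointedcycle}, $H_{(1)}$ is pointed and its link quiver is a basic cycle, so $H_{(1)}^*$ is elementary and Nakayama; by Lemma~\ref{lem:monomial=nakayama} (which is \cite[Corollary 2.4]{CHYZ04} and is characteristic-free), $H_{(1)}^*$ is then a monomial Hopf algebra, i.e.\ $H_{(1)}$ is comonomial --- this is Corollary~\ref{coro:comonomial}. The structure of non-cosemisimple comonomial Hopf algebras in characteristic $p$ is already known: \cite[Theorem 4.2]{LY06} gives $H_{(1)}\cong C_d(n)\#\,\k(G/N)$ as Hopf algebras and a direct sum of copies of $C_d(n)$ as coalgebras, and link-indecomposability of $H_{(1)}$ (its group of group-likes is cyclic, generated by the $g$ attached to the unique nontrivial $(1,g)$-primitive, exactly as in the proof of Theorem~\ref{thm:finitecorep}) forces $H_{(1)}\cong C_d(n)$. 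So the ``genuinely new analytic content'' you anticipate is in fact a literature citation; your proof becomes complete once you replace the unexecuted cocycle exhaustion with the reduction to comonomial Hopf algebras and the appeal to \cite{LY06}.
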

In \cite[Question 4.13]{Li22a}, the author pose the question of whether the Hopf subalgebra $H_{(1)}$ is normal in $H$ or not.
As a byproduct, we give a negative answer to this question. This also tells us that the Hopf algebras with dual Chevalley property is really a nontrivial generalization of pointed Hopf algebras in general.

The organization of this paper is as follows: In Section \ref{section2}, we recall the definition of multiplicative and primitive matrices. The properties of a complete family of non-trivial $(\C, \D)$-primitive matrices are provided. In Section \ref{section3}, we construct a complete family of non-trivial primitive matrices in two ways. Note that the cardinal number of a complete family of non-trivial $(\C, \D)$-primitive matrices coincides with the number of arrows from vertex $D$ to vertex $C$ in the link quiver. Then we discuss the properties for the link quiver of a Hopf algebra with the dual Chevalley property in Section \ref{section4}. We devote Section \ref{section5} to give proofs of our main results: Theorems \ref{coro:equ}, \ref{thm:nakayama}, \ref{thm:finitecorep} and \ref{thm:finitecorepp}. At last, some examples and applications are given in Section \ref{section6}. In particular, Example \ref{example6.1} gives a negative answer to \cite[Question 4.13]{Li22a}.

\section{Matrices and bicomodules over coalgebra}\label{section2}
\subsection{Preliminaries}
Throughout this paper $\k$ denotes an \textit{algebraically closed field} and all spaces are over $\k$. The tensor product over $\k$ is denoted simply by $\otimes$. We refer to \cite{Mon93} for the basics about Hopf algebras.

The concept of multiplicative matrices was introduced by Manin in \cite{Man88}. Later in 2019, Li and Zhu \cite{LZ19} introduced the concept of primitive matrices. Recently, more properties of multiplicative matrices and primitive matrices have been observed. The authors of \cite{Li22a, Li22b, LL22, LZ19} used these two notions to generalize some results of pointed Hopf algebras to non-pointed ones.

Let us first recall the definition of multiplicative matrices.

\begin{definition}\emph{(}\cite[Definition 2.3]{Li22a}\emph{)}
Let $(H,\Delta,\varepsilon)$ be a coalgebra over $\k$.
\begin{itemize}
  \item[(1)] A square matrix $\G=(g_{ij})_{r\times r}$ over $H$ is said to be multiplicative, if for any $1\leq i,j \leq r$, we have $\Delta(g_{ij})=\sum\limits_{t=1}^r g_{it}\otimes g_{tj}$ and $\varepsilon(g_{ij})=\delta_{i, j}$, where $\delta_{i, j}$ denotes the Kronecker notation;
  \item[(2)] A multiplicative matrix $\C$ is said to be basic, if its entries are linearly independent.
\end{itemize}
\end{definition}
Multiplicative matrices over a coalgebra can be understood as a generalization of group-like elements. We know that all the entries of a basic multiplicative matrix $\C$ span a simple subcoalgebra $C$ of $H$. Conversely, for any simple coalgebra $C$ over
$\k$, there exists a basic multiplicative matrix $\C$ whose entries span $C$ (for details, see \cite{LZ19}, \cite{Li22a}).
And according to \cite[Lemma 2.4]{Li22a}, the basic multiplicative matrix of the simple coalgebra $C$ would be unique up to the similarity relation. More specifically, suppose that $\C$ is a basic multiplicative matrix of the simple coalgebra $C$. Then $\C^\prime$ is also a basic multiplicative matrix of $C$ if and only if there exists an invertible matrix $L$ over $\k$ such that $\C^\prime=L\C L^{-1}$.

Next we recall the definition of primitive matrices, which is a non-pointed analogue of primitive elements.
\begin{definition}\emph{(}\cite[Definition 3.2]{LZ19} and \cite[Definition 4.4]{Li22b}\emph{)}
Let $(H,\Delta,\varepsilon)$ be a coalgebra over $\k$. Suppose $\C=(c_{ij})_{r\times r}$ and $\D=(d_{ij})_{s\times s}$ are basic multiplicative matrices over $H$.
\begin{itemize}
  \item[(1)] A matrix $\X=(x_{ij})_{r\times s}$ over $H$ is said to be $(\C, \D)$-primitive, if $$\Delta(x_{ij})=\sum\limits_{k=1}^r c_{ik}\otimes x_{kj}+\sum\limits_{t=1}^s x_{it}\otimes d_{tj}$$ holds for any $1\leq i,j \leq r$;
  \item[(2)] A primitive matrix $\X$ is said to be non-trivial, if there exists some entry of $\X$ which does not belong to the coradical $H_0$.
\end{itemize}
\end{definition}
Let $C, D$ be the simple subcoalgebras spanned by the entries of basic multiplicative matrices $\C$ and $\D$, respectively. For any $(\C, \D)$-primitive matrix $\X$, it is evident that all the entries of $\X$ must belong to $C\wedge D$ and automatically belong to $H_1:=H_0 \wedge H_0$, where $H_0$ is the coradical of $H$.
\subsection{Non-trivial primitive matrices and simple bicomodules over a coalgebra}
In this subsection, let $(H,\Delta,\varepsilon)$ be a coalgebra over $\k$. Denote the coradical filtration of $H$ by $\{H_n\}_{n\geq0}$ and the set of all the simple subcoalgebras of $H$ by $\mathcal{S}$. For any simple subcoalgebra $C\in \mathcal{S}$, we fix a basic multiplicative matrix $\C$ of $C$.

For any matrix $\X=\left(x_{ij}\right)_{r\times s}$ over $H$,  denote the matrix $\left(\overline{x_{ij}}\right)_{r\times s}$ by $\overline{\X}$, where $\overline{x_{ij}}=x_{ij}+H_0\in H/H_0$. Besides, the subspace of $H/H_0$ spanned by the entries of $\overline{\X}$ is denoted by $\operatorname{span}(\overline{\X})$.

We start this subsection by giving the following lemma, which describes a property of simple bicomodules.
\begin{lemma}\label{lemma:simple bicomodule}
For any $C,D\in\mathcal{S}$ with $\dim_{\k}(C)=r^2, \dim_{\k}(D)=s^2$, if $M$ is a simple $C$-$D$-bicomodule, then $\dim_{\k}(M)=rs$.
\end{lemma}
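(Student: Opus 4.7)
The plan is to reduce the statement to the classification of simple modules over a matrix algebra via the standard duality between comodules over a finite-dimensional coalgebra and modules over its dual algebra.

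First I would exploit that $\k$ is algebraically closed: since $C$ and $D$ are simple subcoalgebras of dimensions $r^2$ and $s^2$, their linear duals are simple $\k$-algebras of the same dimension, and hence $C^* \cong \M_r(\k)$ and $D^* \cong \M_s(\k)$. This is where the hypothesis ``$\dim_{\k}(C)=r^2$ and $\dim_{\k}(D)=s^2$'' really enters.

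Next I would transport the bicomodule structure on $M$ through the duality. Because $C$ and $D$ are finite-dimensional, a left $C$-comodule structure on $M$ is the same data as a right $C^*$-module structure (via $m\cdot f=\sum f(m_{-1})m_{0}$), and a right $D$-comodule structure is the same as a left $D^*$-module structure. The compatibility of the left and right comodule structures that defines a bicomodule translates precisely into the compatibility of the left and right actions, so $C$-$D$-bicomodules correspond, equivalently and simple-for-simple, to $(D^*,C^*)$-bimodules. Such a bimodule is in turn the same as a left module over the tensor product $D^*\otimes (C^*)^{\mathrm{op}}$.

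Finally I would identify this algebra: since $\M_r(\k)^{\mathrm{op}}\cong \M_r(\k)$ via the transpose anti-automorphism, one has
\[
D^*\otimes (C^*)^{\mathrm{op}}\;\cong\;\M_s(\k)\otimes \M_r(\k)\;\cong\;\M_{rs}(\k).
\]
By the Artin--Wedderburn theorem, $\M_{rs}(\k)$ has a unique simple module (up to isomorphism), of dimension $rs$. Pulling back through the equivalence above yields $\dim_{\k}(M)=rs$ for every simple $C$-$D$-bicomodule $M$.

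The steps are all essentially bookkeeping; the only point that requires a small amount of care is checking that the comodule-to-module dictionary is compatible on both sides and that simple objects go to simple objects, but this is standard for finite-dimensional coalgebras. So there is no real obstacle beyond setting up the two dualizations consistently.
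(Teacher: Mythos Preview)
Your proof is correct and follows essentially the same route as the paper: dualize to identify $C$-$D$-bicomodules with left $D^*\otimes (C^*)^{\mathrm{op}}$-modules, recognize this algebra as $\M_{rs}(\k)$, and read off the dimension of its unique simple module. The paper phrases the key isomorphism by noting that $C^*$ and $D^*$ are central simple, whereas you invoke the transpose anti-automorphism and $\M_s(\k)\otimes\M_r(\k)\cong\M_{rs}(\k)$ explicitly, but the substance is identical.
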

\begin{proof}
Since $C^*$ and $D^*$ are central simple algebras, it follows that $D^*\otimes C^{*\mathrm{op}}$ is also a central simple algebra and  $$D^*\otimes C^{*\mathrm{op}}\cong M_{rs}(\k)$$
 as algebras, where $M_{rs}(\k)$ is a matrix algebra. It is known that the dimension of simple left $M_{rs}(\k)$-modules is $rs$. Besides, the category of finite-dimensional left $D^*\otimes C^{*\mathrm{op}}$-modules, the category of finite dimensional $D^*$-$C^*$-bimodules and the category of finite-dimensional $C$-$D$-bicomodules are isomorphic. And the isomorphisms preserve the dimension. Hence the dimension of the simple $C$-$D$-bicomodule $M$ is $rs$.
\end{proof}
Let $$\pi: H_1 \longrightarrow H_1/H_0$$ be the quotient map. For any $\overline{h}\in H_1/H_0$, define
\begin{eqnarray}\label{comodulestructure}
\rho_L(\bar{h})=(\id\otimes\pi)\Delta(h),\;\;\rho_R(\bar{h})=(\pi\otimes\id)\Delta(h).
\end{eqnarray}
It is evident that $(H_1/H_0, \rho_L, \rho_R)$ is an $H_0$-bicomodule. Now we turn to mention  $\span(\overline{\X})$, where $\X$ is a non-trivial $(\C, \D)$-primitive matrix.
\begin{lemma}\label{lemma:rs}
For any $C,D\in\mathcal{S}$ with $\dim_{\k}(C)=r^2, \dim_{\k}(D)=s^2$, if $\X_{r\times s}=\left(x_{ij}\right)_{r\times s}$ is a non-trivial $(\C, \D)$-primitive matrix, then $\span(\overline{\X})$ is a simple $C$-$D$-bicomodule. Moreover, $\dim_{\k}(\span(\overline{\X}))=rs$.
\end{lemma}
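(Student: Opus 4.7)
The plan is to read off the $C$-$D$-bicomodule structure on $\span(\overline{\X})$ directly from the primitivity identity, and then to combine Lemma \ref{lemma:simple bicomodule} with the semisimplicity of the category of $C$-$D$-bicomodules to force both the claimed dimension and the simplicity.

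First I would compute the $H_0$-coactions on the classes $\overline{x_{ij}} \in H_1/H_0$. Since every entry $c_{ik}$ of $\C$ and every entry $d_{tj}$ of $\D$ lies in $H_0 = \ker(\pi)$, applying $\id \otimes \pi$ and $\pi \otimes \id$ to
\begin{equation*}
\Delta(x_{ij}) = \sum_{k=1}^r c_{ik} \otimes x_{kj} + \sum_{t=1}^s x_{it} \otimes d_{tj}
\end{equation*}
kills one of the two sums in each case and leaves
\begin{equation*}
\rho_L(\overline{x_{ij}}) = \sum_{k=1}^r c_{ik} \otimes \overline{x_{kj}}, \qquad \rho_R(\overline{x_{ij}}) = \sum_{t=1}^s \overline{x_{it}} \otimes d_{tj}.
\end{equation*}
These two formulas immediately show that $\span(\overline{\X})$ is stable under both coactions, that the left coaction takes values in $C \otimes \span(\overline{\X})$ and the right in $\span(\overline{\X}) \otimes D$; hence $\span(\overline{\X})$ is a genuine $C$-$D$-subbicomodule of $H_1/H_0$, not merely an $H_0$-subbicomodule.

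The remainder is dimension-counting. As noted in the proof of Lemma \ref{lemma:simple bicomodule}, the category of $C$-$D$-bicomodules is equivalent to the category of modules over the simple algebra $D^* \otimes C^{*\mathrm{op}} \cong \M_{rs}(\k)$, hence is semisimple with a unique simple object, of dimension $rs$. Thus $\span(\overline{\X})$ is a direct sum of copies of this simple. Non-triviality of $\X$ guarantees that some $\overline{x_{ij}}$ is nonzero, so the direct sum is nonempty and $\dim_{\k} \span(\overline{\X}) \geq rs$; on the other hand $\span(\overline{\X})$ is spanned by the $rs$ entries $\overline{x_{ij}}$, so $\dim_{\k} \span(\overline{\X}) \leq rs$. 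Together these bounds yield $\dim_{\k} \span(\overline{\X}) = rs$ and reduce the direct sum to a single summand, making $\span(\overline{\X})$ simple. I expect the only step requiring real attention to be the careful separation of the $C$- and $D$-components of the primitivity identity so that $\rho_L$ and $\rho_R$ are read off unambiguously; everything after that is Schur-style bookkeeping.
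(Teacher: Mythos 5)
Your proof is correct and follows essentially the same route as the paper's: read off $\rho_L$ and $\rho_R$ from the primitivity identity (the $H_0$-terms die under $\pi$), bound $\dim_\k\span(\overline{\X})$ above by $rs$ via the spanning set and below by $rs$ via Lemma \ref{lemma:simple bicomodule}, and conclude equality and simplicity. Your explicit appeal to semisimplicity of the category of $C$-$D$-bicomodules just makes precise the paper's implicit step that any nonzero sub-bicomodule contains a simple one of dimension $rs$; no gap.
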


\begin{proof}
By \cite[Proposition 2.11]{Li22a}, we know that $x_{ij}\notin H_0$ holds for all $1\leq i\leq r$ and $1\leq j\leq s$. Notice that
$$\rho_L(\overline{x_{ij}})=(\id\otimes \pi)\Delta(x_{ij})=\sum\limits_{k=1}^r c_{ik}\otimes \overline{x_{kj}} ,$$
$$\rho_R(\overline{x_{ij}})=(\pi\otimes \id)\Delta(x_{ij})=\sum\limits_{t=1}^s \overline{x_{it}}\otimes d_{tj} .$$
It is straightforward to show that $(\span(\overline{\X}), \rho_L, \rho_R)$ is a $C$-$D$-bicomodule and $$\dim_{\k}(\span(\overline{\X}))\leq rs.$$ But according to Lemma \ref{lemma:simple bicomodule}, the dimension of any $C$-$D$-sub-bicomodule is at least $rs$. Thus we conclude that $$\dim_{\k}(\span(\overline{\X}))=rs$$ and $\span(\overline{\X})$ is a simple $C$-$D$-bicomodule.
\end{proof}
A direct consequence of this lemma is:
\begin{corollary}\label{coro:different X}
If $\X$ and $\X^\prime$ are non-trivial $(\C,\D)$-primitive matrices over $H$, then either $\span(\overline{\X})\cap\span(\overline{\X^\prime})=0$ or $\span(\overline{\X})=\span(\overline{\X^\prime})$.
\end{corollary}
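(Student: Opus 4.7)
The plan is to invoke Lemma \ref{lemma:rs} twice and then use the standard fact that two simple subobjects of a module (here, bicomodule) category intersect either trivially or coincide.

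More precisely, first I would observe that since $\X$ and $\X^\prime$ are both $(\C, \D)$-primitive matrices, the formulas for $\rho_L$ and $\rho_R$ computed in the proof of Lemma \ref{lemma:rs} show that $\span(\overline{\X})$ and $\span(\overline{\X^\prime})$ are $C$-$D$-sub-bicomodules of $H_1/H_0$. Because both $\X$ and $\X^\prime$ are assumed non-trivial, Lemma \ref{lemma:rs} ensures that each of $\span(\overline{\X})$ and $\span(\overline{\X^\prime})$ is in fact a \emph{simple} $C$-$D$-bicomodule (of dimension $rs$, where $r^2=\dim_{\k}C$ and $s^2=\dim_{\k}D$).

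Next I would consider the intersection $W:=\span(\overline{\X})\cap\span(\overline{\X^\prime})$. Since the intersection of sub-bicomodules is again a sub-bicomodule, $W$ is a $C$-$D$-sub-bicomodule of both $\span(\overline{\X})$ and $\span(\overline{\X^\prime})$. Simplicity of $\span(\overline{\X})$ forces either $W=0$ or $W=\span(\overline{\X})$; in the latter case $\span(\overline{\X})\subseteq\span(\overline{\X^\prime})$, and simplicity of $\span(\overline{\X^\prime})$ (together with the fact that $\span(\overline{\X})\neq 0$, again by non-triviality via Lemma \ref{lemma:rs}) gives $\span(\overline{\X})=\span(\overline{\X^\prime})$.

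There is no genuine obstacle here; the statement is a clean corollary of Lemma \ref{lemma:rs}. The only subtle point worth stating explicitly in the write-up is that $\span(\overline{\X})$ and $\span(\overline{\X^\prime})$ are indeed \emph{sub-bicomodules} of the same ambient $H_0$-bicomodule $H_1/H_0$ (with the structure maps \eqref{comodulestructure}), so that taking their intersection inside $H_1/H_0$ is meaningful and produces a sub-bicomodule on which the simplicity dichotomy can be applied.
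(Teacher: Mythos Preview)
Your proposal is correct and follows essentially the same approach as the paper: invoke Lemma~\ref{lemma:rs} to see that both $\span(\overline{\X})$ and $\span(\overline{\X^\prime})$ are simple $C$-$D$-bicomodules, note that their intersection is a sub-bicomodule, and apply the simplicity dichotomy. The only cosmetic difference is that the paper concludes the equality by applying the dichotomy symmetrically (``by the same token'' for $\span(\overline{\X^\prime})$), whereas you use simplicity of $\span(\overline{\X^\prime})$ directly on the nonzero sub-bicomodule $\span(\overline{\X})$; these are the same argument.
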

\begin{proof}
According to Lemma \ref{lemma:rs}, it follows that $\span(\overline{\X})$ and $\span(\overline{\X^\prime})$ are both $C$-$D$-bicomodules. It is clear that $\span(\overline{\X})\cap\span(\overline{\X^\prime})$ is a sub-$C$-$D$-bicomodule of $\span(\overline{\X})$. But since $\span(\overline{\X})$ is simple, its  sub-$C$-$D$-bicomodule $\span(\overline{\X})\cap\span(\overline{\X^\prime})$ is either $\span(\overline{\X})$ or $0$. In the previous case, $\span(\overline{\X})\supseteq\span(\overline{\X^\prime})$. By the same taken, we can prove that $\span(\overline{\X})\subseteq\span(\overline{\X^\prime})$.
\end{proof}

Moreover, there are further properties for non-trivial primitive matrices.
\begin{corollary}\label{coro:linear independent}
Let $C, D\in\mathcal{S}$ with basic multiplicative matrices $\C_{r\times r}$ and $\D_{s\times s}$, respectively. Suppose $\X:=(x_{ij})_{r\times s}$ is a $(\C, \D)$-primitive matrix. Then the followings are equivalent:
\begin{itemize}
  \item[(1)] $\X$ is non-trivial; 
  \item[(2)] $x_{ij}\notin H_0$ holds for all $1\leq i\leq r$ and $1\leq j\leq s$;
  \item[(3)] $\{x_{ij}\mid 1\leq j\leq s\}$ are linearly independent in $H_1/H_0$ (the quotient space) for each $1\leq i\leq r$, and $\{x_{ij}\mid 1\leq i\leq r\}$ are linearly independent in $H_1/H_0$ for each $1\leq j\leq s$.
  \item[(4)] $\{x_{ij} \mid 1\leq j\leq s,1\leq i\leq r\}$ are linearly independent in $H_1/H_0$.
\end{itemize}
\end{corollary}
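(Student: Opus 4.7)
The plan is to close the loop $(1) \Rightarrow (4) \Rightarrow (3) \Rightarrow (2) \Rightarrow (1)$, where three of the four implications are essentially immediate and all the content sits in $(1) \Rightarrow (4)$, which will be read off from Lemma \ref{lemma:rs}.

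First I would dispatch the easy direction $(4) \Rightarrow (3) \Rightarrow (2) \Rightarrow (1)$. If the full collection $\{\overline{x_{ij}} : 1 \le i \le r,\, 1 \le j \le s\}$ is linearly independent in $H_1/H_0$, then every subcollection is, in particular each row and each column, yielding $(3)$. From $(3)$, linear independence of any row or column forces each individual $\overline{x_{ij}}$ to be nonzero in $H_1/H_0$, i.e.\ $x_{ij} \notin H_0$, which gives $(2)$. And $(2)$ trivially implies $(1)$ since the existence of one entry outside $H_0$ is weaker than all entries being outside $H_0$.

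For the substantive implication $(1) \Rightarrow (4)$, I would invoke Lemma \ref{lemma:rs} directly: assuming $\X$ is non-trivial, the subspace $\span(\overline{\X})$ of $H_1/H_0$ is a simple $C$-$D$-bicomodule of dimension exactly $rs$. Since by definition $\span(\overline{\X})$ is spanned by the $rs$ elements $\overline{x_{ij}}$, equality of this upper bound with the dimension forces these $rs$ elements to be linearly independent, which is precisely $(4)$.

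I do not expect a real obstacle here; the only subtle point is that $(1) \Rightarrow (2)$ by itself already requires the non-trivial input of \cite[Proposition 2.11]{Li22a} (used in the proof of Lemma \ref{lemma:rs}), so one should not try to prove $(1) \Rightarrow (2)$ by hand before going through $(4)$. Routing everything through Lemma \ref{lemma:rs} in the cyclic order above avoids reusing that external citation and keeps the argument self-contained within the present subsection.
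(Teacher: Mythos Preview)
Your proof is correct and takes essentially the same approach as the paper: the key implication $(1)\Rightarrow(4)$ is obtained in both via Lemma~\ref{lemma:rs}, and the remaining implications are immediate. The only cosmetic difference is that the paper cites \cite[Proposition~2.11]{Li22a} directly for the equivalence of $(1),(2),(3)$, whereas you close the cycle through $(4)$ and thereby avoid invoking that citation a second time (it is already used inside the proof of Lemma~\ref{lemma:rs}); your organization is slightly cleaner in that respect, but mathematically the arguments coincide.
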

\begin{proof}
The equivalence of $(1), (2)$ and $(3)$ is by \cite[Proposition 2.11]{Li22a}. And $(4)$ clearly implies $(1), (2)$ and $(3)$. To complete the proof, we only need to show that $(1)$ implies $(4)$. Note that if $\X$ is non-trivial, it follows from Lemma \ref{lemma:rs} that $\span(\overline{\X})$ is a simple $C$-$D$-bicomodule and $\dim_{\k}(\span(\overline{\X}))=rs$, which means that $\{x_{ij} \mid 1\leq j\leq s,1\leq i\leq r\}$ are linearly independent in $H_1/H_0$.
\end{proof}

Recall that $\{e_C\}_{C\in \mathcal{S}}$ is called a family of \textit{coradical orthonormal idempotents} (see \cite[Section 1]{Rad78}) in $H^*$, if $$e_C|_D=\delta_{C,D}\varepsilon|_D,\;\;\;\;e_Ce_D=\delta_{C,D}e_C\;\;\;\;
      (\text{for any}\;C,D\in\mathcal{S}),\;\;\;\;\sum\limits_{C\in\mathcal{S}}e_C=\varepsilon.$$
The existence of a family of coradical orthonormal idempotents is affirmed in \cite[Lemma 2]{Rad78}.
About more properties of coradical orthonormal idempotents, the reader is referred to \cite[Proposition 2.2]{LZ19} for details.
We use the notations below for convenience:
      $${}^Ch=h\leftharpoonup e_C,\;\;\;h{}^D=e_D\rightharpoonup h,\;\;\;{}^Ch{}^D=e_D\rightharpoonup h\leftharpoonup e_C\;\;\;(\text{for any}\;h\in H\;\text{and}\;C,D \in \mathcal{S}),$$
where $\rightharpoonup$ and $\leftharpoonup$ are hit actions of $H^*$ on $H$.

Moreover, let $\{e_C\}_{C\in \mathcal{S}}$ be a family of coradical orthonormal idempotents. If $V$ is an $H_0$-$H_0$-bicomodule with left comodule structure $\delta_L$ and right comodule structure $\delta_R$, define
$${}^Cv=v\leftharpoonup e_C=(e_C\otimes \id)\delta_L(v) ,\;\;\;v^D=e_D\rightharpoonup v=(\id\otimes e_D)\delta_R(v),$$
$${}^Cv{}^D=e_D\rightharpoonup v\leftharpoonup e_C\;\;\;(\text{for any}\;v\in V\;\text{and}\;C,D \in \mathcal{S}).
$$

With the notations above, we can establish the following decomposition of $H_1/H_0$ as a direct sum.
\begin{lemma}\label{V=sumCVD}
Suppose that $V$ is an $H_0$-$H_0$-bicomodule, then $V=\bigoplus\limits_{C, D\in\mathcal{S}}{{}^C V{}^D}$,
where ${{}^C V{}^D}=e_D\rightharpoonup V\leftharpoonup e_C$ is a $C$-$D$-bicomodule. In particular, we have $H_1/H_0=\bigoplus\limits_{C, D\in\mathcal{S}}{}^C ({H_1/H_0}){}^D$.
\end{lemma}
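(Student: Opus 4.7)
The plan is to use the family of coradical orthonormal idempotents $\{e_C\}_{C\in\mathcal{S}}$ to slice the $H_0$-bicomodule $V$ according to simple subcoalgebras on each side, exactly as one decomposes a module over $\bigoplus_i A_i$ into its factorwise pieces.

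First I would check that the operators ${}^C(-):v \mapsto (e_C\otimes\id)\delta_L(v)$ form a complete family of mutually orthogonal idempotents on $V$. The relation $\sum_{C\in\mathcal{S}} e_C=\varepsilon$ combined with the left counit axiom $(\varepsilon\otimes\id)\delta_L=\id$ immediately gives $v=\sum_{C\in\mathcal{S}}{}^C v$, so $V=\sum_C{}^C V$. For idempotency and orthogonality, apply coassociativity of $\delta_L$ to rewrite ${}^{C'}({}^C v)$ as $((e_{C'}\cdot e_C)\otimes\id)\delta_L(v)$, where $\cdot$ is the convolution product in $H^*$, and then invoke $e_{C'}e_C=\delta_{C,C'}e_C$. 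This produces the direct sum $V=\bigoplus_{C\in\mathcal{S}}{}^C V$. A further check, using $e_C|_D=\delta_{C,D}\varepsilon|_D$ to confirm that $(e_C\otimes\id)\delta_L(v)$ has left-hand tensor factor in $C$, shows that each ${}^C V$ is a left $C$-subcomodule.

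The symmetric argument for the right coaction yields $V=\bigoplus_{D\in\mathcal{S}}V^D$ with each $V^D$ a right $D$-subcomodule. Crucially, the bicomodule compatibility $(\delta_L\otimes\id)\delta_R=(\id\otimes\delta_R)\delta_L$ shows that the operators ${}^C(-)$ and $(-)^D$ commute, so combining the two one-sided decompositions gives the desired $V=\bigoplus_{C,D\in\mathcal{S}}{}^C V{}^D$. On each summand the left coaction lands in $C\otimes{}^C V{}^D$ and the right in ${}^C V{}^D\otimes D$, making ${}^C V{}^D$ a $C$-$D$-bicomodule. The ``in particular'' assertion then follows by specialising to $V:=H_1/H_0$ with the $H_0$-bicomodule structure defined in (\ref{comodulestructure}).

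The step that requires the most care is the commutativity of ${}^C(-)$ with $(-)^D$, since this is the only place where the genuine bicomodule axiom (rather than one-sided comodule structure) is used; once this is in hand, the rest of the argument is bookkeeping with coassociativity, the multiplication in $H^*$, and the defining orthogonality and completeness of the idempotents $e_C$.
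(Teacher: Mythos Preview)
Your proof is correct and rests on the same idea as the paper's: the coradical orthonormal idempotents $\{e_C\}$ act on $V$ as a complete family of orthogonal projectors via the hit actions, and the orthogonality relations $e_Ce_D=\delta_{C,D}e_C$ force the pieces to be independent. The only organizational difference is that you factor through the two one-sided decompositions $V=\bigoplus_C {}^CV$ and $V=\bigoplus_D V^D$ and then invoke the bicomodule axiom to make the left and right projectors commute, whereas the paper argues the two-sided statement in one shot: it writes $v=\varepsilon\rightharpoonup v\leftharpoonup\varepsilon=\sum_{C,D}{}^Cv^D$ for spanning, and for directness it simply hits a relation $0=\sum_{C,D}w_{C,D}$ with $e_E\rightharpoonup(\cdot)\leftharpoonup e_F$ to isolate $w_{F,E}=0$. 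Your version makes the role of the bicomodule compatibility more explicit, while the paper's is a line shorter; neither buys anything the other does not.
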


\begin{proof}
It is straightforward to show that ${{}^C V{}^D}$ is a $C$-$D$-bicomodule. For any $v\in V$, since $\sum\limits_{C\in\mathcal{S}}e_C=\varepsilon$, we have $$v=\varepsilon\rightharpoonup v\leftharpoonup \varepsilon=\sum\limits_{C, D\in \mathcal{S}}{{}^C v{}^D}.$$
Suppose $0=\sum\limits_{C, D\in \mathcal{S}}w_{C, D}$, where $w_{C, D}\in {{}^C V{}^D}$ for any $C, D\in \mathcal{S}$.
Note that for any $E, F\in \mathcal{S}$, we have
\begin{eqnarray*}
0&=&e_E\rightharpoonup 0\leftharpoonup e_F\\
&=&e_E\rightharpoonup (\sum\limits_{C, D\in \mathcal{S}}w_{C, D})\leftharpoonup e_F\\
&=&\sum\limits_{C, D\in \mathcal{S}}e_E\rightharpoonup w_{C, D}\leftharpoonup e_F\\
&=&w_{F, E}.
\end{eqnarray*}
Thus we complete the proof.
\end{proof}

Besides, for any $C, D\in\mathcal{S}$, since $\Delta({}^CH_1{}^D)\subseteq C\otimes {}^CH_1{}^D+{}^CH_1{}^D\otimes D$, it follows that $({}^CH_1{}^D+H_0)/H_0$ is exactly a $C$-$D$-bicomodule with the bicomodule structure $\rho_L, \rho_R$ defined in (\ref{comodulestructure}). Thus we have another direct sum decomposition of $H_1/H_0$ and these two kinds of decomposition are related.
\begin{lemma}\label{lemma:sum2}
As an $H_0$-$H_0$-bicomodule, $H_1/H_0=\bigoplus\limits_{C, D\in\mathcal{S}}({}^CH_1{}^D+H_0)/H_0$. Moreover, ${^C(H_1/H_0)^D}=({}^CH_1{}^D+H_0)/H_0$ holds for any $C, D\in\mathcal{S}$.
\end{lemma}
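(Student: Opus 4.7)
My plan is to deduce both assertions from Lemma \ref{V=sumCVD} applied to the $H_0$-bicomodule $V = H_1/H_0$, concentrating all of the content in the identification ${}^C(H_1/H_0)^D = ({}^CH_1{}^D + H_0)/H_0$, i.e., the compatibility of the quotient map $\pi\colon H_1 \to H_1/H_0$ with both hit actions.

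First I would note that, by the standard property of the coradical filtration, $\Delta(H_1)\subseteq H_0\otimes H_1 + H_1\otimes H_0$. Hence for $h\in H_1$ the $H_1\otimes H_0$ part of $\Delta(h)$ is killed by $\id\otimes\pi$ and the $H_0\otimes H_1$ part is killed by $\pi\otimes\id$, so $\rho_L,\rho_R$ are well defined and make $H_1/H_0$ an $H_0$-bicomodule, and Lemma \ref{V=sumCVD} yields
\begin{equation*}
H_1/H_0 \;=\; \bigoplus_{C,D\in\mathcal{S}} {}^C(H_1/H_0)^D.
\end{equation*}

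The central calculation is then the compatibility $\overline{{}^Ch} = {}^C\bar h$, together with its symmetric counterpart $\overline{h^D} = \bar h^D$. Writing $\Delta(h)=\sum h_{(1)}\otimes h_{(2)}$ for $h\in H_1$, one has
\begin{equation*}
{}^C\bar h \;=\; (e_C\otimes\id)\rho_L(\bar h) \;=\; \sum e_C(h_{(1)})\,\overline{h_{(2)}} \;=\; \pi\!\left(\sum e_C(h_{(1)})\,h_{(2)}\right) \;=\; \overline{{}^Ch},
\end{equation*}
where in both of the middle expressions the $H_1\otimes H_0$ summands of $\Delta(h)$ contribute zero because $\overline{h_{(2)}}=0$ whenever $h_{(2)}\in H_0$. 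Combining with the symmetric identity yields ${}^C\bar h{}^D = \overline{{}^Ch^D}$, and therefore ${}^C(H_1/H_0)^D = \pi({}^CH_1{}^D) = ({}^CH_1{}^D + H_0)/H_0$, which is the ``moreover'' clause. Substituting into the direct-sum decomposition above delivers the first assertion.

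I expect the only nontrivial issue to be the bookkeeping in the central calculation: on $H_1$ the element ${}^Ch$ may contain extra summands lying in $H_0$ (coming from the $H_1\otimes H_0$ part of $\Delta(h)$), and one must verify that these summands are precisely the ones that die after applying $\pi$. Once this compatibility is in place, Lemma \ref{V=sumCVD} does the rest.
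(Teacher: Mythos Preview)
Your proposal is correct and follows essentially the same approach as the paper: both apply Lemma \ref{V=sumCVD} to $V=H_1/H_0$ and verify the compatibility $e_D\rightharpoonup \bar h\leftharpoonup e_C=\overline{e_D\rightharpoonup h\leftharpoonup e_C}$. Your argument is in fact slightly more efficient, since you draw the equality ${}^C(H_1/H_0)^D=\pi({}^CH_1{}^D)$ directly from this identity, whereas the paper first records only the inclusion and then appeals a second time to the directness argument of Lemma \ref{V=sumCVD} to upgrade it.
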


\begin{proof}
For any $x\in H_1$, a direct computation follows that
\begin{eqnarray*}
e_D\rightharpoonup \overline{x}\leftharpoonup e_C&=&\sum\langle e_C, x_{(1)}\rangle \overline{x_{(2)}}\langle e_D, x_{(3)}\rangle\\
&=&\sum\overline{\langle e_C, x_{(1)}\rangle x_{(2)}\langle e_Dx_{(3)}\rangle}\\
&=&\overline{e_D\rightharpoonup x\leftharpoonup e_C}\\
&\in& ({}^CH_1{}^D+H_0)/H_0,
\end{eqnarray*}
where we use the Sweedler notation $\Delta(x)=\sum x_{(1)}\otimes x_{(2)}$ for the comultiplication.
So we have $${}^C(H_1/H_0){}^D\subseteq ({}^CH_1{}^D+H_0)/H_0$$
and  $$\bigoplus\limits_{C, D\in\mathcal{S}}{}^C ({H_1/H_0}){}^D=H_1/H_0=\sum\limits_{C, D\in\mathcal{S}}({}^CH_1{}^D+H_0)/H_0.$$
The same proof with Lemma \ref{V=sumCVD} can be applied to $H_1/H_0$, then we get $$H_1/H_0=\bigoplus\limits_{C, D\in\mathcal{S}}({}^CH_1{}^D+H_0)/H_0,$$ which implies that $${^C(H_1/H_0)^D}=({}^CH_1{}^D+H_0)/H_0.$$
\end{proof}
For the remaining of this subsection, let $C, D\in\mathcal{S}$ with basic multiplicative matrices $\C=(c_{ij})_{r\times r}$ and $\D=(d_{ij})_{s\times s}$, respectively.
\begin{lemma}\label{Lemma:XinCPD}
For any $(\C, \D)$-primitive matrix $\X$, we have $$\span(\overline{\X})\subseteq({}^CH_1{}^D+H_0)/H_0.$$
\end{lemma}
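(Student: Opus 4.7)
The plan is to reduce the problem to showing that each class $\overline{x_{ij}}$ lies in the isotypic component ${}^C(H_1/H_0){}^D$, and then to verify this by a direct hit-action computation using the defining identities of a primitive matrix.

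First I would invoke Lemma \ref{lemma:sum2}, which identifies $({}^CH_1{}^D+H_0)/H_0$ with ${}^C(H_1/H_0){}^D$ as a $C$-$D$-subbicomodule of $H_1/H_0$. Thus it suffices to prove that every generator $\overline{x_{ij}}$ of $\span(\overline{\X})$ satisfies $e_D\rightharpoonup \overline{x_{ij}} \leftharpoonup e_C = \overline{x_{ij}}$. This moves the entire question into the quotient bicomodule $H_1/H_0$, where the left and right coactions are given explicitly by formula (\ref{comodulestructure}).

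Next, applying the primitive matrix identity $\Delta(x_{ij})=\sum_{k}c_{ik}\otimes x_{kj}+\sum_{t}x_{it}\otimes d_{tj}$ to $\rho_L$ and $\rho_R$, and using the fact that the entries $c_{ik}$ belong to the coradical $H_0$ (so $\pi(c_{ik})=0$), I obtain
\begin{equation*}
\rho_L(\overline{x_{ij}})=\sum_{k=1}^{r}c_{ik}\otimes\overline{x_{kj}},\qquad
\rho_R(\overline{x_{ij}})=\sum_{t=1}^{s}\overline{x_{it}}\otimes d_{tj}.
\end{equation*}
Now since $c_{ik}$ lies in the simple subcoalgebra $C$ and $d_{tj}$ lies in the simple subcoalgebra $D$, the coradical orthonormality $e_C|_D=\delta_{C,D}\varepsilon|_D$ together with $\varepsilon(c_{ik})=\delta_{ik}$ and $\varepsilon(d_{tj})=\delta_{tj}$ gives
\begin{equation*}
{}^C\overline{x_{ij}}=\sum_{k}\langle e_C,c_{ik}\rangle\,\overline{x_{kj}}=\overline{x_{ij}},\qquad
\overline{x_{ij}}{}^D=\sum_{t}\overline{x_{it}}\,\langle e_D,d_{tj}\rangle=\overline{x_{ij}}.
\end{equation*}
Combining these gives ${}^C\overline{x_{ij}}{}^D=\overline{x_{ij}}$, i.e., $\overline{x_{ij}}\in {}^C(H_1/H_0){}^D=({}^CH_1{}^D+H_0)/H_0$, and hence $\span(\overline{\X})\subseteq ({}^CH_1{}^D+H_0)/H_0$ as desired.

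There is no real obstacle here; the content of the lemma is essentially bookkeeping, and the only subtlety is to make sure that the $c_{ik}$ and $d_{tj}$ are evaluated against $e_C$ and $e_D$ respectively (rather than against each other), which is automatic because $\C$ spans $C$ and $\D$ spans $D$.
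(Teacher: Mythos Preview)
Your proof is correct and is essentially the same approach as the paper's: both compute $\rho_L(\overline{x_{ij}})=\sum_k c_{ik}\otimes\overline{x_{kj}}$ and $\rho_R(\overline{x_{ij}})=\sum_t \overline{x_{it}}\otimes d_{tj}$ and then invoke Lemma~\ref{lemma:sum2}. The only difference is presentational: the paper argues that since $\rho_L$ lands in $C\otimes(-)$ and $\rho_R$ in $(-)\otimes D$, the element must sit in the $(C,D)$-summand of the direct sum decomposition $H_1/H_0=\bigoplus_{E,F}({}^EH_1{}^F+H_0)/H_0$, whereas you make the equivalent statement explicit by evaluating the idempotents $e_C,e_D$ directly and checking $e_D\rightharpoonup\overline{x_{ij}}\leftharpoonup e_C=\overline{x_{ij}}$.
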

\begin{proof}
For any $\overline{x}\in\span(\overline{\X})\subseteq H_1/H_0$, it follows from Lemma \ref{lemma:simple bicomodule} that
$$\rho_L(\overline{x})\subseteq C\otimes \span(\overline{\X})$$
and
$$\rho_R(\overline{x})\subseteq \span(\overline{\X})\otimes D.$$
 According to Lemma \ref{lemma:sum2}, we have $$\overline{x}\in \bigoplus\limits_{E, F\in\mathcal{S}} ({}^EH_1{}^F+H_0)/H_0.$$
 Note that $({}^EH_1{}^F+H_0)/H_0$ is a $E$-$F$-bicomodule, for any $E, F\in\mathcal{S}$. It follows that $$\overline{x}\in  ({}^CH_1{}^D+H_0)/H_0,$$
 which means that $$\span(\overline{\X})\subseteq({}^CH_1{}^D+H_0)/H_0.$$
\end{proof}
Next we consider the inverse.
\begin{lemma}\label{Lemmasimplemodule}
 If $W$ is a subspace of ${}^CH_1{}^D+H_0$ such that $\overline{W}$ is a simple $C$-$D$-sub-bicomodule of $({}^CH_1{}^D+H_0)/H_0$, then there exists some non-trivial $(\C, \D)$-primitive matrix $\W$ such that $\span(\overline{\W})=\overline{W}$.
\end{lemma}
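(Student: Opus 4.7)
The plan is to realize $\overline{W}$ as the span of entries of an explicitly constructed $(\C,\D)$-primitive matrix in three moves: pick a structural basis of $\overline W$ compatible with its simple bicomodule structure, lift it arbitrarily to $H_1$, and correct the lift by an $H_0$-valued term to kill the resulting coassociativity defect, using cosemisimplicity of $H_0$.

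By Lemma \ref{lemma:simple bicomodule}, $\dim_\k\overline W=rs$. Since the simple $C$-$D$-bicomodule is unique up to isomorphism and admits a standard basis dictated by $\C$ and $\D$, I would first fix a basis $\{\overline{w_{ij}}\}_{1\le i\le r,\,1\le j\le s}$ of $\overline W$ satisfying
$$\rho_L(\overline{w_{ij}})=\sum_{k=1}^{r}c_{ik}\otimes\overline{w_{kj}},\qquad
\rho_R(\overline{w_{ij}})=\sum_{t=1}^{s}\overline{w_{it}}\otimes d_{tj}.$$
Because $\overline W\subseteq({}^CH_1{}^D+H_0)/H_0$, I can pick lifts $w_{ij}^{\circ}\in{}^CH_1{}^D+H_0\subseteq H_1$ of $\overline{w_{ij}}$ and form
$$\Omega_{ij}:=\Delta(w_{ij}^{\circ})-\sum_{k=1}^{r}c_{ik}\otimes w_{kj}^{\circ}-\sum_{t=1}^{s}w_{it}^{\circ}\otimes d_{tj}.$$
Combining $\Delta(H_1)\subseteq H_0\otimes H_1+H_1\otimes H_0$ with the bicomodule relations above gives $(\id\otimes\pi)(\Omega_{ij})=0=(\pi\otimes\id)(\Omega_{ij})$, and intersecting the two kernels yields $\Omega_{ij}\in H_0\otimes H_0$.

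The decisive step is to invoke coassociativity. Writing out $(\Delta\otimes\id)\Delta(w_{ij}^{\circ})=(\id\otimes\Delta)\Delta(w_{ij}^{\circ})$ and cancelling the matching $c\otimes c\otimes w$, $c\otimes w\otimes d$ and $w\otimes d\otimes d$ terms on each side produces
$$(\Delta\otimes\id)(\Omega_{ij})-(\id\otimes\Delta)(\Omega_{ij})=\sum_{k=1}^{r}c_{ik}\otimes\Omega_{kj}-\sum_{t=1}^{s}\Omega_{it}\otimes d_{tj},$$
which is exactly the $2$-cocycle condition in the Hochschild (Cartier) complex of the coalgebra $H_0$ with coefficients in the simple $C$-$D$-bicomodule $\overline W$. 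Since $H_0$ is the coradical of $H$, it is cosemisimple, so Hochschild cohomology of $H_0$ with coefficients in any bicomodule vanishes in positive degrees. Therefore $(\Omega_{ij})$ is a $2$-coboundary: there exist $x_{ij}\in H_0$ with
$$\Omega_{ij}=\Delta(x_{ij})-\sum_{k=1}^{r}c_{ik}\otimes x_{kj}-\sum_{t=1}^{s}x_{it}\otimes d_{tj}.$$

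Putting $w_{ij}:=w_{ij}^{\circ}-x_{ij}$, the classes $\overline{w_{ij}}$ are unchanged because $x_{ij}\in H_0$, and direct substitution gives $\Delta(w_{ij})=\sum_{k}c_{ik}\otimes w_{kj}+\sum_{t}w_{it}\otimes d_{tj}$. Hence $\W:=(w_{ij})_{r\times s}$ is a $(\C,\D)$-primitive matrix, non-trivial because $\overline{w_{ij}}\ne 0$, with $\span(\overline{\W})=\overline W$ by construction. The main obstacle is the cohomological step: establishing the $2$-cocycle identity from coassociativity and then trivializing it via cosemisimplicity of $H_0$. Everything else reduces to routine manipulation of the coradical filtration.
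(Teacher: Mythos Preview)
Your argument is correct and takes a genuinely different route from the paper's proof. The paper does not use any cohomological lifting. Instead, it starts from a single nonzero element $\overline{w}\in\overline{W}$ with a lift $w\in{}^CH_1{}^D$, and applies the structural decomposition of \cite[Theorem~3.1]{LZ19}: every such $w$ (after subtracting an element of $C$ when $C=D$) is a sum $\sum_{i,j}w_{ij}^{(i,j)}$ where each $\W^{(i',j')}$ is already a $(\C,\D)$-primitive matrix. One then shows, by reading off the $H_0$-bicomodule structure, that every $\span(\overline{\W^{(i',j')}})$ is contained in $\overline W$; since some $\W^{(i_0',j_0')}$ must be non-trivial and $\overline W$ is simple, equality follows. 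So the paper produces the primitive matrix \emph{by quotation} of an existing decomposition lemma, whereas you manufacture it \emph{ab initio} by correcting a na\"ive lift.

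Your method is more self-contained (it does not invoke \cite{LZ19}) and conceptually cleaner: the obstruction to lifting is a $2$-cocycle in the Cartier--Doi coHochschild complex of $H_0$, and it dies because $H_0$ is cosemisimple. Two points deserve tightening. First, the complex you wrote has cochains $\operatorname{Hom}_\k(M,H_0^{\otimes n})\cong M^*\otimes H_0^{\otimes n}$ for the simple bicomodule $M\cong\overline W$, so ``coefficients in $\overline W$'' is slightly imprecise; it is the standard coHochschild complex $H^\bullet_{\mathrm c}(H_0,M)$ in Doi's sense. Second, ``cosemisimple $\Rightarrow$ vanishing in positive degrees'' goes through coseparability: over an algebraically closed field (the paper's standing hypothesis), $H_0$ cosemisimple implies $H_0\otimes H_0^{\mathrm{cop}}$ cosemisimple, so every $H_0$-bicomodule map splits, $H_0$ is coseparable, and Doi's theorem gives the vanishing. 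Citing this explicitly would make the step airtight. The payoff of the paper's approach is that it avoids any homological machinery and stays entirely within the matrix formalism developed in \cite{LZ19,Li22a}; the payoff of yours is independence from those references and a transparent obstruction-theoretic picture.
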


\begin{proof}
For any nonzero $\overline{w}\in\overline{W}$, without the loss of generality, we assume $w\in {}^CH_1{}^D$.
\begin{itemize}
  \item[(1)]If $C\neq D$, by \cite[Theorem 3.1(1)]{LZ19} and its proof, we know that there exist $rs$ $(\C, \D)$-primitive matrices $\W^{(i^\prime, j^\prime)}=\left(w_{ij}^{(i^\prime, j^\prime)}\right)_{r\times s}$ $(1\leq i^\prime\leq r, 1\leq j^\prime\leq s) $ such that $$w=\sum\limits_{i=1}^r\sum\limits_{j=1}^s w_{ij}^{(i, j)},$$
      $$\Delta(w)=\sum\limits_{i^\prime,i=1}^r c_{i^\prime i}\otimes x_i^{(i^\prime)}+\sum\limits_{j,j^\prime=1}^s y_{j}^{(j^\prime)}\otimes d_{jj^\prime},$$
      and
      $$\Delta(x_i^{(i^\prime)})=\sum\limits_{k=1}^r c_{ik}\otimes x_k^{(i^\prime)}+\sum\limits_{j,j^\prime=1}^s w_{ij}^{(i^\prime, j^\prime)}\otimes d_{jj^\prime},$$
      where $x_i^{(i^\prime)}, y_j^{(j^\prime)}\in {}^CH_1{}^D\cap \ker \varepsilon$ for all $1\leq i^\prime, i \leq r, 1\leq j^\prime, j \leq s$.
      Observe $\overline{W}$ is a $C$-$D$-sub-bicomodule whose comodule structure is induced by comultiplication, namely,
      $$\rho_L(\overline{w})=(\id\otimes \pi)\Delta(w)\in C\otimes \overline{W}, \;\;\rho_R(\overline{w})=(\pi\otimes \id)\Delta(w)\in \overline{W}\otimes D.$$
      As $\{c_{ii^\prime}\mid 1\leq i, i^\prime \leq r\}$ and $\{d_{jj^\prime}\mid 1\leq j, j^\prime \leq s\}$ are linearly independent, thus $$\overline{x_i^{(i^\prime)}}, \overline{y_{j}^{(j^\prime)}}\in \overline{W}$$ for all $i, i^\prime, j, j^\prime$. According to a similar argument, for any $i^\prime, i$, we have $$\rho_R(\overline{x_i^{(i^\prime)}})\in\overline{W}\otimes D.$$  This means that $\overline{ w_{ij}^{(i^\prime, j^\prime)}}\in\overline{W}$ for all $i, i^\prime, j, j^\prime$. Hence we have  $$\span(\overline{\W^{(i^\prime,j^\prime)}})\subseteq \overline{W}$$ for all $i^\prime, j^\prime$.
      Since $\overline{w}$ is nonzero, there must be some pair $(i_0^\prime, j_0^\prime)$ such that $\W^{(i_0^\prime, j_0^\prime)}$ is a non-trivial $(\C, \D)$-primitive matrix. However, note that $\overline{W}$ is a simple $C$-$D$-sub-bicomodule. It follows that $$\span(\overline{\W^{(i_0^\prime, j_0^\prime)}})=\overline{W}.$$
   \item[(2)]If $C=D$, according to \cite[Theorem 3.1]{LZ19} or \cite[Lemma 2.14(2)]{LL22}, we choose $\C=\D$, and there exist $rs$ $(\C, \D)$-primitive matrices $\W^{(i^\prime, j^\prime)}=\left(w_{ij}^{(i^\prime, j^\prime)}\right)_{r\times s}$ $(1\leq i^\prime\leq r, 1\leq j^\prime\leq s) $ such that $$w-\sum\limits_{i=1}^r\sum\limits_{j=1}^s w_{ij}^{(i, j)}\in C.$$
       Using \cite[Lemma 3.1]{LZ19}, we know that there exists an element $c\in C$ such that
       $$\Delta(w-c)\in C\otimes ({}^CH_1{}^C)^++({}^CH_1{}^C)^+\otimes C,$$
       where $({}^CH_1{}^C)^+={}^CH_1{}^C\cap \ker\varepsilon.$
       Then the same proof of $(1)$ can be applied to the element $w-c$. Thus we can find a non-trivial $(\C, \D)$-primitive matrix $\W^{(i_0^\prime, j_0^\prime)}$ such that $$\span(\overline{\W^{(i_0^\prime, j_0^\prime)}})=\overline{W}.$$
   \end{itemize}
\end{proof}
Clearly, a coalgebra $H$ is cosemisimple if and only if the category of left (resp. right) $H$-comodules is a semisimple category. This means that any $C$-$D$-bicomodule is cosemisimple. Applying Lemma $\ref{lemma:sum2}$ to the cosemisimple $C$-$D$-bicomodule $({}^CH_1{}^D+H_0)/H_0$, we can decompose it into the direct sum of simple $C$-$D$-sub-bicomodules as the following.
\begin{corollary}\label{coro:complete}
There exists a family $\{\X^{(\gamma)}\}_{\gamma\in\mathit{\Gamma}}$ of non-trivial $(\C, \D)$-primitive matrices such that
\begin{eqnarray}\label{equation:direct sum}
{^C(H_1/H_0)^D}=({}^CH_1{}^D+H_0)/H_0=\bigoplus\limits_{\gamma\in\mathit{\Gamma}}\span(\overline{\X^{(\gamma)}}).
\end{eqnarray}
\end{corollary}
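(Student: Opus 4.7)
The plan is to combine the preceding lemmas as follows. First, Lemma \ref{lemma:sum2} already identifies ${}^C(H_1/H_0){}^D$ with $({}^CH_1{}^D+H_0)/H_0$, so the first equality is free, and it remains to produce the direct sum decomposition on the right-hand side via non-trivial $(\C,\D)$-primitive matrices.

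Next, I would argue that the $C$-$D$-bicomodule $V:=({}^CH_1{}^D+H_0)/H_0$ is semisimple. Since $C$ and $D$ are simple subcoalgebras of $H$, their duals $C^*$ and $D^*$ are finite-dimensional simple $\k$-algebras, so $D^*\otimes C^{*\mathrm{op}}$ is a semisimple algebra (indeed a matrix algebra, as already used in Lemma \ref{lemma:simple bicomodule}). Under the standard equivalence between finite-dimensional $C$-$D$-bicomodules and finite-dimensional left $D^*\otimes C^{*\mathrm{op}}$-modules, the bicomodule $V$ therefore decomposes as a direct sum of simple $C$-$D$-sub-bicomodules
\[
V \;=\; \bigoplus_{\gamma \in \mathit{\Gamma}} \overline{W_\gamma},
\]
where each $\overline{W_\gamma}$ is a simple $C$-$D$-sub-bicomodule of $V$ and $W_\gamma$ is a chosen lift to a subspace of ${}^CH_1{}^D+H_0$.

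Now I apply Lemma \ref{Lemmasimplemodule} to each simple summand $\overline{W_\gamma}$: this yields a non-trivial $(\C,\D)$-primitive matrix $\X^{(\gamma)}$ with $\span(\overline{\X^{(\gamma)}})=\overline{W_\gamma}$. Summing over $\gamma\in\mathit{\Gamma}$ gives the desired decomposition
\[
{}^C(H_1/H_0){}^D \;=\; ({}^CH_1{}^D+H_0)/H_0 \;=\; \bigoplus_{\gamma\in\mathit{\Gamma}} \span(\overline{\X^{(\gamma)}}),
\]
and the family $\{\X^{(\gamma)}\}_{\gamma\in\mathit{\Gamma}}$ is as required.

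I do not expect any serious obstacle: the semisimplicity of $V$ is immediate from the matrix algebra structure already exploited in Lemma \ref{lemma:simple bicomodule}, and the realization of each simple summand as the span of some $\overline{\X^{(\gamma)}}$ is precisely the content of Lemma \ref{Lemmasimplemodule}. The only mild care is to verify that the matrices $\X^{(\gamma)}$ produced in the two cases $C=D$ and $C\neq D$ of Lemma \ref{Lemmasimplemodule} can be chosen coherently, but since that lemma already outputs a non-trivial primitive matrix in both cases, nothing further needs to be checked.
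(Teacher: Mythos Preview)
Your proposal is correct and follows essentially the same route as the paper: the paper also observes (just before the corollary) that any $C$-$D$-bicomodule is semisimple, decomposes $({}^CH_1{}^D+H_0)/H_0$ into simple sub-bicomodules, and implicitly invokes Lemma~\ref{Lemmasimplemodule} to realize each summand as $\span(\overline{\X^{(\gamma)}})$. The only minor caveat is that you phrase semisimplicity via the equivalence with \emph{finite-dimensional} $D^*\otimes C^{*\mathrm{op}}$-modules, whereas $V$ need not be finite-dimensional in the paper's generality; but since $D^*\otimes C^{*\mathrm{op}}$ is a semisimple algebra, all of its modules are semisimple, so this is harmless.
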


\begin{definition}
A family of non-trivial $(\C, \D)$-primitive matrices $\{\X^{(\gamma)}\}_{\gamma\in\mathit{\Gamma}}$ satisfying the property of (\ref{equation:direct sum}) in Corollary \ref{coro:complete} is said to be complete.
\end{definition}
The corollary below is followed immediately by Lemma \ref{lemma:simple bicomodule} and Corollary \ref{coro:complete}.
\begin{corollary}\label{coro:basis}
If $\{\X^{(\gamma)}\}_{\gamma\in\mathit{\Gamma}}$ is a complete family of non-trivial $(\C, \D)$-primitive matrices, where $\X_\gamma=(x_{ij}^{(\gamma)})_{r\times s}$, then $\{\overline{x_{ij}^{(\gamma)}}\mid\gamma\in\mathit{\Gamma}, 1\leq i\leq r, 1\leq j\leq s\}$ is a linear basis of $({}^CH_1{}^D+H_0)/H_0$.
\end{corollary}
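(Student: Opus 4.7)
The plan is to deduce the statement essentially as a bookkeeping consequence of results already established in this subsection. By the definition of a complete family of non-trivial $(\C, \D)$-primitive matrices together with Corollary \ref{coro:complete}, we have the direct sum decomposition
\begin{equation*}
({}^CH_1{}^D+H_0)/H_0=\bigoplus_{\gamma\in\mathit{\Gamma}}\span(\overline{\X^{(\gamma)}})
\end{equation*}
as $C$-$D$-bicomodules. So to exhibit a basis of the left-hand side it is enough to exhibit, for each $\gamma\in\mathit{\Gamma}$, a basis of the summand $\span(\overline{\X^{(\gamma)}})$, and then take the (disjoint) union.

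For a fixed $\gamma$, I would combine Lemma \ref{lemma:rs} and Corollary \ref{coro:linear independent} in the following way. By Lemma \ref{lemma:rs}, since $\X^{(\gamma)}$ is a non-trivial $(\C,\D)$-primitive matrix with $\dim_{\k}(C)=r^{2}$ and $\dim_{\k}(D)=s^{2}$, the subspace $\span(\overline{\X^{(\gamma)}})$ is a simple $C$-$D$-bicomodule of dimension exactly $rs$. By definition this subspace is spanned by the $rs$ entries $\{\overline{x_{ij}^{(\gamma)}}\mid 1\leq i\leq r,\,1\leq j\leq s\}$; a spanning set whose cardinality equals the dimension must be a basis. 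Alternatively, one can invoke Corollary \ref{coro:linear independent} directly: non-triviality of $\X^{(\gamma)}$ implies condition (4), i.e.\ the $rs$ entries $\overline{x_{ij}^{(\gamma)}}$ are linearly independent in $H_1/H_0$, and hence form a basis of their span.

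Assembling these local bases across $\gamma\in\mathit{\Gamma}$, the directness of the sum in Corollary \ref{coro:complete} guarantees that the combined family
\begin{equation*}
\bigl\{\overline{x_{ij}^{(\gamma)}}\mid\gamma\in\mathit{\Gamma},\ 1\leq i\leq r,\ 1\leq j\leq s\bigr\}
\end{equation*}
is linearly independent, and it clearly spans $({}^CH_1{}^D+H_0)/H_0$. Hence it is a linear basis, as claimed. There is no real obstacle here; the only thing to double-check is that the conclusion of Corollary \ref{coro:linear independent}(4) is invoked correctly to rule out any hidden linear relation among the $rs$ entries of a single non-trivial primitive matrix, since this is precisely what prevents the naive dimension count from over- or under-counting.
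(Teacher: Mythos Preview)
Your proof is correct and follows essentially the same approach as the paper: the paper states that the corollary is immediate from Lemma~\ref{lemma:simple bicomodule} and Corollary~\ref{coro:complete}, which is exactly the combination you use (via Lemma~\ref{lemma:rs}, itself a consequence of Lemma~\ref{lemma:simple bicomodule}). Your additional remark invoking Corollary~\ref{coro:linear independent}(4) is a harmless alternative justification for the same dimension count.
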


A complete family of non-trivial $(\C, \D)$-primitive matrices is the main tool to help us characterize the link quiver of $H$ in the subsequent sections. Thus some of its properties should be noticed.

\begin{lemma}\label{lemma:complement}
Suppose $\{\X^{(\lambda)}\}_{\lambda\in\mathit{\Lambda}}$ is a family of non-trivial $(\C, \D)$-primitive matrices such that the sum
$\sum\limits_{\lambda\in\Lambda}\span(\overline{\X^{(\lambda)}})$ in $({}^CH_1{}^D+H_0)/H_0$ is direct. Then we can find a complete family of non-trivial $(\C, \D)$-primitive matrices
$\{\X^{(\gamma)}\}_{\gamma\in\mathit{\Gamma}}$ such that $\{\X^{(\lambda)}\}_{\lambda\in\mathit{\Lambda}}$ is a subset of $\{\X^{(\gamma)}\}_{\gamma\in\mathit{\Gamma}}$.
 \end{lemma}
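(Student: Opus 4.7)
The plan is to extend the given family by adding non-trivial primitive matrices that account for a complementary sub-bicomodule. Since the coradical $H_0$ is cosemisimple, the category of $H_0$-bicomodules (equivalently, of $C'$-$D'$-bicomodules summed over pairs $C',D' \in \mathcal{S}$) is semisimple. In particular $({}^C H_1{}^D + H_0)/H_0$ is a semisimple $C$-$D$-bicomodule.

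First I would set $V := \bigoplus_{\lambda\in\Lambda}\span(\overline{\X^{(\lambda)}})$. By Lemma \ref{lemma:rs}, each $\span(\overline{\X^{(\lambda)}})$ is a simple $C$-$D$-sub-bicomodule of $({}^C H_1{}^D + H_0)/H_0$, and the assumption that the sum is direct makes $V$ a semisimple $C$-$D$-sub-bicomodule with a known decomposition into simples indexed by $\Lambda$. By semisimplicity of the ambient bicomodule, there exists a $C$-$D$-sub-bicomodule $W \subseteq ({}^C H_1{}^D + H_0)/H_0$ such that
\[
({}^C H_1{}^D + H_0)/H_0 = V \oplus W.
\]

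Next I would decompose $W$ into simple $C$-$D$-sub-bicomodules, $W = \bigoplus_{\mu \in M} W_\mu$. For each such $W_\mu$, lift to a subspace $W'_\mu \subseteq {}^C H_1{}^D + H_0$ with $\overline{W'_\mu} = W_\mu$, and invoke Lemma \ref{Lemmasimplemodule} to produce a non-trivial $(\C,\D)$-primitive matrix $\X^{(\mu)}$ satisfying $\span(\overline{\X^{(\mu)}}) = W_\mu$. Setting $\Gamma := \Lambda \sqcup M$ and combining the two families yields a family $\{\X^{(\gamma)}\}_{\gamma \in \Gamma}$ whose associated spans give a direct sum decomposition
\[
({}^C H_1{}^D + H_0)/H_0 = \bigoplus_{\lambda \in \Lambda} \span(\overline{\X^{(\lambda)}}) \; \oplus \; \bigoplus_{\mu \in M} \span(\overline{\X^{(\mu)}}),
\]
exhibiting it as complete in the sense of Corollary \ref{coro:complete}.

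The argument is essentially formal once the two key ingredients are in place: semisimplicity of $H_0$-bicomodules (which gives the direct summand $W$ and its decomposition into simples) and Lemma \ref{Lemmasimplemodule} (which realizes each abstract simple summand as the span of an overlined non-trivial primitive matrix). The only mild subtlety to check is that the simple sub-bicomodules of $({}^C H_1{}^D + H_0)/H_0$ produced by this decomposition are genuinely $C$-$D$-bicomodules (not some other $C'$-$D'$-type), but this is automatic because $({}^C H_1{}^D + H_0)/H_0 = {}^C(H_1/H_0){}^D$ by Lemma \ref{lemma:sum2}, so every sub-bicomodule is of type $C$-$D$.
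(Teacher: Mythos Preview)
Your argument is correct and follows essentially the same route as the paper: take a bicomodule complement of $\bigoplus_{\lambda\in\Lambda}\span(\overline{\X^{(\lambda)}})$ in $({}^CH_1{}^D+H_0)/H_0$, decompose it into simples, and apply Lemma~\ref{Lemmasimplemodule} to each simple summand to realize it as $\span(\overline{\X^{(\mu)}})$ for a non-trivial $(\C,\D)$-primitive matrix. The paper's proof compresses the decomposition-into-simples step, but the content is identical; your added remark that sub-bicomodules of ${}^C(H_1/H_0){}^D$ are automatically of type $C$-$D$ is a useful clarification the paper leaves implicit.
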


\begin{proof}
Let $M$ be a complement of $\bigoplus_{\lambda\in\Lambda}\span(\overline{\X^{(\lambda)}})$ in $({}^CH_1{}^D+H_0)/H_0$. According to Lemma \ref{Lemmasimplemodule}, we can show that $$M=\bigoplus_{\gamma^\prime\in\Gamma^\prime}\span(\overline{\X^{(\gamma^\prime)}})$$ for some non-trivial $(\C, \D)$-primitive matrices $\{\X^{(\gamma^\prime)}\}_{\gamma^\prime\in\Gamma^\prime}$. Let $$\{\X^{(\gamma)}\}_{\gamma\in\mathit{\Gamma}}=\{\X^{(\lambda)}\}_{\lambda\in\mathit{\Lambda}}\cup\{\X^{(\gamma^\prime)}\}_{\gamma^\prime\in\Gamma^\prime}.$$
Then $\{\X^{(\gamma)}\}_{\gamma\in\mathit{\Gamma}}$ is a complete family of non-trivial $(\C, \D)$-primitive matrices.
\end{proof}

The important property of a complete family of non-trivial $(\C, \D)$-primitive matrices in the vector space spanned by all $(\C, \D)$-primitive matrices is summarized in the following proposition.
\begin{proposition}\label{prop:Y=kx1+tx2}
Suppose $\{\X^{(\gamma)}\}_{\gamma\in\mathit{\Gamma}}$ is a complete family of non-trivial $(\C, \D)$-primitive matrices. Then for any $(\C, \D)$-primitive matrix $\Y$, we have $\overline{\Y}=\sum\limits_{\gamma\in \mathit{\Gamma}}\alpha_\gamma\overline{\X^{(\gamma)}}$, where $\alpha_\gamma\in\k \;(\gamma\in \mathit{\Gamma})$ and only a finite number of them are nonzero.
\end{proposition}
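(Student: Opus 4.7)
The plan is to expand each entry of $\overline{\Y}$ in the basis of $({}^CH_1{}^D+H_0)/H_0$ given by Corollary \ref{coro:basis}, and then use the primitivity relations of both $\Y$ and each $\X^{(\gamma)}$ to pin down the expansion coefficients. First, handle the degenerate case: if $\Y$ is trivial, i.e., all entries of $\Y$ lie in $H_0$, then $\overline{\Y}=0$ and every $\alpha_\gamma=0$ works, so one may assume $\Y$ is non-trivial.

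By Lemma \ref{Lemma:XinCPD}, every entry $\overline{y_{ij}}$ belongs to $({}^CH_1{}^D+H_0)/H_0$, and by Corollary \ref{coro:basis} the set $\{\overline{x_{kl}^{(\gamma)}}\mid \gamma\in\Gamma,\,1\leq k\leq r,\,1\leq l\leq s\}$ is a linear basis of that space. Thus there is a unique expansion
\[
\overline{y_{ij}}\;=\;\sum_{\gamma\in\Gamma}\sum_{k=1}^{r}\sum_{l=1}^{s}\beta_{ij,kl}^{(\gamma)}\,\overline{x_{kl}^{(\gamma)}},
\]
involving only finitely many nonzero scalars. The task then reduces to showing that $\beta_{ij,kl}^{(\gamma)}=0$ unless $(k,l)=(i,j)$ and that $\beta_{ij,ij}^{(\gamma)}$ is independent of $(i,j)$; the common value will be the desired $\alpha_\gamma$, and finiteness of the nonzero $\alpha_\gamma$ will be automatic.

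The key step is a pair of comodule-structure comparisons using (\ref{comodulestructure}). Applying $\rho_L$ directly to $\overline{y_{ij}}$ via primitivity of $\Y$ yields $\sum_{k}c_{ik}\otimes\overline{y_{kj}}$, while applying $\rho_L$ termwise to the expansion and using the primitivity of each $\X^{(\gamma)}$ yields an expression in the tensors $c_{km}\otimes\overline{x_{ml}^{(\gamma)}}$. Because $\{c_{ab}\}$ is a basis of $C$ (by basicness of $\C$) and $\{\overline{x_{cd}^{(\gamma)}}\}$ is a basis by Corollary \ref{coro:basis}, the tensors $c_{ab}\otimes \overline{x_{cd}^{(\gamma)}}$ are linearly independent. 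Matching coefficients will force $\beta_{ij,kl}^{(\gamma)}=0$ whenever $k\neq i$, and will express the remaining coefficients as $\beta_{ij,il}^{(\gamma)}=\phi_{j,l}^{(\gamma)}$, depending only on $(\gamma,j,l)$. A symmetric computation using $\rho_R$, primitivity, and the linear independence of the tensors $\overline{x_{ab}^{(\gamma)}}\otimes d_{cd}$ then forces $\phi_{j,l}^{(\gamma)}=0$ unless $j=l$ and shows that $\phi_{j,j}^{(\gamma)}$ is independent of $j$, yielding the desired common scalar $\alpha_\gamma$.

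The only obstacle I anticipate is careful index bookkeeping across the two coefficient comparisons; no further structural input is required beyond the linear independence guaranteed by the basicness of $\C$ and $\D$ and by the completeness of $\{\X^{(\gamma)}\}_{\gamma\in\Gamma}$. Conceptually, the argument amounts to saying that each isotypic component of $({}^CH_1{}^D+H_0)/H_0$ is governed by Schur's lemma, but the matrix-indexed labels supplied by the primitive-matrix formalism let one extract the scalars $\alpha_\gamma$ directly and uniformly across all matrix entries.
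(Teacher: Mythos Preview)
Your proposal is correct and follows essentially the same route as the paper's own proof: expand each $\overline{y_{ij}}$ in the basis supplied by Corollary~\ref{coro:basis}, apply $\rho_L$ and $\rho_R$ using the primitivity of $\Y$ and of each $\X^{(\gamma)}$, and compare coefficients via the linear independence of the $c_{ab}$'s, the $d_{cd}$'s, and the $\overline{x_{kl}^{(\gamma)}}$'s. The only cosmetic difference is that you extract the vanishing and $i$-independence of the coefficients directly at the $\rho_L$ step (and symmetrically at $\rho_R$), whereas the paper records two parallel expressions for $\overline{y_{kl}}$ and compares them at the end; the content is identical.
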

\begin{proof}
Suppose that $\X^{(\gamma)}=(x_{ij}^{(\gamma)})_{r\times s}$ and $\Y=(y_{ij})_{r\times s}$. By the definition of $(\C, \D)$-primitive matrix, we have
$$\Delta(y_{ij})=\sum\limits_{k=1}^rc_{ik}\otimes y_{kj}+\sum\limits_{l=1}^sy_{il}\otimes d_{lj}.$$
According to Corollary \ref{coro:basis}, for any $1\leq i\leq r, 1\leq j\leq s$, we can assume
$$\overline{y_{ij}}=\sum\limits_{\gamma\in \mathit{\Gamma}}\sum\limits_{p=1}^r\sum\limits_{q=1}^s \beta_{pq}^{(ij,\gamma)}\overline{x_{pq}^{(\gamma)}},$$
where $\beta_{pq}^{(ij,\gamma)}\in \k$ for any $1\leq p\leq r, 1\leq q \leq s, \gamma\in \mathit{\Gamma}$, and only a finite number of them are nonzero.
Then
\begin{eqnarray*}
\rho_L(\overline{y_{ij}})&=&(\id\otimes\pi)\Delta(y_{ij})=\sum\limits_{k=1}^rc_{ik}\otimes \overline{y_{kj}}\\
&=&\rho_L(\sum\limits_{\gamma\in \mathit{\Gamma}}\sum\limits_{p=1}^r\sum\limits_{q=1}^s \beta_{pq}^{(ij,\gamma)}\overline{x_{pq}^{(\gamma)}})\\
&=&\sum\limits_{p=1}^r\sum\limits_{k=1}^rc_{pk}\otimes(\sum\limits_{\gamma \in \mathit{\Gamma}} \sum\limits_{q=1}^s \beta_{pq}^{(ij,\gamma)}\overline{x_{kq}^{(\gamma)}}).
\end{eqnarray*}
Since the entries of $\C$ are linearly independent, it follows that $$\overline{y_{kj}}=\sum\limits_{\gamma \in \mathit{\Gamma}}\sum\limits_{q=1}^s \beta_{iq}^{(ij,\gamma)}\overline{x_{kq}^{(\gamma)}}$$
holds for any $1\leq k \leq r$.
Using the same argument as above, when we consider the right comodule structure of $\span(\overline{\Y})$, we get $$\overline{y_{il}}=\sum\limits_{\gamma \in \mathit{\Gamma}}\sum\limits_{p=1}^r \beta_{pj}^{(ij,\gamma)}\overline{x_{pl}^{(\gamma)}}$$
holds for any $1\leq l \leq s$.
It follows that $$\overline{y_{kl}}=\sum\limits_{\gamma \in \mathit{\Gamma}}\sum\limits_{q=1}^s \beta_{iq}^{(il,\gamma)}\overline{x_{kq}^{(\gamma)}}=\sum\limits_{\gamma \in \mathit{\Gamma}}\sum\limits_{p=1}^r \beta_{pj}^{(kj,\gamma)}\overline{x_{pl}^{(\gamma)}},$$
for any $1\leq k \leq r, 1\leq l \leq s$.
Because of the linear independence of $$\{\overline{x_{ij}^{(\gamma)}}\mid 1\leq i\leq r, 1\leq j\leq s, \gamma\in \mathit{\Gamma}\}$$ in $H_1/H_0$, for any $1\leq i,k\leq r, 1\leq j,l \leq s, \gamma\in \mathit{\Gamma},$
 we have$$\beta_{iq}^{(il,\gamma)}=0$$ when $q\neq l$, and
 $$\beta_{pj}^{(kj,\gamma)}=0$$when $p\neq k$. Moreover, when $p=k, q=l$, $$\beta_{il}^{(il,\gamma)}=\beta_{kj}^{(kj,\gamma)}$$ holds for all $1\leq i,k\leq r, 1\leq j,l \leq s, \gamma\in \mathit{\Gamma}$. This means that $$\overline{\Y}=\sum\limits_{\gamma\in \mathit{\Gamma}}\alpha_{\gamma} \overline{\X^{(\gamma)}},$$ where $\alpha_\gamma=\beta_{11}^{(11,\gamma)}\in\k$ for any $\gamma\in \mathit{\Gamma}$, and only a finite number of them are nonzero.
\end{proof}

With the help of the preceding proposition, we can now prove:
\begin{corollary} \label{lemma:PXQ=nX}
Let $\X$ be a non-trivial $(\C, \D)$-primitive matrix. Suppose $P\X Q$ is also a $(\C, \D)$-primitive matrix, where $P$ and $Q$ are invertible matrices over $\k$. Then $P\overline{\X} Q=\alpha \overline{\X}$ for some $\alpha\in\k$.
\end{corollary}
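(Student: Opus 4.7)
The plan is to reduce the statement to the expansion result of Proposition \ref{prop:Y=kx1+tx2} by extending $\X$ to a complete family of non-trivial $(\C,\D)$-primitive matrices and then exploiting the direct sum decomposition of Corollary \ref{coro:complete}.

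First, I would observe that since $P$ and $Q$ are invertible scalar matrices, the entries of $P\overline{\X}Q$ and those of $\overline{\X}$ span exactly the same subspace of $H_1/H_0$; in particular $\span(\overline{P\X Q})=\span(\overline{\X})$, and hence $P\X Q$ is automatically non-trivial (its span is a simple $C$-$D$-bicomodule of dimension $rs$ by Lemma \ref{lemma:rs}).

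Next, by Lemma \ref{lemma:complement} applied to the singleton family $\{\X\}$, I would enlarge it to a complete family $\{\X^{(\gamma)}\}_{\gamma\in\Gamma}$ of non-trivial $(\C,\D)$-primitive matrices with $\X=\X^{(\gamma_0)}$ for some distinguished index $\gamma_0\in\Gamma$. Since $P\X Q$ is itself a $(\C,\D)$-primitive matrix, Proposition \ref{prop:Y=kx1+tx2} yields scalars $\alpha_\gamma\in\k$, almost all zero, such that
\[
\overline{P\X Q}\;=\;\sum_{\gamma\in\Gamma}\alpha_\gamma\,\overline{\X^{(\gamma)}},
\]
which read entry-by-entry says $(P\overline{\X}Q)_{ij}=\sum_{\gamma}\alpha_\gamma\,\overline{x^{(\gamma)}_{ij}}$.

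Finally, I would conclude using the directness of the decomposition in Corollary \ref{coro:complete}. The left-hand side $P\overline{\X}Q$ has entries lying inside $\span(\overline{\X})=\span(\overline{\X^{(\gamma_0)}})$, while the right-hand side lies in $\bigoplus_{\gamma}\span(\overline{\X^{(\gamma)}})$; by the directness of this sum all contributions with $\gamma\neq\gamma_0$ must vanish. Thus $P\overline{\X}Q=\alpha_{\gamma_0}\overline{\X}$, and setting $\alpha:=\alpha_{\gamma_0}$ gives the claim. The only subtle point is the appeal to Lemma \ref{lemma:complement} to ensure that a single non-trivial primitive matrix can always be placed as one member of a complete family, but that is a direct consequence of its statement since the sum consisting of the single summand $\span(\overline{\X})$ is trivially direct.
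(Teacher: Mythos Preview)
Your proof is correct and follows essentially the same route as the paper: extend $\X$ to a complete family via Lemma~\ref{lemma:complement}, expand $P\overline{\X}Q$ using Proposition~\ref{prop:Y=kx1+tx2}, and use the directness of the decomposition together with $\span(P\overline{\X}Q)=\span(\overline{\X})$ to kill all but the $\gamma_0$-coefficient. One small wording issue: ``invertible scalar matrices'' should read ``invertible matrices over $\k$'' to avoid confusion with multiples of the identity.
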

\begin{proof}
Using Lemma \ref{lemma:complement}, we can find a complete family of non-trivial $(\C, \D)$-primitive matrices
$\{\X^{(\gamma)}\}_{\gamma\in\mathit{\Gamma}}$ with some element $\X^{(\gamma_1)}=\X$. Then by Proposition \ref{prop:Y=kx1+tx2},
$$P\overline{\X}Q=\sum\limits_{\gamma\in\Gamma}\alpha_{\gamma}\overline{\X^{(\gamma)}},$$ where $\alpha_\gamma\in\k\;(\gamma\in \mathit{\Gamma})$ and only a finite number of them are nonzero. However,
$$\span(P\overline{\X }Q)\cap \big(\sum_{\gamma\in \Gamma\setminus \{\gamma_1\}}\span(\overline{\X^{(\gamma)}})\big)=0. $$  This implies that $\alpha_{\gamma}=0$ for all $\gamma\neq\gamma_1$.
Therefore, $$P\overline{\X} Q=\alpha_{\gamma_1} \overline{\X}.$$
\end{proof}

Note that by \cite[Theorem 4.1]{CHZ06}, we have $$H_1/H_0\cong \bigoplus_{C, D\in\mathcal{S}}(C\wedge D)/(C+D),$$
where $(C\wedge D)/(C+D)$ is isomorphic to the following $C$-$D$-bicomodule
$$\{h\in H_1/H_0\mid \rho_L(h)\in C\otimes H_1/H_0,\;\rho_R(h)\in  H_1/H_0\otimes D\},$$
which is exactly ${}^{C}(H_1/H_0){}^{D}$. So we can now obtain the following lemma:
\begin{lemma}\label{wedge=idempotent}
If $C, D\in\mathcal{S}$, then we have a $C$-$D$-bicomodule isomorphism: $$({}^CH_1{}^D+H_0)/H_0\cong (C\wedge D)/(C+D).$$
\end{lemma}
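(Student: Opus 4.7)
The plan is to combine the two ingredients already assembled immediately before the statement: the cited description of $(C\wedge D)/(C+D)$ from \cite[Theorem 4.1]{CHZ06} and the internal identification of ${}^C(H_1/H_0){}^D$ proved in Lemma \ref{lemma:sum2}. Concretely, the paragraph preceding the lemma observes that under the isomorphism of \cite[Theorem 4.1]{CHZ06} the summand $(C\wedge D)/(C+D)$ corresponds precisely to the $C$-$D$-isotypic part
\[
{}^C(H_1/H_0){}^D \;=\; \{\bar h\in H_1/H_0 \mid \rho_L(\bar h)\in C\otimes H_1/H_0,\; \rho_R(\bar h)\in H_1/H_0\otimes D\},
\]
as $C$-$D$-bicomodules.

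First I would invoke Lemma \ref{lemma:sum2} to rewrite this isotypic component in the internal form, namely
\[
{}^C(H_1/H_0){}^D \;=\; ({}^CH_1{}^D+H_0)/H_0.
\]
Then chaining the two identifications yields the required $C$-$D$-bicomodule isomorphism
\[
({}^CH_1{}^D+H_0)/H_0 \;=\; {}^C(H_1/H_0){}^D \;\cong\; (C\wedge D)/(C+D).
\]

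Since the two building blocks are already stated (one as a cited result, the other as Lemma \ref{lemma:sum2}), the proof is essentially a two-line assembly and there is no serious obstacle; the only thing worth double-checking is that the identification coming from \cite[Theorem 4.1]{CHZ06} is indeed $C$-$D$-bicomodule-linear (not merely a linear isomorphism), but this is exactly the content of the displayed description quoted just above the lemma, where the image is characterized in terms of $\rho_L$ and $\rho_R$. Thus the lemma is obtained by transport of structure along the isomorphism of \cite[Theorem 4.1]{CHZ06} followed by the equality of Lemma \ref{lemma:sum2}.
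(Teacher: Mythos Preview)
Your proposal is correct and matches the paper's own argument exactly: the paper gives the proof in the paragraph immediately preceding the lemma, combining the identification of $(C\wedge D)/(C+D)$ with ${}^C(H_1/H_0){}^D$ coming from \cite[Theorem 4.1]{CHZ06} together with the equality ${}^C(H_1/H_0){}^D=({}^CH_1{}^D+H_0)/H_0$ of Lemma~\ref{lemma:sum2}. There is nothing to add.
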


Combining Corollary \ref{coro:complete} and Lemma \ref{wedge=idempotent}, we obtain the following corollary.
\begin{corollary}\label{coro:complete number}
Let $C, D\in\mathcal{S}$ with basic multiplicative matrices $\C_{r\times r}$ and $\D_{s\times s}$, respectively.
If $\{\X^{(\gamma)}\}_{\gamma\in\mathit{\Gamma}}$ is a complete family of non-trivial $(\C, \D)$-primitive matrices, then the cardinal number
\begin{eqnarray}\label{arrownumber=primitive}
\mid \mathit{\Gamma}\mid=\frac{1}{rs}\dim_{\k}\left((C\wedge D)/(C+D)\right).
\end{eqnarray}
\end{corollary}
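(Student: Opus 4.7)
The plan is to prove this corollary by a straightforward dimension count, chaining together the three results immediately preceding it. First I would invoke Lemma \ref{wedge=idempotent} to replace $(C\wedge D)/(C+D)$ with the more concrete quotient $({}^CH_1{}^D+H_0)/H_0$, since these are isomorphic as $C$-$D$-bicomodules and hence in particular have the same $\k$-dimension. This reduces the problem to computing $\dim_{\k}\left(({}^CH_1{}^D+H_0)/H_0\right)$ in terms of $|\Gamma|$.

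Next I would apply Corollary \ref{coro:complete}, which furnishes the direct sum decomposition
\[
({}^CH_1{}^D+H_0)/H_0 \;=\; \bigoplus_{\gamma\in\mathit{\Gamma}}\span(\overline{\X^{(\gamma)}}).
\]
Taking dimensions turns the left side into $\dim_{\k}\left((C\wedge D)/(C+D)\right)$ (via the previous step) and the right side into $\sum_{\gamma\in\mathit{\Gamma}}\dim_{\k}\span(\overline{\X^{(\gamma)}})$.

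Finally, Lemma \ref{lemma:rs} tells us that each summand $\span(\overline{\X^{(\gamma)}})$ is a simple $C$-$D$-bicomodule of dimension exactly $rs$ (this uses that each $\X^{(\gamma)}$ is non-trivial and that $\dim_{\k}(C) = r^2$, $\dim_{\k}(D) = s^2$). Substituting gives $\dim_{\k}\left((C\wedge D)/(C+D)\right) = |\mathit{\Gamma}|\cdot rs$, from which the desired formula follows by dividing by $rs$. There is no real obstacle here: the work was done in the preceding lemmas, and this corollary is just the bookkeeping that assembles them. One small point worth noting in the write-up is that the formula implicitly asserts that $rs$ divides $\dim_{\k}\left((C\wedge D)/(C+D)\right)$, which is automatic once the decomposition above is in hand but would be non-obvious a priori.
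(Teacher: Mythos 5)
Your proposal is correct and follows exactly the route the paper takes: the corollary is stated as a direct combination of Lemma \ref{wedge=idempotent} (identifying $(C\wedge D)/(C+D)$ with $({}^CH_1{}^D+H_0)/H_0$), Corollary \ref{coro:complete} (the direct sum decomposition indexed by $\mathit{\Gamma}$), and Lemma \ref{lemma:rs} (each summand has dimension $rs$). Nothing is missing.
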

The corollary above will help us transform the problem of number of arrows from vertex $D$ to vertex $C$ in the link quiver of $H$ to the problem of cardinal number of a complete family of non-trivial $(\C, \D)$-primitive matrices in the subsequent sections.

Note that the number (\ref{arrownumber=primitive}) in Corollary \ref{coro:complete number} does not depend on the choices of basic multiplicative matrices $\C$ and $\D$ as well as a complete family of non-trivial $(\C, \D)$-primitive matrices.
\section{Constructions of a complete family of non-trivial primitive matrices}\label{section3}
Recall that a finite-dimensional Hopf algebra is said to have the dual Chevalley property, if its coradical $H_0$ is a Hopf subalgebra. In this paper, we still use the term \textit{dual Chevalley property} to indicate a Hopf algebra $H$ with its coradical $H_0$ as a Hopf subalgebra, even if $H$ is infinite-dimensional.

In this section, let $H$ be a Hopf algebra over $\k$ with the dual Chevalley property. Denote the coradical filtration of $H$ by $\{H_n\}_{n\geq0}$ and the set of all the simple subcoalgebras of $H$ by $\mathcal{S}$. We say that two matrices $\A$ and $\A^\prime$ over $H$ are \textit{similar}, which is denoted by $\A\sim\A^\prime$ for simplicity, if there exists an invertible matrix $L$ over $\k$ such that $\A^\prime=L\A L^{-1}$.

The aim of this section is to construct a complete family of non-trivial $(\C, \D)$-primitive matrices over $H$ for any $C, D\in\mathcal{S}$ with basic multiplicative matrices $\C, \D$, respectively.

\subsection{The first construction}
Let us start by recalling some notations.

For any matrix $\A=(a_{ij})_{r\times s}$ and $\B=(b_{ij})_{u\times v}$ over $H$, define $\A\odot \B$ and $\A\odot^\prime \B$ as follow
 $$\A\odot \B=
\left(\begin{array}{ccc}
      a_{11}\B& \cdots &  a_{1s}\B  \\
      \vdots  & \ddots & \vdots  \\
      a_{r1}\B&  \cdots & a_{rs}\B
    \end{array}\right),\;\;
\A\odot^\prime \B=\left(\begin{array}{cccc}
      \A b_{11} &   \cdots &  \A b_{1v} \\
      \vdots &  \ddots & \vdots  \\
      \A b_{u1} &  \cdots & \A b_{uv}
    \end{array}\right).$$

Some evident formulas on $\odot$ and $\odot^\prime$ should be noted for later computations.
\begin{lemma}\label{lemma:Kronecker}
Let $\A, \B$ be matrices over $H$ and $I$ be the identity matrix over $\k$, then
\begin{itemize}
  \item[(1)]  $(L_1\A L_2)\odot^\prime\B=(L_1\odot^\prime I)(\A\odot^\prime\B)(L_2\odot^\prime I)$ holds for any invertible matrices $L_1$, $L_2$ over $\k$;
  \item[(2)] $\A\odot(L_1\B L_2)=(I\odot L_1)(\A\odot\B)(I\odot L_2)$ holds for any invertible matrices $L_1$, $L_2$ over $\k$;
  \item[(3)] There exist invertible matrices $K, K^\prime$ over $\k$ such that $K(\A\odot \B)K^\prime=\A\odot^\prime \B$. Moreover, if $\A, \B$ are square matrices, then $\A\odot^\prime \B
      \sim \A\odot \B$.
\end{itemize}
\end{lemma}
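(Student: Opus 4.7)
The plan is to handle this by direct entry-wise computation, since $\A\odot\B$ and $\A\odot'\B$ are simply two different ways of packaging the set of products $\{a_{ij}b_{kl}\}$ into a block matrix. For $\A=(a_{ij})_{r\times s}$ and $\B=(b_{kl})_{u\times v}$, both $\A\odot\B$ and $\A\odot'\B$ lie in $\M_{ru\times sv}(H)$, with the entry $a_{ij}b_{kl}$ located at position $((i-1)u+k,(j-1)v+l)$ in $\A\odot\B$ and at position $((k-1)r+i,(l-1)s+j)$ in $\A\odot'\B$. Once this bookkeeping is fixed, every assertion reduces to a computation analogous to the standard ``mixed product property'' of the Kronecker product; the only thing needing attention is that entries of $\A$ and $\B$ live in the non-commutative ring $H$, whereas the auxiliary matrices $L_1,L_2$ have entries in $\k$ which commute with everything.

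For (1), I would write $L_1\in\M_r(\k)$ and $L_2\in\M_s(\k)$, so $(L_1\A L_2)\odot'\B$ has $((k-1)r+i,(l-1)s+j)$-entry equal to $(L_1\A L_2)_{ij}\,b_{kl}=\sum_{p,q}(L_1)_{ip}a_{pq}(L_2)_{qj}\,b_{kl}$. On the other side, $L_1\odot' I_u$ has $((k-1)r+i,(k'-1)r+p)$-entry $(L_1)_{ip}\delta_{kk'}$, and $L_2\odot' I_v$ has $((l-1)s+q,(l'-1)s+j)$-entry $(L_2)_{qj}\delta_{ll'}$. Multiplying the three factors collapses the Kronecker deltas and produces $\sum_{p,q}(L_1)_{ip}a_{pq}b_{kl}(L_2)_{qj}$, which agrees with the left-hand side after sliding the scalar $(L_2)_{qj}\in\k$ past $b_{kl}\in H$. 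Part (2) is proved by the same template but now using the block pattern of $\odot$, with $L_1$ and $L_2$ acting on the $\B$-indices through $I\odot L_1$ and $I\odot L_2$.

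For (3), I would simply turn the index-reshuffling above into permutation matrices. Define $\sigma\in S_{ru}$ by $\sigma((i-1)u+k)=(k-1)r+i$ and $\tau\in S_{sv}$ by $\tau((j-1)v+l)=(l-1)s+j$; let $K$ be the permutation matrix of $\sigma$ and $K'$ the permutation matrix of $\tau^{-1}$. Then checking a single entry shows $K(\A\odot\B)K'=\A\odot'\B$. When $\A,\B$ are square, i.e.\ $r=s$ and $u=v$, the two permutations $\sigma$ and $\tau$ are given by the same formula on the same index set, hence coincide, so $K'=K^{-1}$ and the identity becomes the similarity $\A\odot'\B=K(\A\odot\B)K^{-1}$.

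There is really no serious obstacle here: everything is a careful indexing exercise, and the only substantive point is keeping the scalar/non-scalar distinction straight so that the $\k$-entries of $L_1,L_2$ (or of $K,K'$) can be commuted past the $H$-entries of $\A,\B$ at the right moment. The main place to be cautious is verifying that the block sizes of $L_1\odot' I$ and $L_2\odot' I$ in (1) (respectively $I\odot L_1$ and $I\odot L_2$ in (2)) match the block sizes of $\A\odot'\B$ (respectively $\A\odot\B$), which fixes the sizes of the identity matrices involved.
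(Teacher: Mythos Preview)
Your proposal is correct and follows essentially the same strategy as the paper: all three parts are routine verifications exploiting that the $\k$-entries of $L_1,L_2,K,K'$ commute with the $H$-entries of $\A,\B$. There are only minor stylistic differences worth noting. For (1) the paper works block-wise (writing $(L_1\A L_2)\odot'\B$ as a block matrix with blocks $L_1\A L_2\,b_{kl}$ and factoring out the block-diagonal matrices $L_1\odot' I$ and $L_2\odot' I$), whereas you compute entry-wise; the content is the same. For (2) the paper deduces it from (1) by passing to $H^{\mathrm{op}}$, while you repeat the direct computation with the roles of the two index pairs swapped; your route is slightly more self-contained. For (3) the paper simply cites the commutation matrices of \cite[Theorem~8.26 and Theorem~8.24]{Sch17}, whereas you construct the permutations $\sigma,\tau$ explicitly and observe that $\sigma=\tau$ in the square case, giving $K'=K^{-1}$; again this is more explicit but amounts to the same argument.
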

\begin{proof}
\begin{itemize}
\item[(1)]Suppose that
$\B=\left(\begin{array}{ccc}
    b_{11}  & \cdots & b_{1v}  \\
    \vdots & & \vdots  \\
    b_{u1} &  \cdots & b_{uv}
  \end{array}\right),$
then
\begin{eqnarray*}
(L_1\A L_2)\odot^\prime\B&=&\left(\begin{array}{ccc}
    L_1\A L_2b_{11}  & \cdots & L_1\A L_2b_{1v}  \\
    \vdots  &  &\vdots  \\
    L_1\A L_2b_{u1}  & \cdots & L_1\A L_2b_{uv}
  \end{array}\right)\\
  &=&
 \left(\begin{array}{ccc}
    L_1\A b_{11}L_2  & \cdots & L_1\A b_{1v}L_2  \\
    \vdots &  &\vdots  \\
    L_1\A b_{u1}L_2  & \cdots & L_1\A b_{uv}L_2
  \end{array}\right)\\
&=&\left(\begin{array}{ccc}
    L_1  \\
    &   \ddots \\
    &&L_1
  \end{array}\right)
  \left(\begin{array}{ccc}
   \A b_{11} &  \cdots & \A b_{1v}  \\
    \vdots &   &\vdots  \\
  \A b{u1}  & \cdots & \A b_{uv}
  \end{array}\right)
  \left(\begin{array}{ccc}
    L_2  \\
    &  \ddots \\
    &&L_2
  \end{array}\right)
  \\
  &=&(L_1\odot^\prime I)(\A\odot^\prime\B)(L_2\odot^\prime I).
  \end{eqnarray*}
  \item[(2)]Consider the Hopf algebra $H^{\mathrm{op}}$, whose multiplication is opposite to $H$. Using (1), we can get this result.
  \item[(3)]By \cite[Theorem 8.26]{Sch17}, there exist commutation matrices $K, K^\prime$ such that $$K(\A\odot \B)K^\prime=\A\odot^\prime \B,$$where commutation matrix is defined in \cite[Definition 8.1]{Sch17}. Moreover, from the proof of \cite[Theorem 8.24]{Sch17}, we know that if $\A, \B$ are square matrices, then $$\A\odot^\prime \B
      \sim \A\odot \B.$$
  \end{itemize}
  \end{proof}
Let $B, C, D\in\mathcal{S}$ with basic multiplicative matrices $\B, \C, \D$ respectively.
According to \cite[Proposition 2.6]{Li22a}, there exists an invertible matrices $L_{\B, \C}$ over $\k$ such that
$$
L_{\B, \C}(\B\odot^\prime \C)L_{\B, \C}^{-1}=
\left(\begin{array}{ccc}
        \E_{1} &  &   \\
       & \ddots &   \\
        &  &   \E_{ u_{(\B, \C)}}
  \end{array}\right),
$$
where $\E_{1}, \cdots, \E_{ u_{(\B, \C)}}$ are the basic multiplicative matrices of $E_{1}, \cdots, E_{ u_{(\B, \C)}}$, respectively.
In particular, let $L_{1, \C}=L_{\C, 1}=I$, where $I$ is the identity matrix over $\k$.
Note that cosemisimple coalgebra $BC$ admits a decomposition into a direct sum of simple subcoalgebras and $u_{(\B, \C)}$ is exactly the number of such simple subcoalgebras. Thus in fact $u_{(\B, \C)}$ does not depend on the choices of basic multiplicative matrices $\B$ and $\C$ as well as the invertible matrix $L_{\B, \C}$.

For any $(\C, \D)$-primitive matrix $\X$, by \cite[Proposition 2.6]{Li22a}, there exist invertible matrices $L_{\B, \C}, L_{\B, \D}$ over $\k$ such that
\begin{eqnarray}\label{equation:BX}
&&\left(\begin{array}{cc}
    L_{\B, \C}  \\
    & L_{\B, \D}
  \end{array}\right)\left(\B\odot^\prime
\left(\begin{array}{cc}
\C&\X\\
0&\D
 \end{array}\right)
 \right)
 \left(\begin{array}{cc}
    L_{\B, \C}^{-1}  \\
    & L_{\B, \D}^{-1}
  \end{array}
 \right)\notag\\
&=&\left(\begin{array}{cc}
    L_{\B, \C}  \\
    & L_{\B, \D}
  \end{array}\right)
  \left(\begin{array}{cc}
\B\odot^\prime\C&\B\odot^\prime\X\\
0&\B\odot^\prime\D
 \end{array}
\right)
 \left(\begin{array}{cc}
    L_{\B, \C}^{-1}  \\
    & L_{\B, \D}^{-1}
  \end{array}
  \right)\notag\\
&=&\left(\begin{array}{cccccc}
    \E_1 &  &  & \X_{11} & \cdots & \X_{1u_{(\B, \D)}}  \\
    & \ddots &  & \vdots &  & \vdots  \\
     & & \E_{u_{(\B, \C)}} & \X_{u_{(\B, \C)}1} & \cdots & \X_{u_{(\B, \C)}u_{(\B, \D)}}  \\
     &  &  & \F_1 &  &   \\
     & 0 &  &  & \ddots &   \\
     &  &  &  &  & \F_{u_{(\B, \D)}}
  \end{array}\right),
\end{eqnarray}
 where $\E_1, \cdots, \E_{u_{(\B, \C)}}, \F_1, \cdots, \F_{u_{(\B, \D)}}$ are the given basic multiplicative matrices.
Combining \cite[Remark 2.5 and Lemma 2.7]{Li22a} and \cite[Remark 3.2]{LZ19}, we can show that each $\X_{ij}$ is a $(\E_i, \F_j)$-primitive matrix.

With the notations above, we have
\begin{lemma}\label{Lemma:CXno0}
For any $B, C, D\in\mathcal{S}$ with basic multiplicative matrices $\B, \C, \D$ respectively. If $\X$ is a non-trivial $(\C, \D)$-primitive matrix, then
\begin{itemize}
  \item[(1)]The set of all row vectors of $\B\odot^\prime \X$ is linearly independent over $H_1/H_0$;
  \item[(2)]The set of all column vectors of $\B\odot^\prime \X$ is linearly independent over $H_1/H_0$;
  \item[(3)]For each $1\leq i\leq u_{(\B, \C)}$, there is some $1\leq j\leq u_{(\B, \D)}$ such that $\X_{ij}$ is non-trivial;
  \item[(4)]For each $1\leq j\leq u_{(\B, \D)}$, there is some $1\leq i\leq u_{(\B, \C)}$ such that $\X_{ij}$ is non-trivial.
  \end{itemize}
\end{lemma}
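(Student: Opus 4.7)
My plan is to prove (1) and (2) first by reducing them to a key claim about the multiplication action of $B$ on $\operatorname{span}(\X)$, and then to derive (3) and (4) from (1) and (2) via the block-triangular decomposition in (\ref{equation:BX}). For (1), suppose $\sum_{i,k}\alpha_{i,k}R_{i,k}$ (a $\k$-linear combination of rows of $\B\odot^\prime\X$) has every entry in $H_0$; entry-wise, this reads $\sum_{i,k}\alpha_{i,k}b_{kl}x_{ij}\in H_0$ for each $(j,l)$. Setting $a_{l,i}:=\sum_k\alpha_{i,k}b_{kl}\in B$, the row vector $A_l:=(a_{l,1},\ldots,a_{l,r})$ satisfies $A_l\X\in H_0^{1\times s}$ for each $l$. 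Thus (1) reduces to the \emph{key claim}: for any $A\in B^{1\times r}$, $A\X\in H_0^{1\times s}$ forces $A=0$. Once this holds, each $A_l=0$, and the $\k$-linear independence of $\{b_{kl}\}_k$ (entries of the basic matrix $\B$, for fixed $l$) forces $\alpha_{i,k}=0$. Statement (2) will be obtained by the symmetric argument on columns.

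For the key claim, I will apply the right $H_0$-comodule structure on $H_1/H_0$. Computing $\Delta(ax_{ij})=\Delta(a)\Delta(x_{ij})$ for $a\in B$ using $\Delta(a)\in B\otimes B$ and $\Delta(x_{ij})=\sum_k c_{ik}\otimes x_{kj}+\sum_l x_{il}\otimes d_{lj}$, and noting that the $BC$-valued first summand is killed by $\pi\otimes\id$, yields $\rho_R(\overline{a\,x_{ij}})=\sum_l\overline{a_{(1)}x_{il}}\otimes a_{(2)}d_{lj}$. Applying $\rho_R$ to the hypothesis $\sum_i a_ix_{ij}\in H_0$ gives
\[
\sum_{i,l}\overline{a_{i,(1)}x_{il}}\otimes a_{i,(2)}d_{lj}=0\in(H_1/H_0)\otimes H_0\qquad\text{for each }j.
\]
Using the linear independence of the columns of the basic matrix $\D$, the coradical orthonormal idempotents $\{e_F\}_{F\in\mathcal{S}}$ in $H_0^*$ to isolate contributions from each simple component of $BD\subseteq H_0$, and the non-triviality of $\X$ (hence linear independence of $\{\overline{x_{il}}\}$ in $H_1/H_0$ by Corollary \ref{coro:linear independent}), one extracts each $a_i$ and forces $a_i=0$ for all $i$.

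Finally, (3) and (4) follow from (1) and (2) by contradiction via the block decomposition. Setting $\tilde\X:=L_{\B,\C}(\B\odot^\prime\X)L_{\B,\D}^{-1}=(\X_{ij})$ as in (\ref{equation:BX}), invertibility of $L_{\B,\C}$ and $L_{\B,\D}$ over $\k$ ensures that row (resp.\ column) linear independence of $\B\odot^\prime\X$ modulo $H_0$ is equivalent to that of $\tilde\X$. If all $\X_{i_0,j}$ were trivial for some block row $i_0$, the corresponding rows of $\tilde\X$ in that block would vanish modulo $H_0$, contradicting (1); so some $\X_{i_0,j}$ is non-trivial, giving (3). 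The analogous column argument gives (4). The main obstacle is the key claim in the middle paragraph, whose justification requires a careful interplay between the $H_0$-bicomodule structure on $H_1/H_0$ (from Lemmas \ref{V=sumCVD} and \ref{lemma:sum2}) and the basic-multiplicative decomposition of $BD$ inside $H_0$, particularly the extraction of individual $a_i$'s via the idempotents $e_F$.
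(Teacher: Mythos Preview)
Your reductions are sound: the passage from (1) to the key claim via the row structure of $\B\odot'\X$ is correct, and the derivation of (3) and (4) from (1) and (2) through the block decomposition $\tilde\X=L_{\B,\C}(\B\odot'\X)L_{\B,\D}^{-1}$ is clean. The paper itself does not give an independent argument here---it simply cites \cite[Lemma~3.12]{Li22a}---so you are attempting something the paper does not do.

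However, the key claim has a genuine gap. After applying $\rho_R$ you obtain
\[
\sum_{i,l}\overline{a_{i,(1)}x_{il}}\otimes a_{i,(2)}d_{lj}=0,
\]
and at this point none of the three tools you invoke actually applies. The first tensor factors are $\overline{a_{i,(1)}x_{il}}$, not $\overline{x_{il}}$, so the linear independence from Corollary~\ref{coro:linear independent} says nothing about them. The second tensor factors are $a_{i,(2)}d_{lj}\in BD$, and once you multiply the $d_{lj}$'s by elements of $B$ you lose the linear independence of the columns of $\D$: the products $b_{mq}d_{lj}$ span $BD$, whose dimension is in general strictly smaller than $t^2s^2$, so there are nontrivial linear relations among them. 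The idempotents $e_F$ will project onto the simple summands of $BD$, but within each summand the images of the $b_{mq}d_{lj}$ still satisfy relations you have no control over, and in any case the unknowns $a_{i,(1)}$ remain entangled with the $x_{il}$ in the left tensor factor. There is no evident mechanism in your sketch to ``extract each $a_i$'' from this equation.

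A workable route to the key claim uses the fundamental theorem of Hopf modules more substantially than a single application of $\rho_R$: the isomorphism $H_1/H_0\cong H_0\otimes W$ (with $W$ the left coinvariants) is simultaneously a left $H_0$-module and left $H_0$-comodule isomorphism, and under it $\overline{x_{ij}}$ corresponds to $\sum_l c_{il}\otimes w_{lj}$ for suitable $w_{lj}\in W$; the condition $\sum_i a_i\overline{x_{ij}}=0$ then becomes $\sum_{i,l}a_ic_{il}\otimes w_{lj}=0$ in $H_0\otimes W$, and one must still argue that the $w_{lj}$ carry enough independence (coming from the right $D$-coaction) to force $\sum_i a_ic_{il}=0$ in $H_0$, hence $a_i=0$. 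This is essentially what the cited lemma in \cite{Li22a} establishes. Your outline does not supply this step.
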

\begin{proof}
These four claims are exactly (i), (ii), (I), (II) appearing in the proof of \cite[Lemma 3.12]{Li22a} in the case of $H^{\mathrm{op}}$.
\end{proof}

\begin{notation}
Let $\mathcal{M}$ denote the set of representative elements of basic multiplicative matrices over $H$ for the similarity class.
\end{notation}
\begin{remark}
It is clear that there is a bijection from $\mathcal{S}$ to $\mathcal{M}$, mapping each simple subcoalgebra to its basic multiplicative matrix, and $\mathcal{S}=\{\span(\C)\mid \C\in \mathcal{M}\}$, where $\span(\C)$ is the subspace of $H_0$ spanned by the entries of $\C$.
\end{remark}
Denote ${}^1\mathcal{S}=\{C\in\mathcal{S}\mid \k1+C\neq \k1\wedge C\}$. For any $C\in{}^1\mathcal{S}$ with basic multiplicative matrix $\C\in\mathcal{M}$, using Corollary \ref{coro:complete}, we can fix a complete family $\{\X_{C}^{(\gamma_C)}\}_{\mathit{\gamma_C\in\Gamma}_C}$ of non-trivial $(1, \C)$-primitive matrices.

Denote
\begin{eqnarray}\label{def:1^P}
{^1\mathcal{P}}:=\bigcup\limits_{C\in {}^1\mathcal{S}}\{\X_{C}^{(\gamma_C)}\mid \gamma_C\in \mathit{\Gamma}_C\}.
\end{eqnarray}
Then for any non-trivial $(1, \C)$-primitive matrix $\Y\in{^1\mathcal{P}}$ and $\B\in\mathcal{M}$, we have
\begin{eqnarray}\label{equationBY}
\left(\begin{array}{cc}
I&0\\
0&L_{\B, \C}
 \end{array}\right)
\left(\B\odot^\prime
\left(\begin{array}{cc}
1&\Y\\
0&\C
 \end{array}\right)\right)
 \left(\begin{array}{cc}
I&0\\
0&L_{\B, \C}^{-1}
 \end{array}\right)
=\left(\begin{array}{cccccc}
    \B&  &  & {\Y_{ 1}} & \cdots & {\Y_{ u_{(\B, \C)}}}  \\
     &  &  & \E_{1} &  &   \\
    0&  &   &  & \ddots &   \\
     &  &  &  &  & \E_{u_{(\B, \C)}}
  \end{array}\right),
\end{eqnarray}
where $\E_1, \E_2, \cdots, \E_{u_{(\B, \C)}}\in \mathcal{M}$. According to Lemma \ref{Lemma:CXno0}, we know that $\Y_1, \Y_2, \cdots, \Y_{u_{(\B, \C)}}$ are non-trivial.\\
Denote
\begin{eqnarray}\label{^BPY}
^{\B}\mathcal{P}_{\Y}:=\{\Y_{ i}\mid 1\leq i\leq u_{(\B, \C)}\},
\end{eqnarray}
\begin{eqnarray}\label{definition:^BP}
^{\B}\mathcal{P}:=\bigcup\limits_{\Y\in{{}^1\mathcal{P}}}{}^{\B}\mathcal{P}_{\Y},\;\;\; \mathcal{P}_{\Y}:=\bigcup\limits_{\B\in \mathcal{M}}{}^{\B}\mathcal{P}_{\Y}.
\end{eqnarray}
We remark that $\bigcup\limits_{\Y\in{{}^1\mathcal{P}}}{}^{1}\mathcal{P}_{\Y}$ coincides with ${}^1\mathcal{P}$ defined in (\ref{def:1^P}).\\
Moreover, denote
\begin{eqnarray}\label{definition:P}
\mathcal{P}:=\bigcup\limits_{\B\in \mathcal{M}}{^{\B}\mathcal{P}}=\bigcup\limits_{\Y\in{{}^1\mathcal{P}}}\mathcal{P}_{\Y} .
\end{eqnarray}
Note that the elements in the set $^{\B}\mathcal{P}_{\Y}$ depends on the choice of the invertible matrix $L_{\B, \C}$ in (\ref{equationBY}). It will be shown in the following Lemma that the cardinal number $\mid{}^\B\mathcal{P}_{\Y}\mid$ does not depend on the choice of $L_{\B, \C}$.

\begin{lemma}\label{Lemma:cap=0}
The sum $\sum\limits_{1\leq i\leq u_{(\B, \C)}}\span(\overline{\Y_{ i}})$ is direct, where each $\Y_{ i}$ appears in (\ref{equationBY}).
\end{lemma}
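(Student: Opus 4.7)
\emph{Reducing across isotypic components.} The plan is a two-step reduction culminating in an appeal to Lemma \ref{Lemma:CXno0}(2). By Lemma \ref{Lemma:XinCPD}, each $\span(\overline{\Y_i})$ sits inside the $(B,E_i)$-isotypic component $({}^BH_1{}^{E_i}+H_0)/H_0$ of $H_1/H_0$, where $E_i:=\span(\E_i)$. Since the decomposition $H_1/H_0=\bigoplus_{F,G\in\mathcal{S}}({}^FH_1{}^G+H_0)/H_0$ in Lemma \ref{lemma:sum2} is direct, contributions with distinct $E_i$'s automatically sum directly, so it suffices to show, for each fixed simple subcoalgebra $E$, that $\sum_{i\in I}\span(\overline{\Y_i})$ is direct in $M:=({}^BH_1{}^E+H_0)/H_0$, where $I:=\{i:E_i=E\}$. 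By an appropriate choice of $L_{\B,\C}$ I may arrange all $\E_i$ for $i\in I$ to equal one common basic multiplicative matrix $\E$ of size $t^2$, so that each such $\Y_i$ becomes a $(\B,\E)$-primitive matrix of the common shape $r\times t$.

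\emph{Using Schur's lemma.} By Lemma \ref{lemma:rs}, each $\span(\overline{\Y_i})$ is a simple $(B,E)$-sub-bicomodule of $M$ of dimension $rt$, isomorphic to one fixed simple $(B,E)$-bicomodule $V$. Corollary \ref{coro:complete} gives a decomposition $M\cong V^{\oplus n}$, and by Schur's lemma the space of $(B,E)$-bicomodule maps $V\to M$ has dimension $n$. Each $\overline{\Y_i}$ determines such a map $\phi_i$ sending the canonical basis of $V$ to the entries of $\overline{\Y_i}$, and a direct check on the diagonal embeddings $V\hookrightarrow V^{\oplus n}$ shows that the images $\phi_i(V)=\span(\overline{\Y_i})$ sum directly in $M$ if and only if $\{\phi_i\}_{i\in I}$ is $\k$-linearly independent in the ambient $\Hom$-space, equivalently the matrices $\{\overline{\Y_i}\}_{i\in I}$ are $\k$-linearly independent as $r\times t$ matrices over $H_1/H_0$.

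\emph{Concluding.} Suppose $\sum_{i\in I}\alpha_i\overline{\Y_i}=0$ as $r\times t$ matrices. Reading off any fixed column index $q$ gives $\sum_{i\in I}\alpha_i\overline{\operatorname{col}_q(\Y_i)}=0$ in $(H_1/H_0)^r$. The columns $\operatorname{col}_q(\Y_i)$ for $i\in I$ are $|I|$ distinct columns of $\Z:=(\B\odot^\prime\Y)L_{\B,\C}^{-1}=[\Y_1,\ldots,\Y_{u_{(\B,\C)}}]$. Since $L_{\B,\C}^{-1}$ is invertible over $\k$, the linear independence of the $rs$ columns of $\overline{\B\odot^\prime\Y}$ in $(H_1/H_0)^r$ given by Lemma \ref{Lemma:CXno0}(2) transfers to the columns of $\overline{\Z}$, forcing $\alpha_i=0$ for all $i\in I$. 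The Schur step then yields the within-isotypic directness, and combined with the first reduction the full sum $\sum_{1\leq i\leq u_{(\B,\C)}}\span(\overline{\Y_i})$ is direct. The main obstacle is verifying the biconditional in the Schur step; once that reduction is in place, the whole argument is a single appeal to the column-wise linear independence already recorded in Lemma \ref{Lemma:CXno0}(2).
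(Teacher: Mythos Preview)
Your proof is correct, and while it shares the overall shape of the paper's argument---reduce to a single isotypic component, then invoke the column independence of Lemma~\ref{Lemma:CXno0}(2)---the middle step is genuinely different. The paper, after taking a maximal direct subfamily $\{\Y_i\}_{i\in T_0}$, extends it to a complete family via Lemma~\ref{lemma:complement} and then appeals to Proposition~\ref{prop:Y=kx1+tx2} to conclude that any remaining $\overline{\Y_t}$ is a \emph{scalar} combination $\sum_\gamma\alpha_\gamma\overline{\X^{(\gamma)}}$, which yields a column relation contradicting Lemma~\ref{Lemma:CXno0}. You bypass both Lemma~\ref{lemma:complement} and Proposition~\ref{prop:Y=kx1+tx2} entirely: since all simple $B$-$E$-bicomodules are isomorphic (Lemma~\ref{lemma:simple bicomodule}), the Schur biconditional ``$\sum_i\phi_i(V)$ direct $\Leftrightarrow$ $\{\phi_i\}$ linearly independent in $\Hom(V,M)$'' does the same work in one stroke. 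Your route is shorter and more self-contained; the paper's route has the advantage that Proposition~\ref{prop:Y=kx1+tx2} is needed elsewhere anyway, so no machinery is wasted. Two minor points worth tightening: the freedom to choose $L_{\B,\C}$ so that all $\E_i$ with $E_i=E$ coincide as matrices is justified by \cite[Lemma~2.4]{Li22a} (basic multiplicative matrices are unique up to similarity), and the Schur biconditional, which you flag as the main obstacle, follows cleanly once you identify $\Hom_{B\text{-}E}(V,V^{\oplus n})\cong\k^n$ and note that the matrix $A=(a_{ij})$ recording the components of the $\phi_i$ has independent rows iff $A^T$ is injective on $V^{|I|}$.
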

\begin{proof}
Without loss of generality, assume that
$$E_1=E_2=\cdots=E_t$$ for some $1\leq t\leq u_{\B, \C}$, and
$$E_j\neq E_1$$ when $t< j\leq u_{(\B, \C)}$.
In fact $\sum\limits_{1\leq i\leq t }\span(\overline{\Y_{ i}})$ is a $B$-$E_1$-sub-bicomodule of $H_1/H_0$. Let $T_0$ be a maximal subset of $\{1, 2, \cdots, t\}$ such that $\sum\limits_{j\in T_0} \span(\overline{\Y_{ j}})$ is direct. Suppose $$\bigoplus\limits_{j\in T_0} \span(\overline{\Y_{ j}})\subsetneqq \sum\limits_{1\leq i\leq t}\span(\overline{\Y_{ i}}).$$ Since for any $1\leq i\leq t$, $\span(\overline{\Y_{ i}})$ is simple, there exists some $s\notin T_0$, $$\span(\overline{\Y_{ s}})\cap (\bigoplus\limits_{j\in T_0} \span(\overline{\Y_{ j}}))=0.$$ Thus $$\span(\overline{\Y_{ s}})+ (\bigoplus\limits_{j\in T_0} \span(\overline{\Y_{ j}}))=\span(\overline{\Y_{ s}})\oplus (\bigoplus\limits_{j\in T_0} \span(\overline{\Y_{ j}})),$$ which is a contradiction. Now we can get a subset $\{w_1, \cdots, w_r\} $ of $\{1, \cdots, t\}$ such that $$\sum\limits_{i=1 }^t\span(\overline{\Y_{ i}})=\bigoplus_{i=j}^r\span(\overline{\Y_{ w_j}}).$$
Without loss of generality, assume that $$\{w_1, w_2, \cdots, w_r\}=\{1, 2, \cdots, r\}.$$
According to Lemma \ref{lemma:complement}, there exists a complete family $\{\X^{(\gamma)}\}_{\gamma\in \mathit{\Gamma}}$ of non-trivial $(\B, \E_{ 1})$-primitive matrices such that $\{\Y_{ i}\}_{1\leq i\leq r}$ is a subset of $\{\X^{(\gamma)}\}_{\gamma\in \mathit{\Gamma}}$. It follows from Proposition \ref{prop:Y=kx1+tx2} that $\overline{\Y_{ t}}$ is the linear combination of $\{\overline{\X^{(\gamma)}}\}_{\gamma\in \mathit{\Gamma}}$. Note that if $t>r$, then $$\span(\overline{\Y_{ t}})\subseteq\bigoplus_{1\leq i\leq r}\span(\overline{\Y_{ i}}).$$ According to Corollary \ref{coro:basis}, $\overline{\Y_t}$ is the linear combination of $\{\overline{\Y_i}\}_{1\leq i\leq r}$. This implies that the column vectors of
$\left(\begin{array}{cccccc}
{\Y_{ 1}} & {\Y_{ 2}} &\cdots & {\Y_{ t}}
\end{array}\right)$
are linearly dependent over $H/H_0$, which is in contradiction with Lemma \ref{Lemma:CXno0}. Thus we have $t=r$ and the sum $\sum\limits_{1\leq i\leq t }\span(\overline{\Y_{ i}})$ is direct. Then by Corollary \ref{coro:complete}, the proof is completed.
\end{proof}

\begin{remark}\label{remark:ubc}
The cardinal number $\mid {}^{\B}\mathcal{P}_{\Y}\mid=u_{B, C}$, where ${}^{\B}\mathcal{P}_{\Y}$ appears in (\ref{^BPY}).
\end{remark}
Now we define an $H_0$-bimodule structure on $H_1/H_0$ as follows:
$$h\otimes \overline{x}\mapsto h\cdot \overline{x}:=\overline{hx},\; \;\overline{x}\otimes h\mapsto  \overline{x}\cdot h:=\overline{xh} \;(h\in H_0,x\in H_1).$$
Thus $H_1/H_0$ becomes an $H_0$-Hopf bimodule with the bicomodule structure defined in (\ref{comodulestructure}) and bimodule structure defined obove.
\begin{lemma}\label{Lem:P=H_1/H_0}
With the notations in (\ref{definition:P}), we have $H_1/H_0=\sum\limits_{\X\in\mathcal{P}}\span(\overline{\X})$.
\end{lemma}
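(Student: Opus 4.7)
The plan is to realize $H_1/H_0$ as a left $H_0$-Hopf module (with multiplication action inherited from $H$ and left coaction $\rho_L$ from (\ref{comodulestructure})) and then apply the Fundamental Theorem of Hopf Modules. The dual Chevalley property (i.e., $H_0$ being a Hopf subalgebra) ensures $H_0 H_1 \subseteq H_1$, so the left action is well-defined on the quotient, and the compatibility $\rho_L(h\overline{x}) = \Delta(h)\rho_L(\overline{x})$ is immediate from $\Delta(hx) = \Delta(h)\Delta(x)$. The coinvariant subspace $V := \{\overline{v} \in H_1/H_0 \mid \rho_L(\overline{v}) = 1 \otimes \overline{v}\}$ coincides with ${}^{\k 1}(H_1/H_0)$, so Lemma \ref{V=sumCVD} applied with $C = \k 1$ gives $V = \bigoplus_{D \in \mathcal{S}} {}^{\k 1}(H_1/H_0)^D$. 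By Lemmas \ref{lemma:sum2} and \ref{wedge=idempotent} each summand is isomorphic to $(\k 1 \wedge D)/(\k 1 + D)$, which vanishes unless $D \in {}^1\mathcal{S}$; for such $D$, Corollary \ref{coro:complete} and the definition (\ref{def:1^P}) of ${}^1\mathcal{P}$ yield ${}^{\k 1}(H_1/H_0)^D = \bigoplus_{\gamma_D \in \Gamma_D} \span(\overline{\X_D^{(\gamma_D)}})$. Summing over $D$, we obtain $V = \sum_{\Y \in {}^1\mathcal{P}} \span(\overline{\Y})$.

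The Fundamental Theorem of Hopf Modules then yields $H_1/H_0 = H_0 \cdot V$. Since $H_0 = \sum_{\B \in \mathcal{M}} \span(\B)$, this gives $H_1/H_0 = \sum_{\B \in \mathcal{M}} \sum_{\Y \in {}^1\mathcal{P}} \span(\B) \cdot \span(\overline{\Y})$. It remains to show $\span(\B) \cdot \span(\overline{\Y}) = \sum_{\X \in {}^{\B}\mathcal{P}_{\Y}} \span(\overline{\X})$ for each pair $(\B, \Y)$. For a $(1, \C)$-primitive matrix $\Y$, equation (\ref{equationBY}) exhibits a conjugation by $\mathrm{diag}(I, L_{\B, \C})$ that transforms the upper-right block of $\B \odot^\prime \left(\begin{smallmatrix} 1 & \Y \\ 0 & \C \end{smallmatrix}\right)$ into the concatenation $(\Y_1, \ldots, \Y_{u_{(\B,\C)}})$. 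Since conjugation by an invertible scalar matrix is a $\k$-linear automorphism, the $\k$-spans of the entries coincide: before conjugation they are the products $b_{ij} y_{1k}$, whose span is $\span(\B) \cdot \span(\Y)$, and after conjugation they are the entries of the $\Y_i$'s. Reducing modulo $H_0$ commutes with the scalar conjugation, so $\span(\B) \cdot \span(\overline{\Y}) = \sum_i \span(\overline{\Y_i}) = \sum_{\X \in {}^\B\mathcal{P}_\Y} \span(\overline{\X})$, and summing over $\B$ and $\Y$ gives $H_1/H_0 = \sum_{\X \in \mathcal{P}} \span(\overline{\X})$.

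The main technical obstacle I anticipate is invoking the Fundamental Theorem of Hopf Modules cleanly: one must verify that $H_1/H_0$ is genuinely a left $H_0$-Hopf module and that the multiplication map $H_0 \otimes V \to H_1/H_0$ is surjective. Both reduce to the stability $H_0 H_1 \subseteq H_1$ and the routine compatibility check, but they should be stated explicitly since the excerpt does not yet record them. A subsidiary concern is that the identification of spans of entries of $\B\odot^\prime\bigl(\begin{smallmatrix} 1 & \Y \\ 0 & \C \end{smallmatrix}\bigr)$ with those of the $\Y_i$'s truly descends to $H_1/H_0$; this is immediate because the conjugating matrix $\mathrm{diag}(I, L_{\B,\C})$ has entries in $\k$, so the operation commutes with the quotient map $\pi : H_1 \to H_1/H_0$.
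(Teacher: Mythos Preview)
Your proposal is correct and follows essentially the same strategy as the paper's own proof: both realize $H_1/H_0$ as a left $H_0$-Hopf module, invoke the Fundamental Theorem of Hopf Modules to write it as $H_0$ acting on the coinvariants, identify those coinvariants with $\sum_{\Y\in{}^1\mathcal{P}}\span(\overline{\Y})$, and then observe that multiplying by $\span(\B)$ produces exactly $\sum_{\X\in{}^{\B}\mathcal{P}_{\Y}}\span(\overline{\X})$ via the block decomposition (\ref{equationBY}). The only notable difference is that the paper outsources the identification of the coinvariants to an external reference (\cite[Proposition 3.9]{LL22}), whereas you obtain it internally from Lemma~\ref{V=sumCVD} and Corollary~\ref{coro:complete}; your route is slightly more self-contained but otherwise equivalent.
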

\begin{proof}
It suffices us to prove that $$H_1/H_0\subseteq\sum\limits_{\X\in\mathcal{P}}\span(\overline{\X}).$$
Applying the fundamental theorem of Hopf modules (\cite[Theorem 4.1.1]{Swe69}), we know that as a left $H_0$-Hopf module, $$ H_0\otimes {^{coH_0}(H_1/H_0)}\cong H_1/H_0,$$
where ${(H_1/H_0){}^{coH_0}}$ is the left coinvariants of $H_0$ in $H_1/H_0$.
This isomorphism maps $h\otimes \overline{x}$ to $ \overline{hx}$, where $h\in H_0, \overline{x}\in  {^{coH_0}(H_1/H_0)}$. Using \cite[Proposition 2.6(4)]{LL22}, we can obtain the direct sum decomposition $${^1H_1}=\bigoplus_{C\in\mathcal{S}}{^1{H_1}^C}.$$
It follows that
\begin{eqnarray*}
({}^1{H_1}+H_0)/H_0&=&\big((\bigoplus_{C\in\mathcal{S}}{^1{H_1}^C})+H_0\big)/H_0\\
&=&(\sum_{C\in\mathcal{S}}{^1{H_1}^C}+H_0)/H_0\\
&=&\sum_{C\in\mathcal{S}}({^1{H_1}^C}+H_0)/H_0.
\end{eqnarray*}
Note that $\sum_{C\in\mathcal{S}}({^1{H_1}^C}+H_0)/H_0$ is direct and according to Corollary \ref{coro:complete}, we have
\begin{eqnarray}\label{3.7}
({}^1{H_1}+H_0)/H_0=\bigoplus_{C\in\mathcal{S}}({^1{H_1}^C}+H_0)/H_0=\bigoplus_{\Y\in{}^1\mathcal{P}}\span(\overline{\Y}).
\end{eqnarray}
From the proof of \cite[Proposition 3.9]{LL22}, we know that
\begin{eqnarray}\label{3.8}
(H_1/H_0){}^{coH_0}=\bigoplus_{\Y\in{}^1\mathcal{P}}\span(\overline{\Y}).
\end{eqnarray}
Moreover, the definition of $\mathcal{P}$ yields that
$$\sum\limits_{\X\in\mathcal{P}}\span(\overline{\X})=\sum_{\B\in \mathcal{M}}\sum_{\Y\in{}^1\mathcal{P}}\span(\overline{\B\odot^\prime \Y}).$$
In fact
\begin{eqnarray}\label{3.9}
H_0=\sum_{\B\in\mathcal{M}}\span(\B).
 \end{eqnarray}
According to (\ref{3.7}) and (\ref{3.9}), one can get
\begin{eqnarray*}
H_1/H_0&=& \{h\cdot\overline{x} \mid h\in H_0, x\in {}^1{H_1}+H_0\}\\
&\subseteq& \big(\sum_{\B\in\mathcal{M}}\span(\B)\big)\cdot \big(\bigoplus_{\Y\in{}^1\mathcal{P}}\span(\overline{\Y})\big)\\
&\subseteq&\sum_{\B\in \mathcal{M}}\sum_{\Y\in{}^1\mathcal{P}}\span(\overline{\B\odot^\prime \Y})\\
&=&\sum\limits_{\X\in\mathcal{P}}\span(\overline{\X}).
\end{eqnarray*}
\end{proof}
\begin{lemma}\label{lemma:PXnoPY}
With the notations in (\ref{definition:^BP}), for any $\C\in\mathcal{M}$ and non-trivial $(1, \C)$-primitive matrix $\X\in{}^1\mathcal{P}$, we have
$$\left( \sum_{\W\in \mathcal{P}_{\X}}\span(\overline{\W})\right)\cap \left( \sum\limits_{\Z\in {}^1\mathcal{P}, \Z\neq \X} \sum_{\W\in \mathcal{P}_{\Z}}\span(\overline{\W})\right)=0.$$
\end{lemma}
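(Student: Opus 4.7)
The plan is to identify, for each $\X \in {}^1\mathcal{P}$, the sum $\sum_{\W \in \mathcal{P}_\X} \span(\overline{\W})$ with the left $H_0$-submodule $H_0 \cdot \span(\overline{\X})$ of $H_1/H_0$, and then to invoke the fundamental theorem of Hopf modules (exactly as in the proof of Lemma~\ref{Lem:P=H_1/H_0}) to conclude that these submodules are the direct summands of a decomposition of $H_1/H_0$ indexed by $\X \in {}^1\mathcal{P}$.

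First, for a fixed $\B \in \mathcal{M}$, I claim that
$$\sum_{\W \in {}^\B\mathcal{P}_\X} \span(\overline{\W}) \;=\; \span(\B) \cdot \span(\overline{\X}),$$
where $\cdot$ denotes the induced left $H_0$-action on $H_1/H_0$. By the definition of $\odot^\prime$, the entries of $\B \odot^\prime \X$ are precisely the products $bx$ with $b$ an entry of $\B$ and $x$ an entry of $\X$, so their images in $H_1/H_0$ span exactly $\span(\B) \cdot \span(\overline{\X})$. On the other hand, unpacking the conjugation in~(\ref{equationBY}) via Lemma~\ref{lemma:Kronecker}, the top-right block of the conjugated matrix is $(\B \odot^\prime \X) L_{\B,\C}^{-1}$, which by construction equals the horizontal concatenation $(\X_1, \ldots, \X_{u_{(\B,\C)}})$. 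Invertibility of $L_{\B,\C}$ over $\k$ then forces the $\k$-span of the entries of this concatenation to coincide with that of $\B \odot^\prime \X$, giving the claim.

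Summing over $\B \in \mathcal{M}$ and using $H_0 = \sum_{\B \in \mathcal{M}} \span(\B)$ (see~(\ref{3.9})) yields $\sum_{\W \in \mathcal{P}_\X} \span(\overline{\W}) = H_0 \cdot \span(\overline{\X})$. Now the fundamental theorem of Hopf modules for the $H_0$-Hopf bimodule $H_1/H_0$, together with equation~(\ref{3.8}), produces the isomorphism
$$H_0 \otimes \bigoplus_{\Y \in {}^1\mathcal{P}} \span(\overline{\Y}) \;\xrightarrow{\cong}\; H_1/H_0, \quad h \otimes \overline{y} \mapsto h \cdot \overline{y}.$$
Hence $H_1/H_0 = \bigoplus_{\Y \in {}^1\mathcal{P}} H_0 \cdot \span(\overline{\Y})$, and in particular the summand $H_0 \cdot \span(\overline{\X})$ meets the sum of the remaining summands trivially, which is precisely the desired conclusion.

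I expect the only real obstacle to be the bookkeeping in the first step: tracking the conjugation by $\mathrm{diag}(I, L_{\B,\C})$ and verifying that the off-diagonal block transforms to $(\B \odot^\prime \X) L_{\B,\C}^{-1}$, so that its entries share the same $\k$-span as those of $\B \odot^\prime \X$. Once this identification is in hand, the remaining reduction via the Hopf-module structure theorem is a routine assembly.
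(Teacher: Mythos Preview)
Your proposal is correct and follows essentially the same route as the paper: both arguments pass through the fundamental theorem of Hopf modules together with the identification~(\ref{3.8}) of the coinvariants, and then observe that the entries of each $\W\in\mathcal{P}_\X$ lie in $H_0\cdot\span(\overline{\X})$. Your treatment is in fact more explicit than the paper's---you prove the equality $\sum_{\W\in{}^{\B}\mathcal{P}_\X}\span(\overline{\W})=\span(\B)\cdot\span(\overline{\X})$ via the block computation $(\B\odot'\X)L_{\B,\C}^{-1}$, whereas the paper only needs (and only asserts) the containment, invoking ``the definition of $\mathcal{P}_\X$'' without spelling it out.
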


\begin{proof}
According to \ref{3.8}, we have $$H_0\otimes(H_1/H_0){}^{coH_0}=\bigoplus\limits_{\Y\in{{^1\mathcal{P}}}}H_0\otimes \span(\overline{\Y}).$$
Moreover, the isomorphism $$ H_0\otimes (H_1/H_0){}^{coH_0}\cong H_1/H_0$$  maps $h\otimes \overline{x}$ to $h\cdot \overline{x}$, where $h\in H_0, \overline{x}\in  {^{coH_0}(H_1/H_0)}$.
By the definition of ${}^1\mathcal{P}$, we know that $$\span(\overline{\X})\cap\big(\sum\limits_{\Z\in {}^1\mathcal{P}, \Z\neq \X}\span(\overline{\Z})\big)=0.$$
Then $$\left(H_0\otimes\span(\overline{\Y})\right)\cap \left(H_0\otimes\big(\sum\limits_{\Z\in {}^1\mathcal{P}, \Z\neq \X}\span(\overline{\Z})\big)\right)=0,$$
which suggests that
$$\left(H_0\cdot\span(\overline{\Y})\right)\cap \left(H_0\cdot\big(\sum\limits_{\Z\in {}^1\mathcal{P}, \Z\neq \X}\span(\overline{\Z})\big)\right)=0,$$
where
\begin{eqnarray*}
H_0\cdot\span(\overline{\Y})&=&\{h\cdot \overline{y}\mid h\in H_0, y\in \Y\},\\
H_0\cdot\big(\sum\limits_{\Z\in {}^1\mathcal{P}, \Z\neq \X}\span(\overline{\Z})\big)&=&\{h\cdot \overline{z}\mid h\in H_0, \overline{z}\in \sum\limits_{\Z\in {}^1\mathcal{P}, \Z\neq \X}\span(\overline{\Z})\}.
\end{eqnarray*}
Therefore, by the definition of $\mathcal{P}_{\X}$, we conclude that
$$\left( \sum_{\W\in \mathcal{P}_{\X}}\span(\overline{\W})\right)\cap \left( \sum\limits_{\Z\in {}^1\mathcal{P}, \Z\neq \X} \sum_{\W\in \mathcal{P}_{\Z}}\span(\overline{\W})\right)=0.$$
\end{proof}
A direct consequence of this lemma is:
\begin{corollary}\label{coro:P_X}
With the notations in (\ref{definition:P}), then the union $\mathcal{P}=\bigcup\limits_{\Y\in{}^1\mathcal{P}}\mathcal{P}_{\Y}$ is disjoint.
\end{corollary}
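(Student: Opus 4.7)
The plan is to obtain this corollary as an immediate consequence of Lemma \ref{lemma:PXnoPY} combined with the non-triviality of all matrices in $\mathcal{P}$, via a short proof by contradiction. The essential content (the direct-sum-like behavior across different $\Y \in {}^1\mathcal{P}$) has already been extracted in the preceding lemma; what remains is only a set-theoretic packaging.

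Concretely, I would argue as follows. Suppose the union $\bigcup_{\Y \in {}^1\mathcal{P}} \mathcal{P}_{\Y}$ is not disjoint. Then there exist distinct $\Y, \Y' \in {}^1\mathcal{P}$ and a common matrix $\W \in \mathcal{P}_{\Y} \cap \mathcal{P}_{\Y'}$. By the very construction in (\ref{equationBY}) together with Lemma \ref{Lemma:CXno0}(3)--(4), every element of ${}^{\B}\mathcal{P}_{\Z}$ (for any $\B \in \mathcal{M}$ and any $\Z \in {}^1\mathcal{P}$) is a non-trivial primitive matrix, so in particular $\span(\overline{\W}) \neq 0$. On the other hand, $\W \in \mathcal{P}_{\Y}$ gives the inclusion $\span(\overline{\W}) \subseteq \sum_{\W' \in \mathcal{P}_{\Y}} \span(\overline{\W'})$, while $\W \in \mathcal{P}_{\Y'}$ together with $\Y' \neq \Y$ gives $\span(\overline{\W}) \subseteq \sum_{\Z \in {}^1\mathcal{P},\, \Z \neq \Y} \sum_{\W' \in \mathcal{P}_{\Z}} \span(\overline{\W'})$. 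Applying Lemma \ref{lemma:PXnoPY} to these two inclusions forces $\span(\overline{\W}) = 0$, contradicting non-triviality, and establishing the claim.

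There is no serious obstacle here: the statement is essentially a repackaging of Lemma \ref{lemma:PXnoPY} at the level of sets rather than subspaces. The only subtle point is to justify that any $\W \in \mathcal{P}_{\Y}$ satisfies $\span(\overline{\W}) \neq 0$, but this is exactly the content of Lemma \ref{Lemma:CXno0}(3)--(4) (and (\ref{equationBY})), which guarantees that each block $\Y_i$ arising in the decomposition of $\B \odot' \Y$ is non-trivial. Hence the proof is essentially a one-line deduction from the lemma immediately preceding it.
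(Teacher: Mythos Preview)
Your proposal is correct and follows essentially the same approach as the paper, which presents this corollary as an immediate consequence of Lemma~\ref{lemma:PXnoPY}. You have simply made explicit the one detail the paper leaves implicit: that any $\W\in\mathcal{P}_{\Y}$ is non-trivial (stated right after (\ref{equationBY}) via Lemma~\ref{Lemma:CXno0}), so that $\span(\overline{\W})\neq 0$ and the intersection statement of Lemma~\ref{lemma:PXnoPY} forces disjointness.
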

Now it is not difficult to verify the following theorem.
\begin{theorem}\label{coro:BXcomplete}
Let $C, D\in \mathcal{S}$ with basic multiplicative matrices $\C, \D\in\mathcal{M}$ respectively. Denote
$${}^{\C}\mathcal{P}^{\D}:=\{\X\in \mathcal{P}\mid  \X \text{ is a non-trivial }(\C, \D)\text{-primitive matrix}\}.$$
Then it is a complete family of non-trivial $(\C, \D)$-primitive matrices. Moreover, we have $H_1/H_0=\bigoplus_{\X\in\mathcal{P}}\span(\overline{\X})$.
\end{theorem}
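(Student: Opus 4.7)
The plan is to first establish the global direct sum decomposition $H_1/H_0=\bigoplus_{\X\in\mathcal{P}}\span(\overline{\X})$ and then deduce completeness of ${}^\C\mathcal{P}^\D$ by projecting onto the $(C,D)$-bicomodule summand of $H_1/H_0$. The only real obstacle is bookkeeping: each $\X\in\mathcal{P}$ carries two independent labelings, namely the ``origin'' pair $(\B,\Y)\in\mathcal{M}\times{}^1\mathcal{P}$ from which it was constructed (supplying directness via Corollary~\ref{coro:P_X} and Lemma~\ref{Lemma:cap=0}), and the ``type'' pair $(\C,\D)\in\mathcal{S}\times\mathcal{S}$ (placing $\span(\overline{\X})$ into the correct bicomodule summand via Lemma~\ref{Lemma:XinCPD}). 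Once these two labelings are kept apart, both conclusions fall out by matching summands of two direct sum decompositions of the same space.

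For the global decomposition, Lemma~\ref{Lem:P=H_1/H_0} already supplies the sum, so only directness remains. Stratify $\mathcal{P}=\bigcup_{\Y\in{}^1\mathcal{P}}\mathcal{P}_{\Y}$, which is a disjoint union by Corollary~\ref{coro:P_X}. Lemma~\ref{lemma:PXnoPY} guarantees that the partial sums $\sum_{\W\in\mathcal{P}_\Y}\span(\overline{\W})$ are in direct position as $\Y$ runs over ${}^1\mathcal{P}$, so it suffices to prove, for each fixed $\Y$, that $\sum_{\W\in\mathcal{P}_\Y}\span(\overline{\W})=\sum_{\B\in\mathcal{M}}\sum_{\W\in{}^\B\mathcal{P}_\Y}\span(\overline{\W})$ is direct. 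Every $\W\in{}^\B\mathcal{P}_\Y$ is a $(\B,\E)$-primitive matrix for some $\E\in\mathcal{M}$, so combining Lemmas~\ref{Lemma:XinCPD} and~\ref{lemma:sum2} places $\span(\overline{\W})$ inside the summand ${}^B(H_1/H_0)^E$ of the decomposition $H_1/H_0=\bigoplus_{B,D\in\mathcal{S}}{}^B(H_1/H_0)^D$ from Lemma~\ref{V=sumCVD}. Different choices of $\B$ therefore contribute to summands with different left index $B$, so the sum over $\B$ is direct; and for each fixed $\B$ the innermost sum is direct by Lemma~\ref{Lemma:cap=0}. Combining these levels yields $H_1/H_0=\bigoplus_{\X\in\mathcal{P}}\span(\overline{\X})$.

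To finish with completeness of ${}^\C\mathcal{P}^\D$, I would project this decomposition onto the summand $({}^CH_1{}^D+H_0)/H_0$ of $H_1/H_0=\bigoplus_{C,D\in\mathcal{S}}({}^CH_1{}^D+H_0)/H_0$ (from Lemmas~\ref{V=sumCVD} and~\ref{lemma:sum2}). Lemma~\ref{Lemma:XinCPD} ensures each $\span(\overline{\X})$ with $\X\in\mathcal{P}$ lies entirely in the summand indexed by its $(\C,\D)$-type, so matching these two direct sum decompositions of $H_1/H_0$ summand by summand gives $({}^CH_1{}^D+H_0)/H_0=\bigoplus_{\X\in{}^\C\mathcal{P}^\D}\span(\overline{\X})$, which is precisely the completeness property~(\ref{equation:direct sum}).
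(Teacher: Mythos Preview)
Your proposal is correct and uses the same ingredients as the paper's proof---Lemma~\ref{Lem:P=H_1/H_0} for spanning, Lemmas~\ref{Lemma:cap=0} and~\ref{lemma:PXnoPY} for directness, and the bicomodule decomposition of Lemmas~\ref{V=sumCVD}, \ref{lemma:sum2}, \ref{Lemma:XinCPD} to sort each $\span(\overline{\X})$ into its $(C,D)$-summand. The only difference is the order of assembly: the paper first obtains the equality $({}^CH_1{}^D+H_0)/H_0=\sum_{\X\in{}^\C\mathcal{P}^\D}\span(\overline{\X})$ by matching $\sum_{\X\in\mathcal{P}}\span(\overline{\X})=H_1/H_0$ against the bicomodule decomposition, and then proves directness inside each fixed ${}^\C\mathcal{P}=\bigcup_{\Y}{}^\C\mathcal{P}_\Y$ via Lemmas~\ref{Lemma:cap=0} and~\ref{lemma:PXnoPY}; you instead establish global directness first (stratifying by $\Y$, then by the left index $\B$, then applying Lemma~\ref{Lemma:cap=0}) and read off completeness by projection. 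Both routes are valid and amount to the same argument in a slightly different order.
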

\begin{proof}
By the definition of $\mathcal{P}$, we have $$\mathcal{P}=\bigcup_{\C, \D\in\mathcal{M}} {}^{\C}\mathcal{P}^{\D},$$
which means that
$$\sum_{\X\in\mathcal{P}}\span(\overline{\X})=\sum_{\C,\D\in\mathcal{M}}\sum_{\X\in{}^{\C}\mathcal{P}^{\D}}\span(\overline{\X}).$$
According to Lemma \ref{lemma:sum2}, we know that $$H_1/H_0=\bigoplus_{C,D\in\mathcal{S}}({}^CH_1{}^{D}+H_0)/H_0.$$
It follows from Lemma \ref{Lemma:XinCPD} that $$\sum_{\X\in{}^{\C}\mathcal{P}^{\D}}\span(\overline{\X})\subseteq ({}^CH_1{}^{D}+H_0)/H_0,$$
which implies that
\begin{eqnarray*}
\sum_{\X\in\mathcal{P}}\span(\overline{\X})&=&\bigoplus_{\C,\D\in\mathcal{M}}\big(\sum_{\X\in{}^{\C}\mathcal{P}^{\D}}\span(\overline{\X})\big)\\
&\subseteq& \bigoplus_{C,D\in\mathcal{S}}({}^CH_1{}^{D}+H_0)/H_0\\
&=&H_1/H_0.
\end{eqnarray*}
By Lemma \ref{Lem:P=H_1/H_0}, we have $$H_1/H_0=\sum_{\X\in\mathcal{P}}\span(\overline{\X}).$$
Therefore one can get
\begin{eqnarray}\label{cpd=sumx}
({}^CH_1{}^{D}+H_0)/H_0=\sum_{\X\in{}^{\C}\mathcal{P}^{\D}}\span(\overline{\X}).
\end{eqnarray}
Note that $${}^{\C}\mathcal{P}_{\Y}\subseteq \mathcal{P}_{\Y}.$$
Combining Lemmas \ref{Lemma:cap=0} and \ref{lemma:PXnoPY}, we can get
\begin{eqnarray*}
\sum_{\Y\in{}^1\mathcal{P}}\big( \sum_{\X\in  {}^{\C}\mathcal{P}_{\Y}}\span(\overline{\X})\big)&=&
\sum_{\Y\in{}^1\mathcal{P}}\big( \bigoplus_{\X\in  {}^{\C}\mathcal{P}_{\Y}}\span(\overline{\X})\big)\\
&=&\bigoplus_{\Y\in{}^1\mathcal{P}}\big( \bigoplus_{\X\in  {}^{\C}\mathcal{P}_{\Y}}\span(\overline{\X})\big).
\end{eqnarray*}
Thus it follows from $${}^{\C}\mathcal{P}^{\D}\subseteq \bigcup_{\Y\in{}^1\mathcal{P}} {}^{\C}\mathcal{P}_{\Y}$$  that$$\sum_{\X\in  {}^{\C}\mathcal{P}^{\D}}\span(\overline{\X})=\bigoplus_{\X\in  {}^{\C}\mathcal{P}^{\D}}\span(\overline{\X}),$$ and ${}^{\C}\mathcal{P}^{\D}$ is a family of non-trivial $(\C, \D)$-primitive matrices.
Moreover, it is straightforward to show that  $$H_1/H_0=\bigoplus_{\X\in\mathcal{P}}\span(\overline{\X}).$$
\end{proof}
\begin{corollary}\label{coro:CP,PD}
Let $C, D\in\mathcal{S}$ with basic multiplicative matrices $\C, \D\in\mathcal{M}$ respectively. Denote
\begin{eqnarray*}
\mathcal{P}^{\D}:=\bigcup_{\C\in\mathcal{M}} {}^{\C}\mathcal{P}^{\D},
\end{eqnarray*}
which is a disjoint union of ${}^{\C}\mathcal{P}^{\D}$ $(\C\in\mathcal{M})$ defined in Theorem \ref{coro:BXcomplete}.
Then $$ (H_1{}^{D}+H_0)/H_0=\bigoplus_{\X\in \mathcal{P}^{\D}} \span(\overline{\X}).$$
Moreover, we have $${}^{\C}\mathcal{P}=\bigcup_{\D\in\mathcal{M}}{}^{\C}\mathcal{P}^{\D},$$
which is also a disjoint union, and that $$({}^{C}H_1+H_0)/H_0= \bigoplus_{\X\in {}^{\C}\mathcal{P}} \span(\overline{\X}).$$
\end{corollary}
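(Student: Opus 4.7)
The plan is to deduce both direct-sum decompositions by combining the component-wise decomposition already established in Theorem~\ref{coro:BXcomplete} with the $H_0$-bicomodule decomposition machinery of Lemmas~\ref{V=sumCVD} and~\ref{lemma:sum2} applied one side at a time.

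First I would dispose of the disjointness claims. Suppose $\X\in{}^{\C_1}\mathcal{P}^{\D}\cap{}^{\C_2}\mathcal{P}^{\D}$ for $\C_1,\C_2\in\mathcal{M}$. Writing out the two coproduct expansions of each entry $x_{ij}$ of $\X$, one as a $(\C_1,\D)$-primitive matrix and one as a $(\C_2,\D)$-primitive matrix, and subtracting, gives $\sum_k (c^{(1)}_{ik}-c^{(2)}_{ik})\otimes x_{kj}=0$. The non-triviality of $\X$ combined with Corollary~\ref{coro:linear independent} implies that the $x_{kj}$ are linearly independent modulo $H_0$, hence linearly independent in $H$, which forces $c^{(1)}_{ik}=c^{(2)}_{ik}$, that is, $\C_1=\C_2$ as matrices. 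Since $\mathcal{M}$ consists of representatives under similarity, they must be the same element. The analogous argument on the right side handles the disjointness of ${}^{\C}\mathcal{P}=\bigcup_{\D\in\mathcal{M}}{}^{\C}\mathcal{P}^{\D}$.

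Next I would establish the identity $(H_1{}^{D}+H_0)/H_0=\bigoplus_{\C\in\mathcal{M}}({}^{C}H_1{}^{D}+H_0)/H_0$. Applying Lemma~\ref{V=sumCVD} to the $H_0$-bicomodule $V=H_1/H_0$ and restricting to its right $D$-component gives $(H_1/H_0)^{D}=\bigoplus_{C\in\mathcal{S}}{}^{C}(H_1/H_0)^{D}$. A direct check shows that the quotient map $\pi\colon H_1\to H_1/H_0$ intertwines the right hit action by $e_D$, because $\pi(x)^{D}=(\id\otimes e_D)\rho_R(\pi(x))=\pi((\id\otimes e_D)\Delta(x))=\pi(x{}^{D})$; consequently $(H_1/H_0)^{D}=(H_1{}^{D}+H_0)/H_0$. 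Combined with the identification ${}^{C}(H_1/H_0)^{D}=({}^{C}H_1{}^{D}+H_0)/H_0$ from Lemma~\ref{lemma:sum2}, this yields the desired decomposition. Now substituting equation~(\ref{cpd=sumx}) from Theorem~\ref{coro:BXcomplete} into each summand and using disjointness, one obtains
$$
(H_1{}^{D}+H_0)/H_0=\bigoplus_{\C\in\mathcal{M}}\bigoplus_{\X\in{}^{\C}\mathcal{P}^{\D}}\span(\overline{\X})=\bigoplus_{\X\in\mathcal{P}^{\D}}\span(\overline{\X}).
$$
The parallel decomposition of $({}^{C}H_1+H_0)/H_0$ is obtained by the mirror argument, taking the left hit action by $e_C$ rather than the right.

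All steps are essentially bookkeeping consequences of the previously developed lemmas. The only mildly delicate point is the identification $(H_1/H_0)^{D}=(H_1{}^{D}+H_0)/H_0$, which is a routine computation using that the quotient map $\pi$ commutes with the hit action $e_D\rightharpoonup{-}$; once this is in hand, the direct sum decomposition drops out by combining the $C$-indexed sum with the decompositions already produced in Theorem~\ref{coro:BXcomplete}.
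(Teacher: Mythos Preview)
Your proposal is correct and follows essentially the same strategy as the paper: decompose $(H_1{}^D+H_0)/H_0$ as a direct sum over $C\in\mathcal{S}$ of the pieces $({}^CH_1{}^D+H_0)/H_0$, then plug in equation~(\ref{cpd=sumx}) from Theorem~\ref{coro:BXcomplete}. The only tactical differences are that (i) you prove disjointness by a direct coproduct computation using Corollary~\ref{coro:linear independent}, whereas the paper deduces it from the direct-sum decomposition of $H_1/H_0$ in Lemma~\ref{lemma:sum2}, and (ii) you obtain the one-sided decomposition by applying Lemma~\ref{V=sumCVD} to $H_1/H_0$ and checking that $\pi$ intertwines the hit action, while the paper instead quotes \cite[Proposition~2.6(4)]{LL22} for the splitting $H_1{}^D=\bigoplus_C{}^CH_1{}^D$ at the level of $H_1$ and then passes to the quotient; your route has the small advantage of being self-contained within the paper.
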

\begin{proof}
Note that by the definition of ${}^{\C}\mathcal{P}{}^{\D}$, we know that ${}^{\C}\mathcal{P}{}^{\D}$ contains all the non-trivial $(\C, \D)$-primitive matrices in $\mathcal{P}$. It follows that $${}^{\C}\mathcal{P}=\bigcup_{\D\in\mathcal{M}}{}^{\C}\mathcal{P}^{\D}.$$
According to Lemma \ref{lemma:sum2}, we have $$H_1/H_0=\bigoplus_{C,D\in\mathcal{S}}({^C{H_1}^D}+H_0)/H_0.$$
By (\ref{cpd=sumx}) in the proof of Theorem \ref{coro:BXcomplete}, we know that $$({}^CH_1{}^{D}+H_0)/H_0=\sum_{\X\in{}^{\C}\mathcal{P}^{\D}}\span(\overline{\X}).$$
This means that these two unions $$\mathcal{P}^{\D}=\bigcup_{\C\in\mathcal{M}} {}^{\C}\mathcal{P}^{\D}$$ and
$${}^{\C}\mathcal{P}=\bigcup_{\D\in\mathcal{M}}{}^{\C}\mathcal{P}^{\D}$$ are both disjoint union.\\
Using \cite[Proposition 2.6(4)]{LL22}, we can obtain the direct sum decomposition $${H_1{}^{D}}=\bigoplus_{C\in\mathcal{S}}{^C{H_1}^D}.$$
It follows that
\begin{eqnarray*}
({H_1}{}^D+H_0)/H_0&=&\big((\bigoplus_{C\in\mathcal{S}}{^C{H_1}^D})+H_0\big)/H_0\\
&=&(\sum_{C\in\mathcal{S}}{^C{H_1}^D}+H_0)/H_0\\
&=&\sum_{C\in\mathcal{S}}({^C{H_1}^D}+H_0)/H_0.
\end{eqnarray*}
Then it follows from (\ref{cpd=sumx}) in the proof of Theorem \ref{coro:BXcomplete} that
\begin{eqnarray*}
(H_1{}^{D}+H_0)/H_0&=&\sum_{C\in\mathcal{S}} \sum_{\X\in{}^{\C}\mathcal{P}^{\D}}\span(\overline{\X})\\
&=&\sum_{\X\in \mathcal{P}^{\D}} \span(\overline{\X})\\
&=&\bigoplus_{\X\in \mathcal{P}^{\D}} \span(\overline{\X}),
\end{eqnarray*}
the last equation is due to  Theorem \ref{coro:BXcomplete}.
The proof of $$({}^{C}H_1+H_0)/H_0= \bigoplus_{\X\in {}^{\C}\mathcal{P}} \span(\overline{\X}) $$ is similar.
\end{proof}
\subsection{The second construction}
In this subsection, for any $C, D\in \mathcal{S}$, we give another construction of a complete family of non-trivial $(\C, \D)$-primitive matrices over $H$. We construct a set $\mathcal{P}^\prime$ in the following way.
Denote $\mathcal{S}^1=\{C\in\mathcal{S}\mid C+\k1\neq C\wedge \k1\}$. For any $C\in\mathcal{S}^1$ with basic multiplicative matrix $\C\in\mathcal{M}$, according to Corollary \ref{coro:complete}, we can fix a complete family $\{\X_{C}^{\prime(\gamma^\prime_C)}\}_{\mathit{\gamma^\prime_C\in\Gamma}^\prime_C}$ of non-trivial $(\C, 1)$-primitive matrices.

Denote
\begin{eqnarray}\label{def:1^Pprime}
{\mathcal{P}^{\prime 1}}=\bigcup\limits_{C\in \mathcal{S}_1}\{\X_{C}^{\prime(\gamma^\prime_C)}\mid \gamma^\prime_C\in \mathit{\Gamma}^\prime_C\}.
\end{eqnarray}
Then for any non-trivial $(\C, 1)$-primitive matrix $\Y^\prime\in{\mathcal{P}^{\prime 1}}$, we have
\begin{eqnarray}\label{equationBYprime}
\left(\begin{array}{cc}
L_{\B, \C}&0\\
0&I
 \end{array}\right)
\left(\B\odot^\prime
\left(\begin{array}{cc}
\C&\Y^\prime\\
0&1
 \end{array}\right)\right)
 \left(\begin{array}{cc}
L_{\B, \C}^{-1}&0\\
0&I
 \end{array}\right)
=\left(\begin{array}{cccccc}
      \E_{1} &  & & {\Y^\prime_{1}} \\
        & \ddots & &\vdots  \\
      &  & \E_{u_{(\B, \C)}}&{\Y^\prime_{u_{(\B, \C)}}}\\
     &0&&\B
  \end{array}\right),
\end{eqnarray}
where $I$ is the identity matrix over $\k$ and $\E_1, \E_2, \cdots, \E_{u_{(\B, \C)}}\in \mathcal{M}$.\\
Denote
\begin{eqnarray}\label{^BPYprime}
\mathcal{P}^{\prime \B}_{\Y^\prime}&=&\{\Y^\prime_{i}\mid 1\leq i\leq u_{(\B, \C)}\},
\end{eqnarray}
\begin{eqnarray}\label{definition:^BPprime}
\mathcal{P}^{\prime \B}&=&\bigcup\limits_{\Y^\prime\in{\mathcal{P}^{\prime 1}}}{}\mathcal{P}^{\prime \B}_{\Y^\prime},\;\;\; \mathcal{P}^\prime_{\Y^\prime}=\bigcup\limits_{\B\in \mathcal{M}}\mathcal{P}^{\prime \B}_{\Y^\prime},
\end{eqnarray}
and
\begin{eqnarray}\label{definition:Pprime}
\mathcal{P}^\prime&=&\bigcup\limits_{\B\in \mathcal{M}}{\mathcal{P}^{\prime \B}}=\bigcup\limits_{\Y^\prime\in{\mathcal{P}^1}}\mathcal{P}^\prime_{\Y^\prime} .
\end{eqnarray}
The same proof with Remark \ref{remark:ubc} can be applied to $\mid \mathcal{P}_{\Y^\prime}^{\prime\B}\mid$.
\begin{remark}\label{remark:ubcprime}
The cardinal number $\mid \mathcal{P}_{\Y^\prime}^{\prime\B}\mid=u_{B, C}$, where $\mathcal{P}^{\prime \B}_{\Y^\prime}$ appears in (\ref{^BPYprime}).
\end{remark}
According to \cite[Corollary 3.6]{Rad77}, since $H$ has the dual Chevalley property, the antipode $S$ of $H$ is bijective. Then for the mixed Hopf module $H_1/H_0$ in ${}_H\mathcal{M}^H$, we have $$ H_0\otimes {(H_1/H_0)^{coH_0}}\cong H_1/H_0,$$
where ${(H_1/H_0)^{coH_0}}$ is the right coinvariants of $H_0$ in $H_1/H_0$.
And the isomorphism maps $h\otimes \overline{x}$ to $h\cdot \overline{x}$, where $h\in H_0, \overline{x}\in  {(H_1/H_0)^{coH_0}}$.

The proofs of the following theorem and corollary can be completed by the method analogous to that used in the proofs of Theorem \ref{coro:BXcomplete} and Corollary \ref{coro:CP,PD}.
\begin{theorem}\label{coro:BXcompleteprime}
Let $C, D\in \mathcal{S}$ with basic multiplicative matrices $\C, \D\in\mathcal{M}$ respectively. Denote
$${}^{\C}\mathcal{P}^{\prime \D}:=\{\X^\prime\in \mathcal{P}^\prime\mid\X^\prime \text{ is a non-trivial }(\C, \D)\text{-primitive matrix}\}.$$
Then it is a complete family of non-trivial $(\C, \D)$-primitive matrices. Moreover, we have
$H_1/H_0=\bigoplus_{\X^{\prime}\in\mathcal{P}^{\prime}}\span(\overline{\X^{\prime}})$.
\end{theorem}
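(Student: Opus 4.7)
The plan is to carry out an argument strictly parallel to that of Theorem \ref{coro:BXcomplete}, systematically replacing left-handed constructs with their right-handed counterparts. The essential input that makes this parallelism go through, already flagged in the paragraph preceding the statement, is that the dual Chevalley property forces the antipode of $H$ to be bijective (by \cite[Corollary 3.6]{Rad77}), so the fundamental theorem of Hopf modules applied to $H_1/H_0\in {}_H\mathcal{M}^H$ gives the isomorphism
$$H_0 \otimes (H_1/H_0)^{coH_0} \longrightarrow H_1/H_0,\qquad h \otimes \overline{x} \longmapsto h\cdot \overline{x}.$$

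First I would verify the right-handed analog of Lemma \ref{Lem:P=H_1/H_0}, namely $H_1/H_0 = \sum_{\X^\prime \in \mathcal{P}^\prime} \span(\overline{\X^\prime})$. The argument proceeds by using the right analog of \cite[Proposition 2.6(4)]{LL22} to obtain a decomposition of the right coinvariant part of $H_1$ indexed by $\mathcal{S}$; combined with Corollary \ref{coro:complete}, this identifies $(H_1/H_0)^{coH_0}$ with $\bigoplus_{\Y^\prime\in \mathcal{P}^{\prime 1}} \span(\overline{\Y^\prime})$. Then acting on the left by $H_0 = \sum_{\B\in\mathcal{M}} \span(\B)$ and using the definitions in (\ref{definition:Pprime}) recovers all of $H_1/H_0$.

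Second I would establish the right-handed analogs of Lemmas \ref{Lemma:cap=0} and \ref{lemma:PXnoPY}. The first, asserting the directness of the sum of spans of the $\Y^\prime_i$ appearing in (\ref{equationBYprime}), is obtained by the same maximality argument as in Lemma \ref{Lemma:cap=0}, invoking Lemmas \ref{lemma:complement} and \ref{Lemma:CXno0} and Corollary \ref{coro:basis}, transposed for primitive matrices of shape $(\C,1)$. The second, asserting the disjointness of the contributions $\sum_{\W\in \mathcal{P}^\prime_{\X^\prime}} \span(\overline{\W})$ across different base matrices $\X^\prime\in \mathcal{P}^{\prime 1}$, follows by tensoring the direct sum decomposition of $(H_1/H_0)^{coH_0}$ by $H_0$ through the Hopf-module isomorphism recalled above.

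Finally, these pieces assemble just as in the proof of Theorem \ref{coro:BXcomplete}. Lemma \ref{Lemma:XinCPD} gives the inclusion $\sum_{\X^\prime\in{}^{\C}\mathcal{P}^{\prime\D}} \span(\overline{\X^\prime}) \subseteq ({}^{C}H_1{}^{D}+H_0)/H_0$, while Lemma \ref{lemma:sum2} gives the direct sum decomposition $H_1/H_0 = \bigoplus_{C,D\in\mathcal{S}} ({}^{C}H_1{}^{D}+H_0)/H_0$. Comparing these with the equality $H_1/H_0 = \sum_{\X^\prime\in\mathcal{P}^\prime}\span(\overline{\X^\prime})$ established in the first step forces equality in each $(C,D)$-component, so ${}^{\C}\mathcal{P}^{\prime\D}$ is a complete family and the total sum is direct. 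The main obstacle is not conceptual but bookkeeping: one must confirm that each auxiliary one-sided statement borrowed from \cite{LL22} admits the required right-sided version (either directly or by applying the result to $H^{\mathrm{op}}$), and that the bijectivity of $S$ is invoked precisely where needed so that the right-sided fundamental theorem is applicable.
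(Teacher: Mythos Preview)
Your proposal is correct and is precisely the approach the paper takes: the paper states that the proof of Theorem \ref{coro:BXcompleteprime} ``can be completed by the method analogous to that used in the proofs of Theorem \ref{coro:BXcomplete} and Corollary \ref{coro:CP,PD},'' and your outline faithfully reproduces that analogy, including the key point (already noted in the paper just before the theorem) that bijectivity of $S$ via \cite[Corollary 3.6]{Rad77} is what makes the fundamental theorem for the mixed Hopf module $H_1/H_0\in{}_H\mathcal{M}^H$ available in the right-handed setting.
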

\begin{corollary}\label{coro:CP^prime,PD}
Let $C, D\in\mathcal{S}$ with basic multiplicative matrices $\C, \D\in\mathcal{M}$ respectively. Denote
\begin{eqnarray*}
{}^{\C}\mathcal{P}^{\prime}:=\bigcup_{\D\in\mathcal{M}}{}^{\C}\mathcal{P}^{\prime \D},
\end{eqnarray*}
which is a disjoint union of ${}^{\C}\mathcal{P}^{\prime \D}$ $(\D\in\mathcal{M})$ defined in Theorem \ref{coro:BXcomplete}.
Then $$({}^{C}H_1+H_0)/H_0= \bigoplus_{\X\in {}^{\C}\mathcal{P}^\prime} \span(\overline{\X}).$$
Moreover, we have \begin{eqnarray*}
\mathcal{P}^{\prime \D}=\bigcup_{\C\in\mathcal{M}} {}^{\C}\mathcal{P}^{\prime \D},
\end{eqnarray*}
which is also a disjoint union, and that $$ (H_1{}^{D}+H_0)/H_0=\bigoplus_{\X\in \mathcal{P}^{\prime \D}} \span(\overline{\X}).$$
\end{corollary}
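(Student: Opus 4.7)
The plan is to adapt the proof of Corollary \ref{coro:CP,PD} essentially verbatim, interchanging the roles of left and right where needed, because the primed construction in Section 3.2 is formally symmetric to the unprimed one. By the very definition of ${}^{\C}\mathcal{P}^{\prime\D}$ as the set of non-trivial $(\C,\D)$-primitive matrices contained in $\mathcal{P}^\prime$, the equalities
$${}^{\C}\mathcal{P}^{\prime}=\bigcup_{\D\in\mathcal{M}}{}^{\C}\mathcal{P}^{\prime\D},\qquad \mathcal{P}^{\prime\D}=\bigcup_{\C\in\mathcal{M}}{}^{\C}\mathcal{P}^{\prime\D}$$
hold tautologically. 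Each union is disjoint because a non-trivial $(\C,\D)$-primitive matrix uniquely determines the pair of basic multiplicative matrices, so the same matrix cannot lie in two different ${}^{\C}\mathcal{P}^{\prime\D}$.

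Next I would pair Theorem \ref{coro:BXcompleteprime} with Lemma \ref{lemma:sum2} to pin down each ${}^C(H_1/H_0){}^D$ summand. Theorem \ref{coro:BXcompleteprime} gives $H_1/H_0=\bigoplus_{\X^\prime\in\mathcal{P}^\prime}\span(\overline{\X^\prime})$, and by Lemma \ref{Lemma:XinCPD} each summand corresponding to $\X^\prime\in{}^{\C}\mathcal{P}^{\prime\D}$ sits inside $({}^CH_1{}^D+H_0)/H_0$. Since Lemma \ref{lemma:sum2} provides $H_1/H_0=\bigoplus_{C,D\in\mathcal{S}}({}^CH_1{}^D+H_0)/H_0$, matching the two direct-sum decompositions forces
$$({}^CH_1{}^D+H_0)/H_0=\bigoplus_{\X^\prime\in{}^{\C}\mathcal{P}^{\prime\D}}\span(\overline{\X^\prime}),$$
which is exactly the analogue of equation (\ref{cpd=sumx}) appearing in the proof of Theorem \ref{coro:BXcomplete}.

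To obtain the first claimed identity, I would apply \cite[Proposition 2.6(4)]{LL22} to get ${}^CH_1=\bigoplus_{D\in\mathcal{S}}{}^CH_1{}^D$ and then sum the previous identity over $D\in\mathcal{S}$:
$$({}^CH_1+H_0)/H_0=\sum_{D\in\mathcal{S}}({}^CH_1{}^D+H_0)/H_0=\bigoplus_{\X^\prime\in{}^{\C}\mathcal{P}^{\prime}}\span(\overline{\X^\prime}).$$
For the second identity, namely $(H_1{}^D+H_0)/H_0=\bigoplus_{\X^\prime\in\mathcal{P}^{\prime\D}}\span(\overline{\X^\prime})$, the argument is already spelled out in the proof of Corollary \ref{coro:CP,PD}: one uses the same decomposition ${H_1{}^D}=\bigoplus_{C\in\mathcal{S}}{}^CH_1{}^D$ and sums over $C\in\mathcal{S}$, and the argument carries over verbatim to the primed case since it only uses the identity $({}^CH_1{}^D+H_0)/H_0=\bigoplus_{\X^\prime\in{}^{\C}\mathcal{P}^{\prime\D}}\span(\overline{\X^\prime})$ established above.

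There is no substantive obstacle here: the entire content lives in Theorem \ref{coro:BXcompleteprime}, whose proof was already flagged as parallel to Theorem \ref{coro:BXcomplete}. The only care needed is to verify that the formal structure of the primed construction (built from $(\C,1)$-primitive matrices $\Y^\prime$ together with the block expansion (\ref{equationBYprime}) and the right-coinvariant version of the fundamental theorem for Hopf modules mentioned just before Theorem \ref{coro:BXcompleteprime}) yields the same type of decomposition as in the unprimed case, so that every citation to Lemma \ref{Lemma:cap=0}, Lemma \ref{lemma:PXnoPY} and Lemma \ref{Lem:P=H_1/H_0} goes through with left-right interchanged.
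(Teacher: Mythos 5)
Your proposal is correct and coincides with the paper's own treatment: the paper gives no separate argument for this corollary, stating only that it follows by the method of Theorem \ref{coro:BXcomplete} and Corollary \ref{coro:CP,PD} with the primed construction, which is exactly the left--right interchange you carry out (establishing the primed analogue of (\ref{cpd=sumx}) via Theorem \ref{coro:BXcompleteprime}, Lemma \ref{Lemma:XinCPD} and Lemma \ref{lemma:sum2}, then summing over one index using \cite[Proposition 2.6(4)]{LL22}). The only cosmetic difference is that you justify disjointness directly from the fact that a non-trivial primitive matrix determines its pair $(C,D)$, whereas the paper reads it off from the direct-sum decomposition; both are valid.
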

So far, for any $C, D\in\mathcal{S}$ with $\dim_{\k}(C)=r^2, \dim(D)_{\k}=s^2$, we have already constructed two complete families of non-trivial $(\C, \D)$-primitive matrices over $H$. According to Corollary \ref{coro:complete number}, the cardinal number $\mid{}^{\C}\mathcal{P}^{\D}\mid=\mid{}^{\C}\mathcal{P}^{\prime \D}\mid=\frac{1}{rs}\dim_{\k}\left((C\wedge D)/(C+D)\right)$. Thus we can determine the number $\frac{1}{rs}\dim_{\k}\left((C\wedge D)/(C+D)\right)$ by studying ${}^{\C}\mathcal{P}^{\D}$ and ${}^{\C}\mathcal{P}^{\prime \D}$.
\section{Link quiver}\label{section4}
\subsection{$\Bbb{Z}_+$-rings}
Let $H$ be a Hopf algebra over $\k$ with the dual Chevalley property. For convenience, we still use the notations in Section \ref{section3}.

Let $\Bbb{Z}\mathcal{S}$ be the free additive abelian group generated by the elements of $\mathcal{S}$. For our purpose, let us start by giving a unital based $\Bbb{Z}_+$-ring structure on $\Bbb{Z}\mathcal{S}$. The related definitions and properties of $\Bbb{Z}_+$rings can be found in \cite[Section 2]{Ost03} and \cite[Chapter 3]{EGNO15}.

Let $\Bbb{Z}_+$ be the set of nonnegative integers. Some relevant concepts and results are recalled as follows.
\begin{definition}\emph{(}\cite[Definitions 2.1 and 2.2]{Ost03}\emph{)}
Let $A$ be an associative ring with unit which is free as a $\Bbb{Z}$-module.
\begin{itemize}
  \item[(1)]A $\Bbb{Z}_+$-basis of $A$ is a basis $B=\{b_{i}\}_{i\in I}$ such that $b_ib_j=\sum_{t\in I}c_{ij}^tb_t$, where $c_{ij}^t\in\Bbb{Z}_+$.
  \item[(2)]A ring with a fixed $\Bbb{Z}_+$-basis $\{b_i\}_{i\in I}$ is called a unital based ring if the following conditions hold:
  \begin{itemize}
  \item[(i)]$1$ is a basis element.
  \item[(ii)]Let $\tau: A\rightarrow \Bbb{Z}$ denote the group homomorphism defined by
  $$\tau(b_i)=\left\{
\begin{aligned}
1,~~~  \text{if} ~~~ b_i=1, \\
0,~~~  \text{if} ~~~ b_i\neq1.
\end{aligned}
\right.$$
There exists an involution $i \mapsto i^*$ of $I$ such that the induced map
$$a=\sum\limits_{i\in I}a_ib_i \mapsto a^*=\sum\limits_{i\in I}a_ib_{i^*},\;\; a_i\in \Bbb{Z}$$ is an anti-involution of $A$, and
$$\tau(b_ib_j)=\left\{
\begin{aligned}
1,~~~  \text{if} ~~~ i=j^*, \\
0,~~~  \text{if} ~~~ i\neq j^*.
\end{aligned}
\right.$$
  \end{itemize}
  \item[(3)]A fusion ring is a unital based ring of finite rank.
\end{itemize}
\end{definition}
It is straightforward to show the following lemma.
\begin{lemma}\emph{(}cf. \cite[Exercise 3.3.2]{EGNO15}\emph{)}\label{lemma:transitive}
Suppose $A$ is a unital based ring with $\Bbb{Z}_+$-basis $I$, then for any $X, Z\in I$, there exist $Y_1, Y_2$ such that $XY_1$ and $Y_1Z$ contain $Z$ with a nonzero coefficient.
\end{lemma}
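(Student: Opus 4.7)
The plan is to exploit the anti-involution $i \mapsto i^{*}$ on the basis $I$ in order to manufacture the unit $1$ inside a suitable three-fold product, and then read off $Y_1$ and $Y_2$ from associativity by grouping the factors two different ways. (The statement as typeset appears to have a typo: the natural symmetric reading, which matches the referenced EGNO exercise, is that $XY_1$ and $Y_2 X$ both contain $Z$ with nonzero coefficient; I address that reading.)

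First I would observe that for any $X \in I$, the product $X X^{*}$ contains the basis element $1$ with coefficient exactly $1$. Indeed, writing $X X^{*} = \sum_{t \in I} c^{t} b_{t}$ with $c^{t} \in \Bbb{Z}_+$, the defining property $\tau(b_i b_j) = \delta_{i,j^*}$ applied to $X = b_i$, $X^{*} = b_{i^*}$ gives $\tau(X X^{*}) = 1$, and by definition $\tau$ extracts the coefficient of $1$. Thus we may write $XX^{*} = 1 + R$ with $R$ a nonnegative integer combination of basis elements. Multiplying on the right by $Z$ yields
\begin{equation*}
X X^{*} Z \;=\; Z + R Z,
\end{equation*}
and $RZ$ is still a nonnegative integer combination of basis elements, so $X X^{*} Z$ contains $Z$ with coefficient at least $1$.

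Next I will regroup the same product to extract $Y_1$. Expanding $X^{*} Z = \sum_{i \in I} n_{i} b_{i}$ with $n_{i} \in \Bbb{Z}_+$ and using associativity,
\begin{equation*}
X X^{*} Z \;=\; \sum_{i \in I} n_{i}\,(X b_{i}).
\end{equation*}
Since the left-hand side contains $Z$ with positive coefficient and each term $n_{i}(X b_{i})$ on the right is a nonnegative combination of basis elements, there must exist an index $i_{0}$ with $n_{i_{0}} \neq 0$ such that $X b_{i_{0}}$ contains $Z$ with positive coefficient. Setting $Y_1 := b_{i_{0}}$ proves the first half of the claim. For $Y_2$ the argument is entirely symmetric: the identity $\tau(X^{*} X) = 1$ gives $X^{*} X = 1 + R'$, hence $Z X^{*} X$ contains $Z$; expanding $Z X^{*} = \sum_{j} m_{j} b_{j}$ yields $Z X^{*} X = \sum_{j} m_{j}(b_{j} X)$, forcing some $b_{j_{0}} = Y_2$ with $Y_2 X$ containing $Z$ with positive coefficient.

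There is essentially no obstacle beyond unpacking the definitions; the entire content of the lemma rests on the observation that $X X^{*}$ and $X^{*} X$ both contain the unit, which is an immediate consequence of the anti-involution axiom combined with $\Bbb{Z}_+$-positivity of the structure constants. If one insists on the statement exactly as printed (with $Y_{1} Z$ in place of $Y_{2} X$), then the assertion is false in general, and the typo should be corrected to the symmetric form above.
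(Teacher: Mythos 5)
Your proof is correct and follows essentially the same route as the paper's one-line argument: the paper likewise takes $Y_1$ to be a suitable summand of $X^*Z$ and $Y_2$ a suitable summand of $ZX^*$, using that the relevant products with the dual basis element contain $1$, and you have merely written out the associativity and $\Bbb{Z}_+$-positivity details that make ``suitable summand'' precise. You are also right that the statement as printed contains a typo ($Y_1Z$ should read $Y_2X$, as in the cited EGNO exercise); the paper's own proof, and its later application, establish and use exactly that corrected form.
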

\begin{proof}
Since both $XX^*$ and $Z^*Z$ contain $1$ with a nonzero coefficient, we can take $Y_1$ to be a suitable summand of $X^*Z$ and $Y_2$ to be a suitable summand of $ZX^*$.
\end{proof}
For any $B, C\in\mathcal{S}$ with basic multiplicative matrices $\B, \C \in\mathcal{M}$ respectively.
Since $H$ has the dual Chevalley property, it follows from
\cite[Proposition 2.6(2)]{Li22a} that there exists an invertible matrix $L$ over $\k$ such that
\begin{equation}\label{equationCD}
L
(\B\odot^{\prime}\C) L^{-1}=
\left(\begin{array}{cccc}
      \E_1 & 0 & \cdots & 0  \\
      0 & \E_2 & \cdots & 0  \\
      \vdots & \vdots & \ddots & \vdots  \\
      0 & 0 & \cdots & \E_t
    \end{array}\right),
    \end{equation}
where $\E_1, \E_2, \cdots, \E_t$ are basic multiplicative matrices over $H$.\\

Define a multiplication on $\Bbb{Z}\mathcal{S}$ as follow: for $B, C\in \mathcal{S}$,
$$B\cdot C=\sum\limits_{i=1}^t E_i,$$
where $E_1, \cdots, E_t\in\mathcal{S}$ are well-defined with basic multiplicative matrices $\E_i\in\mathcal{M}$ as in (\ref{equationCD}).

With the multiplication defined above, now we can prove the following proposition by using Lemma \ref{lemma:Kronecker}.
\begin{proposition}\label{Prop:basedring}
Let $H$ be a Hopf algebra over $\k$ with the dual Chevalley property and $\mathcal{S}$ be the set of all the simple subcoalgebras of $H$. Then $\Bbb{Z}\mathcal{S}$ is a unital based ring with $\Bbb{Z}_+$-basis $\mathcal{S}$.
\end{proposition}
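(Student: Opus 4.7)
The plan is to verify each axiom of a unital based ring on $\Bbb{Z}\mathcal{S}$ with distinguished basis $\mathcal{S}$. Every step rests on two observations: the outer product $\B \odot' \C$ is itself a multiplicative matrix whose entries span the product subspace $BC \subseteq H_0$, and the dual Chevalley property makes $H_0$ a cosemisimple Hopf subalgebra, so $BC$ decomposes intrinsically as a direct sum of simple subcoalgebras with well-defined multiplicities.

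First I would prove well-definedness of the product. If $\B', \C'$ are other basic multiplicative matrices of $B, C$, then $\B' = L_1 \B L_1^{-1}$ and $\C' = L_2 \C L_2^{-1}$ for invertible $L_1, L_2$ over $\k$. Combining Lemma \ref{lemma:Kronecker}(1) on the outer factor with Lemma \ref{lemma:Kronecker}(3) to switch to $\odot$, then Lemma \ref{lemma:Kronecker}(2) on the inner factor, and finally switching back via (3), shows that $\B' \odot' \C'$ is similar over $\k$ to $\B \odot' \C$. Consequently, the multiset of simple blocks $\{\E_1, \ldots, \E_t\}$ appearing in (\ref{equationCD}) is independent of all the choices, and the expression $B \cdot C = \sum_i E_i$ depends only on $B$ and $C$. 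Collecting repeated simples yields $B \cdot C = \sum_{F \in \mathcal{S}} m_{B,C}^F \, F$ with $m_{B,C}^F \in \Bbb{Z}_+$, establishing $\mathcal{S}$ as a $\Bbb{Z}_+$-basis. The unit axiom is immediate: $\k 1 \in \mathcal{S}$ has basic multiplicative matrix $(1)$, and $(1) \odot' \C = \C = \C \odot' (1)$ gives $\k 1 \cdot C = C = C \cdot \k 1$. Associativity follows because $(\B \odot' \C) \odot' \D$ and $\B \odot' (\C \odot' \D)$ are similar multiplicative matrices whose entries both span $BCD$, hence share the same block decomposition into simples.

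Next I would construct the involution. Since $H$ has the dual Chevalley property, by \cite[Corollary 3.6]{Rad77} the antipode $S$ is bijective; it restricts to an anti-Hopf automorphism of $H_0$ and hence permutes $\mathcal{S}$. Set $C^* := S(C)$. The anti-involution identity $(B \cdot C)^* = C^* \cdot B^*$ is forced by $S(BC) = S(C)S(B)$ compared with the uniqueness of simple decompositions on both sides. For the counit identity $\tau(B \cdot C) = \delta_{C, B^*}$, I would interpret the coefficient $m_{B,C}^{\k 1}$ as the multiplicity of the trivial comodule $\k$ in the tensor product $V_B \otimes V_C$, where $V_B, V_C$ are the simple right $H_0$-comodules with coefficient coalgebras $B, C$. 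A direct computation with basic multiplicative matrices shows that $V_B^*$ has coefficient coalgebra $S(B) = B^*$, so by Schur's lemma over the algebraically closed field $\k$ this multiplicity equals $\dim \Hom_{H_0}(V_B^*, V_C) = \delta_{V_C, V_B^*} = \delta_{C, B^*}$.

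The main obstacle will be verifying that $C \mapsto C^*$ is genuinely an involution, i.e., $S^2(C) = C$ for every $C \in \mathcal{S}$; equivalently $V_C^{**} \cong V_C$ as $H_0$-comodules. This is not formal, since $S^2$ is a nontrivial Hopf automorphism of $H_0$ in general and may a priori permute simple subcoalgebras. In the setting of the paper, namely $H_0$ cosemisimple over the algebraically closed field $\k$, the required statement is available from a Larson--Radford-type theorem applied to $H_0$; this is the one substantive input outside the matrix formalism. Every other step is routine bookkeeping built on Lemma \ref{lemma:Kronecker} and the canonical decomposition of cosemisimple subcoalgebras of $H_0$.
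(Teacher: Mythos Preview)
Your argument for the ring axioms follows the paper's exactly: both establish associativity and well-definedness by showing via Lemma~\ref{lemma:Kronecker} that the relevant $\odot'$-products are similar over $\k$, hence have the same block decomposition into basic multiplicative matrices. The paper spells out the associativity calculation in a bit more detail, but the idea is the same.

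The difference is in the based-ring axioms. For the $\tau$-condition you argue categorically via $\Hom_{H_0}(V_B^*,V_C)$ and Schur's lemma, which is correct and arguably more transparent than the paper's direct citation of \cite[Theorem~2.7]{Lar71}. For the involution, you correctly isolate $S^2(C)=C$ as the only nontrivial point, but your proposed justification is imprecise: the Larson--Radford theorem asserts $S^2=\id$ only for cosemisimple Hopf algebras in characteristic zero, whereas Proposition~\ref{Prop:basedring} is stated over an arbitrary algebraically closed field. The result actually required---and the one the paper invokes---is \cite[Theorem~3.3]{Lar71}: for any cosemisimple Hopf algebra the antipode restricts to an involution on the space of cocommutative elements. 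Since irreducible characters are cocommutative and separate simple subcoalgebras, this yields $S^2(C)=C$ with no characteristic hypothesis. Once you replace ``Larson--Radford'' by this earlier Larson result, your proof is complete and essentially parallel to the paper's.
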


\begin{proof}
For any $B, C, D\in\mathcal{S}$ with basic multiplicative matrices $\B, \C, \D$ respectively.
With the notations in (\ref{equationCD}), we know that $$(B\cdot C)\cdot D=\sum\limits_{i=1}^t E_i\cdot D.$$
By Lemma \ref{lemma:Kronecker}, we have
\begin{eqnarray*}
\left(\begin{array}{cccc}
      \E_1\odot^\prime \D & 0 & \cdots & 0  \\
      0 & \E_2\odot^\prime \D & \cdots & 0  \\
      \vdots & \vdots & \ddots & \vdots  \\
      0 & 0 & \cdots & \E_t\odot^\prime \D
    \end{array}\right)
    &\sim&
    \left(\begin{array}{cccc}
      \E_1\odot \D & 0 & \cdots & 0  \\
      0 & \E_2\odot \D & \cdots & 0  \\
      \vdots & \vdots & \ddots & \vdots  \\
      0 & 0 & \cdots & \E_t\odot \D
    \end{array}\right)\\
    &=&(L(\B\odot^\prime \C)L^{-1})\odot \D\\
    &\sim&(L(\B\odot^\prime \C)L^{-1})\odot^\prime \D\\
    &\sim&(\B\odot^\prime \C) \odot^\prime \D.
\end{eqnarray*}
Suppose that $$C\cdot D=\sum\limits_{i=1}^s F_i,$$ where $F_i\in\mathcal{S}$ for any $1\leq i\leq s,$ which means that
$$
L^\prime
(\C\odot^{\prime}\D) L^{\prime -1}=
\left(\begin{array}{cccc}
      \F_1 & 0 & \cdots & 0  \\
      0 & \F_2 & \cdots & 0  \\
      \vdots & \vdots & \ddots & \vdots  \\
      0 & 0 & \cdots & \F_s
    \end{array}\right)
$$
for some invertible matrix $L^\prime $ over $\k$.
Then $$B\cdot(C\cdot D)=\sum\limits_{i=1}^s B\cdot F_i.$$
By Lemma \ref{lemma:Kronecker}, we have
\begin{eqnarray*}
\left(\begin{array}{cccc}
      \B\odot^\prime\F_1 & 0 & \cdots & 0  \\
      0 &  \B\odot^\prime\F_2 & \cdots & 0  \\
      \vdots & \vdots & \ddots & \vdots  \\
      0 & 0 & \cdots &  \B\odot^\prime\F_s
    \end{array}\right)
&=&\B\odot^{\prime}
 \left(\begin{array}{cccc}
     \F_1 & 0 & \cdots & 0  \\
      0 &  \F_2 & \cdots & 0  \\
      \vdots & \vdots & \ddots & \vdots  \\
      0 & 0 & \cdots & \F_s
    \end{array}\right)\\
&=&\B\odot^\prime \left( L^\prime(\C\odot^{\prime}\D) L^{\prime -1} \right)\\
&\sim&\B\odot\left( L^\prime(\C\odot^{\prime}\D) L^{\prime -1} \right)\\
&\sim&\B\odot(\C\odot^\prime\D)\\
&\sim&\B\odot^\prime(\C\odot^\prime\D).
\end{eqnarray*}
As a conclusion, we have
$$\left(\begin{array}{cccc}
      \E_1\odot^\prime \D & 0 & \cdots & 0  \\
      0 & \E_2\odot^\prime \D & \cdots & 0  \\
      \vdots & \vdots & \ddots & \vdots  \\
      0 & 0 & \cdots & \E_t\odot^\prime \D
    \end{array}\right)
    \sim
    \left(\begin{array}{cccc}
      \B\odot^\prime\F_1 & 0 & \cdots & 0  \\
      0 &  \B\odot^\prime\F_2 & \cdots & 0  \\
      \vdots & \vdots & \ddots & \vdots  \\
      0 & 0 & \cdots &  \B\odot^\prime\F_s
    \end{array}\right).
$$
It follows that the traces of these two matrices are equal.
Thus a direct verification gives rise to the fact that $\Bbb{Z}\mathcal{S}$ is a unital $\Bbb{Z}_+$-ring.
Let $S$ be the antipode of $H$, then according to \cite[Theorem 3.3]{Lar71}, we get an anti-involution $C\mapsto S(C)$ of $\mathcal{S}$. It follows from \cite[Theorem 2.7]{Lar71} that there is only one $1$ in the summand of $C\cdot S(C)$, which means that
$\Bbb{Z}\mathcal{S}$ is a based ring.
\end{proof}
Note that if in addition $H_0$ is finite-dimensional, it is clear that $\Bbb{Z}\mathcal{S}$ is a fusion ring.
In this situation, we can study \textit{Frobenius-Perron dimensions} in $\Bbb{Z}\mathcal{S}$. The reader is referred to \cite[Chapter 3]{EGNO15} and \cite[Section 3]{Et02} for details.

For any $C\in\mathcal{S}$, let $\operatorname{FPdim}(C)$ be the maximal non-negative eigenvalue of the matrix of left multiplication by $C$. Since this matrix has non-negative entries, it follows from the Frobenius-Perron theorem that $\operatorname{FPdim}(C)$ exists. Furthermore, $\operatorname{FPdim}$ is the unique character of $\Bbb{Z}\mathcal{S}$ which takes non-negative values on $\mathcal{S}$.
\begin{lemma}\label{Prop:fpdim}
If $H_0$ is finite-dimensional, then for any $C\in\mathcal{S}$, we have $\operatorname{FPdim}(C)=\sqrt{\dim_{\k}(C)}$.
\end{lemma}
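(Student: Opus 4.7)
The plan is to identify $\operatorname{FPdim}$ on $\Bbb{Z}\mathcal{S}$ with the square-root-dimension function by exploiting the uniqueness of the Frobenius--Perron character. Concretely, for each $C\in\mathcal{S}$ let $r(C)$ denote the size of the basic multiplicative matrix $\C\in\mathcal{M}$ of $C$, so that $\dim_{\k}(C)=r(C)^2$, i.e.\ $r(C)=\sqrt{\dim_{\k}(C)}$. Define a group homomorphism $d:\Bbb{Z}\mathcal{S}\to\Bbb{Z}$ by $d(C):=r(C)$ on the basis and extend linearly. The plan is to show that $d$ is a ring homomorphism with positive values on $\mathcal{S}$, and then to invoke the uniqueness statement recalled in the paragraph preceding the lemma to conclude $d=\operatorname{FPdim}$.

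For multiplicativity, I would use the definition of the product in $\Bbb{Z}\mathcal{S}$ given in (\ref{equationCD}): if $\B$ is $r\times r$ and $\C$ is $s\times s$, then $\B\odot^{\prime}\C$ is an $rs\times rs$ matrix, and since similarity preserves size, the block-diagonal decomposition $\mathrm{diag}(\E_{1},\dots,\E_{t})$ on the right-hand side of (\ref{equationCD}) must have block sizes $r(E_{i})$ summing to $rs$. Thus
\begin{equation*}
d(B\cdot C)=\sum_{i=1}^{t}r(E_{i})=rs=d(B)d(C),
\end{equation*}
and since $d(1)=1$, the map $d$ is a unital ring homomorphism, i.e.\ a character of $\Bbb{Z}\mathcal{S}$. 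By construction $d(C)=r(C)\geq 1>0$ for every $C\in\mathcal{S}$, so $d$ takes (strictly) positive values on the $\Bbb{Z}_+$-basis.

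Since $H_{0}$ is finite-dimensional, $\Bbb{Z}\mathcal{S}$ is a fusion ring (as noted right after Proposition \ref{Prop:basedring}), so Frobenius--Perron theory applies: as recalled in the paragraph preceding the lemma, $\operatorname{FPdim}$ is the \emph{unique} character of $\Bbb{Z}\mathcal{S}$ taking non-negative values on $\mathcal{S}$. Therefore $d=\operatorname{FPdim}$, which yields $\operatorname{FPdim}(C)=r(C)=\sqrt{\dim_{\k}(C)}$ for every $C\in\mathcal{S}$.

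No step here looks hard; the only place that deserves a double-check is the size-counting identity $\sum r(E_{i})=rs$, which is immediate from the fact that the similarity (\ref{equationCD}) is carried out by a single invertible $rs\times rs$ scalar matrix $L$, so the block-diagonal matrix on the right has the same total size $rs$ as $\B\odot^{\prime}\C$. Everything else is an application of results already established (Proposition \ref{Prop:basedring} for the ring structure, and the quoted uniqueness of $\operatorname{FPdim}$ from \cite{EGNO15}).
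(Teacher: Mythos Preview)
Your proof is correct and follows essentially the same approach as the paper: both arguments identify $C\mapsto\sqrt{\dim_{\k}(C)}$ as a character taking positive values on $\mathcal{S}$ and then invoke the uniqueness of $\operatorname{FPdim}$. The paper's proof is a single sentence asserting this without justification, whereas you supply the size-counting verification of multiplicativity from (\ref{equationCD}); this extra detail is sound and welcome.
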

\begin{proof}
This is because $$C\mapsto \sqrt{\dim_{\k}(C)}$$
is exactly the unique character of $\Bbb{Z}\mathcal{S}$ which take non-negative values.
\end{proof}
\subsection{Properties for link quiver}
In this subsection, let $H$ be a non-cosemisimple Hopf algebra over $\k$ with the dual Chevalley property and $H_1/H_0$ is finite-dimensional. Note that according to Lemma \ref{Lem:P=H_1/H_0}, we know that $\mathcal{P}$ is a finite set in this situation.
Besides, for any matrix $\A=(a_{ij})_{m\times n}$ over $H$, denote the matrix
$\A^T:=(a_{ji})_{n\times m}$ and $S(\A):=(S(a_{ij}))_{m\times n}$, where $S$ is the antipode of $H$.

Now let us recall the concept of link quiver.
\begin{definition}\emph{(}\cite[Definition 4.1]{CHZ06}\emph{)}
Let $H$ be a coalgebra over $\k$. The link quiver $\mathrm{Q}(H)$ of $H$ is defined as follows: the vertices of $\mathrm{Q}(H)$ are the elements of $\mathcal{S}$; for any simple subcoalgebra $C, D\in \mathcal{S}$ with $\dim_{\k}(C)=r^2, \dim_{\k}(D)=s^2$, there are exactly $\frac{1}{rs}\dim_{\k}((C\wedge D)/(C+D))$ arrows from $D$ to $C$.
\end{definition}

With the notations in Section \ref{section3}, we can view $^{\C}{\mathcal{P}}^{\D}$ as the set of arrows from vertex $D$ to vertex $C$, view ${\mathcal{P}^{\D}}$ as the set of arrows with start vertex $D$ and view ${^{\C}\mathcal{P}}$ as the set of arrows with end vertex $C$. Similar statements can also be applied to $\mathcal{P}^\prime$.

Now we start to study the properties for the link quiver of a Hopf algebra with the dual Chevalley property.
\begin{lemma}\label{lemma:P^1=1^P}
Let $H$ be a non-cosemisimple Hopf algebra over $\k$ with the dual Chevalley property. Denote ${}^1\mathcal{S}=\{C\in\mathcal{S}\mid \k1+C\neq\k1\wedge C \}$, $\mathcal{S}^1=\{C\in\mathcal{S}\mid C+\k1\neq C\wedge \k1\}$. Then
\begin{itemize}
  \item[(1)]$\mid{^1\mathcal{P}}\mid\geq1$;
  \item[(2)]$\mid{^1\mathcal{P}}\mid=\mid\mathcal{P}^1\mid$;
  \item[(3)]$C\in{}^1\mathcal{S}$ if and only if $S(C)\in\mathcal{S}^1$.
  \end{itemize}
\end{lemma}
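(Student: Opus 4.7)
The plan is to treat (1), (3), (2) in this order, since (2) follows from (3) together with the cardinality formula in Corollary \ref{coro:complete number}. The underlying engine for (2) and (3) is that the antipode $S$ of $H$ is bijective (by \cite[Corollary 3.6]{Rad77}, using the dual Chevalley property) and satisfies the anti-coalgebra identity $\Delta \circ S = (S \otimes S) \circ \tau \circ \Delta$, where $\tau$ is the flip on $H \otimes H$.

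For (1), I would use non-cosemisimplicity of $H$ to get $H_0 \subsetneq H_1$, hence $H_1/H_0 \neq 0$. Theorem \ref{coro:BXcomplete} gives $H_1/H_0 = \bigoplus_{\X \in \mathcal{P}} \span(\overline{\X})$, so $\mathcal{P} \neq \emptyset$. The rewriting $\mathcal{P} = \bigcup_{\Y \in {}^1\mathcal{P}} \mathcal{P}_{\Y}$ from (\ref{definition:P}) then forces ${}^1\mathcal{P} \neq \emptyset$, i.e.\ $|{}^1\mathcal{P}| \geq 1$.

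For (3), a brief computation using $\Delta \circ S = (S \otimes S) \circ \tau \circ \Delta$ shows that $h \in \k1 \wedge C$, i.e.\ $\Delta(h) \in \k1 \otimes H + H \otimes C$, is equivalent to $\Delta(S(h)) \in S(C) \otimes H + H \otimes \k1$, i.e.\ $S(h) \in S(C) \wedge \k1$. Combined with the obvious $S(\k1 + C) = \k1 + S(C)$ and bijectivity of $S$, this yields a $\k$-linear isomorphism $(\k1 \wedge C)/(\k1 + C) \cong (S(C) \wedge \k1)/(S(C) + \k1)$. In particular one side vanishes iff the other does, which is exactly $C \in {}^1\mathcal{S}$ iff $S(C) \in \mathcal{S}^1$.

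For (2), set $r^2 = \dim_{\k} C$, so $r^2 = \dim_{\k} S(C)$ as well (since $S$ is bijective). Corollary \ref{coro:complete number} gives $|{}^1\mathcal{P}^{\C}| = \frac{1}{r}\dim_{\k}\bigl((\k1 \wedge C)/(\k1 + C)\bigr)$, and for $D = S(C)$ with basic multiplicative matrix $\D$ (similar to $S(\C)^T$) one has $|{}^{\D}\mathcal{P}^1| = \frac{1}{r}\dim_{\k}\bigl((D \wedge \k1)/(D + \k1)\bigr)$. The isomorphism from (3) equates these two right-hand sides, so $|{}^1\mathcal{P}^{\C}| = |{}^{\D}\mathcal{P}^1|$. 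Since $S$ induces a bijection on $\mathcal{S}$, hence on $\mathcal{M}$ via $\C \mapsto S(\C)^T$ up to similarity, summing over $\C \in \mathcal{M}$ gives $|{}^1\mathcal{P}| = \sum_{\C} |{}^1\mathcal{P}^{\C}| = \sum_{\D} |{}^{\D}\mathcal{P}^1| = |\mathcal{P}^1|$. The main technical point is the antipode--wedge compatibility $S(\k1 \wedge C) = S(C) \wedge \k1$ used in (3); it is short but must carefully track the order-reversal of tensor factors under $S$. Once this is in hand, (2) is pure bookkeeping and (1) is immediate from Theorem \ref{coro:BXcomplete}.
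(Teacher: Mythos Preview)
Your proof is correct; in particular your argument for (1) via $H_1/H_0\neq 0$ together with Theorem~\ref{coro:BXcomplete} and the decomposition $\mathcal{P}=\bigcup_{\Y\in{}^1\mathcal{P}}\mathcal{P}_\Y$ is valid, and your treatment of (3) is essentially the paper's, only more explicit about the wedge identity $S(\k1\wedge C)=S(C)\wedge\k1$.

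For (2), your route is genuinely cleaner than the paper's. The paper shows that $S(\Y)^T$ is a non-trivial $(S(\C)^T,1)$-primitive matrix whenever $\Y\in{}^1\mathcal{P}^{\C}$ is non-trivial, deduces only the \emph{inequality} $|{}^1\mathcal{P}^{\C}|\leq |{}^{\D}\mathcal{P}^1|$ (with $\D$ a basic multiplicative matrix of $S(C)$), sums to get $|{}^1\mathcal{P}|\leq |\mathcal{P}^1|$, and then runs the same argument through the second construction $\mathcal{P}'$ together with $|{}^1\mathcal{P}|=|{}^1\mathcal{P}'|$ and $|\mathcal{P}^1|=|\mathcal{P}^{\prime 1}|$ to obtain the reverse inequality. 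You bypass all of this by combining Corollary~\ref{coro:complete number} with the linear isomorphism $(\k1\wedge C)/(\k1+C)\cong (S(C)\wedge\k1)/(S(C)+\k1)$ from (3), giving $|{}^1\mathcal{P}^{\C}|=|{}^{\D}\mathcal{P}^1|$ \emph{termwise}, and then summing over the bijection $C\mapsto S(C)$ on $\mathcal{S}$. What the paper's longer detour buys is the explicit auxiliary equality $|\mathcal{P}^1|=|\mathcal{P}^{\prime 1}|$ recorded as (\ref{P1=Pprime1}) and reused later; however that equality is immediate from Corollary~\ref{coro:complete number} (both sides are $\sum_{\C}|{}^{\C}\mathcal{P}^1|=\sum_{\C}|{}^{\C}\mathcal{P}^{\prime 1}|$), so nothing is actually lost in your shorter approach.
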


\begin{proof}
\begin{itemize}
  \item[(1)]At first, we try to find a non-trivial $(1, \F)$-primitive matrix for some $\F\in\mathcal{M}$. This can be obtained by the same reason in the proof of \cite[Lemma 4.7(1)]{LL22}, but here we prove it in another way. When $H\neq H_0$, it follows from Lemma \ref{Lem:P=H_1/H_0} that $\mathcal{P}\neq0$. So there exists some non-trivial $(\C, \D)$-primitive matrix $\X\in\mathcal{P}$. Let $K S(\C)^TK^{-1}\in\mathcal{M}$ be the basic multiplicative matrix of $S(C)$, where $K$ is some invertible matrix over $\k$. Since $S(C)\cdot C$ contains $\k1$ with a nonzero coefficient, by Lemma \ref{lemma:Kronecker}, we have
 \begin{eqnarray*}
     && (KS(\C)^TK^{-1})\odot^\prime
\left(\begin{array}{cc}
\C&\X\\
0&\D
 \end{array}\right)\\
 &=&(K\odot^\prime I) \left(S(\C)^T\odot^\prime
\left(\begin{array}{cc}
\C&\X\\
0&\D
 \end{array}\right)\right)
 (K^{-1}\odot^\prime I)\\
 &\sim&
 \left(\begin{array}{cccccccccc}
    \E_1 &  &  & &\X_{1,1} & \cdots & \X_{1,u}  \\
    & \ddots &  & &\vdots &  & \vdots  \\
     & & \E_t & &\X_{t,1} & \cdots & \X_{t,u} \\
     &&&1 &\X_{t+1,1} & \cdots & \X_{t+1,u}\\
     &  &  && F_1 &     \\
     & 0 &  &&  & \ddots    \\
     &  &  &  &&  & \F_u
  \end{array}\right),
\end{eqnarray*}
where $I$ is the identity matrix over $\k$, $\E_i, \F_j\in\mathcal{M}$ for any $1\leq i\leq t, 1\leq j\leq u.$
 By Lemma \ref{Lemma:CXno0}, there exists some $k$ such that $\X_{t+1, k}$ is non-trivial, where $1\leq k\leq u$. Thus we get a non-trivial $(1, \F_k)$-primitive matrix $\X_{t+1, k}$. \\
 According to Lemma \ref{Lemma:XinCPD}, we have  $$0\neq\span(\overline{\X_{t+1, k}})\subseteq({}^1H_1{}^{\F_k}+H_0)/H_0\subseteq({}^1H_1+H_0)/H_0.$$
 Consequently, it follows from Corollary \ref{coro:CP,PD} that $$\mid{^1\mathcal{P}}\mid\geq1.$$
  \item[(2)]
  For any $\C\in\mathcal{M}$, it is not difficult to verify that $S(\Y)^T$ is a non-trivial $(S(\C)^T, 1)$-primitive matrix, where $\Y\in{}^1\mathcal{P}$ is a non-trivial $(1, \C)$-primitive matrix.
   According to (\ref{cpd=sumx}) in the proof of Theorem \ref{coro:BXcomplete}, we have
     \begin{eqnarray*}
  S\big(({}^1H_1{}^C+H_0)/H_0\big)&=&S\big(\bigoplus_{\Y\in {}^1\mathcal{P}^{\C}} \span(\overline{\Y})\big)\\
&\subseteq& \sum_{\Y\in{}^1\mathcal{P}^{\C}}\span(\overline{S(\Y)^T}) \\
&\subseteq&({}^{S(C)}H_1{}^1+H_0)/H_0,
   \end{eqnarray*}
   the last inclusion is due to Lemma \ref{Lemma:XinCPD}.
It follows from \cite[Corollay 3.6]{Rad77} that $S$ is bijective. Hence we have
  $$\dim_{\k}(({}^1H_1{}^C+H_0)/H_0))\leq \dim_{\k}(({}^{S(C)}H_1{}^1+H_0)/H_0).$$ This implies that
  \begin{eqnarray*}
  \mid{^1\mathcal{P}^{\C}}\mid&=&\frac{1}{\sqrt{\dim_{\k}(C)}}\dim_{\k}(({}^1H_1{}^C+H_0)/H_0))\\
  &\leq&\frac{1}{\sqrt{\dim_{\k}(S(C))}}\dim_{\k}(({}^{S(C)}H_1{}^1+H_0)/H_0))\\
  &=&\mid{}^{KS(\C)^TK^{-1}}\mathcal{P}^1\mid,
  \end{eqnarray*}
  where $K$ is some invertible matrix over $\k$ such that $K S(\C)^TK^{-1}\in\mathcal{M}$ is a basic multiplicative matrix of $S(C)$.
  By Corollary \ref{coro:CP,PD} and the fact that $S$ is a permutation on $\mathcal{S}$, we have
   \begin{eqnarray*}
  \mid{^1\mathcal{P}}\mid&=& \sum_{\C\in\mathcal{M}} \mid{^1\mathcal{P}^{\C}}\mid \\
  &\leq&  \sum_{\C\in\mathcal{M}}\mid{}^{KS(\C)^TK^{-1}}\mathcal{P}^1\mid\\
  &=&\sum_{\C\in\mathcal{M}}\mid{}^{\C}\mathcal{P}^1\mid\\
  &=&\mid{\mathcal{P}^1}\mid.
  \end{eqnarray*}
  Next we adopt the same procedure to deal with $\mathcal{P}^{\prime 1}$, we get
  \begin{eqnarray*}
  \mid\mathcal{P}^{\prime 1}\mid\leq\mid{^1\mathcal{P}^\prime}\mid.
  \end{eqnarray*}
  According to Corollaries \ref{coro:CP,PD} and \ref{coro:CP^prime,PD},
  \begin{eqnarray*}
  \mid{^1\mathcal{P}}\mid=\sum_{\C\in\mathcal{M}} \mid{^1\mathcal{P}^{\C}}\mid
  =\sum_{\C\in\mathcal{M}} \mid{^1\mathcal{P}^{\prime\C}}\mid
  =\mid{^1\mathcal{P}^\prime}\mid.
  \end{eqnarray*}
  A similar arguments shows that
  \begin{eqnarray}\label{P1=Pprime1}
  \mid{\mathcal{P}^1}\mid=\mid{\mathcal{P}^{\prime 1}}\mid.
  \end{eqnarray}
   Thus the proof is completed.
    \item[(3)]It is straightforward to know that \begin{eqnarray*}
    C\in{}^1\mathcal{S}&\Longleftrightarrow& \k 1+C\neq \k 1\wedge C\\ &\Longleftrightarrow& S(C)+\k 1\neq S(C)\wedge \k1\\ &\Longleftrightarrow&  S(C)\in \mathcal{S}^1.
     \end{eqnarray*}
   \end{itemize}
\end{proof}
For convenience, denote $\mathcal{S}=\{C_i\mid i\in I\}$ be the set of all the simple subcoalgebras of $H$. For any $C_i, C_j\in\mathcal{S}$, let $C_i\cdot C_j=\sum\limits_{t\in I}\alpha_{i,j}^tC_t$, where $\alpha_{i,j}^t\in\Bbb{Z}_+$.
Moreover, we denote $\mathcal{M}=\{\C_j\mid i\in I\}$, such that each $\C_j\in\mathcal{M}$ is the basic multiplicative matrix of $C_j\in\mathcal{S}$.
By Remarks \ref{remark:ubc} and \ref{remark:ubcprime}, we can show the following results.
\begin{lemma}\label{lem:numberinPX}
Let $H$ be a non-cosemisimple Hopf algebra over $\k$ with the dual Chevalley property.
\begin{itemize}
  \item[(1)]For any $\Y\in{}^{1}\mathcal{P}$, where $\Y$ is a non-trivial $(1, \C_j)$-primitive matrix and $\C_j\in\mathcal{M}$, let $\beta_{ij}$ be the cardinal number of $^{\C_i}\mathcal{P}_{\Y}$. Then $\beta_{ij}=\sum\limits_{t\in I}\alpha_{i,j}^t\geq1$;
  \item[(2)]For any $\Y^\prime\in\mathcal{P}^{\prime 1}$, where $\Y^\prime$ is a non-trivial $(\C_j, 1)$-primitive matrix and $\C_j\in\mathcal{M}$, let $\beta_{ij}^\prime$ be the cardinal number of $\mathcal{P}_{\Y}^{\prime \C_i}$. Then $\beta_{ij}^\prime=\sum\limits_{t\in I}\alpha_{i,j}^t\geq1$.
  \end{itemize}
  \end{lemma}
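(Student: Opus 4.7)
The plan is to identify both quantities $\beta_{ij}$ and $\sum_{t\in I}\alpha_{i,j}^t$ with the same integer $u_{(\C_i,\C_j)}$ that counts the blocks in the decomposition of $\C_i\odot^{\prime}\C_j$. Once this is done, the equality $\beta_{ij}=\sum_{t\in I}\alpha_{i,j}^t$ falls out, and the lower bound $\geq 1$ reduces to the observation that the block decomposition is non-empty.

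First, I would read the cardinality of ${}^{\C_i}\mathcal{P}_{\Y}$ off equation (\ref{equationBY}). Applying that equation with $\B=\C_i$ and $\C=\C_j$ to the non-trivial $(1,\C_j)$-primitive matrix $\Y$, we pass to a conjugate of $\C_i\odot^{\prime}\begin{pmatrix}1&\Y\\0&\C_j\end{pmatrix}$ whose lower-right block is $\mathrm{diag}(\E_1,\dots,\E_{u_{(\C_i,\C_j)}})$ and whose off-diagonal part produces exactly $u_{(\C_i,\C_j)}$ non-trivial matrices $\Y_1,\dots,\Y_{u_{(\C_i,\C_j)}}$, each $\Y_k$ being a $(\C_i,\E_k)$-primitive matrix by Lemma \ref{Lemma:CXno0}. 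By the defining formula (\ref{^BPY}) together with Remark \ref{remark:ubc}, this gives $\beta_{ij}=|{}^{\C_i}\mathcal{P}_{\Y}|=u_{(\C_i,\C_j)}$.

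Next, I would identify $\sum_{t\in I}\alpha_{i,j}^t$ with the same number $u_{(\C_i,\C_j)}$ by unwinding the definition of the product in $\mathbb{Z}\mathcal{S}$. Equation (\ref{equationCD}) puts $\C_i\odot^{\prime}\C_j$ in block-diagonal form with basic multiplicative blocks, and by Proposition \ref{Prop:basedring} these blocks are exactly the summands $C_t$ of $C_i\cdot C_j$ counted with multiplicity $\alpha_{i,j}^t$. Hence the total number of blocks is $\sum_{t\in I}\alpha_{i,j}^t$, but this total is by definition $u_{(\C_i,\C_j)}$. Combining with the previous step yields $\beta_{ij}=\sum_{t\in I}\alpha_{i,j}^t$. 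The bound $\geq 1$ is immediate: $\C_i\odot^{\prime}\C_j$ is a matrix of positive size $r_is_j\times r_is_j$, so its block decomposition has at least one block, i.e.\ $u_{(\C_i,\C_j)}\geq 1$. Alternatively one may invoke Lemma \ref{lemma:transitive} applied to the unital based ring $\mathbb{Z}\mathcal{S}$.

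Finally, statement (2) is proved by the mirror argument: apply equation (\ref{equationBYprime}) in place of (\ref{equationBY}) to a non-trivial $(\C_j,1)$-primitive matrix $\Y^{\prime}$ with $\B=\C_i$, and use Remark \ref{remark:ubcprime} instead of Remark \ref{remark:ubc} to conclude $\beta_{ij}^{\prime}=u_{(\C_i,\C_j)}$; the identification with $\sum_{t\in I}\alpha_{i,j}^t$ is exactly the same as in (1). I do not anticipate any genuine obstacle here — the content of the lemma is essentially a bookkeeping translation between the block decomposition governing the multiplication on $\mathbb{Z}\mathcal{S}$ and the indexing sets ${}^{\C_i}\mathcal{P}_{\Y}$ and $\mathcal{P}_{\Y^{\prime}}^{\prime\C_i}$ already set up in Section \ref{section3}; the only mild care needed is to make sure that the $\C_i$ in $\beta_{ij}$ really corresponds to the left factor $\B$ in (\ref{equationBY}) (respectively (\ref{equationBYprime})), rather than to one of the blocks $\E_k$ on the right.
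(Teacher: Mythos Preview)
Your proposal is correct and follows exactly the approach the paper intends: the paper's entire proof is the sentence ``By Remarks \ref{remark:ubc} and \ref{remark:ubcprime}, we can show the following results,'' and your write-up is precisely the unpacking of this, identifying both $\beta_{ij}$ and $\sum_t\alpha_{i,j}^t$ with the block count $u_{(\C_i,\C_j)}$. One small attribution fix: the fact that each $\Y_k$ is a $(\C_i,\E_k)$-primitive matrix comes from the discussion after (\ref{equation:BX}) (citing \cite{Li22a,LZ19}), while Lemma \ref{Lemma:CXno0} supplies only their non-triviality.
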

For any $\Y\in{}^1\mathcal{P}$ and $\C_i\in\mathcal{M}$, denote $$\mathcal{P}^{\C_i}_{\Y}:=\mathcal{P}^{\C_i}\cap \mathcal{P}_{\Y}.$$
Using the lemma above, we can acquire further properties.
\begin{corollary}\label{coro:numberinPX}
Let $H$ be a non-cosemisimple Hopf algebra over $\k$ with the dual Chevalley property. Then for any non-trivial $(1, \C_j)$-primitive matrix  $\Y\in{}^1\mathcal{P}$, where $\C_j\in\mathcal{M}$, we have
\begin{itemize}
  \item[(1)]$\mid{{}^{\C_i}\mathcal{P}_{\Y}}\mid\geq1$, $\mid{\mathcal{P}^{\C_i}_{\Y}}\mid\geq1$ hold for all $\C_i\in\mathcal{M}$;
  \item[(2)]$\mid\mathcal{P}_{\Y}^1\mid=1$.
  \end{itemize}
\end{corollary}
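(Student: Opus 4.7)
The plan is to translate each cardinality into a multiplicity inside the based ring $\Bbb{Z}\mathcal{S}$ of Proposition \ref{Prop:basedring}, and then to read the answers off its elementary structural properties. The first inequality in part (1) requires no new work: by Lemma \ref{lem:numberinPX}(1), $\mid{}^{\C_i}\mathcal{P}_{\Y}\mid=\sum_{t\in I}\alpha_{i,j}^{t}=u_{(\C_i,\C_j)}$, and this is at least $1$ because the nonzero square matrix $\C_i\odot^{\prime}\C_j$ always admits a block diagonalization of the form (\ref{equationCD}) with at least one basic multiplicative block.

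For the remaining two assertions, the key observation is that for $\Y$ a non-trivial $(1,\C_j)$-primitive matrix and any $\B\in\mathcal{M}$, the decomposition (\ref{equationBY}) produces a family $\{\Y_{1},\ldots,\Y_{u_{(\B,\C_j)}}\}$ of non-trivial primitive matrices, where each $\Y_k$ is $(\B,\E_k)$-primitive and the multiset $\{\E_1,\ldots,\E_{u_{(\B,\C_j)}}\}\subseteq\mathcal{M}$ is exactly that of the simple summands of $B\cdot C_j\in\Bbb{Z}\mathcal{S}$. Moreover, the source and target basic multiplicative matrices of a primitive matrix are uniquely determined by its entries (Lemma \ref{Lemma:XinCPD} applied to the simple $C$-$D$-bicomodule $\span(\overline{\X})$ of Lemma \ref{lemma:rs}), so $\mathcal{P}_{\Y}=\bigsqcup_{\B\in\mathcal{M}}{}^{\B}\mathcal{P}_{\Y}$ is a disjoint union and
\[
\mid\mathcal{P}^{\C_i}_{\Y}\mid=\sum_{B\in\mathcal{S}}\,(\text{coefficient of }C_i\text{ in }B\cdot C_j),\qquad \mid\mathcal{P}^{1}_{\Y}\mid=\sum_{B\in\mathcal{S}}\,(\text{coefficient of }\k 1\text{ in }B\cdot C_j).
\]

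For the second inequality of part (1), it suffices to exhibit one $B\in\mathcal{S}$ with a positive coefficient of $C_i$ in $B\cdot C_j$. By the Frobenius-type reciprocity in the based ring $\Bbb{Z}\mathcal{S}$ (a direct consequence of the anti-involution $C\mapsto S(C)$ from Proposition \ref{Prop:basedring}), this coefficient equals the coefficient of $B$ in $C_i\cdot S(C_j)$. Since $C_i\cdot S(C_j)$ is a nonzero element of $\Bbb{Z}\mathcal{S}$ and hence has at least one simple summand, picking $B$ to be any such summand gives a nonzero contribution; this is essentially the content of Lemma \ref{lemma:transitive}. For part (2), the coefficient of $\k 1$ in $B\cdot C_j$ is exactly $\tau(B\cdot C_j)$, and by the anti-involution axiom of a unital based ring this equals $\delta_{B,S(C_j)}$. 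Thus the sum defining $\mid\mathcal{P}^{1}_{\Y}\mid$ has exactly one nonzero term, of value $1$.

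The main point requiring care is the disjoint-source bookkeeping underlying the translation, namely ensuring that every non-trivial primitive matrix in $\mathcal{P}$ has uniquely determined source and target in $\mathcal{M}$, so that both $\mid\mathcal{P}^{\C_i}_{\Y}\mid$ and $\mid\mathcal{P}^{1}_{\Y}\mid$ reduce cleanly to sums of multiplicities in $\Bbb{Z}\mathcal{S}$. This is a consequence of the results of Section \ref{section3}, particularly Lemmas \ref{lemma:rs} and \ref{Lemma:XinCPD}; once granted, both parts become immediate applications of Lemma \ref{lem:numberinPX} and the anti-involution axiom.
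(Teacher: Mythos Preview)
Your argument is correct. For part (1) you and the paper proceed in essentially the same way: both reduce the claim $\mid\mathcal{P}^{\C_i}_{\Y}\mid\geq1$ to finding some $B\in\mathcal{S}$ for which $C_i$ occurs in $B\cdot C_j$, and both invoke the transitivity of Lemma~\ref{lemma:transitive} (you phrase it as Frobenius reciprocity in the based ring, which is equivalent).

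The genuine difference is in part (2). The paper argues indirectly: from (1) it knows $\mid\mathcal{P}^{1}_{\Z}\mid\geq1$ for every $\Z\in{}^1\mathcal{P}$, then sums over $\Z$ using the disjointness of Corollary~\ref{coro:P_X} to get $\mid\mathcal{P}^{1}\mid\geq\mid{}^1\mathcal{P}\mid$, and finally appeals to Lemma~\ref{lemma:P^1=1^P}(2) (which required the antipode and a separate counting argument) to force equality term by term. Your route is more direct: once you have the formula $\mid\mathcal{P}^{1}_{\Y}\mid=\sum_{B\in\mathcal{S}}\tau(B\cdot C_j)$, the based-ring axiom $\tau(B\cdot C_j)=\delta_{B,S(C_j)}$ from Proposition~\ref{Prop:basedring} gives the value $1$ immediately. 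This bypasses Lemma~\ref{lemma:P^1=1^P} entirely for this corollary, which is a small gain; on the other hand, the paper's approach makes the role of the global equality $\mid{}^1\mathcal{P}\mid=\mid\mathcal{P}^{1}\mid$ more visible. The disjoint-union bookkeeping you flag (that the source and target of a non-trivial primitive matrix in $\mathcal{P}$ are uniquely determined) is indeed the only delicate point, and it is justified exactly as you say via Lemmas~\ref{lemma:rs} and~\ref{Lemma:XinCPD} together with the direct-sum decomposition of Lemma~\ref{lemma:sum2}.
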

\begin{proof}
\begin{itemize}
  \item[(1)]For any $\C_i\in\mathcal{M}$, it is apparent from Lemma \ref{lem:numberinPX} that $$\beta_{ij}=\mid{{}^{C_i}\mathcal{P}_{\Y}}\mid\geq 1.$$ Since $\Bbb{Z}\mathcal{S}$ is a unital based ring, according to Lemma \ref{lemma:transitive}, there exists some simple subcolagebra $C_t\in\mathcal{S}$ such that $C_t\cdot C_j$ contains $C_i$ with a nonzero coefficient. Now we consider
      $$
     \left(\begin{array}{cc}
I&0\\
0&L_{\C_t, \C_j}
 \end{array}\right)
      \left(\C_t\odot^\prime
      \left(\begin{array}{cc}
1&\Y\\
0&\C_j
 \end{array}\right)\right)
  \left(\begin{array}{cc}
I&0\\
0&L^{-1}_{\C_t, \C_j}
 \end{array}\right),
      $$
 where $I$ is the identity matrix over $\k$ and $L_{\C_t, \C_j}$ is an invertible matrix over $\k$ which is defined in Section \ref{section3}. It follows from Lemma \ref{Lemma:CXno0} that there exists some non-trivial $(\C_t, \C_i)$-primitive matrix $\Z\in {}^{\C_t}\mathcal{P}_{\Y}\subseteq \mathcal{P}_{\Y}$, where $\C_t\in\mathcal{M}$. On the other hand, we know that $\Z\in {}^{\C_t}\mathcal{P}^{\C_i}\subseteq \mathcal{P}^{\C_i},$ where the last inclusion is due to Corollary \ref{coro:CP,PD}. It follows that $$\Z\in \mathcal{P}^{\C_i}\cap \mathcal{P}_{\Y}.$$Thus $$\mid{\mathcal{P}^{\C_i}_{\Y}}\mid\geq1.$$
\item[(2)]Choosing $C_i=\k 1$ in $(1)$, we know that
\begin{eqnarray*}
\mid{\mathcal{P}^{1}_{\Y}}\mid&\geq&1=\mid{}^1\mathcal{P}_{\Y}\mid,
\end{eqnarray*}
where $${}^1\mathcal{P}_{\Y}=\{\Y\}.$$
Since $$\mathcal{P}^{1}=\mathcal{P}^{1}\cap \mathcal{P}=\bigcup_{\Z\in{}^1\mathcal{P}}(\mathcal{P}^{1}\cap \mathcal{P}_{\Z}),$$it follows from Corollary \ref{coro:P_X} that
\begin{eqnarray*}
    \mid{\mathcal{P}^{1}}\mid= \sum_{\Z\in{}^1\mathcal{P}}\mid {\mathcal{P}_{\Z}^{1}} \mid
   \geq \sum_{\Z\in{}^1\mathcal{P}}\mid{}^1\mathcal{P}_{\Z}\mid
    =\mid{{}^{1}\mathcal{P}}\mid.
    \end{eqnarray*}
    But by Lemma \ref{lemma:P^1=1^P}, we have
    $$\mid\mathcal{P}^1\mid=\mid{}^1\mathcal{P}\mid, $$
    which follows that the cardinal number $\mid{\mathcal{P}^{1}_{\Z}}\mid$ can only equal to $1$ for each $\Z\in{}^1\mathcal{P}$.
  \end{itemize}
  \end{proof}
For any $C, D\in\mathcal{S}$ with basic multiplicative matrices $\C, \D\in\mathcal{M}$ respectively.
Recall that $C$, $D$ are said to be \textit{directly linked} in $H$ if $C+D$ is a proper subspace of $C\wedge D+D\wedge C$.
Note that by \cite[Lemma 3.6 (2)]{Li22a} that $C$, $D$ are directly linked in $H$ if and if only there exists some $(\C, \D)$-primitive or $(\D, \C)$-primitive matrix, which is non-trivial.
We can use this notion to describe ${}^1\mathcal{S}$ and $\mathcal{S}^1$.

The following proposition exactly recovers the definition of Hopf quiver.
\begin{proposition}\label{proposition:link1dim===}
Let $H$ be a non-cosemisimple Hopf algebra over $\k$ with the dual Chevalley property. If all the simple subcoalgebras directly linked to $\k1$ are $1$-dimensional, then we have
$\mid{^{\C}\mathcal{P}}\mid=\mid{\mathcal{P}^{\C}}\mid=\mid{^1\mathcal{P}}\mid$,  for any $\C\in \mathcal{M}$.
\end{proposition}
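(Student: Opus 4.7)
The plan is to establish the two equalities $\mid{}^{\C}\mathcal{P}\mid=\mid{}^1\mathcal{P}\mid$ and $\mid\mathcal{P}^{\C}\mid=\mid{}^1\mathcal{P}\mid$ by dual arguments, using the first construction of Section \ref{section3} for the former and the second construction for the latter, bridged by the equality $\mid{}^{\C}\mathcal{P}^{\D}\mid=\mid{}^{\C}\mathcal{P}^{\prime\D}\mid$ from Corollary \ref{coro:complete number}. The unifying observation is that the hypothesis forces every non-trivial primitive matrix in ${}^1\mathcal{P}$ (and in $\mathcal{P}^{\prime 1}$) to be attached to a group-like element, which makes the corresponding fusion-ring products trivial.

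First I would take any $\Y\in{}^1\mathcal{P}$, which is a non-trivial $(1,\C_j)$-primitive matrix for some $C_j\in{}^1\mathcal{S}$. Since $C_j$ is directly linked to $\k1$, the hypothesis forces $\dim_{\k}(C_j)=1$, so $\C_j=g_j$ is a single group-like element. Multiplication by $g_j$ is a bijection on $\mathcal{S}$, hence for every $C_i\in\mathcal{S}$ the fusion-ring product $C_i\cdot C_j$ equals the single simple subcoalgebra $C_ig_j$; that is, $\sum_{t\in I}\alpha_{i,j}^t=1$. Lemma \ref{lem:numberinPX}(1) then gives $\mid{}^{\C_i}\mathcal{P}_{\Y}\mid=1$ for every $\C_i\in\mathcal{M}$. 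Combining Corollary \ref{coro:CP,PD} with the disjoint decomposition $\mathcal{P}=\bigcup_{\Y\in{}^1\mathcal{P}}\mathcal{P}_{\Y}$ of Corollary \ref{coro:P_X} yields the disjoint decomposition ${}^{\C}\mathcal{P}=\bigcup_{\Y\in{}^1\mathcal{P}}{}^{\C}\mathcal{P}_{\Y}$; summing then gives $\mid{}^{\C}\mathcal{P}\mid=\mid{}^1\mathcal{P}\mid$.

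For the second equality, I would first pass from $\mathcal{P}^{\C}$ to $\mathcal{P}^{\prime\C}$ using Corollary \ref{coro:complete number} term-by-term over $\D\in\mathcal{M}$. Then the same idea applies symmetrically: Lemma \ref{lemma:P^1=1^P}(3) gives $\mathcal{S}^1=S({}^1\mathcal{S})$, so the hypothesis also forces $\dim_{\k}(C)=1$ for every $C\in\mathcal{S}^1$. Every $\Y'\in\mathcal{P}^{\prime 1}$ is therefore a non-trivial $(\C_j,1)$-primitive matrix with $\C_j$ group-like, Lemma \ref{lem:numberinPX}(2) yields $\mid\mathcal{P}^{\prime\C_i}_{\Y'}\mid=1$ for every $\C_i\in\mathcal{M}$, and the second-construction analog of the disjoint decomposition gives $\mathcal{P}^{\prime\C}=\bigcup_{\Y'\in\mathcal{P}^{\prime 1}}\mathcal{P}^{\prime\C}_{\Y'}$, whence $\mid\mathcal{P}^{\prime\C}\mid=\mid\mathcal{P}^{\prime 1}\mid$. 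Combining this with equation (\ref{P1=Pprime1}) and Lemma \ref{lemma:P^1=1^P}(2) identifies $\mid\mathcal{P}^{\prime\C}\mid$ with $\mid{}^1\mathcal{P}\mid$, completing the argument. The one piece requiring care is the bookkeeping in the second half, namely the disjointness of $\bigcup_{\Y'}\mathcal{P}^{\prime\C}_{\Y'}$, which is the mirror of the argument already carried out for $\mathcal{P}$ in Subsection 3.1.
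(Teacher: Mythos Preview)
Your proposal is correct and follows essentially the same approach as the paper's proof. The only cosmetic difference is that you invoke Lemma \ref{lem:numberinPX} to get $\mid{}^{\C_i}\mathcal{P}_{\Y}\mid=1$, whereas the paper observes directly that ${}^{\C}\mathcal{P}_{\Y}=\{\C y\}$ when $\Y$ is a $(1,g)$-primitive element for $g\in G(H)$; the bridging steps via $\mid\mathcal{P}^{\C}\mid=\mid\mathcal{P}^{\prime\C}\mid$, Lemma \ref{lemma:P^1=1^P}, and (\ref{P1=Pprime1}) are identical.
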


\begin{proof}
Suppose that $y\in{}^1\mathcal{P}$ is a non-trivial $(1, g)$-primitive matrix of size 1 for some $g\in G(H)$. For any $C\in\mathcal{S}$, it is straightforward to show that $C g$ is also a simple subcoalgebra of $H$. By (\ref{^BPY}), we know that for any $\C\in\mathcal{M}$,
$${}^{\C}\mathcal{P}_\Y=\{\C y\}\;\;\;\;\text{and}\;\;\;\; {}^{1}\mathcal{P}_\Y=\{y\}.$$
This means that $$\mid{}^{\C}\mathcal{P}_\Y\mid=1=\mid{}^1\mathcal{P}_\Y\mid.$$
Therefore, we have
\begin{eqnarray}\label{4.1}
\mid{^{\C}\mathcal{P}}\mid=\sum_{\Y\in{}^1\mathcal{P}}\mid{}^{\C}\mathcal{P}_\Y\mid
=\sum_{\Y\in{}^1\mathcal{P}}\mid{}^1\mathcal{P}_\Y\mid
=\mid{^1\mathcal{P}}\mid.
\end{eqnarray}
By the same method as employed above, we can show that \begin{eqnarray}\label{4.2}
\mid{\mathcal{P}^{\prime \C}}\mid=\mid{\mathcal{P}^{\prime 1}}\mid.
\end{eqnarray}
According to Corollaries \ref{coro:CP,PD} and \ref{coro:CP^prime,PD}, for any $\D\in\mathcal{M}$, we have
\begin{eqnarray}\label{4.3}
\mid{\mathcal{P}^{\D}}\mid=\sum_{\C\in\mathcal{M}}\mid{^{\C}\mathcal{P}^{\D}}\mid
=\sum_{\C\in\mathcal{M}}\mid{^{\C}\mathcal{P}^{\prime \D}}\mid
=\mid{\mathcal{P}^{\prime\D}}\mid.
\end{eqnarray}
It follows from Lemma \ref{lemma:P^1=1^P} that
$$\mid{^{\C}\mathcal{P}}\mid\overset{(\ref{4.1})}{=}\mid{^1\mathcal{P}}\mid=\mid{\mathcal{P}^1}\mid\overset{(\ref{4.3})}{=}\mid{\mathcal{P}^{\prime 1}}\mid\overset{(\ref{4.2})}{=}\mid{\mathcal{P}^{\prime \C}}\mid\overset{(\ref{4.3})}{=}\mid{\mathcal{P}^{ \C}}\mid.$$
\end{proof}
Now let us focus on a special situation.
\begin{corollary}\label{lemma:Pc=cP=1}
Let $H$ be a non-cosemisimple Hopf algebra over $\k$ with the dual Chevalley property. Then the followings are equivalent:
\begin{itemize}
  \item[(1)]$\mid{}^{\C}\mathcal{P}\mid=\mid\mathcal{P}^{\C}\mid=1 $ holds for all $C\in\mathcal{M}$;
  \item[(2)]$\mid{}^1\mathcal{P}\mid=1$ and the unique subcoalgebra $C\in{}^1\mathcal{S}$ is $1$-dimensional.
  \end{itemize}
\end{corollary}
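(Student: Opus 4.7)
The plan is to establish the two implications separately. For $(2)\Rightarrow(1)$, the idea is to invoke Proposition \ref{proposition:link1dim===}, whose hypothesis demands that every simple subcoalgebra directly linked to $\k1$ be $1$-dimensional. The simple subcoalgebras directly linked to $\k1$ lie in ${}^1\mathcal{S}\cup\mathcal{S}^1$; by assumption ${}^1\mathcal{S}=\{C_0\}$ with $\dim_{\k}C_0=1$, and Lemma \ref{lemma:P^1=1^P}(3) then gives $\mathcal{S}^1=\{S(C_0)\}$, which is also $1$-dimensional since $S$ is a bijection preserving dimensions of subcoalgebras. Applying the proposition yields $\mid{}^{\C}\mathcal{P}\mid=\mid\mathcal{P}^{\C}\mid=\mid{}^1\mathcal{P}\mid=1$ for every $\C\in\mathcal{M}$.

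For $(1)\Rightarrow(2)$, taking $\C=1$ in hypothesis (1) immediately gives $\mid{}^1\mathcal{P}\mid=1$, so by the definition of ${}^1\mathcal{P}$ in (\ref{def:1^P}) there is a unique $C_0\in{}^1\mathcal{S}$ together with a unique non-trivial $(1,\C_0)$-primitive matrix $\Y$. It remains to prove $\dim_{\k}C_0=1$. Let $\C_k\in\mathcal{M}$ be the basic multiplicative matrix of $S(C_0)$. Since $\mathcal{P}=\bigcup_{\Z\in{}^1\mathcal{P}}\mathcal{P}_{\Z}$ is a disjoint union by Corollary \ref{coro:P_X} and ${}^1\mathcal{P}=\{\Y\}$, we have $\mathcal{P}=\mathcal{P}_{\Y}$, so that ${}^{\C_k}\mathcal{P}_{\Y}={}^{\C_k}\mathcal{P}$ has cardinality $1$. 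Lemma \ref{lem:numberinPX}(1) then yields $\sum_t\alpha_{k,0}^t=1$, where $S(C_0)\cdot C_0=\sum_t\alpha_{k,0}^tC_t$ in $\Bbb{Z}\mathcal{S}$. Combined with the fact that the coefficient of $\k1$ in $S(C_0)\cdot C_0$ is at least one (cf.\ the proof of Proposition \ref{Prop:basedring} via Larson's theorem), this forces $S(C_0)\cdot C_0=\k1$ exactly.

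To extract a dimension equation, I will use that $C\mapsto\sqrt{\dim_{\k}C}$ defines a ring homomorphism $\Bbb{Z}\mathcal{S}\to\Bbb{R}$. This follows by comparing sizes in the block decomposition $L(\B\odot^{\prime}\C)L^{-1}$ of (\ref{equationCD}): the left-hand side has size $\sqrt{\dim_{\k}B\cdot\dim_{\k}C}$, while the block-diagonal right-hand side has size $\sum_i\sqrt{\dim_{\k}E_i}$, so multiplicativity is automatic regardless of whether $H_0$ is finite-dimensional (this is the engine behind Lemma \ref{Prop:fpdim}). Applying this homomorphism to $S(C_0)\cdot C_0=\k1$ yields $\dim_{\k}C_0\cdot\dim_{\k}S(C_0)=1$, and since $\dim_{\k}S(C_0)=\dim_{\k}C_0$, we conclude $\dim_{\k}C_0=1$. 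The main subtlety I anticipate is justifying $\alpha_{k,0}^0\geq 1$ rigorously; this relies on the antipode being a bijection on $\mathcal{S}$ together with Larson's theorem in the precise form used in Proposition \ref{Prop:basedring}, and it is what converts the bookkeeping identity $\sum_t\alpha_{k,0}^t=1$ into the sharp statement $S(C_0)\cdot C_0=\k1$ that the dimension count requires.
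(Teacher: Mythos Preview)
Your proof is correct and follows essentially the same route as the paper's. For $(2)\Rightarrow(1)$ the paper simply cites Proposition~\ref{proposition:link1dim===}, while you carefully verify its hypothesis by using Lemma~\ref{lemma:P^1=1^P}(3) to handle $\mathcal{S}^1$ as well as ${}^1\mathcal{S}$; this is a worthwhile check the paper leaves implicit. For $(1)\Rightarrow(2)$ the paper argues the dimension claim by contrapositive (if $\dim_{\k}C>1$ then $S(C)\cdot C$ has a summand other than $\k1$, hence $\mid{}^{S(\C)}\mathcal{P}\mid>1$), whereas you argue directly that $\sum_t\alpha_{k,0}^t=1$ together with $\alpha_{k,0}^{0}=1$ forces $S(C_0)\cdot C_0=\k1$ and then read off the dimension; these are the same idea phrased in opposite directions. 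Your observation that $C\mapsto\sqrt{\dim_{\k}C}$ is a character independently of any finiteness hypothesis on $H_0$ is correct (it just compares matrix sizes in~(\ref{equationCD})) and makes the final step slightly cleaner than the paper's implicit dimension count.
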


\begin{proof}
Indeed, it follows from Proposition \ref{proposition:link1dim===} that (2) implies (1).
Conversely, assume that there exists some simple subcoalgebra $C\in\mathcal{S}_1$ such that $$\dim_{\k}(C)>1.$$  Suppose $K S(\C)^TK^{-1}\in\mathcal{M}$ is a basic multiplicative matrix of $S(C)$, where $K$ is some invertible matrix over $\k$. Since there is only one $1$ in the summand of $S(C)\cdot C$ in $\Bbb{Z}\mathcal{S}$, it follows from Lemma \ref{lem:numberinPX} that $$\mid{}^{KS(C)K^{-1}}\mathcal{P}\mid>1,$$
which is a contradiction to the assumption in (1).
\end{proof}
According to Proposition \ref{proposition:link1dim===}, we know that if all the simple subcoalgebras directly linked to $\k1$ are $1$-dimensional, we have
$$\mid{}^{1}\mathcal{P}\mid \big| \mid{}^{\C}\mathcal{P}\mid$$ and
$$\mid{}^{\C}\mathcal{P}\mid=\mid\mathcal{P}{}^{\C}\mid,$$ for any $\C\in\mathcal{M}.$
This motivates us forming the following conjecture, which we hope can guide future research about Hopf algebras with the dual Chevalley property.
\begin{conjecture}\label{4.11}Let H be a non-cosemisimple Hopf algebra over k with the dual Chevalley
property. Then we have
\begin{itemize}
\item[(1)]$\mid{}^{1}\mathcal{P}\mid \big| \mid{}^{\C}\mathcal{P}\mid$, for any $\C\in\mathcal{M}$;
\item[(2)]$\mid{}^{\C}\mathcal{P}\mid=\mid\mathcal{P}{}^{\C}\mid$, for any $\C\in\mathcal{M}$.
\end{itemize}
\end{conjecture}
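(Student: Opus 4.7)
The plan is to exploit the interplay between the bijectivity of the antipode $S$ of $H$, which holds by \cite[Corollary 3.6]{Rad77} and was already used in the proof of Lemma \ref{lemma:P^1=1^P}, and the Hopf bimodule structure of $H_1/H_0$. First I would establish an antipode symmetry at the level of primitive matrices: a direct computation (analogous to one in the proof of Lemma \ref{lemma:P^1=1^P}(2)) shows that if $\X$ is a non-trivial $(\C,\D)$-primitive matrix, then $S(\X)^T$ is a non-trivial $(S(\D)^T,S(\C)^T)$-primitive matrix. Since $S(\C)^T$ is similar to the $\mathcal{M}$-representative of the simple subcoalgebra $S(C)$, this produces a bijection ${}^{\C}\mathcal{P}^{\D} \leftrightarrow {}^{S(\D)}\mathcal{P}^{S(\C)}$, and summing over $\D\in\mathcal{M}$ yields
$$|{}^{\C}\mathcal{P}| \;=\; |\mathcal{P}^{S(\C)}|.$$
Part (2) is thereby reduced to proving $|\mathcal{P}^{\C}| = |\mathcal{P}^{S(\C)}|$ for every $\C\in\mathcal{M}$, that is, the $S$-invariance of the function $\C \mapsto |\mathcal{P}^{\C}|$.

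For this $S$-invariance, I would use the fundamental theorem of Hopf modules (already invoked in Lemma \ref{Lem:P=H_1/H_0}) to write $H_1/H_0 \cong H_0 \otimes V$ as left Hopf modules and $H_1/H_0 \cong W \otimes H_0$ as right Hopf modules, where $V := {}^{coH_0}(H_1/H_0)$, $W := (H_1/H_0)^{coH_0}$ and $\dim_\k V = \dim_\k W$. Counting the left (resp. right) $C$-isotypic components on both sides produces the weighted identity
$$\sum_{\D\in\mathcal{M}} \sqrt{\dim_\k D}\,|{}^{\C}\mathcal{P}^{\D}| \;=\; \sqrt{\dim_\k C}\,\dim_\k V \;=\; \sum_{\D\in\mathcal{M}} \sqrt{\dim_\k D}\,|{}^{\D}\mathcal{P}^{\C}|.$$
To upgrade this to the unweighted equality demanded by (2), the plan is to transport $V$ to a Yetter-Drinfeld module over $H_0$ via Radford's equivalence and to prove that $V$ is self-dual as a right $H_0$-comodule; then the multiplicity of any simple right $H_0$-comodule in $V$ matches that of its dual, giving $|\mathcal{P}^{\C}| = |\mathcal{P}^{S(\C)}|$ directly.

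For part (1), Lemma \ref{lem:numberinPX} supplies
$$|{}^{\C_i}\mathcal{P}| \;=\; \sum_{j\in I} m_j\beta_{ij}, \qquad |{}^1\mathcal{P}| \;=\; \sum_{j\in I} m_j,$$
where $m_j := |{}^1\mathcal{P}^{\C_j}|$ and $\beta_{ij} = \sum_{t\in I}\alpha_{i,j}^t$. Since $\beta_{ij}$ depends non-trivially on $j$, the divisibility $\sum_j m_j \mid \sum_j m_j\beta_{ij}$ is not a formal consequence of $\beta_{ij}\ge 1$. A natural approach is to exhibit a free-module structure: view $V$ as a right module over a suitable subring of the fusion ring $\mathbb{Z}\mathcal{S}$ so that the contribution of each $\Y\in {}^1\mathcal{P}$ to $|{}^{\C_i}\mathcal{P}|$ is partitioned into identically sized pieces. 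Equivalently, one can try to define, for each $\Y$, a bijection between ${}^{\C_i}\mathcal{P}_\Y$ and a fixed set whose size is independent of $\Y$, which would reduce the divisibility to a bookkeeping exercise.

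In my view, the hardest step is the $S$-invariance needed in (2): antipode symmetry by itself only yields $S^2$-invariance (by applying the bijection of the first step twice and combining with the analogous bijection for $|\mathcal{P}^{\C}|$), so genuine additional Hopf-algebraic input is required. Promising candidates are Radford's formula for $S^4$ combined with the cosemisimplicity of $H_0$ (which forces $S^2|_{H_0}=\id$ in characteristic zero), together with a categorical analog for the Yetter-Drinfeld module $V$. The fact that the authors pose Conjecture \ref{4.11} as open suggests that a complete proof will require machinery beyond the matrix-theoretic toolbox developed in this paper, perhaps drawing on Nichols algebras in the non-pointed setting or on fusion-categorical dimension arguments.
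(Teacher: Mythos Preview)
The statement you are addressing is \emph{Conjecture} \ref{4.11}: the paper does not prove it and explicitly leaves it open. So there is no proof in the paper to compare your proposal against, and your write-up is (as you yourself say at the end) a strategy with acknowledged gaps rather than a proof.

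That said, your reduction for part (2) is sound and worth recording. The antipode argument does give $|{}^{\C}\mathcal{P}^{\D}|=|{}^{S(\D)}\mathcal{P}^{S(\C)}|$ for all $\C,\D$ (this follows already from Corollary \ref{coro:complete number}, since $S$ is a bijection carrying $(C\wedge D)/(C+D)$ onto $(S(D)\wedge S(C))/(S(D)+S(C))$), and summing yields $|{}^{\C}\mathcal{P}|=|\mathcal{P}^{S(\C)}|$. Your weighted identity is also correct. However, one point should be sharpened: self-duality of $V={}^{coH_0}(H_1/H_0)$ as a right $H_0$-comodule gives \emph{directly} only $|{}^{1}\mathcal{P}^{\C}|=|{}^{1}\mathcal{P}^{S(\C)}|$, i.e.\ $m_j=m_{j^*}$ in your notation. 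To conclude part (2) from this you must combine it with the fusion-ring identity $\alpha_{i,j}^t=\alpha_{t,j^*}^i$ (Lemma \ref{lemma:numberinquiver}(2), valid without the hypothesis $|{}^1\mathcal{P}|=1$), since
\[
|{}^{\C_i}\mathcal{P}|=\sum_j m_j\sum_t\alpha_{i,j}^t=\sum_j m_j\sum_t\alpha_{t,j^*}^i
\quad\text{and}\quad
|\mathcal{P}^{\C_i}|=\sum_j m_j\sum_t\alpha_{t,j}^i,
\]
and these agree precisely when $m_j=m_{j^*}$. The self-duality of $V$ itself is the genuine open step; the Yetter--Drinfeld transport you mention is natural, but no argument is given for why the resulting object should be self-dual, and in the absence of a finiteness hypothesis on $H_0$ the Larson--Radford input you allude to is unavailable.

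For part (1), your sketch (``exhibit a free-module structure'' or a family of bijections making $|{}^{\C_i}\mathcal{P}_\Y|$ independent of $\Y$) does not yet isolate a concrete mechanism. Note that $|{}^{\C_i}\mathcal{P}_\Y|=\beta_{ij}$ depends only on the target $C_j$ of $\Y$, so what you need is that each $\beta_{ij}$ occurring is constant across the $j$'s with $m_j>0$, or at least that the weighted sum $\sum_j m_j\beta_{ij}$ is divisible by $\sum_j m_j$; neither is a formal consequence of the data available. As stated, this part of the proposal is too vague to evaluate.
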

According to \cite[Propositions 3.3.6(2) and 3.3.11]{EGNO15}, there exists a unique, up to scaling, nonzero element $R\in\Bbb{Z}\mathcal{S}\otimes_{\Bbb{Z}}\Bbb{C}$ such that $X\cdot R=\operatorname{FPdim}(X)R$ for all $X\in \Bbb{Z}\mathcal{S}$, and it satisfies the equality $R\cdot Y=\operatorname{FPdim}(Y)R$ for all $Y\in\Bbb{Z}\mathcal{S}$. Such an element $R$ is called a regular element of $\Bbb{Z}\mathcal{S}$. It is straightforward to show that the element $R=\sum\limits_{Y\in I}\operatorname{FPdim}(Y)Y$ is a regular element.
We can obtain a useful equation, which is needed in the next section.
\begin{lemma}\label{lem:fpequation}
 With the notations in Lemma \ref{lem:numberinPX}, suppose that $H_0$ is finite-dimensional. Then we have the following equation
 $$\sqrt{\dim_{\k}(C_k)}\left(\sum\limits_{i\in I} \sqrt{\dim_{\k}(C_i)}\right)=\sum\limits_{i\in I} \sqrt{\dim_{\k}(C_i)}\beta_{ik}.$$
\end{lemma}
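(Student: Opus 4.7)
The plan is to derive the identity by applying the regular element of the fusion ring $\Bbb{Z}\mathcal{S}$ to the basis element $C_k$ and extracting what we want by comparing coefficients. Since $H_0$ is finite-dimensional, $\mathcal{S}$ is finite and $\Bbb{Z}\mathcal{S}$ is genuinely a fusion ring of finite rank, so Lemma \ref{Prop:fpdim} gives $\operatorname{FPdim}(C_i)=\sqrt{\dim_{\k}(C_i)}$ for every $i\in I$, and the regular element from \cite[Propositions 3.3.6(2) and 3.3.11]{EGNO15} takes the explicit form
$$R=\sum_{i\in I}\sqrt{\dim_{\k}(C_i)}\,C_i.$$

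The key step is to evaluate $R\cdot C_k$ in two different ways. On one hand, the defining property $X\cdot R=\operatorname{FPdim}(X)R$ of a regular element, together with the fact that a regular element is unique up to scalar and hence also satisfies $R\cdot X=\operatorname{FPdim}(X)R$, yields
$$R\cdot C_k=\sqrt{\dim_{\k}(C_k)}\,\sum_{t\in I}\sqrt{\dim_{\k}(C_t)}\,C_t.$$
On the other hand, expanding directly using $C_i\cdot C_k=\sum_t\alpha_{i,k}^t C_t$ gives
$$R\cdot C_k=\sum_{t\in I}\Big(\sum_{i\in I}\sqrt{\dim_{\k}(C_i)}\,\alpha_{i,k}^t\Big)C_t.$$
Equating the coefficient of each $C_t$ in these two expressions, and then summing the resulting identities over $t\in I$, converts the right-hand side of the second expression into $\sum_i\sqrt{\dim_{\k}(C_i)}\sum_t\alpha_{i,k}^t$. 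Finally, invoking Lemma \ref{lem:numberinPX}(1) to replace $\sum_t\alpha_{i,k}^t$ by $\beta_{ik}$ produces the desired formula.

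This argument is essentially bookkeeping once one recognizes that the regular element is the right tool, so there is no serious obstacle. The only subtle point worth flagging is the hypothesis that $H_0$ is finite-dimensional: this is what makes $\mathcal{S}$ finite, so that $\Bbb{Z}\mathcal{S}$ is a fusion ring, $\operatorname{FPdim}$ and $R$ are well-defined, and the sums over $I$ used above are all finite and may be freely interchanged.
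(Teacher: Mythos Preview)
Your proof is correct and follows essentially the same approach as the paper: both use the regular element $R=\sum_{i}\sqrt{\dim_{\k}(C_i)}\,C_i$ of the fusion ring $\Bbb{Z}\mathcal{S}$, compute $R\cdot C_k$ in two ways, compare coefficients of each $C_t$, and then sum over $t$ using $\beta_{ik}=\sum_t\alpha_{i,k}^t$ from Lemma~\ref{lem:numberinPX}.
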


\begin{proof}
By Lemma \ref{Prop:fpdim} and Lemma \ref{lem:numberinPX}, we know that  $$\beta_{ik}=\sum\limits_{t\in I}\alpha_{i,k}^t$$ and $$\operatorname{FPdim}(C_i)=\sqrt{\dim_{\k}(C_i)}$$  hold for any $i\in I$. It follows from \cite[Propositions 3.3.6(2) and 3.3.11]{EGNO15} that $$R=\sum\limits_{i\in I}\sqrt{\dim_{\k}(C_i)}C_i$$ is a regular element
and $$\sqrt{\dim_{\k}(C_k)}R=R\cdot C_k.$$
This means that
\begin{eqnarray*}
 \sqrt{\dim_{\k}(C_k)}\left(\sum\limits_{i\in I}\sqrt{\dim_{\k}(C_i)}C_i\right)&=&\left(\sum\limits_{i\in I}\sqrt{\dim_{\k}(C_i)}C_i\right)\cdot C_k\\
&=& \sum\limits_{i\in I}\sum\limits_{t\in I}\sqrt{\dim_{\k}(C_i)}\alpha_{i,k}^t C_t,
\end{eqnarray*}
 which follows that
 $$\sqrt{\dim_{\k}(C_k)}\sqrt{\dim_{\k}(C_t)}=\sum\limits_{i\in I}\sqrt{\dim_{\k}(C_i)}\alpha_{i,k}^t.$$
Thus we have $$\sqrt{\dim_{\k}(C_k)}\left(\sum\limits_{i\in I} \sqrt{\dim_{\k}(C_i)}\right)=\sum\limits_{i\in I} \sqrt{\dim_{\k}(C_i)}\beta_{ik}.$$
\end{proof}

At the end of this subsection, we recall the concept of link-indecomposable components of coalgebra $H$.
\begin{definition}\emph{(}\cite[Definition 1.1]{Mon95}\emph{)}
A subcoalgebra $H^\prime$ of coalgebra $H$ is called \textit{link-indecomposable} if the link quiver $\mathcal{Q}(H^\prime)$ of $H^\prime$ is connected (as an undirected graph).
A \textit{link-indecomposable component} of $H$ is a maximal link-indecomposable subcoalgebra.
\end{definition}
As a consequence, we obtain the following proposition.
\begin{proposition}\label{lemma:link1dim}
Let $H$ be a non-cosemisimple Hopf algebra over $\k$ with the dual Chevalley property. If all the simple subcoalgebras directly linked to $\k1$ are $1$-dimensional, then the link-indecomposable component $H_{(1)}$ containing $\k 1$ is a pointed Hopf algebra.
\end{proposition}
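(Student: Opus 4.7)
\textbf{Proof plan for Proposition \ref{lemma:link1dim}.}

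The plan is to show by induction along the undirected graph underlying the link quiver that every simple subcoalgebra in $H_{(1)}$ is one-dimensional, and then invoke a Montgomery-type result to get that $H_{(1)}$ is a Hopf subalgebra.

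The starting observation is that for any group-like $g\in G(H)$, left multiplication $L_{g^{-1}}\colon H\to H$, $x\mapsto g^{-1}x$, is a coalgebra automorphism of $H$, because $g^{-1}$ is again group-like and hence
\[
\Delta(g^{-1}x)=\Delta(g^{-1})\Delta(x)=(g^{-1}\otimes g^{-1})\Delta(x).
\]
Being a bijective coalgebra map, $L_{g^{-1}}$ sends simple subcoalgebras to simple subcoalgebras and preserves wedges, so
\[
g^{-1}(g\wedge D)=\k1\wedge(g^{-1}D),\qquad g^{-1}(D\wedge g)=(g^{-1}D)\wedge\k1,\qquad g^{-1}(g+D)=\k1+g^{-1}D.
\]
Hence $g$ and $D$ are directly linked in $H$ if and only if $\k1$ and $g^{-1}D$ are. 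This is the crucial translation lemma.

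Next I would run the induction on the graph distance $d(\k1,D)$ in the underlying undirected link quiver. The base case $d=0$ is trivial. For the inductive step, if $d(\k1,D)=n+1$, choose a simple subcoalgebra $E$ with $d(\k1,E)=n$ that is directly linked to $D$. By the inductive hypothesis $E=\k g$ for some $g\in G(H)$, so the translation lemma shows that $\k1$ and $g^{-1}D$ are directly linked. The hypothesis of the proposition then forces $g^{-1}D$ to be one-dimensional, and therefore $D=g(g^{-1}D)$ is one-dimensional as well. Since the vertex set of $H_{(1)}$ is precisely the set of simple subcoalgebras at finite distance from $\k1$ in the link quiver, every simple subcoalgebra of $H_{(1)}$ is one-dimensional.

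It remains to observe that $H_{(1)}$ is itself a Hopf subalgebra of $H$: this is the Montgomery-type statement for Hopf algebras with the dual Chevalley property \cite{Mon95}, which I would just cite. Combining the two, $H_{(1)}$ is a Hopf subalgebra all of whose simple subcoalgebras are one-dimensional, i.e., it is pointed. The main (and essentially only) technical point is verifying the translation lemma, namely that ``directly linked to $\k1$'' is transported correctly by multiplication by a group-like; once this is in hand, the induction is immediate.
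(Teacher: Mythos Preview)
Your proof is correct and follows essentially the same strategy as the paper's: both reduce the question to the translation observation that $\k g$ is directly linked to $D$ iff $\k1$ is directly linked to $g^{-1}D$, and then use connectivity of $H_{(1)}$ to propagate one-dimensionality outward from $\k1$. The only differences are cosmetic---the paper phrases the translation step in terms of primitive matrices rather than the coalgebra automorphism $L_{g^{-1}}$, and leaves the induction implicit---and one bibliographic point: the fact that $H_{(1)}$ is a Hopf subalgebra under the dual Chevalley property is not in \cite{Mon95} (which treats the pointed case) but in \cite[Theorem 4.8]{Li22a}, which is what the paper cites.
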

\begin{proof}
Suppose there exists a simple subcoalgebra $B$ with $\dim_{\k}(B)> 1$ such that some $1$-dimensional simple subcoalgebra $\k g$ contained in $H_{(1)}$ is directly linked to $B$, where $g\in G(H)$. Without the loss of generality, we can assume that there exists some non-trivial $(g, \B)$-primitive matrix for some $\B\in\mathcal{M}$. Now we consider
$$
 g^{-1}\odot^\prime
      \left(\begin{array}{cc}
g&\X\\
0&\B
 \end{array}\right)
 .$$
  We can get a $(1, g^{-1}\B)$-primitive matrix $g^{-1}\X$, where $g^{-1}\B$ is a multiplicative matrix of $g^{-1}B\in\mathcal{S}$. We know that $\k1$ is directly linked to the simple subcoalgebra $g^{-1}B$, which is a contradiction.
Therefore, it is directly from \cite[Theorem 4.8 (3)]{Li22a} that $H_{(1)}$ is a pointed Hopf algebra.
\end{proof}
\section{Corepresentation type}\label{section5}
In this section, we want to classify finite-dimensional Hopf algebras with the dual Chevalley properties of finite corepresentation type. The reader is referred to \cite{ARS95} and \cite{ASS06} for general background knowledge of representation theory.

Recall that a finite-dimensional algebra $A$ is said to be of \textit{finite representation type} if there are finitely many isoclasses of indecomposable $A$-modules. An algebra $A$ is said to be of \textit{infinite representation type}, if $A$ is not of finite representation type. A finite-dimensional coalgebra $C$ is said to be of \textit{finite corepresentation type}, if the dual algebra $C^*$ is of finite representation type. A finite-dimensional coalgebra $C$ is defined to be of \textit{infinite corepresentation type},
if $C^*$ is of infinite representation type.

Let $A$ (resp. $C$) be an algebra (resp. coalgebra) over $\k$ and $\{M_i\}_{ i\in I}$ be the complete set of isoclasses of simple left $A$-modules (resp. right $C$-comodules). The \textit{Ext quiver} $\Gamma(A)$ (resp. $\Gamma(C)$) of $A$ (resp. $C$) is an oriented graph with vertices indexed by $I$, and there are $\dim_{\k}\Ext^1(M_i, M_j)$ arrows from $i$ to $j$ for any $i, j\in I$.
To avoid confusion, for any Hopf algebra $H$ over $\k$, we denote the algebra's version of Ext quiver of $H$ by $\Gamma(H)^{\mathrm{a}}$ and denote the coalgebra's version of Ext quiver of $H$ by $\Gamma(H)^\mathrm{c}$.

\subsection{Corepresentation type and link quiver}
At first, let us recall the definition of separated quiver.
\begin{definition}\emph{(}cf. \cite[\textsection X. 2]{ARS95}\emph{)}
Let $A$ be a finite-dimensional algebra over $\k$ and $\Gamma(A)=(\Gamma_0, \Gamma_1)$ be its Ext quiver, where $\Gamma_0=\{1, 2, \cdots, n\}$. The separated quiver $\Gamma(A)_{s}$ of $A$ has $2n$ vertices $\{1, 2, \cdots, n, 1^\prime, 2^\prime, \cdots, n^\prime\}$ and an arrow $i \rightarrow j^\prime$ for every arrow $i \rightarrow j$ of $\Gamma(A)$.
\end{definition}

Let $H$ be a finite-dimensional Hopf algebra over $\k$ with the dual Chevalley property. For simplicity, we still use the notations in Sections \ref{section3} and \ref{section4}.
As mentioned in Section \ref{section4}, for any simple subcoalgebra $C, D\in\mathcal{S}$ with $\dim_{\k}(C)=r^2, \dim_{\k}(D)=s^2$, there are exactly $\frac{1}{rs}\dim_{\k}((C\wedge D)/(C+D))$ arrows from $D$ to $C$ in the link quiver $\mathrm{Q}(H)$ of $H$. Moreover, we have $$\mid{}^{\C}\mathcal{P}^{\D}\mid=\mid{}^{\C}\mathcal{P}^{\prime D}\mid=\frac{1}{rs}\dim_{\k}((C\wedge D)/(C+D)),$$
and there are exactly $\mid{}^{\C}\mathcal{P}\mid(=\mid{}^{\C}\mathcal{P}^\prime\mid)$ arrows with end vertex $C$ and $\mid\mathcal{P}^{\D}\mid(=\mid\mathcal{P}^{\prime D}\mid)$ arrows with start vertex $D$ in the link quiver $\mathrm{Q}(H)$ of $H$.

In order to solve the classification problems, we divide it into several different situations. Let us consider the first case.
\begin{proposition}\label{prop:2arrow=infin}
Let $H$ be a finite-dimensional non-cosemisimple Hopf algebra over $\k$ with the dual Chevalley property. If $\mid{}^1\mathcal{P}\mid\geq2$, then $H$ is of infinite corepresentative type.
\end{proposition}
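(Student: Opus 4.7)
The strategy is to pass to the radical-square-zero quotient $A:=H^{*}/J^{2}$, where $J$ denotes the Jacobson radical of $H^{*}$, and apply the classical separated-quiver criterion for finite representation type. I would organise the argument in three steps: (i) every vertex of $\mathrm{Q}(H)$ has both in-degree and out-degree at least $2$; (ii) the Ext quiver of $A$ has the same unoriented underlying graph as $\mathrm{Q}(H)$; (iii) the separated quiver of $A$ is not a disjoint union of Dynkin diagrams, whence $A$, and consequently $H^{*}$, has infinite representation type.

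For step (i), fix $C_{i}\in\mathcal{S}$ with basic multiplicative matrix $\C_{i}\in\mathcal{M}$. Corollary~\ref{coro:numberinPX}(1) guarantees, for every $\Y\in{}^{1}\mathcal{P}$, at least one element in each of $\mathcal{P}^{\C_{i}}_{\Y}$ and ${}^{\C_{i}}\mathcal{P}_{\Y}$. Corollary~\ref{coro:P_X} gives the pairwise disjointness of $\{\mathcal{P}_{\Y}\}_{\Y\in{}^{1}\mathcal{P}}$, and hence of the subfamilies $\{\mathcal{P}^{\C_{i}}_{\Y}\}_{\Y}$ inside $\mathcal{P}^{\C_{i}}$ and $\{{}^{\C_{i}}\mathcal{P}_{\Y}\}_{\Y}$ inside ${}^{\C_{i}}\mathcal{P}$. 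Summing over $\Y$ therefore yields $|\mathcal{P}^{\C_{i}}|\geq|{}^{1}\mathcal{P}|\geq 2$ and $|{}^{\C_{i}}\mathcal{P}|\geq|{}^{1}\mathcal{P}|\geq 2$, and by Corollary~\ref{coro:complete number} (matching primitive matrices with arrows of $\mathrm{Q}(H)$) the vertex $C_{i}$ has both out-degree and in-degree at least $2$ in $\mathrm{Q}(H)$.

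For step (ii), the identity $H_{0}^{\perp}=J$ together with $H_{1}=H_{0}\wedge H_{0}$ gives $H_{1}^{\perp}=J^{2}$, so $A\cong H_{1}^{*}$ with $J(A)^{2}=0$. Simple $A$-modules are dual to simple subcoalgebras of $H_{0}$, and for $C,D\in\mathcal{S}$ the wedge $C\wedge D$ computed inside $H_{1}$ agrees with that computed inside $H$. Hence, by \cite[Theorem~4.1]{CHZ06}, the underlying unoriented graph (with multiplicities) of $\Gamma(A)^{\mathrm{a}}$ coincides with that of $\mathrm{Q}(H)$.

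For step (iii), a classical theorem (see \cite[Chapter~X]{ARS95}) asserts that a finite-dimensional algebra over an algebraically closed field with square-zero Jacobson radical is of finite representation type if and only if its separated quiver is a finite disjoint union of simply-laced Dynkin diagrams of type $A_{n}$, $D_{n}$, $E_{6}$, $E_{7}$ or $E_{8}$; every such diagram contains a vertex of degree one. In the separated quiver of $A$, each unprimed vertex inherits the out-degree of the corresponding vertex of $\Gamma(A)^{\mathrm{a}}$, and each primed vertex inherits the in-degree; both are $\geq 2$ by step (i). Thus the separated quiver of $A$ contains no vertex of degree one, cannot be Dynkin, and $A$ has infinite representation type. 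Inflating along the surjection $H^{*}\twoheadrightarrow A$ preserves indecomposability and non-isomorphism, so $H^{*}$ has infinitely many isoclasses of indecomposable modules and $H$ is of infinite corepresentation type. The principal subtlety is matching the coalgebra-side data (link quiver, primitive matrices) with the algebra-side data (Ext quiver, separated quiver): arrow orientations may differ between the two pictures, but since the Dynkin criterion depends only on the underlying unoriented graph, this is not an obstruction.
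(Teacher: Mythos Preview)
Your proof is correct and shares the paper's overall strategy---pass to a radical-square-zero quotient and invoke the separated-quiver criterion of \cite[X.2]{ARS95}---but your step (iii) is genuinely cleaner than the paper's. The paper, after establishing exactly your inequalities $|{}^{\C_i}\mathcal{P}|\geq 2$ and $|\mathcal{P}^{\C_i}|\geq 2$ (its (5.1)--(5.2)), proceeds by a case analysis on $|{}^1\mathcal{S}|$: it explicitly locates either a Kronecker subquiver or an $\tilde{A}_n$ subquiver inside $\mathrm{Q}(H)_s$. Your observation that every simply-laced Dynkin diagram has a leaf, while every vertex of the separated quiver has degree $\geq 2$ (unprimed vertices inherit out-degree, primed vertices inherit in-degree), dispatches the matter in one line without cases. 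Both arguments rest on the same degree inequalities at every vertex; yours simply exploits them more efficiently.

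Two minor remarks. First, your step (ii) is slightly roundabout: the paper cites \cite[Theorem~2.1 and Corollary~4.4]{CHZ06} to identify $\mathrm{Q}(H)$ with $\Gamma(H^*)^{\mathrm{a}}$ directly (not merely their underlying graphs), so the hedge about orientations is unnecessary, though harmless. Second, the paper interposes the basic algebra $\mathcal{B}(H^*)$ and the path-algebra quotient $\k\mathrm{Q}(H)/J^2$ rather than working with $H^*/J(H^*)^2$; since both are radical-square-zero with the same Ext quiver (up to Morita equivalence), this difference is cosmetic.
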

\begin{proof}
We know that the $\k$-linear abelian category of finite-dimensional comodules over $H$ is isomorphic to the category of finite-dimensional modules over $H^*$. This means that the coalgebra's version of Ext quiver $\Gamma(H)^{\mathrm{c}}$ of $H$ is the same as the algebra's version of Ext quiver $\Gamma(H^*)^{\mathrm{a}}$ of $H^*$. According to \cite[Theorem 2.1 and Corollary 4.4]{CHZ06}, the link quiver $\mathrm{Q}(H)$ of $H$ coincides with the algebra's version of Ext quiver $\Gamma(H^*)^{\mathrm{a}}$ of $H^*$. \\Note that $H^*$ is Morita equivalent to a basic algebra $\mathcal{B}(H^*)$.
It suffices to prove that the basic algebra $\mathcal{B}(H^*)$ of $H^*$ is of infinite representative type. Let $J$ be the ideal generated by all the arrows in $\mathrm{Q}(H)$. By the Gabriel's theorem, there exists an admissible ideal $I$ such that $$\k\mathrm{Q}(H)/I\cong \mathcal{B}(H^*),$$ where $J^t\subseteq I \subseteq J^2$ for some integer $t\geq2$. Thus there exists an algebra epimorphism $$f:\mathcal{B}(H^*)\rightarrow \k\mathrm{Q}(H)/J^2.$$ It is enough to show that $\k\mathcal{Q}(H)/J^2$ is of infinite representative type. Since the Jacobson radical of $\k\mathcal{Q}(H)/J^2$ is $J/J^2$, we know that $\k\mathrm{Q}(H)/J^2$ is an artinian algebra with radical square zero. \\
Now assume on the countrary that $\k\mathrm{Q}(H)/J^2$ is of finite representation type. It follows from \cite[X.2 Theorem 2.6]{ARS95} that the separated quiver $\mathrm{Q}(H)_s$ of $\k\mathrm{Q}(H)/J^2$ is a finite disjoint union of Dynkin diagrams.\\
Since $\mid{}^1\mathcal{P}\mid\geq2$, it follows from Corollary \ref{coro:P_X} that
\begin{eqnarray}\label{5.1}
\mid{}^{C}\mathcal{P}\mid=\sum_{\Y\in{}^1\mathcal{P}}\mid{}^{C}\mathcal{P}_{\Y}\mid\geq\mid{}^1\mathcal{P}\mid\geq2.
 \end{eqnarray}
 According to Corollary \ref{coro:numberinPX}, we have \begin{eqnarray}\label{5.2}
 \mid\mathcal{P}^{C}\mid=\sum_{\Y\in{}^1\mathcal{P}}\mid\mathcal{P}^{C}_{\Y}\mid\geq\mid{}^1\mathcal{P}\mid\geq2
 \end{eqnarray} for all $\C\in\mathcal{M}$.\\
By a discussion on $\mid{}^1\mathcal{S}\mid$, we aim to find a contradiction to $\mathrm{Q}(H)_s$ being a finite disjoint union of Dynkin diagrams.
\begin{itemize}
\item[i)]
When $\mid{}^1\mathcal{S}\mid=1,$ then the separated quiver $\mathrm{Q}(H)_s$ must contain
$$
\begin{tikzpicture}
\filldraw [black] (1,0) circle (2pt) node[anchor=west]{$\k 1^\prime$};
\filldraw [black] (-1,0) circle (2pt)node[anchor=north]{};
\draw[thick, ->] (-0.9,0.1).. controls (-0.1,0.1) and (0.1,0.1) .. node[anchor=south]{}(0.9,0.1) ;
\draw[thick, ->] (-0.9,-0.1).. controls (-0.1,-0.1) and (0.1,-0.1) .. node[anchor=south]{}(0.9,-0.1);
\end{tikzpicture}
$$
as a sub-quiver. The above quiver is a Kronecker quiver whose underlying graph is not a Dynkin diagram. We know that $\k\mathrm{Q}(H)/J^2$ is of infinite representation.\\
\item[ii)]When $\mid{}^1\mathcal{S}\mid\geq 2,$ by (\ref{5.1}) and (\ref{5.2}),
the separated quiver $\mathrm{Q}(H)_s$ contains a sub-quiver of the form
$$
\begin{tikzpicture}
\filldraw [black] (1,0) circle (2pt) node[anchor=south]{$E_1 $};
\filldraw [black] (0,-1) circle (2pt) node[anchor=north]{$\k1^\prime$};
\filldraw [black] (-1,0) circle (2pt)node[anchor=south]{$E_2 $};
\filldraw [black] (2,-1) circle (2pt) node[anchor=north]{$D_1^\prime$};
\filldraw [black] (-2,-1) circle (2pt)node[anchor=north]{$D_2^\prime$};
\draw[thick, ->] (-0.9,-0.1).. controls (-0.9,-0.1) and (-0.9,-0.1) .. node[anchor=north]{}(-0.1,-0.9) ;
\draw[thick, ->] (0.9,-0.1).. controls (0.9,-0.1) and (0.9,-0.1) .. node[anchor=south]{}(0.1,-0.9);
\draw[thick, ->] (-1.1,-0.1).. controls (-1.1,-0.1) and (-1.1,-0.1) .. node[anchor=north]{}(-1.9,-0.9) ;
\draw[thick, ->] (1.1,-0.1).. controls (1.1,-0.1) and (1.1,-0.1) .. node[anchor=south]{}(1.9,-0.9);
\end{tikzpicture},
$$
where $E_1\neq E_2$ and there are 3 possible situation.\\
If $D_1^\prime=\k1^\prime$ or $D_2^\prime=\k1^\prime$, the separated quiver of $\mathrm{Q}(H)$ contains a Kronecker quiver as a sub-quiver.\\
If $D_1^\prime=D_2^\prime\neq \k1^\prime$, the separated quiver of $\mathrm{Q}(H)$ contains a sub-quiver whose underlying graph is $\tilde{A_{n}}$ for some $n\geq3$ and it is a Euclidean diagram. Therefore, $\k\mathrm{Q}(H)/J^2$ is of infinite representation type.\\
 If $D_1^\prime, D_2^\prime, \k1^\prime$ are distinct from each other, the separated quiver $\mathrm{Q}(H)_s$ contains the following sub-quiver
$$
\begin{tikzpicture}
\filldraw [black] (1,0) circle (2pt) node[anchor=south]{$E_1 $};
\filldraw [black] (0,-1) circle (2pt) node[anchor=north]{$\k1^\prime$};
\filldraw [black] (-1,0) circle (2pt)node[anchor=south]{$E_2 $};
\filldraw [black] (2,-1) circle (2pt) node[anchor=north]{$D_1^\prime$};
\filldraw [black] (-2,-1) circle (2pt)node[anchor=north]{$D_2^\prime$};
\filldraw [black] (3,0) circle (2pt) node[anchor=south]{$E_3$};
\filldraw [black] (-3,0) circle (2pt)node[anchor=south]{$E_4$};
\draw[thick, ->] (-0.9,-0.1).. controls (-0.9,-0.1) and (-0.9,-0.1) .. node[anchor=north]{}(-0.1,-0.9) ;
\draw[thick, ->] (0.9,-0.1).. controls (0.9,-0.1) and (0.9,-0.1) .. node[anchor=south]{}(0.1,-0.9);
\draw[thick, ->] (-1.1,-0.1).. controls (-1.1,-0.1) and (-1.1,-0.1) .. node[anchor=north]{}(-1.9,-0.9) ;
\draw[thick, ->] (1.1,-0.1).. controls (1.1,-0.1) and (1.1,-0.1) .. node[anchor=south]{}(1.9,-0.9);
\draw[thick, ->] (-2.9,-0.1).. controls (-2.9,-0.1) and (-2.9,-0.1) .. node[anchor=north]{}(-2.1,-0.9) ;
\draw[thick, ->] (2.9,-0.1).. controls (2.9,-0.1) and (2.9,-0.1) .. node[anchor=south]{}(2.1,-0.9);
\end{tikzpicture}.
$$
Then if $E_4=E_i$ for some $i=1, 2, 3$, it is evident that $\k\mathrm{Q}(H)/J^2$ is of infinite representation type.
Otherwise, we repeat above process. Since $\mathrm{S}$ is a finite set, the separated quiver $\mathrm{Q}(H)_s$ either contains the Kronecker quiver as a sub-quiver or contains a sub-quiver whose underlying graph is $\tilde{A_{n}}$ for some $n\geq3$.
 \end{itemize}
As a conclusion, $\k\mathrm{Q}(H)/J^2$ is of infinite representation type, and this implies that $H$ is of infinite corepresentative type.
\end{proof}
Recall that an algebra is said to be \textit{Nakayama}, if each indecomposable projective left and right module has a unique composition series. It is well-known that a basic algebra $A$ is Nakayama if and only if every vertex of the Ext quiver of $A$ is the start vertex of at most one arrow and the end vertex of at most one arrow
(see \cite[\textsection V. 2. Theorem 2.6]{ASS06}).

Next we consider the second case.
\begin{proposition}\label{Prop:finite corep}
Let $H$ be a finite-dimensional non-cosemisimple Hopf algebra over $\k$ with the dual Chevalley property. If $\mid{}^1\mathcal{P}\mid=1$ and the unique subcoalgebra $C\in{}^1\mathcal{S}$ is $1$-dimensional, then $H$ is of finite corepresentative type.
\end{proposition}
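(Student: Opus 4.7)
The plan is to invoke the machinery the paper has just built up to translate the hypothesis into a statement about the link quiver, and then quote the classical representation theory of Nakayama algebras.

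\medskip

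First I would apply Corollary \ref{lemma:Pc=cP=1}: the hypotheses ``$|{}^1\mathcal{P}|=1$ and the unique $C\in{}^1\mathcal{S}$ is $1$-dimensional'' are exactly condition (2) there, so they yield $|{}^{\C}\mathcal{P}|=|\mathcal{P}^{\C}|=1$ for every $\C\in\mathcal{M}$. Since $|{}^{\C}\mathcal{P}^{\D}|$ counts arrows $D\to C$ in the link quiver (Corollary \ref{coro:complete number}), this means every vertex of $\mathrm{Q}(H)$ is the start vertex of exactly one arrow and the end vertex of exactly one arrow. Because $H$ is finite-dimensional, $\mathcal{S}$ is finite, so a finite oriented graph with in-degree and out-degree $1$ at every vertex is a disjoint union of basic oriented cycles.

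\medskip

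Next I would transport this to $H^{*}$ in the same way as in the proof of Proposition \ref{prop:2arrow=infin}. The category of finite-dimensional right $H$-comodules is isomorphic to the category of finite-dimensional left $H^{*}$-modules, so $\Gamma(H)^{\mathrm{c}}=\Gamma(H^{*})^{\mathrm{a}}$; combined with \cite[Theorem 2.1 and Corollary 4.4]{CHZ06}, this identifies $\mathrm{Q}(H)$ with the Ext quiver of the basic algebra $\mathcal{B}(H^{*})$ of $H^{*}$, and Gabriel's theorem gives a presentation $\mathcal{B}(H^{*})\cong \k\mathrm{Q}(H)/I$ for some admissible ideal $I$.

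\medskip

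Finally I would quote the standard criterion (e.g.\ \cite[\textsection V.2, Theorem 2.6]{ASS06}): a basic finite-dimensional algebra whose Ext quiver has at most one arrow leaving each vertex and at most one arrow entering each vertex is Nakayama. Since $\mathrm{Q}(H)$ is a disjoint union of basic cycles, $\mathcal{B}(H^{*})$ is Nakayama, and every finite-dimensional Nakayama algebra is of finite representation type (each indecomposable module is a quotient of an indecomposable projective, and there are only finitely many such quotients). Therefore $\mathcal{B}(H^{*})$, hence $H^{*}$ (Morita equivalent to it), is representation-finite, which by definition means $H$ is of finite corepresentation type.

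\medskip

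There is no serious obstacle: the entire argument is a bookkeeping of results already established (Corollary \ref{lemma:Pc=cP=1}, Corollary \ref{coro:complete number}, the identification of $\mathrm{Q}(H)$ with $\Gamma(H^{*})^{\mathrm{a}}$) plus the standard Nakayama criterion. The only subtlety worth double-checking is that an admissible quotient of the path algebra of a disjoint union of cycles is indeed finite-dimensional and Nakayama, but this is immediate because each vertex has in- and out-degree $1$, so every indecomposable projective at a vertex has a unique composition series (determined by its uniserial filtration along the cycle).
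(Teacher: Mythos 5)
Your proposal is correct and follows essentially the same route as the paper: both apply Corollary \ref{lemma:Pc=cP=1} to get $\mid{}^{\C}\mathcal{P}\mid=\mid\mathcal{P}^{\C}\mid=1$ for all $\C\in\mathcal{M}$, identify $\mathrm{Q}(H)$ with the Ext quiver of $\mathcal{B}(H^*)$ as in the proof of Proposition \ref{prop:2arrow=infin}, conclude that $\mathcal{B}(H^*)$ is Nakayama, and invoke the classical fact that Nakayama algebras are representation-finite. The extra detail you supply (explicitly noting the quiver is a disjoint union of cycles, and sketching why Nakayama algebras are representation-finite) is just an expansion of what the paper cites from \cite[\textsection VI. Theorem 2.1]{ARS95}.
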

\begin{proof}
From the proof of Proposition \ref{prop:2arrow=infin}, we know that the link quiver $\mathrm{Q}(H)$ of $H$ is the same as the Ext quiver $\Gamma(H^*)^\mathrm{a}$ of $H^*$.
Using Lemma \ref{lemma:Pc=cP=1}, we can find
 $$\mid{}^{\C}\mathcal{P}\mid=\mid\mathcal{P}{}^{\C}\mid=1,\;\;\;\; \C\in\mathcal{M},$$which means that the basic algebra $\mathcal{B}(H^*)$ is a Nakayama algebra. It follows from \cite[\textsection VI. Theorem 2.1]{ARS95} that the Nakayama algebra $\mathcal{B}(H^*)$ is of finite representation type, which implies that $H$ is of finite corepresentation type.
\end{proof}
Note that since $H$ is finite-dimensional, $\Bbb{Z}\mathcal{S}$ is a fusion ring with $\Bbb{Z}_+$-basis $\mathcal{S}=\{C_i\}_{i\in I}$. Suppose that $C_i\cdot C_j= \sum\limits_{t\in I}\alpha_{ij}^tC_t,$ for any $C_i, C_j\in\mathcal{S}$. By the proof of Proposition \ref{Prop:basedring}, the involution of $I$ is decided by $S$,  that is $C_{i^*}=S(C_i)$.

Before proceeding further, let us give the following lemma.
\begin{lemma}\label{lemma:numberinquiver}
Let $H$ be a finite-dimensional non-cosemisimple Hopf algebra over $\k$ with the dual Chevalley property. Let $\mid{}^1\mathcal{P}\mid=1$ and $C_k$ be the unique subcoalgebra contained in ${}^1\mathcal{S}$.
\begin{itemize}
\item[(1)]The number of arrows with end vertex $C_i$ in $\mathrm{Q}(H)$ is equal to $\sum\limits_{t\in I}\alpha_{ik}^t$, and the number of arrows with start vertex $C_i$ in $\mathrm{Q}(H)$ is equal to $\sum\limits_{t\in I}\alpha_{ik^*}^t$;
\item[(2)]The number of arrows from $C_t$ to $C_i$ in $\mathrm{Q}(H)$ is equal to $\alpha_{ik}^t$ and we have $\alpha_{ik}^t=\alpha_{tk^*}^i.$
\end{itemize}
\end{lemma}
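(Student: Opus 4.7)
The plan is to translate the arrow counts in $\mathrm{Q}(H)$ into cardinalities of the complete families ${}^{\C_i}\mathcal{P}^{\C_t}$ (and their primed analogues) constructed in Section~\ref{section3}, and then read off the coefficients from the block decompositions produced there. Under the hypotheses, the disjoint union $\mathcal{P}=\bigcup_{\Y\in{}^1\mathcal{P}}\mathcal{P}_{\Y}$ of Corollary~\ref{coro:P_X} collapses to $\mathcal{P}=\mathcal{P}_{\Y}$ for the unique non-trivial $(1,\C_k)$-primitive matrix $\Y$. A parallel argument works on the $\mathcal{P}^\prime$-side: by (\ref{P1=Pprime1}) and Lemma~\ref{lemma:P^1=1^P}(2), $\mid\mathcal{P}^{\prime 1}\mid=1$, while Lemma~\ref{lemma:P^1=1^P}(3) forces $\mathcal{S}^1=\{C_{k^*}\}$, so the unique $\Y^\prime\in\mathcal{P}^{\prime 1}$ is a non-trivial $(\C_{k^*},1)$-primitive matrix.

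For (1), the number of arrows with end vertex $C_i$ in $\mathrm{Q}(H)$ equals $\mid{}^{\C_i}\mathcal{P}\mid$, and the collapse above together with Lemma~\ref{lem:numberinPX}(1) yields
$$\mid{}^{\C_i}\mathcal{P}\mid\;=\;\mid{}^{\C_i}\mathcal{P}_{\Y}\mid\;=\;\beta_{ik}\;=\;\sum_{t\in I}\alpha_{ik}^{t}.$$
The symmetric count for start vertex $C_i$ follows by repeating the argument through the $\mathcal{P}^\prime$-construction: by (\ref{4.3}) it equals $\mid\mathcal{P}^{\C_i}\mid=\mid\mathcal{P}^{\prime\C_i}\mid=\mid\mathcal{P}^{\prime\C_i}_{\Y^\prime}\mid=\beta^\prime_{ik^*}=\sum_{t\in I}\alpha_{ik^*}^{t}$ by Lemma~\ref{lem:numberinPX}(2).

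For (2), I would refine (1) by inspecting the block form (\ref{equationBY}) with $\B=\C_i$ and $\C=\C_k$: the diagonal basic multiplicative matrices $\E_1,\dots,\E_{u_{(\C_i,\C_k)}}\in\mathcal{M}$ realise $C_i\cdot C_k=\sum_{s\in I}\alpha_{ik}^{s}C_s$ in $\Bbb{Z}\mathcal{S}$, so exactly $\alpha_{ik}^{t}$ of them equal $\C_t$. Each corresponding $\Y_j$ is a non-trivial $(\C_i,\C_t)$-primitive matrix, giving $\mid{}^{\C_i}\mathcal{P}^{\C_t}\mid\geq\alpha_{ik}^{t}$; combined with the disjoint decomposition ${}^{\C_i}\mathcal{P}=\bigcup_{t\in I}{}^{\C_i}\mathcal{P}^{\C_t}$ from Corollary~\ref{coro:CP,PD} and the total from (1), the inequalities saturate term-by-term, so $\mid{}^{\C_i}\mathcal{P}^{\C_t}\mid=\alpha_{ik}^{t}$, which is the number of arrows from $C_t$ to $C_i$ by Corollary~\ref{coro:complete number}. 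The identity $\alpha_{ik}^{t}=\alpha_{tk^*}^{i}$ then comes from repeating the block analysis on (\ref{equationBYprime}) with $\B=\C_t$ and $\C=\C_{k^*}$, which decomposes $\C_t\odot^\prime\C_{k^*}$ via $C_t\cdot C_{k^*}=\sum_{s\in I}\alpha_{tk^*}^{s}C_s$ and produces $\mid{}^{\C_i}\mathcal{P}^{\prime\C_t}\mid=\alpha_{tk^*}^{i}$; since both ${}^{\C_i}\mathcal{P}^{\C_t}$ and ${}^{\C_i}\mathcal{P}^{\prime\C_t}$ are complete families of non-trivial $(\C_i,\C_t)$-primitive matrices, Corollary~\ref{coro:complete number} forces their cardinalities to coincide. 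The only subtlety to watch is the sided-ness convention: an arrow from $D$ to $C$ in $\mathrm{Q}(H)$ corresponds to a non-trivial $(\C,\D)$-primitive matrix (first index $\C$), so one has to be careful which coordinate of each block form picks up incoming versus outgoing arrows when matching the $\mathcal{P}$ and $\mathcal{P}^\prime$ constructions.
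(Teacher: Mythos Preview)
Your proposal is correct. Part (1) is essentially identical to the paper's argument: both collapse $\mathcal{P}={}^{\C_i}\mathcal{P}_{\Y}$ and $\mathcal{P}'=\mathcal{P}^{\prime\C_i}_{\Y'}$ under the hypothesis and invoke Lemma~\ref{lem:numberinPX}.

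For part (2) the two proofs diverge. The paper obtains the identity $\alpha_{ik}^{t}=\alpha_{tk^*}^{i}$ purely from the based-ring axioms: it applies the anti-involution $S$ to $C_i\cdot C_k$ to get $\alpha_{ik}^{t}=\alpha_{k^*i^*}^{t^*}$, and then cites \cite[Proposition~3.1.6]{EGNO15} (the Frobenius-reciprocity identity for structure constants in a unital based ring) to rewrite this as $\alpha_{tk^*}^{i}$. Only afterwards does it read off $|{}^{\C_i}\mathcal{P}^{\C_t}|=\alpha_{ik}^{t}$ directly from Theorem~\ref{coro:BXcomplete}. Your route instead derives the identity \emph{internally}: you compute the arrow count once via the $\mathcal{P}$-construction (yielding $\alpha_{ik}^{t}$) and once via the $\mathcal{P}'$-construction (yielding $\alpha_{tk^*}^{i}$), and then equate them using Corollary~\ref{coro:complete number}. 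This avoids the external reference to \cite{EGNO15} and stays entirely within the machinery built in Sections~\ref{section2}--\ref{section3}, at the cost of running the block-decomposition argument twice. Your saturation step (``$\geq$ plus matching totals forces equality'') is correct but slightly indirect: since ${}^{\C_i}\mathcal{P}={}^{\C_i}\mathcal{P}_{\Y}=\{\Y_1,\dots,\Y_{u_{(\C_i,\C_k)}}\}$ and ${}^{\C_i}\mathcal{P}^{\C_t}$ is by definition the subset of these whose right index is $\C_t$, the count $\alpha_{ik}^{t}$ is immediate without the inequality detour.
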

\begin{proof}
\begin{itemize}
\item[(1)]According to Lemma \ref{lemma:P^1=1^P} (2) and (\ref{P1=Pprime1}) in its proof, we know that
 $$\mid\mathcal{P}{}^{\prime 1}\mid=\mid\mathcal{P}{}^1\mid=\mid{}^1\mathcal{P}\mid=1.$$ Suppose $${}^1\mathcal{P}=\{\Y\}$$ and $$\mathcal{P}{}^{\prime 1}=\{\Y^\prime\}.$$ Combining (\ref{definition:^BP}) and Lemma \ref{lem:numberinPX}, we have $$\mid{}^{\C_i}\mathcal{P}\mid=\mid{}^{\C_i}\mathcal{P}_{\Y}\mid=\sum\limits_{t\in I}\alpha_{ik}^t.$$
 This means that the number of arrows with end vertex $C_i$ in $\mathrm{Q}(H)$ is equal to $\sum\limits_{t\in I}\alpha_{ik}^t$.
 A similar argument shows that the number of arrows with start vertex $C_i$ in $\mathrm{Q}(H)$ is equal to $\sum\limits_{t\in I}\alpha_{ik^*}^t$.
\item[(2)]In $\Bbb{Z}\mathcal{S}$, we have $$S(C_k)\cdot S(C_i)=\sum\limits_{t\in I}\alpha_{ik}^tS(C_t)=\sum\limits_{t\in I}\alpha_{k^*i^*}^{t^*}S(C_t).$$
It follows from \cite[Proposition 3.1.6]{EGNO15} that $$\alpha_{ik}^t=\alpha_{k^*i^*}^{t^*}=\alpha_{tk^*}^i.$$
Moreover, by (\ref{definition:^BP}), we can find that $$^{C_i}\mathcal{P}={}^{C_i}\mathcal{P}_{\Y}.$$
It follows from Theorem \ref{coro:BXcomplete} that $$\mid{}^{C_i}\mathcal{P}^{\C_t}\mid=\alpha_{ik}^t.$$
Thus the number of arrows from $C_t$ to $C_i$ in $\mathrm{Q}(H)$ is equal to $\alpha_{ik}^t$.
\end{itemize}
\end{proof}
Using Lemma \ref{lemma:numberinquiver}, now we can turn to the last situation.
\begin{proposition}\label{dimC4,9}
Let $H$ be a finite-dimensional non-cosemisimple Hopf algebra over $\k$ with the dual Chevalley property. Let $\mid{}^1\mathcal{P}\mid=1$ and $C_k$ be the unique subcoalgebra contained in ${}^1\mathcal{S}$. If $\dim_{\k}(C_k)\geq4$, then $H$ is of infinite corepresentative type.
\end{proposition}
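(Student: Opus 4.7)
I argue by contradiction, following the separated-quiver strategy of Proposition \ref{prop:2arrow=infin}. Suppose $H$ is of finite corepresentation type. Then the radical-square-zero quotient $\k\mathrm{Q}(H)/J^2$ of the basic algebra of $H^*$ is of finite representation type, so by \cite[Theorem X.2.6]{ARS95} the separated quiver $\mathrm{Q}(H)_s$ is a disjoint union of Dynkin diagrams. Translating back to $\mathrm{Q}(H)$, this forces $\mathrm{Q}(H)$ to have no multi-edges and every vertex of $\mathrm{Q}(H)$ to have both in-degree and out-degree at most $3$.

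Set $r := \sqrt{\dim_{\k} C_k} \geq 2$. Since $\k 1 \cdot C_k = C_k$ we have $\beta_{1,k} = 1$, so Lemma \ref{lem:fpequation} rearranges to
$$\sum_{i \neq 1} \sqrt{\dim_{\k} C_i}\,(\beta_{ik} - r) \;=\; r - 1 \;>\; 0,$$
and hence some $i_0 \neq 1$ satisfies $\beta_{i_0,k} \geq r+1$. When $r \geq 3$ this already gives $\beta_{i_0,k} \geq 4$, so the vertex $C_{i_0}'$ has degree $\geq 4$ in $\mathrm{Q}(H)_s$, contradicting Dynkin. This disposes of the case $r \geq 3$.

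The main obstacle is the case $r = 2$ (i.e.\ $\dim_{\k}C_k = 4$), where the bound only forces $\beta_{i_0,k} \geq 3$, still compatible with a branching vertex of a Dynkin diagram. To close this case I plan to combine two inputs. First, applying the regular-element identity $R \cdot C_{k^*} = r R$, which is legitimate because $|\mathcal{P}^1|=1$ and $C_{k^*}$ is the unique element of $\mathcal{S}^1$ by Lemma \ref{lemma:P^1=1^P}(3), yields the parallel identity
$$\sum_{i \neq 1} \sqrt{\dim_{\k} C_i}\,(\beta'_{i,k^*} - r) \;=\; r - 1,$$
which produces some $j_0 \neq 1$ with out-degree $\beta'_{j_0,k^*} \geq 3$. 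Second, under the no-multi-edge and degree-at-most-$3$ constraints, the decompositions of $C_k \cdot C_{k^*}$ and $C_{k^*} \cdot C_k$ --- each of Frobenius--Perron dimension $4$ and containing $\k 1$ with multiplicity one --- reduce to a short finite list of admissible shapes (the remaining Frobenius--Perron dimension $3$ may split as a single simple of dimension $9$, as $\k g$ plus a simple of dimension $4$, or, if three distinct $1$-dimensional pieces occur, the degree bound is already violated).

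For each admissible shape I would chase the arrows among $\k 1$, $C_k$, $C_{k^*}$ and the further simple subcoalgebras produced by the fusion rules, using Lemma \ref{lemma:numberinquiver}(2), the symmetry $\alpha_{ik}^t = \alpha_{tk^*}^i$, and the transitivity of $\mathbb{Z}\mathcal{S}$ from Lemma \ref{lemma:transitive}. Fusion-ring associativity should then force, in every admissible case, one of three outcomes: a forbidden multi-edge in $\mathrm{Q}(H)$, a vertex of degree $\geq 4$ in $\mathrm{Q}(H)_s$, or an undirected cycle producing a Euclidean $\tilde{A}_n$ sub-quiver. Any of these contradicts the disjoint-union-of-Dynkin conclusion, completing the proof.
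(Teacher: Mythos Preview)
Your overall strategy matches the paper's exactly: reduce to the separated quiver of $\k\mathrm{Q}(H)/J^2$, use the Frobenius--Perron identity of Lemma~\ref{lem:fpequation} to produce a vertex of in-degree $\geq 4$ when $r\geq 3$, and then carry out a fusion-rule case analysis on $S(C_k)\cdot C_k$ when $r=2$. The $r\geq 3$ argument is correct and identical to the paper's.

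There is, however, a genuine gap in your $r=2$ plan. You assert that the chase will always terminate in one of three obstructions: a multi-edge, a vertex of degree $\geq 4$, or an $\tilde{A}_n$ cycle. But the paper's detailed analysis shows this is not so. In several of the admissible shapes the separated sub-quiver one is forced to build is a \emph{tree} of maximum degree $3$ with no multi-edges---specifically of Euclidean type $\tilde{D}_5$ in case~(II)(i), and of types $\tilde{E}_6$, $\tilde{E}_7$, $\tilde{E}_8$ in the branches of case~(III). None of these is caught by your three criteria: each has no cycle, no multi-edge, and every vertex of degree $\leq 3$. What rules them out is that a Dynkin component can have at most one branch vertex, and the lengths of the three branches at that vertex are constrained (at most $(1,1,n)$, $(1,2,2)$, $(1,2,3)$, $(1,2,4)$). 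Your trichotomy misses exactly the ``two branch points'' and ``one overlong branch'' failures, so the plan as stated would not close (II)(i) or (III).

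To repair this you must either enlarge your list of forbidden outcomes to include the full Euclidean list $\tilde{D}_n,\tilde{E}_6,\tilde{E}_7,\tilde{E}_8$ and then verify, as the paper does, that each admissible decomposition of $S(C_k)\cdot C_k$ eventually forces one of them; or else argue more structurally that the fusion chain $D_1^{(m)}\cdot S^{\alpha}(C_k)$ must keep growing in Frobenius--Perron dimension until it splits, and that any split produces a second branch vertex in the same component. The paper takes the first route, and the case analysis is longer than your sketch suggests (it requires tracking chains $D_1^{(3)},D_1^{(4)},\ldots$ and invoking finite-dimensionality to terminate them). Your parallel observation about $\beta'_{j_0,k^*}\geq 3$ is correct but not obviously sufficient on its own: it gives a second degree-$3$ vertex in $\mathrm{Q}(H)_s$, but you would still need to show it lies in the \emph{same} connected component as the first one to conclude non-Dynkin.
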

\begin{proof}
Proceeding as in the proof of Proposition \ref{prop:2arrow=infin}, we need only to observe the separated quiver of $\k\mathrm{Q}(H)/J^2$.\\
If $\dim_{\k}(C_k)\geq9$, since $\beta_{1k}=1$, it follows from Lemma \ref{lem:fpequation} that there exists at least one subcoalgebra $C_u$ such that $$\beta_{uk}=\sum\limits_{t\in I}\alpha_{uk}^t\geq4.$$ The separated quiver of $\mathrm{Q}(H)$ contains a vertex which is the end vertex of at least $4$ arrows.
Evidently, the underlying graph of this separated quiver is not the union of Dynkin diagrams, thus $H$ is of infinite corepresentative type.\\
If $\dim_{\k}(C_k)=4$, we deal with this situation through classified discussion. In the following part, for any $n\geq 2$, let $\mathcal{S}(n)$ be the set of all the $n^2$-dimensional simple subcoalgebras of $H$ and let $G(H)$ be the set of all the group-like elements.
According to Lemma \ref{lemma:numberinquiver} (2), we know that the following two numbers are equal:
\begin{itemize}
\item[-]The number of $C_t$ contained in $C_i\cdot C_k$;
\item[-]The number of $C_i$ contained in $C_t\cdot S(C_k)$.
\end{itemize}
Now let us start discussing different situations.
\begin{itemize}
  \item[(I)]Suppose that $$S(C_k)\cdot C_k=\k1+\k g_1+\k g_2+\k g_3$$ in $\Bbb{Z}\mathcal{S}$, where $g_1, g_2, g_3\in G(H).$ According to Lemma \ref{lemma:numberinquiver} (1), the separated quiver $\mathrm{Q}(H)_s$ contains a vertex which is the end vertex of $4$ arrows and it can not be a finite disjoint union of Dynkin diagrams.
We know that $H$ is of infinite corepresentation type.\\
 Note that if there exists some vertex in $\mathrm{Q}(H)_s$ which is the end vertex or the start vertex of at least $4$ arrows, then a similar arguments shows that $H$ is of infinite corepresentation type. For simplicity, in the following proof, we will no longer consider the occurrence of this situation.
\item[(II)] Suppose that in $\Bbb{Z}\mathcal{S}$, we have $$S(C_k)\cdot C_k=\k1+\k g_1+D_1^{(2)},$$ for some $g_1\in G(H)$ and $D_1^{(2)}\in\mathcal{S}(2)$.
\begin{itemize}
  \item[i)]If $$D_1^{(2)}\cdot S(C_k)=S(C_k)+\k g_2+\k g_3,$$ where $g_2, g_3\in G(H)$. Using Lemma \ref{lemma:numberinquiver} (2), the separated quiver of $\k\mathrm{Q}(H)/J^2$ either contains a sub-quiver of the form
$$
\begin{tikzpicture}
\filldraw [black] (1,0) circle (2pt) node[anchor=south]{};
\filldraw [black] (-1,0) circle (2pt)node[anchor=north]{};
\draw[thick, ->] (-0.9,0.1).. controls (-0.1,0.1) and (0.1,0.1) .. node[anchor=south]{}(0.9,0.1) ;
\draw[thick, ->] (-0.9,-0.1).. controls (-0.1,-0.1) and (0.1,-0.1) .. node[anchor=south]{}(0.9,-0.1);
\end{tikzpicture},
$$
or contains
$$
\begin{tikzpicture}
\filldraw [black] (0,0) circle (2pt) node[anchor=south]{$\k1$};
\filldraw [black] (1,0) circle (2pt)node[anchor=south]{$\k g_1$};
\filldraw [black] (2,0) circle (2pt)node[anchor=south]{$D_1^{(2)}$};
\filldraw [black] (1,-1) circle (2pt) node[anchor=north]{$S(C_k)^\prime$};
\filldraw [black] (3,-1) circle (2pt)node[anchor=north]{$\k g_2^\prime$};
\filldraw [black] (4,-1) circle (2pt)node[anchor=north]{$\k g_3^\prime$};
\draw[thick, ->] (0.1,-0.1).. controls (0.1,-0.1) and (0.1,-0.1) .. node[anchor=south]{}(0.9,-0.9) ;
\draw[thick, ->] (1,-0.1).. controls (1,-0.1) and (1,-0.1) .. node[anchor=south]{}(1,-0.9) ;
\draw[thick, ->] (1.9,-0.1).. controls (1.9,-0.1) and (1.9,-0.1) .. node[anchor=south]{}(1.1,-0.9) ;
\draw[thick, ->] (2.1,-0.1).. controls (2.1,-0.1) and (2.1,-0.1) .. node[anchor=south]{}(2.9,-0.9) ;
\draw[thick, ->] (2.2,-0.1).. controls (2.2,-0.1) and (2.2,-0.1) .. node[anchor=south]{}(3.9,-0.9) ;
\end{tikzpicture}
$$
as a sub-quiver. The underlying graph of the sub-quiver in the latter case is $\tilde{D_5}$ and it is an Euclidean
graph.
Since the underlying graph of both of them are not Dynkin diagrams, it follows that $H$ is of infinite corepresentation type.
\item[ii)]If $$D_1^{(2)}\cdot S(C_k)=S(C_k)+D_2^{(2)}$$ for some $D_2^{(2)}\in\mathcal{S}(2)$,  a similar argument shows that if $$D_2^{(2)}\cdot C_k=D_1^{(2)}+\k g_4+\k g_5,$$ where $g_4, g_5\in G(H)$, then $H$ is of infinite corepresentation. If not, we can consider the case that $$D_2^{(2)}\cdot C_k=D_1^{(2)}+D_3^{(2)},$$ where $D_3^{(2)}\in\mathcal{S}(2)$. Continue the process, we know that either $H$ is of infinite corepresentation type, or we can get a sub-quiver which contains infinite vertexes of $\k\mathrm{Q}(H)/J^2$ of the following form
$$
\begin{tikzpicture}
\filldraw [black] (0,0) circle (2pt) node[anchor=south]{$\k1$};
\filldraw [black] (1,0) circle (2pt)node[anchor=south]{$\k g_1$};
\filldraw [black] (2,0) circle (2pt)node[anchor=south]{$D_1^{(2)}$};
\filldraw [black] (1,-1) circle (2pt) node[anchor=north]{$S(C_k)^\prime$};
\filldraw [black] (3,-1) circle (2pt)node[anchor=north]{$D_2^{(2) ^\prime}$};
\filldraw [black] (4,0) circle (2pt)node[anchor=south]{$D_3^{(2)}$};
\filldraw [black] (5,-1) circle (2pt)node[anchor=west]{$\cdots$};
\draw[thick, ->] (0.1,-0.1).. controls (0.1,-0.1) and (0.1,-0.1) .. node[anchor=south]{}(0.9,-0.9) ;
\draw[thick, ->] (1,-0.1).. controls (1,-0.1) and (1,-0.1) .. node[anchor=south]{}(1,-0.9) ;
\draw[thick, ->] (1.9,-0.1).. controls (1.9,-0.1) and (1.9,-0.1) .. node[anchor=south]{}(1.1,-0.9) ;
\draw[thick, ->] (2.1,-0.1).. controls (2.1,-0.1) and (2.1,-0.1) .. node[anchor=south]{}(2.9,-0.9) ;
\draw[thick, ->] (3.9,-0.1).. controls (3.9,-0.1) and (3.9,-0.1) .. node[anchor=south]{}(3.1,-0.9) ;
\draw[thick, ->] (4.1,-0.1).. controls (4.1,-0.1) and (4.1,-0.1) .. node[anchor=south]{}(4.9,-0.9) ;
\end{tikzpicture}.
$$
For the latter case, it is in contradiction with the fact that $H$ is finite-dimensional.
\end{itemize}
\item[(III)]Finally, we focus on the case that $$S(C_k)\cdot C_k=\k1+D_1^{(3)}$$ in $\Bbb{Z}\mathcal{S}$ for some $D_1^{(3)}\in\mathcal{S}(3).$
\begin{itemize}
  \item[i)]If $$D_1^{(3)}\cdot S(C_k)=S(C_k)+D_1^{(2)}+D_2^{(2)},$$  where $D_1^{(2)}, D_2^{(2)}\in\mathcal{S}(2)$, then $$D_1^{(2)}\cdot C_k=D_1^{(3)}+\k g_1$$ and $$D_2^{(2)}\cdot C_k=D_1^{(3)}+\k g_2,$$where $g_1, g_2\in G(H).$ It follows from Lemma \ref{lemma:numberinquiver} (2) that the separated quiver for $\k\mathrm{Q}(H)/J^2$ either contains the Kronecker quiver as a sub-quiver, a sub-quiver whose underlying graph is $\tilde{A_{n}}$ for some $n\geq3$, or a sub-quiver of the following form
   $$
\begin{tikzpicture}
\filldraw [black] (-1,0) circle (2pt) node[anchor=south]{$\k1$};
\filldraw [black] (1,0) circle (2pt) node[anchor=south]{$D_1^{(3)}$};
\filldraw [black] (3,0) circle (2pt)node[anchor=south]{$\k g_1$};
\filldraw [black] (4,0) circle (2pt)node[anchor=south]{$\k g_2$};
\filldraw [black] (0,-1) circle (2pt) node[anchor=north]{$S(C_k)^\prime$};
\filldraw [black] (2,-1) circle (2pt)node[anchor=north]{$D_1^{(2) \prime}$};
\filldraw [black] (3,-1) circle (2pt)node[anchor=north]{$D_2^{(2) \prime}$};
\draw[thick, ->] (-0.9,-0.1).. controls (-0.9,-0.1) and (-0.9,-0.1) .. node[anchor=south]{}(-0.1,-0.9) ;
\draw[thick, ->] (0.9,-0.1).. controls (0.9,-0.1) and (0.9,-0.1) .. node[anchor=south]{}(0.1,-0.9) ;
\draw[thick, ->] (1.1,-0.1).. controls (1.1,-0.1) and (1.1,-0.1) .. node[anchor=south]{}(1.9,-0.9) ;
\draw[thick, ->] (1.1,-0.1).. controls (1.1,-0.1) and (1.1,-0.1) .. node[anchor=south]{}(2.9,-0.9) ;
\draw[thick, ->] (2.9,-0.1).. controls (2.9,-0.1) and (2.9,-0.1) .. node[anchor=south]{}(2.1,-0.9) ;
\draw[thick, ->] (3.9,-0.1).. controls (3.9,-0.1) and (3.9,-0.1) .. node[anchor=south]{}(3.1,-0.9) ;
\end{tikzpicture}.
$$
The underlying graph of the quiver in the latter case is $\tilde{E_6}$, which is an Euclidean graph. This means that $H$ is of infinite corepresentation type.
\item[ii)] If $$D_1^{(3)}\cdot S(C_k)=S(C_k)+D_2^{(3)}+\k g_1,$$ where $g_1\in G(H)$ and $D_2^{(3)}\in \mathcal{S}(3)$, we know that $\k g_1\cdot C_k$ contains $D^{(3)}_1$ with a nonzero coefficient in $\Bbb{Z}\mathcal{S}$. But $$\sqrt{\dim_{\k}(\k g_1)}\sqrt{\dim_{\k}(C_k)}<\sqrt{\dim_{\k}(D_1^{(3)})},$$ this leads to a contradiction. Therefore, this situation never happen.
\item[iii)]Suppose that $$D_1^{(3)}\cdot S(C_k)=S(C_k)+D_1^{(4)},$$ where $D_1^{(4)}\in\mathcal{S}(4)$, we can continue this process. Since $H$ is finite-dimensional, an argument similar to the one used in $(2)(II)$ shows that there exists some $n\geq3$ such that $$D_{1}^{(i)}\cdot S^{\alpha_i}(C_k)=D_{1}^{(i-1)}+D_{1}^{(i+1)},$$ holds for all $3\leq i\leq n$, and $$D_{1}^{(n+1)}\cdot S^{\alpha_{n+1}}(C_k)=D_{1}^{(n)}+E+F,$$ where $E, F \in\mathcal{S}, D_{1}^{(2)}=S(C_k), D_{1}^{(i)}\in\mathcal{S}(i)$ for $3\leq i\leq n+1$ and $\alpha_i=0$ when $i$ is even, $\alpha_i=1$ when $i$ is odd.\\
    When $n=2m$ for some $m\geq2$, a similar argument shows that $$\sqrt{\dim_{\k}(E)}= m+1, \;\;\sqrt{\dim_{\k}(F)}= m+1.$$ Notice that $E \cdot C_k$ contains at least one subcoalgebra $G$ with a nonzero coefficient besides $D_{1}^{(2m+1)}$, where $\sqrt{\dim_{\k}(G)}= 1$. Then we know that $G\cdot S(C_k)$ contains $E$, which is in contradiction with $\sqrt{\dim_{\k}(E)}\geq3$.\\
    When $n=2m+1$ for some $m\geq1$, since $E\cdot S(C_k)$ and $F\cdot S(C_k)$ contain $D_{1}^{(2m+2)}$ with a nonzero coefficient, it follows that $$\sqrt{\dim_{\k}(E)}\geq m+1, \;\;\sqrt{\dim_{\k}(F)}\geq m+1.$$  Without loss of generality, we can assume$$\sqrt{\dim_{\k}(E)}=m+2, \;\;\sqrt{\dim_{\k}(F)}=m+1.$$
    Note that $E\cdot S(C_k)$ contains at least one subcoalgebra $G$ with a nonzero coefficient besides $D_{1}^{(2m+2)}$, where $\sqrt{\dim_{\k}(G)}\leq 2$. Then we know that $G\cdot S(C_k)$ contains $E$, which means that $m=1$ or $m=2$ and $\sqrt{\dim_{\k}(G)}=2$.\\
    Based on the consideration above, we need only to consider the situations of $n=3$ and $n=5$. \\
    When $n=3$, $$D_1^{(4)}\cdot C_k=D_1^{(3)}+E+F,$$ where $E, F\in\mathcal{S}$. Since $E\cdot S(C_k)$ and $F\cdot S(C_k)$ contains $D_1^{(4)}$ with a nonzero coefficient, it follows that $$\sqrt{\dim_{\k}(E)}\geq2, \;\;\sqrt{\dim_{\k}(F)}\geq2.$$  Without loss of generality, suppose that $$\sqrt{\dim_{\k}(E)}=3,\;\;\sqrt{\dim_{\k}(F)}=2.$$  Then we have $$E\cdot S(C_k)= D_1^{(4)}+D_2^{(2)},$$
and
$$D_2^{(2)}\cdot S(C_k)= E+\k g_1,$$
where $g_1\in G(H)$, $D_2^{(2)}\in\mathcal{S}(2)$.
According to Lemma \ref{lemma:numberinquiver} (2), the separated quiver of $\k\mathrm{Q}(H)/J^2$ either contains the Kronecker quiver as a sub-quiver, a sub-quiver whose underlying graph is $\tilde{A_{n}}$ for some $n\geq3$ or a sub-quiver of the following type
$$
\begin{tikzpicture}
\filldraw [black] (-1,0) circle (2pt) node[anchor=south]{$\k1$};
\filldraw [black] (1,0) circle (2pt) node[anchor=south]{$D_1^{(3)}$};
\filldraw [black] (3,0) circle (2pt)node[anchor=south]{$E$};
\filldraw [black] (4,0) circle (2pt)node[anchor=south]{$F$};
\filldraw [black] (5,0) circle (2pt)node[anchor=south]{$\k g$};
\filldraw [black] (0,-1) circle (2pt) node[anchor=north]{$S(C_k)^\prime$};
\filldraw [black] (2,-1) circle (2pt)node[anchor=north]{$D_1^{(4) \prime}$};
\filldraw [black] (4,-1) circle (2pt)node[anchor=north]{$D_2^{(2) \prime}$};
\draw[thick, ->] (-0.9,-0.1).. controls (-0.9,-0.1) and (-0.9,-0.1) .. node[anchor=south]{}(-0.1,-0.9) ;
\draw[thick, ->] (0.9,-0.1).. controls (0.9,-0.1) and (0.9,-0.1) .. node[anchor=south]{}(0.1,-0.9) ;
\draw[thick, ->] (1.1,-0.1).. controls (1.1,-0.1) and (1.1,-0.1) .. node[anchor=south]{}(1.9,-0.9) ;
\draw[thick, ->] (2.9,-0.1).. controls (2.9,-0.1) and (2.9,-0.1) .. node[anchor=south]{}(2,-0.9) ;
\draw[thick, ->] (3.9,-0.1).. controls (3.9,-0.1) and (3.9,-0.1) .. node[anchor=south]{}(2.2,-0.9) ;
\draw[thick, ->] (3.1,-0.1).. controls (3.1,-0.1) and (3.1,-0.1) .. node[anchor=south]{}(3.9,-0.9) ;
\draw[thick, ->] (4.9,-0.1).. controls (4.9,-0.1) and (4.9,-0.1) .. node[anchor=south]{}(4.1,-0.9) ;
\end{tikzpicture}.
$$
The underlying graph of the sub-quiver in the latter case is $\tilde{E_7}$ and it is an Euclidean graph, which means that $H$ is of infinite corepresentation type.\\
When $n=5$, we have
$$D_1^{(4)}\cdot C_k= D_1^{(3)}+D_1^{(5)},$$
$$D_1^{(5)}\cdot S(C_k)= D_1^{(4)}+D_1^{(6)}$$ and
$$D_1^{(6)}\cdot C_k=D_1^{(5)}+E+F.$$ Without loss of generality, we can assume $$\sqrt{\dim_{\k}(E)}=4,\;\;\sqrt{\dim_{\k}(F)}=3.$$ It follows that $$E\cdot S(C_k)=D_1^{(6)}+D_3^{(2)}.$$
This means that the separated quiver for $\k\mathrm{Q}(H)/J^2$ either contains a sub-quiver whose underlying graph is $\tilde{A_{n}}$ for some $n\geq3$ or a sub-quiver of the following type
$$
\begin{tikzpicture}
\filldraw [black] (-1,0) circle (2pt) node[anchor=south]{$\k1$};
\filldraw [black] (1,0) circle (2pt) node[anchor=south]{$D_1^{(3)}$};
\filldraw [black] (3,0) circle (2pt) node[anchor=south]{$D_1^{(5)}$};
\filldraw [black] (5,0) circle (2pt)node[anchor=south]{$E$};
\filldraw [black] (6,0) circle (2pt)node[anchor=south]{$F$};
\filldraw [black] (0,-1) circle (2pt) node[anchor=north]{$S(C_k)^\prime$};
\filldraw [black] (2,-1) circle (2pt)node[anchor=north]{$D_1^{(4) \prime}$};
\filldraw [black] (4,-1) circle (2pt)node[anchor=north]{$D_2^{(6) \prime}$};
\filldraw [black] (6,-1) circle (2pt)node[anchor=north]{$D_3^{(2) \prime}$};
\draw[thick, ->] (-0.9,-0.1).. controls (-0.9,-0.1) and (-0.9,-0.1) .. node[anchor=south]{}(-0.1,-0.9) ;
\draw[thick, ->] (0.9,-0.1).. controls (0.9,-0.1) and (0.9,-0.1) .. node[anchor=south]{}(0.1,-0.9) ;
\draw[thick, ->] (1.1,-0.1).. controls (1.1,-0.1) and (1.1,-0.1) .. node[anchor=south]{}(1.9,-0.9) ;
\draw[thick, ->] (2.9,-0.1).. controls (2.9,-0.1) and (2.9,-0.1) .. node[anchor=south]{}(2.1,-0.9) ;
\draw[thick, ->] (3.1,-0.1).. controls (3.1,-0.1) and (3.1,-0.1) .. node[anchor=south]{}(3.9,-0.9) ;
\draw[thick, ->] (4.9,-0.1).. controls (4.9,-0.1) and (4.9,-0.1) .. node[anchor=south]{}(4,-0.9) ;
\draw[thick, ->] (5.1,-0.1).. controls (5.1,-0.1) and (5.1,-0.1) .. node[anchor=south]{}(5.9,-0.9) ;
\draw[thick, ->] (5.9,-0.1).. controls (5.9,-0.1) and (5.9,-0.1) .. node[anchor=south]{}(4.2,-0.9) ;
\end{tikzpicture}.
$$
The underlying graph of the sub-quiver in the latter case is $\tilde{E_8}$ and it is still an Euclidean graph, which means that $H$ is of infinite corepresentation type.
  \end{itemize}
\end{itemize}
In conclusion, $H$ is of infinite corepresentation type.
\end{proof}
Recall that a basic cycle of length $n$ is a quiver with $n$ vertices $e_0, e_1, \cdots, e_{n-1}$ and $n$ arrows $a_0, a_1, \cdots a_{n-1}$, where the arrow $a_i$ goes from the vertex $e_i$ to the vertex $e_{i+1}$.
With the help of the proceeding three propositions and Corollary \ref{lemma:Pc=cP=1}, we can now obtain the following theorem.
\begin{theorem}\label{coro:equ}
Let $H$ be a finite-dimensional non-cosemisimple Hopf algebra over an algebraically closed field $\k$ with the dual Chevalley property and $\mathrm{Q}(H)$ be the link quiver of $H$. Then the following statements are equivalent:
\begin{itemize}
  \item[(1)]$H$ is of finite corepresentation type;
  \item[(2)]Every vertex in $\mathrm{Q}(H)$ is both the start vertex of only one arrow and the end vertex of only one arrow, that is, $\mathrm{Q}(H)$ is a disjoint union of basic cycles;
  \item[(3)]There is only one arrow $C\rightarrow \k1$ in $\mathrm{Q}(H)$ whose end vertex is $\k1$ and $\dim_{\k}(C)=1$;
  \item[(4)]There is only one arrow $\k1\rightarrow D$ in $\mathrm{Q}(H)$ whose start vertex is $\k1$ and $\dim_{\k}(D)=1$.
\end{itemize}
\end{theorem}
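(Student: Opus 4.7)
The plan is to deduce this four-way equivalence by assembling Propositions \ref{prop:2arrow=infin}, \ref{Prop:finite corep} and \ref{dimC4,9} together with Corollary \ref{lemma:Pc=cP=1}, using the bridge between the matrix-theoretic language of complete families of primitive matrices and the quiver-theoretic language of $\mathrm{Q}(H)$. The key bookkeeping observation, already recorded in Section \ref{section4}, is that ${}^{\C}\mathcal{P}^{\D}$ is identified with the set of arrows $D\to C$ in $\mathrm{Q}(H)$ via Corollary \ref{coro:complete number}. In particular, ${}^1\mathcal{P}$ is in bijection with the arrows ending at $\k1$, and a non-trivial $(1,\C)$-primitive matrix has size $1\times r$ with $\dim_{\k}(C)=r^2$, corresponding to a single arrow whose source vertex $C$ has dimension $r^2$. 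Symmetrically, $\mathcal{P}^1$ is in bijection with the arrows starting at $\k1$, coming from $\mathcal{S}^1$.

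First, I would establish that (1) is equivalent to the single statement: $|{}^1\mathcal{P}|=1$ and the unique element $C\in{}^1\mathcal{S}$ satisfies $\dim_{\k}(C)=1$. If $|{}^1\mathcal{P}|\geq 2$, Proposition \ref{prop:2arrow=infin} gives infinite corepresentation type. If $|{}^1\mathcal{P}|=1$ with unique $C_k\in{}^1\mathcal{S}$ having $\dim_{\k}(C_k)\geq 4$ (note that dimensions of simple subcoalgebras are perfect squares by Lemma \ref{lemma:simple bicomodule}, so the alternatives are $\dim_{\k}(C_k)=1$ or $\dim_{\k}(C_k)\geq 4$), Proposition \ref{dimC4,9} yields infinite corepresentation type. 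If $|{}^1\mathcal{P}|=1$ and the unique $C_k$ is $1$-dimensional, Proposition \ref{Prop:finite corep} gives finite corepresentation type. Translating through the identification above, this trichotomy yields (1) $\Leftrightarrow$ (3).

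Next, (3) $\Leftrightarrow$ (4) is obtained from Lemma \ref{lemma:P^1=1^P}: part (2) supplies $|{}^1\mathcal{P}|=|\mathcal{P}^1|$, while part (3) ensures that the antipode $S$ of $H$ induces a dimension-preserving bijection between ${}^1\mathcal{S}$ and $\mathcal{S}^1$; hence the condition ``unique arrow ending at $\k1$ with $1$-dimensional source'' corresponds exactly to ``unique arrow starting at $\k1$ with $1$-dimensional target.'' Finally, (1) $\Leftrightarrow$ (2) follows immediately from Corollary \ref{lemma:Pc=cP=1}: the condition $|{}^{\C}\mathcal{P}|=|\mathcal{P}^{\C}|=1$ for every $\C\in\mathcal{M}$ is precisely the statement that every vertex of $\mathrm{Q}(H)$ has in-degree and out-degree equal to $1$, which in a finite directed graph is equivalent to $\mathrm{Q}(H)$ being a disjoint union of basic cycles.

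There is no substantive new obstacle at this stage, since all the hard work, the case analysis behind Proposition \ref{dimC4,9}, the Nakayama-algebra identification used in Proposition \ref{Prop:finite corep}, and the separated-quiver argument in Proposition \ref{prop:2arrow=infin}, has already been carried out. The main points of care are twofold: to verify that the ``$\dim_{\k}(C_k)\geq 4$'' hypothesis of Proposition \ref{dimC4,9} together with the ``$\dim_{\k}(C_k)=1$'' hypothesis of Proposition \ref{Prop:finite corep} jointly cover every possible dimension (which is where Lemma \ref{lemma:simple bicomodule} enters), and to check that the quiver-theoretic statements (3) and (4), phrased in terms of arrows and source/target dimensions in $\mathrm{Q}(H)$, are the faithful translations of the matrix-theoretic statements $|{}^1\mathcal{P}|=1$ with a $1$-dimensional unique simple in ${}^1\mathcal{S}$ (respectively in $\mathcal{S}^1$).
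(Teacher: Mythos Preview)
Your proposal is correct and follows essentially the same route as the paper's own proof: both deduce $(1)\Leftrightarrow(3)$ from the trichotomy given by Propositions \ref{prop:2arrow=infin}, \ref{Prop:finite corep} and \ref{dimC4,9}, and then obtain the remaining equivalences from Lemma \ref{lemma:P^1=1^P} and Corollary \ref{lemma:Pc=cP=1}. Your write-up adds useful bookkeeping (the perfect-square dimension observation ruling out $1<\dim_{\k}(C_k)<4$, and the explicit translation between ${}^{\C}\mathcal{P}^{\D}$ and arrows in $\mathrm{Q}(H)$), but the structure of the argument is the same; note only that Corollary \ref{lemma:Pc=cP=1} literally gives $(2)\Leftrightarrow(3)$ rather than $(1)\Leftrightarrow(2)$ directly, so your final sentence implicitly chains through the already-established $(1)\Leftrightarrow(3)$.
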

\begin{proof}
Combining Propositions \ref{prop:2arrow=infin}, \ref{Prop:finite corep} and \ref{dimC4,9}, we can prove the equivalence of (1) and (3). According to Lemma \ref{lemma:P^1=1^P} and Corollary \ref{lemma:Pc=cP=1}, we know the equivalence of (2), (3), and (4).
\end{proof}
Let $H_{(1)}$ be the link-indecomposable component containing $\k1$. Combining Proposition \ref{lemma:link1dim} and Theorem \ref{coro:equ}, we have:
\begin{corollary}\label{coro:H(1)pointedcycle}
A finite-dimensional non-cosemisimple Hopf algebra $H$ over $\k$ with the dual Chevalley property is of finite corepresentation type if and only if $H_{(1)}$ is a pointed Hopf algebra and the link quiver of $H_{(1)}$ is a basic cycle.
\end{corollary}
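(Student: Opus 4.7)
The plan is to obtain the corollary as a direct consequence of Theorem~\ref{coro:equ}, together with Proposition~\ref{lemma:link1dim} and the standard identification of the link quiver of a link-indecomposable component with the corresponding connected component of $\mathrm{Q}(H)$.

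For the forward direction, assume $H$ is of finite corepresentation type. Theorem~\ref{coro:equ}(3)--(4) tells us that the unique arrow entering $\k 1$ and the unique arrow leaving $\k 1$ in $\mathrm{Q}(H)$ come from, respectively go to, a $1$-dimensional simple subcoalgebra, so every simple subcoalgebra directly linked to $\k 1$ is $1$-dimensional. Proposition~\ref{lemma:link1dim} then yields that $H_{(1)}$ is a pointed Hopf algebra. Moreover, by Theorem~\ref{coro:equ}(2), $\mathrm{Q}(H)$ is a disjoint union of basic cycles, so the connected component of $\k 1$ is itself a basic cycle. Since $H_{(1)}$ is by definition the maximal link-indecomposable subcoalgebra containing $\k 1$, the link quiver $\mathrm{Q}(H_{(1)})$ coincides with this component, hence is a basic cycle.

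For the converse, assume $H_{(1)}$ is pointed and $\mathrm{Q}(H_{(1)})$ is a basic cycle. Since $H_{(1)}$ is pointed, every vertex of $\mathrm{Q}(H_{(1)})$ is a $1$-dimensional coalgebra $\k g$ with $g\in G(H_{(1)})\subseteq G(H)$, and in the basic cycle there is exactly one arrow entering $\k 1$, coming from some $\k g$. To conclude via Theorem~\ref{coro:equ}(3) it suffices to show that this is the only arrow of $\mathrm{Q}(H)$ with end vertex $\k 1$: indeed, any simple subcoalgebra $C$ of $H$ admitting an arrow to $\k 1$ in $\mathrm{Q}(H)$ is directly linked to $\k 1$, hence lies in the link-indecomposable component of $\k 1$, which is $H_{(1)}$, and the corresponding arrow is already present in $\mathrm{Q}(H_{(1)})$.

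The main technical point is the compatibility of the two link quivers, namely that for $C,D\in \mathcal{S}$ both lying in $H_{(1)}$, the arrows from $D$ to $C$ in $\mathrm{Q}(H)$ coincide with those in $\mathrm{Q}(H_{(1)})$, and that no arrow of $\mathrm{Q}(H)$ connects a vertex of $H_{(1)}$ to a vertex outside $H_{(1)}$. This follows from the maximality in the definition of link-indecomposable component \cite{Mon95} together with the Ext-quiver interpretation of the link quiver in \cite[Theorem 4.1]{CHZ06}; once granted, both directions above reduce immediately to Theorem~\ref{coro:equ} and Proposition~\ref{lemma:link1dim}.
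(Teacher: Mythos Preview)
Your proposal is correct and follows essentially the same approach as the paper: both directions rest on Theorem~\ref{coro:equ} together with Proposition~\ref{lemma:link1dim}, and the paper's own proof consists of nothing more than citing these two results. Your write-up is simply a more detailed unpacking of that combination, including the (standard) identification of $\mathrm{Q}(H_{(1)})$ with the connected component of $\k 1$ in $\mathrm{Q}(H)$, which the paper tacitly assumes.
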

Recall that a finite-dimensional Hopf algebra $H$ over $\k$ is said to have the \textit{Chevalley property}, if radical $Rad(H)$ is a Hopf ideal. According to \cite[Propersition 4.2]{AEG01}, we know that $H$ has the Chevalley property if and only if $H^*$ has the dual Chevalley property.
\begin{theorem}\label{thm:nakayama}
A finite-dimensional Hopf algebra $H$ over an algebraically closed field $\k$ with the Chevalley property is of finite representation type if and only if $H$ is a Nakayama algebra.
\end{theorem}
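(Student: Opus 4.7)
The plan is to reduce this statement to the already established Theorem \ref{coro:equ} by dualizing. First I would recall that, by \cite[Proposition 4.2]{AEG01}, the Chevalley property for $H$ is equivalent to the dual Chevalley property for $H^*$, and by definition $H$ is of finite representation type if and only if $H^*$ is of finite corepresentation type. Hence the whole question can be translated into a question about $H^*$, which is exactly in the setting of Theorem \ref{coro:equ}.

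Next, I would use the identification already invoked in the proof of Proposition \ref{prop:2arrow=infin}: by \cite[Theorem 2.1 and Corollary 4.4]{CHZ06}, the link quiver $\mathrm{Q}(H^*)$ of the coalgebra $H^*$ coincides with the algebra-version Ext quiver $\Gamma(H)^{\mathrm{a}}$ of $H$. On the algebra side, the standard criterion (see \cite[\textsection V.2, Theorem 2.6]{ASS06}) says that the basic algebra $\mathcal{B}(H)$ of $H$ is Nakayama if and only if every vertex of $\Gamma(H)^{\mathrm{a}}$ is the start of at most one arrow and the end of at most one arrow, and $H$ is Nakayama iff $\mathcal{B}(H)$ is. Thus the Nakayama condition for $H$ is purely a statement about the shape of $\mathrm{Q}(H^*)$.

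For the forward implication, suppose $H$ is of finite representation type. If $H$ is semisimple there is nothing to prove, since semisimple algebras are trivially Nakayama. Otherwise $H^*$ is non-cosemisimple and of finite corepresentation type, so Theorem \ref{coro:equ} applies to $H^*$ and gives that $\mathrm{Q}(H^*)$ is a disjoint union of basic cycles; in particular every vertex has exactly one incoming and one outgoing arrow, which via the identification $\mathrm{Q}(H^*)=\Gamma(H)^{\mathrm{a}}$ yields that $\mathcal{B}(H)$, and hence $H$, is Nakayama.

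For the converse, if $H$ is Nakayama then so is $\mathcal{B}(H)$, and by \cite[\textsection VI, Theorem 2.1]{ARS95} (used already in Proposition \ref{Prop:finite corep}) every Nakayama algebra is of finite representation type; since $H$ and $\mathcal{B}(H)$ are Morita equivalent, $H$ itself is of finite representation type. I do not expect a serious obstacle in either direction: all the heavy lifting has been done in Theorem \ref{coro:equ}, and the remaining work is the dictionary between the Chevalley and dual Chevalley sides together with the standard Ext-quiver characterization of Nakayama algebras. The only point to be careful about is to separate out the semisimple case so that Theorem \ref{coro:equ}, which is stated for non-cosemisimple Hopf algebras, is applied legitimately.
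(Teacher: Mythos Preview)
Your proposal is correct and follows essentially the same route as the paper: dualize via \cite[Proposition 4.2]{AEG01} to pass to $H^*$ with the dual Chevalley property, identify $\Gamma(H)^{\mathrm{a}}$ with $\mathrm{Q}(H^*)$ as in the proof of Proposition~\ref{prop:2arrow=infin}, invoke Theorem~\ref{coro:equ} in the non-semisimple case to see the quiver is a union of basic cycles (hence $H$ is Nakayama), and use the standard fact that Nakayama algebras have finite representation type for the converse. The paper's proof is just a terser version of exactly this argument.
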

\begin{proof}
The sufficiency follows immediately since it is known that every Nakayama algebra is of finite representation type. Next we show the necessity. In fact if $H$ has the Chevalley property, we know that $H^*$ has the dual Chevalley property. According to the proof of Proposition \ref{prop:2arrow=infin}, the Ext quiver of $H$ is the same as the link quiver of $H^*$. If $H$ is semisimple, the Ext quiver of $H$ contains no arrows. If $H$ is not semisimple, it follows from Theorem \ref{coro:equ} that the Ext quiver of $H$ is a finite union of basic cycles. Thus $H$ is a Nakayama algebra.
\end{proof}
Recall that a coalgebra $C$ is said to be coNakayama, if the dual algebra $C^*$ is a Nakayama algebra.
It is direct to see the following corollary.
\begin{corollary}\label{coro:conakayama}
A finite-dimensional Hopf algebra $H$ over an algebraically closed field $\k$ with the dual Chevalley property is of finite corepresentation type if and only if $H$ is coNakayama.
\end{corollary}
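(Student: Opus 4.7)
The plan is to derive the corollary directly from Theorem \ref{coro:equ}, together with two classical facts about Nakayama algebras: (a) a finite-dimensional algebra $A$ is Nakayama iff its basic algebra is Nakayama iff every vertex in its Ext quiver $\Gamma(A)^{\mathrm{a}}$ is the start vertex of at most one arrow and the end vertex of at most one arrow (\cite[\textsection V.3]{ASS06}); and (b) every Nakayama algebra is of finite representation type (\cite[\textsection VI.2]{ARS95}). The bridge to our link-quiver framework is the identification $\mathrm{Q}(H)=\Gamma(H^*)^{\mathrm{a}}$ established in the proof of Proposition \ref{prop:2arrow=infin} via \cite[Theorem 2.1 and Corollary 4.4]{CHZ06}, together with the tautology that $H$ is of finite corepresentation type iff $H^*$ is of finite representation type, and $H$ is coNakayama iff $H^*$ is Nakayama.

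For the forward implication, I would split into two cases according to whether $H$ is cosemisimple. If $H$ is cosemisimple, then $H^*$ is semisimple; its basic algebra is a finite product of copies of $\k$, which is trivially Nakayama (every indecomposable projective is simple, hence uniserial). If $H$ is not cosemisimple and has finite corepresentation type, Theorem \ref{coro:equ} tells us that $\mathrm{Q}(H)$ is a disjoint union of basic cycles. Translating across the identification $\mathrm{Q}(H)=\Gamma(H^*)^{\mathrm{a}}$, every vertex of the Ext quiver of $H^*$ is the source of exactly one arrow and the target of exactly one arrow, so $\mathcal{B}(H^*)$ is Nakayama by fact (a), and hence $H^*$ itself is Nakayama.

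For the converse, assume $H$ is coNakayama, i.e.\ $H^*$ is Nakayama. By fact (b) $H^*$ is of finite representation type, so $H$ is of finite corepresentation type. This closes the equivalence, and there is no real obstacle: the only care needed is to note that the Morita reduction to the basic algebra preserves both being Nakayama and being of finite representation type, and to handle the cosemisimple case separately (since Theorem \ref{coro:equ} is stated under the non-cosemisimple hypothesis). Alternatively one can dualize Theorem \ref{thm:nakayama} along the Chevalley/dual-Chevalley equivalence $H \leftrightarrow H^*$ of \cite[Proposition 4.2]{AEG01}, which gives exactly the same statement in one line.
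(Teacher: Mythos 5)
Your proposal is correct and follows essentially the same route as the paper: the paper obtains this corollary by dualizing Theorem \ref{thm:nakayama} (which it proves from Theorem \ref{coro:equ} exactly as in your main argument, including the separate treatment of the semisimple case), so your closing ``one-line'' alternative is in fact the paper's own proof, and your unfolded direct version is just the same argument with the dualization performed at the start rather than at the end.
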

\subsection{Characterization}
Next we give a more accurate description for $H_{(1)}$ in the case that $H$ is a finite-dimensional non-cosemisimple Hopf algebra with the dual Chevalley property of finite corepresentation type.

In order to achieve our goals, we need the knowledge of monomial Hopf algebras and comonomial Hopf algebras (see \cite{CHYZ04} for details). An algebra $A$ is called \textit{monomial} if there exits a quiver $\mathrm{Q}$ and an admissible ideal $I$ generated by some paths such that $A\cong \k \mathrm{Q}/I$. A coalgebra $C$ is called \textit{comonomial} if $C^*$ is a monomial algebra. A finite-dimensional Hopf algebra is called a \textit{monomial} (resp. \textit{comonomial}) Hopf algebra if it is monomial (resp. comonomial) as an algebra (coalgebra).
\begin{remark}
Note that comonomial Hopf algebra in \cite{CHYZ04} was called monomial Hopf algebra. In order to not cause confusion, we recall the defintion of comonomial Hopf algebra.
\end{remark}
One of the key observation we need is the following lemma which was proved in \cite{CHYZ04}, which is true no matter when the characteristic of $\k$ is equal to $0$ or is equal to $p$.
\begin{lemma}\cite[Corollary 2.4]{CHYZ04}\label{lem:monomial=nakayama}
A non-semisimple Hopf algebra over $\k$ is a monomial Hopf algebra if and only if it is elementary and Nakayama.
\end{lemma}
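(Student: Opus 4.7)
The plan is to treat the two implications separately, with the bridge between them being that every finite-dimensional Hopf algebra is Frobenius, hence self-injective.

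For the forward direction, assume $H$ is a non-semisimple monomial Hopf algebra, so $H\cong \k Q/I$ with admissible ideal $I$ generated by paths. The elementary property is immediate: $H/\operatorname{rad}(H)\cong \k Q_0 \cong \k^{|Q_0|}$, so every simple $H$-module is one-dimensional. For the Nakayama property, I would use that in a monomial algebra the indecomposable projective $P_v = e_v H$ has a basis indexed by the classes of paths in $Q$ starting at $v$ that do not lie in $I$, and its socle is spanned by the classes of maximal (non-extendable) such paths. Since $H$ is self-injective, $\operatorname{soc}(P_v)$ is simple, so there is a unique maximal path $p_v$ starting at $v$. Any path from $v$ can be extended to a maximal one, which by uniqueness must equal $p_v$, so every path from $v$ is an initial segment of $p_v$; this makes $P_v$ uniserial. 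A symmetric argument for right indecomposable projectives (i.e., working with $H^{\mathrm{op}}$) completes the proof that $H$ is Nakayama.

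For the reverse direction, assume $H$ is elementary and Nakayama. By Gabriel's theorem, $H\cong \k Q/I$ with $Q$ the Ext quiver and $I$ admissible. The Nakayama condition forces each vertex of $Q$ to have both in-degree and out-degree at most one, so each connected component of $Q$ is either a basic cycle or a linear $A_n$-quiver. Using that $H$ is self-injective as a finite-dimensional Hopf algebra, the linear $A_n$-components are excluded (on $A_n$ with $n\geq 2$ the indecomposable projectives have distinct lengths, precluding a Nakayama permutation, while $n=1$ forces semisimplicity, excluded by hypothesis). Hence $Q$ is a disjoint union of basic cycles, on which the unique outgoing arrow at each vertex gives every indecomposable projective a linearly ordered basis of paths; consequently $I$ is generated by the paths of the prescribed lengths at each vertex (the Kupisch description of admissible Nakayama ideals on a cyclic quiver). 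Therefore $I$ is monomial and $H$ is a monomial Hopf algebra.

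The crux of both directions lies in the structural bridge between monomiality and self-injectivity: uniqueness of the maximal path at each vertex in the forward direction, and the monomial character of every admissible ideal on a basic cycle in the reverse direction. Neither step requires computation, but both rely on a careful description of path bases in monomial algebras together with basic self-injective algebra theory.
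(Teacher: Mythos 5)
The paper offers no proof of this lemma: it is imported verbatim from \cite[Corollary 2.4]{CHYZ04}, so there is no internal argument to compare against. Judged on its own merits, your proof is essentially correct and is, in substance, the standard argument (and close in spirit to the one in the cited source): both directions pivot on the fact that a finite-dimensional Hopf algebra is Frobenius, hence self-injective. The forward direction is complete as written: for a monomial ideal the nonzero paths form a basis, so the socle of an indecomposable projective is spanned by the maximal nonzero paths; simplicity of the socle forces a unique maximal path $p_v$, every nonzero path from $v$ is an initial segment of $p_v$, hence the projective is uniserial, and the left-module case follows from $H^{\mathrm{op}}$.

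Two points in the reverse direction deserve tightening, though neither is fatal. First, your reason for excluding a linear $A_n$ component with $n\geq 2$ (``distinct lengths, precluding a Nakayama permutation'') is more a slogan than an argument; the clean version is that the sink vertex carries a simple projective module, which by self-injectivity is also injective, and this contradicts the existence of an arrow ending at that vertex. For $n=1$, the claim that an isolated vertex ``forces semisimplicity'' genuinely needs the Hopf structure (a one-dimensional simple projective $S$ makes $S\otimes S^{*}\cong \k_{\varepsilon}$ projective, whence $H$ is semisimple); alternatively you may simply note that isolated vertices contribute nothing to $I$ and are harmless for monomiality. Second, the appeal to ``the Kupisch description'' should be substantiated: letting $c_i$ be the Loewy length of $P_i$ and $I'$ the ideal generated by the paths of length $c_i$ starting at each vertex $i$, one has $I'\subseteq I$, and on a basic cycle any element of $I'$ starting at $i$ has length at least $|r|+c_j\geq c_i$, so the paths of length less than $c_i$ survive in $\k Q/I'$; hence $\dim_{\k}\k Q/I'=\sum_i c_i=\dim_{\k}\k Q/I$ and $I=I'$ is monomial. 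With these two repairs the argument is a complete and self-contained substitute for the external citation.
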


It is known that a finite-dimensional Hopf algebra $H$ over an algebraically closed field is pointed if and only if $H^*$ is elementary. And according to the proof of Proposition \ref{prop:2arrow=infin}, the link quiver $\mathrm{Q}(H_{(1)})$ of $H_{(1)}$ agrees with the Ext quiver $\Gamma(H^*)^{\mathrm{a}}$ of $H^*$. Thus the following corollary is a direct consequence of Corollary \ref{coro:H(1)pointedcycle} and Lemma \ref{lem:monomial=nakayama}.
\begin{corollary}\label{coro:comonomial}
Let $H$ be a finite-dimensional non-cosemisimple Hopf algebra with the dual Chevalley property over an algebraically closed field $\k$. Then $H$ is of finite corepresentation type if and only if $H_{(1)}$ is a comonomial Hopf algebra.
\end{corollary}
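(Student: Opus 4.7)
The plan is to deduce this corollary from Corollary \ref{coro:H(1)pointedcycle} by dualizing the structural condition on $H_{(1)}$ and invoking Lemma \ref{lem:monomial=nakayama}. I will exploit three translations: that $H_{(1)}$ is itself a finite-dimensional Hopf algebra (so $(H_{(1)})^*$ makes sense as a Hopf algebra), that $H_{(1)}$ is pointed if and only if $(H_{(1)})^*$ is elementary, and that the link quiver of $H_{(1)}$ coincides with the Ext quiver $\Gamma((H_{(1)})^*)^{\mathrm{a}}$, as recalled in the proof of Proposition \ref{prop:2arrow=infin}. A preliminary observation is that $H_{(1)}$ is non-cosemisimple whenever $H$ is: by Lemma \ref{lemma:P^1=1^P}(1), when $H\neq H_0$ there exists some $\Y\in{}^1\mathcal{P}$, which yields an arrow of $\mathrm{Q}(H)$ with endpoint $\k 1$, and any such arrow automatically lies inside $H_{(1)}$. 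Consequently $(H_{(1)})^*$ is a non-semisimple Hopf algebra, which is precisely the hypothesis needed to apply Lemma \ref{lem:monomial=nakayama}.

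For the forward direction, suppose $H$ is of finite corepresentation type. Corollary \ref{coro:H(1)pointedcycle} asserts that $H_{(1)}$ is pointed and that $\mathrm{Q}(H_{(1)})$ is a basic cycle. Dualizing, $(H_{(1)})^*$ is elementary, and its Ext quiver is a basic cycle, meaning each vertex is the source of exactly one arrow and the target of exactly one arrow. This is precisely the defining condition of a basic Nakayama algebra, so by Lemma \ref{lem:monomial=nakayama} the algebra $(H_{(1)})^*$ is monomial, i.e., $H_{(1)}$ is comonomial.

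For the converse, suppose $H_{(1)}$ is comonomial. Then $(H_{(1)})^*$ is a non-semisimple monomial Hopf algebra, and Lemma \ref{lem:monomial=nakayama} gives that $(H_{(1)})^*$ is both elementary and Nakayama. The elementary condition yields that $H_{(1)}$ is pointed. The Nakayama condition, translated back through the identification of $\Gamma((H_{(1)})^*)^{\mathrm{a}}$ with the link quiver of $H_{(1)}$, says that each vertex in $\mathrm{Q}(H_{(1)})$ is both the source of at most one arrow and the target of at most one arrow. The delicate point, which I expect to be the main obstacle, is upgrading \emph{at most one} to \emph{exactly one}, ruling out that $\mathrm{Q}(H_{(1)})$ degenerates into a finite linear path. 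I would resolve this by using that $(H_{(1)})^*$ is a finite-dimensional Hopf algebra, hence Frobenius and in particular self-injective; a basic self-injective Nakayama algebra has Ext quiver a disjoint union of basic cycles, and link-indecomposability of $H_{(1)}$ forces this disjoint union to be connected, hence a single basic cycle. An application of Corollary \ref{coro:H(1)pointedcycle} then concludes that $H$ is of finite corepresentation type.
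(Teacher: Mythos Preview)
Your proposal is correct and follows essentially the same approach as the paper: both deduce the corollary from Corollary \ref{coro:H(1)pointedcycle} and Lemma \ref{lem:monomial=nakayama} via the standard dualizations pointed $\leftrightarrow$ elementary and link quiver $\leftrightarrow$ Ext quiver. You supply more detail than the paper, which simply declares the result ``a direct consequence''; in particular, your self-injectivity argument in the converse direction (upgrading the Nakayama condition from ``at most one arrow'' to ``exactly one arrow'' so that $\mathrm{Q}(H_{(1)})$ is genuinely a basic cycle) is a point the paper leaves implicit.
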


The authors of \cite{CHYZ04} classify non-cosemisimple comonomial Hopf algebras via group data when the characteristic of $\k$ is zero. Let us briefly recall their results.

Let $\k$ be an algebraically closed field with characteristic 0, a \textit{group datum} (see \cite[Definition 5.3]{CHYZ04}) over $\k$ is defined to be a sequence $\alpha=(G, g, \chi, \mu)$ consisiting of
\begin{itemize}
  \item[(1)]a finite group $G$, with an element $g$ in its center;
  \item[(2)]a one-dimensional $\k$-representation $\chi$ of $G$; and
  \item[(3)]an element $\mu\in \k$ such that $\mu=0$ if $o(g)=o(\chi(g))$, and that if $\mu\neq 0$, then $\chi^{o(\chi(g))}=1$.
  \end{itemize}

For a group datum $\alpha=(G, g, \chi, \mu)$ over $\k$, the authors of \cite{CHYZ04} give the corresponding Hopf algebra structure $A(\alpha)$ as follow (for details, see \cite[5.7]{CHYZ04}).
Define $A(\alpha)$ to be an associative algebra with generators $x$ and all $h\in G$, with relations
$$
 x^d=\mu(1-g^d),\;\; xh=\chi(h)hx,\;\; \forall h\in G,
$$
with comultiplication $\Delta$, counit $\varepsilon$, and the antipode $S$ given by
$$
\Delta(g)=g\otimes g,\;\; \varepsilon(g)=1,\;\;
\Delta(x)=x\otimes 1+g\otimes x,\;\; \varepsilon(x)=0,\;\;
S(g)=g^{-1},\;\;S(x)=-g^{-1}x.
$$

Let $H$ be a non-semisimple comonomial Hopf algebra over $\k$, \cite[Lemma 5.2]{CHYZ04} permits us to introduce the following notion. A group datum $\alpha(H)=(G, g, \chi, \mu)$ is called an \textit{induced group datum} of $H$ (see \cite[Definition 5.5]{CHYZ04} for details) provide that
\begin{itemize}
  \item[(1)]$G=G(H)$, where $G(H)$ is the set of all the group-like elements of $H$;
  \item[(2)]there exists a non-trivial $(1, g)$-primitive element $x$ in $H$ such that
  $$
  x^d=\mu(1-g^d),\;\; xh=\chi(h)hx,\;\; \forall h\in G,
  $$
  where $d$ is the multiplicative order of $\chi(g)$.
  \end{itemize}
It is not difficult to verify that $H\cong A(\alpha(H))$.

As mentioned above, let us illustrate it with an example.
\begin{example}
Let $q\in \k$ be an $n$-th root of unit of order $d$. In \cite{AS98} and \cite{Rad77}, Radford and Andruskiewitsch-Schneider have considered the following Hopf algebra $A(n, d, \mu, q)$ which as an associative algebra is generated by $g$ and $x$ with relations
$$
g^n=1, \;\;\;\;x^d=\mu(1-g^d),\;\;\;\;xg=qgx.
$$
Its comultiplication $\Delta$, counit $\varepsilon$, and the antipode $S$ are given by
$$
\Delta(g)=g\otimes g,\;\; \varepsilon(g)=1,\;\;
\Delta(x)=x\otimes 1+g\otimes x,\;\; \varepsilon(x)=0,\;\;
S(g)=g^{-1},\;\;S(x)=-g^{-1}x.
$$
In fact, $(\Bbb{Z}_n, \overline{1}, \chi, \mu)$ with $\chi(\overline{1})=q$ is an induced group datum and  $A(n, d, \mu, q)=A(\Bbb{Z}_n, \overline{1}, \chi, \mu)$.
\end{example}
The following results gives a classification of non-semisimple comonomial Hopf algebra over an algebraically filed $\k$ of characteristic zero.
\begin{lemma}\emph{(}\cite[Theorem 5.9]{CHYZ04}\emph{)}\label{lemma:comonomial1-1}
Let $\k$ be an algebraically closed field with characteristic 0, there is a one-to-one corresponding between sets
$$\{\text{the isoclasses of non-cosemisimple comonomial Hopf algebras over }\k\}$$
and
$$\{\text{the isoclasses of group data over }\k\}.$$
\end{lemma}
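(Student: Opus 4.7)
The plan is to construct explicit mutually inverse maps between the two sets of isomorphism classes, leveraging the link-quiver machinery developed in Sections \ref{section3}--\ref{section5}. For the direction sending a group datum $\alpha = (G, g, \chi, \mu)$ to a Hopf algebra, I would first check that $A(\alpha)$ is well-defined: the only nontrivial compatibility is that the relation $x^d = \mu(1 - g^d)$ is respected by $\Delta$, which reduces via a $q$-binomial expansion of $\Delta(x)^d = (x \otimes 1 + g \otimes x)^d$ to the identity $\Delta(x^d) = x^d \otimes 1 + g^d \otimes x^d$, valid precisely because $d = o(\chi(g))$. Since $A(\alpha)$ is visibly pointed with $G(A(\alpha)) = G$, and its link quiver decomposes into $|G|/d$ disjoint basic cycles of length $d$ (one per coset of $\langle g \rangle$ in $G$, with arrows $h \to gh$), Corollary \ref{coro:comonomial} together with Lemma \ref{lem:monomial=nakayama} shows that $A(\alpha)$ is a non-cosemisimple comonomial Hopf algebra.

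For the reverse direction, starting from a non-cosemisimple comonomial Hopf algebra $H$, I would extract an induced group datum as follows. Since $H^*$ is basic, $H$ is pointed, and we set $G := G(H)$. By Corollaries \ref{coro:comonomial} and \ref{coro:H(1)pointedcycle}, the link-indecomposable component $H_{(1)}$ containing $\k 1$ is pointed and its link quiver is a single basic cycle. This cycle isolates a unique $g \in G$ admitting an arrow $\k 1 \to \k g$, and Theorem \ref{coro:equ} together with Corollary \ref{coro:basis} produces a non-trivial $(1, g)$-skew-primitive element $x$, unique up to scalar modulo $H_0$. Centrality of $g$ follows because $hxh^{-1}$ is again a non-trivial $(1, hgh^{-1})$-primitive, and the uniqueness of the arrow starting at $\k 1$ in the basic cycle forces $hgh^{-1} = g$ for all $h \in G$. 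The character $\chi \colon G \to \k^\times$ is then defined by the eigen-relation $xh = \chi(h) h x$, which is forced by the one-dimensionality of the space of non-trivial $(1,g)$-primitives modulo $H_0$. Setting $d := o(\chi(g))$, the element $x^d$ is $(1, g^d)$-primitive, and another application of the basic-cycle analysis pins it down to the form $\mu(1 - g^d)$ for a unique $\mu \in \k$ satisfying the constraints in the definition of a group datum.

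To conclude, I would verify that the two constructions are mutually inverse on isoclasses. The equality $\alpha(A(\alpha)) = \alpha$ is immediate from the presentation. For $A(\alpha(H)) \cong H$, I would define the obvious algebra map on generators, check it respects the extracted relations, and finish with a dimension count: the link quiver of $H$ being a disjoint union of basic cycles of common length $d$ forces $\dim_{\k} H = d \cdot |G|$, matching $\dim_{\k} A(\alpha(H))$.

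The step I expect to be the main obstacle is controlling the dependence of the extracted datum on the choice of the skew-primitive $x$ and the passage from the quotient $H_1/H_0$ back to $H$. Rescaling $x$ by $\lambda \in \k^\times$ rescales $\mu$ by $\lambda^d$ and leaves $\chi$ untouched, so one must either fix a canonical representative or pass to the correct equivalence on group data; the cleanest bookkeeping tool is Corollary \ref{lemma:PXQ=nX}, which dictates how non-trivial primitive matrices transform under invertible similarity and thereby ensures that the Hopf-algebra isomorphism class corresponds to a unique equivalence class of group data.
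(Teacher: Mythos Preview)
The paper gives no proof of this lemma: it is stated as a direct citation of \cite[Theorem 5.9]{CHYZ04} and then used as a black box in the proof of Theorem \ref{thm:finitecorep}. There is nothing in the paper's text to compare your argument against.

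Your sketch is in spirit the original argument of \cite{CHYZ04}, recast in the link-quiver language of this paper, and the broad strategy is sound. Two points deserve correction, however. First, the basic cycles in the link quiver of $A(\alpha)$ (and of any non-cosemisimple comonomial $H$) have length $o(g)$, the order of $g$ in $G$, not $d = o(\chi(g))$; in general $d$ divides $o(g)$ strictly, and $d$ governs the depth of the coradical filtration (equivalently, the length at which paths are killed in the monomial ideal defining $H^*$), not the cycle length. Your dimension count therefore requires a separate argument that the monomial relations in $H^*$ are uniformly of length $d$ across all components, and that is where the substantive work in \cite{CHYZ04} lies; it is not a consequence of the results in Sections \ref{section3}--\ref{section5}. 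Second, Corollary \ref{lemma:PXQ=nX} operates only modulo $H_0$ and cannot determine the scalar $\mu$ sitting inside $H$ itself; the rescaling ambiguity $x \mapsto \lambda x$, $\mu \mapsto \lambda^d \mu$ you correctly identify is exactly what the equivalence relation on group data in \cite{CHYZ04} is designed to absorb, so you should invoke that equivalence directly rather than Corollary \ref{lemma:PXQ=nX}.
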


Summarizing the above conclusions, we prove the following theorem of finite-dimensional Hopf algebras over an algebraically closed field $\k$ with characteristic 0 with the dual Chevalley property of finite corepresentation type, which is a generalization of \cite[Theorem 4.6]{LL07}.
\begin{theorem}\label{thm:finitecorep}
Let $\k$ be an algebraically closed field with characteristic zero. Then a finite-dimensional Hopf algebra $H$ over $\k$ with the dual Chevalley property is of finite corepresentation type if and only if either of the following conditions is satisfied:
\begin{itemize}
  \item[(1)]$H$ is cosemisimple;
  \item[(2)]$H$ is not cosemisimple and $H_{(1)}\cong A(n, d, \mu, q)$.
  \end{itemize}
\end{theorem}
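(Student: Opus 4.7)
My plan is to combine the structural reduction to the link-indecomposable component $H_{(1)}$ that has already been established (Corollaries \ref{coro:H(1)pointedcycle} and \ref{coro:comonomial}) with the classification of non-cosemisimple comonomial Hopf algebras by group data in characteristic zero (Lemma \ref{lemma:comonomial1-1}). The sufficiency is essentially a direct verification on the explicit generators of $A(n,d,\mu,q)$, while the necessity requires identifying the underlying group datum as $(\mathbb{Z}_n,g,\chi,\mu)$.

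For the \textbf{sufficiency}, if $H$ is cosemisimple then $H^\ast$ is semisimple and hence of finite representation type, so $H$ is of finite corepresentation type. If $H$ is not cosemisimple with $H_{(1)}\cong A(n,d,\mu,q)$, I would read off the link quiver of $A(n,d,\mu,q)$ from the defining relations: the group-likes are $\{1,g,g^2,\ldots,g^{n-1}\}$, and the skew-primitive $x$ together with its left translates $g^{i}x$ furnishes exactly one non-trivial $(\k g^{i},\k g^{i+1})$-primitive for each $i$. Consequently the link quiver of $H_{(1)}$ is a basic cycle of length $n$, and the unique arrow with start vertex $\k 1$ ends at the one-dimensional subcoalgebra $\k g$. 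Theorem \ref{coro:equ} then yields that $H$ itself is of finite corepresentation type.

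For the \textbf{necessity}, assume $H$ is not cosemisimple and of finite corepresentation type. By Corollary \ref{coro:comonomial}, $H_{(1)}$ is comonomial, and by Corollary \ref{coro:H(1)pointedcycle} it is pointed with link quiver a basic cycle. Since $\mathrm{char}(\k)=0$, Lemma \ref{lemma:comonomial1-1} supplies a group datum $\alpha=(G,g,\chi,\mu)$ with $H_{(1)}\cong A(\alpha)$ and $G=G(H_{(1)})$. Inspecting the relations of $A(\alpha)$, the space of non-trivial skew-primitives modulo the coradical is spanned by the translates $\{hx:h\in G\}$, each $hx$ being an $(\k h,\k hg)$-primitive. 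Hence each vertex $\k h$ has exactly one outgoing arrow pointing to $\k(hg)$, so the link quiver decomposes into $[G:\langle g\rangle]$ basic cycles of length $o(g)$. The hypothesis that the link quiver is a \emph{single} basic cycle forces $[G:\langle g\rangle]=1$, i.e., $G=\langle g\rangle\cong\mathbb{Z}_n$ with $n=o(g)$. Setting $q:=\chi(g)$ of multiplicative order $d$, the group datum $(\mathbb{Z}_n,g,\chi,\mu)$ yields $H_{(1)}\cong A(n,d,\mu,q)$ as desired.

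The delicate step is the arrow count in the link quiver of $A(\alpha)$, where the basic-cycle hypothesis is leveraged to pin down $G$. The key verification is that in $A(G,g,\chi,\mu)$ the space of $(\k h,\k hg)$-skew primitives modulo the coradical is exactly one-dimensional; this is where the machinery of complete families of non-trivial primitive matrices from Section \ref{section3} and the structural lemmas of Section \ref{section4} enter, allowing the arrow count to be extracted unambiguously from the presentation of $A(\alpha)$. Once this is in hand, the remainder of the argument is a matter of matching the parameters of the group datum to the normal form $A(n,d,\mu,q)$.
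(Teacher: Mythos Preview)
Your proposal is correct and follows essentially the same route as the paper: reduce to $H_{(1)}$ via Corollaries \ref{coro:H(1)pointedcycle} and \ref{coro:comonomial}, invoke the group-datum classification (Lemma \ref{lemma:comonomial1-1}), and then use connectedness of the link quiver of $H_{(1)}$ to force $G(H_{(1)})=\langle g\rangle$, giving the normal form $A(n,d,\mu,q)$.

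Two small remarks. For sufficiency the paper is slightly more economical: it simply notes that $A(n,d,\mu,q)$ is comonomial and invokes Corollary \ref{coro:comonomial}, rather than recomputing the link quiver and appealing to Theorem \ref{coro:equ}; your explicit route is fine but unnecessary. For necessity, the step you flag as ``delicate'' (that in $A(\alpha)$ the skew-primitives modulo the coradical are exactly the translates $hx$) does not really need the Section \ref{section3}--\ref{section4} machinery: it is an immediate consequence of the PBW-type basis $\{hx^{i}: h\in G,\ 0\le i<d\}$ of $A(\alpha)$, and the paper dispenses with it in one line by observing directly that connectedness of the basic-cycle link quiver forces every group-like to be a power of $g$.
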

\begin{proof}
Since $A(n, d, \mu, q)$ is a comonomial Hopf algebra, the if implication follows immediately from Corollary \ref{coro:comonomial}. It suffices to prove the only if part. According to Lemma \ref{lemma:comonomial1-1}, it is enough to find the induced datum of $H_{(1)}.$ Let $G(H_{(1)})$ be the set of group-like elements of $H_{(1)}$. If $H$ is a non-cosemisimple Hopf algebra of finite corepresntation type, it follows from Theorem \ref{coro:equ} and Corollary \ref{coro:H(1)pointedcycle} that $H_{(1)}$ is a pointed Hopf algebra and there exists a unique non-trivial $(1, g)$-primitive element $x$ for some $g\in G(H_{(1)})$. Without loss of generality, assume $$\mid G(H_{(1)})\mid=n.$$ Due to $H_{(1)}$ is link-indecomposable, the link quiver of $H_{(1)}$ is connected. This means that $G(H_{(1)})$ is a cyclic group whose generator is $g$. Thus the induced group datum of $H_{(1)}$ is $(\langle g\rangle, g, \chi, \mu)$ and we have
$$H_{(1)}\cong A(\langle g\rangle, g, \chi, \mu)\cong A(n, d, \mu, q).$$
\end{proof}
\begin{remark}
Let $\k$ be an algebraically closed field with characteristic zero.
\begin{itemize}
\item[(1)]Andruskiewitsch and Schneider conjectured that any finite-dimensional pointed Hopf algebra over $\k$ is generated in degree $1$ of its coradical filtration, i.e., by grouplike and skew-primitive elements \cite{AS02}. Suppose $H$ is a finite-dimensional Hopf algebra over $\k$ with the dual Chevalley property of finite corepresentation type. According to Theorem \ref{thm:finitecorep} and \cite[Corollary 4.10]{Li22a}, we can show that $H$ is generated in degree $1$ of its coradical filtration.
\item[(2)]Let $H$ be a finite-dimensional Hopf algebra over $\k$ with the dual Chevalley property. Recall that the rank of $H$ is defined to be $n$ if $\dim_{\k}(\k\otimes_{H_{0}} H_1)=n+1$ and $H$ is generated by $H_1$ as an algebra \cite{KR06}. It is not difficult to show that $H$ is of rank one if and only if $H$ is of finite corepresentation type.
\end{itemize}
\end{remark}
Finally, we focus on the above theorem in the case of that $\k$ is an algebraically closed field of characteristic $p$.
For any quiver $\mathrm{Q}$, we define $C_d{\mathrm{Q}}:=\bigoplus_{i=0}^{d-1}\k\mathrm{Q}(i)$ for $d\geq 2$, where $\mathrm{Q}(i)$ is the set of all paths of length $i$ in $\mathrm{Q}(i)$. It is not difficult to show that $C_d{\mathrm{Q}}$ is a subcoalgebra of path coalgabra $\k\mathrm{Q}$ (see \cite{CR02} for the definition of path coalgebra). We denote the basic cycle of length $n$ by $Z_n$ and denote $C_d(Z_n)$ by $C_d(n)$. By \cite[Theorem 1]{LY06}, we know that $C_d(n)$ admits a Hopf algebra structure if and only if there exists a primitive $d_0$-th root $q\in\k$ of unity with $d_0\mid n$ and a natural number $r\geq0$ such that $d=p^rd_0.$

Moreover, the authors of \cite{LY06} have given a description of the structures of comonomial Hopf algebras when the characteristic of $\k$ is not zero.
\begin{lemma}\emph{(}\cite[Theorem 4.2]{LY06}\emph{)}
Let $H$ be a non-cosemisimple comonomial Hopf algebra over an algebraically closed field $\k$ of character $p$. Then there exists a $d_0$-th primitive root $q\in\k$ of unit with $d_0\mid n$, $r\geq0$ and $d=p^rd_0$ such that
$$
H\cong C_d(n)\oplus \cdots \oplus C_d(n)
$$
as coalgebras and
$$
H\cong C_d(n)\# \k(G/N)
$$
as Hopf algebras, where $G=G(H)$, the set of group-like elements of $H$, and $N=G(C_d(n))$, the set of group-like elements of $C_d(n)$.
\end{lemma}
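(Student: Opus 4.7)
The plan is to first reduce to understanding the link-indecomposable component $H_{(1)}$, identify it with $C_d(n)$ as a Hopf algebra, and then assemble the full $H$ via a smash product over the quotient group $G/N$.

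Since $H$ is comonomial and non-cosemisimple, Lemma \ref{lem:monomial=nakayama} tells us $H^*$ is elementary and Nakayama, so $H$ itself is pointed and coNakayama. By Theorem \ref{coro:equ} and Corollary \ref{coro:H(1)pointedcycle}, the link quiver $\mathrm{Q}(H)$ decomposes as a disjoint union of basic cycles and $\mathrm{Q}(H_{(1)})$ is a single basic cycle $Z_n$ of length $n=|N|$, where $N=G(H_{(1)})$ is cyclic, generated by some $g\in G$. Since $H_{(1)}^*$ is monomial and Nakayama with Ext quiver $Z_n$, it must have the form $\k Z_n/I$ with $I$ generated by all paths of some fixed length $d$; dualizing yields $H_{(1)}\cong C_d(n)$ as coalgebras.

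To pin down $d$ and the Hopf structure, I would fix the unique (up to scalar) non-trivial $(1,g)$-primitive element $x\in H_{(1)}$ and derive its algebraic relations. Pointedness forces $gx=qxg$ for some $q=\chi(g)\in\k^\times$, and iterating $\Delta(x)=x\otimes 1+g\otimes x$ produces the $q$-binomial expansion of $\Delta(x^m)$. Requiring $x^d$ to be $(1,g^d)$-primitive, i.e.\ $x^d=\mu(1-g^d)$ for some scalar $\mu$, forces the $q$-binomial coefficients $\binom{d}{i}_q$ to vanish for $0<i<d$. In characteristic $p$, a Lucas-type argument shows this vanishing is equivalent to $d=p^r d_0$ with $d_0=\mathrm{ord}(q)\mid n$, which both recovers the criterion for $C_d(n)$ to admit a Hopf structure and fixes the isomorphism class of $H_{(1)}$.

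For the coalgebra decomposition of $H$, each link-indecomposable component is a translate $hH_{(1)}$ for $h\in G$, with $hH_{(1)}=h'H_{(1)}$ iff $hN=h'N$; left multiplication by $h$ gives a coalgebra isomorphism $H_{(1)}\to hH_{(1)}$, hence $H\cong C_d(n)^{\oplus|G/N|}$ as coalgebras. The main obstacle is the Hopf algebra isomorphism $H\cong C_d(n)\#\k(G/N)$. My strategy is to construct a Hopf algebra surjection $\pi:H\twoheadrightarrow\k(G/N)$ factoring the group projection $G\twoheadrightarrow G/N$, together with a Hopf algebra section induced by a transversal of $N$ in $G$, and then invoke Radford's theorem on Hopf algebras with a projection to obtain $H\cong H^{\mathrm{co}\,\pi}\#\k(G/N)$. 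Identifying $H^{\mathrm{co}\,\pi}$ with $H_{(1)}\cong C_d(n)$ by comparing link-indecomposable components and dimensions, and then verifying that the resulting smash-product action of $\k(G/N)$ on $C_d(n)$ is compatible with the explicit relations $gx=qxg$ and $x^d=\mu(1-g^d)$, is the technically delicate step.
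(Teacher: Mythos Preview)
The paper does not prove this lemma at all: it is stated with the attribution \emph{(\cite[Theorem 4.2]{LY06})} and no proof environment follows. The result is quoted from Liu--Ye (2006) purely as an input to Theorem \ref{thm:finitecorepp}, so there is nothing in the present paper to compare your argument against.

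That said, a brief comment on your sketch as a stand-alone argument. The coalgebra side is fine: pointedness plus coNakayama forces the link quiver to be a union of basic cycles, the component $H_{(1)}$ is $C_d(n)$ for the reasons you give, and left translation by coset representatives yields the coalgebra decomposition. The genuine gap is in the Hopf algebra step. Invoking Radford's projection theorem requires a Hopf algebra section $\k(G/N)\hookrightarrow H$, and a section induced by a set-theoretic transversal of $N$ in $G$ is a Hopf algebra map only when the transversal is multiplicative, i.e.\ when the extension $1\to N\to G\to G/N\to 1$ splits. You have not argued this, and it is not automatic. In the original proof (in \cite{LY06}) this issue is handled by first showing that $N$ is central in $G$ and that $G$ is abelian in the relevant setting, so that the splitting can be arranged; you would need to supply an analogous argument, or else replace the Radford-projection strategy by a direct verification that the multiplication map $C_d(n)\otimes\k T\to H$ (for a suitable transversal $T$) is a Hopf algebra isomorphism.
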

Based on the above lemma and the proof of Theorem \ref{thm:finitecorep}, we can now obtain the following theorem immediately.
\begin{theorem}\label{thm:finitecorepp}
Let $\k$ be an algebraically closed field of positive characteristic $p$. Then a finite-dimensional Hopf algebra $H$ over $\k$ with the dual Chevalley property is of finite corepresentation type if and only if either of the following conditions is satisfied:
\begin{itemize}
  \item[(1)]$H$ is cosemisimple;
  \item[(2)]$H$ is not cosemisimple and $H_{(1)}\cong C_d(n)$.
  \end{itemize}
\end{theorem}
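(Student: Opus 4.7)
The plan is to run the same argument as in the proof of Theorem \ref{thm:finitecorep}, substituting the characteristic-$p$ description of non-cosemisimple comonomial Hopf algebras recalled in the lemma from \cite{LY06} for the characteristic-zero classification used there. The bulk of the work has already been done: Corollaries \ref{coro:H(1)pointedcycle} and \ref{coro:comonomial} reduce the problem to understanding finite-dimensional non-cosemisimple comonomial Hopf algebras whose link quiver is a single basic cycle, and the \cite{LY06} lemma classifies such objects up to coalgebra isomorphism.

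For sufficiency, if $H$ is cosemisimple then $H^{*}$ is semisimple, so trivially every $H$-comodule is completely reducible and $H$ is of finite corepresentation type. In the second case, $C_d(n)$ is (by the \cite{LY06} lemma) a Hopf algebra whenever the numerical data $(n,d_0,r,q)$ satisfy the stated constraints, and it is visibly comonomial since it is a truncated path coalgebra on a basic cycle. With $H_{(1)}\cong C_d(n)$, Corollary \ref{coro:comonomial} then yields that $H$ has finite corepresentation type.

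For necessity, assume $H$ is non-cosemisimple and of finite corepresentation type. Corollary \ref{coro:comonomial} shows that $H_{(1)}$ is a non-cosemisimple comonomial Hopf algebra, and the \cite{LY06} lemma gives a coalgebra isomorphism
$$H_{(1)} \;\cong\; \underbrace{C_d(n)\oplus\cdots\oplus C_d(n)}_{k\ \text{copies}}$$
for some $n$, $d_0\mid n$, $r\geq 0$ with $d=p^r d_0$, and a primitive $d_0$-th root of unity $q\in\k$. The main, and essentially only, remaining obstacle is to force $k=1$. For this I would combine Corollary \ref{coro:H(1)pointedcycle}, which says the link quiver of $H_{(1)}$ is a single basic cycle, with the elementary observation that the link quiver of $C_d(n)$ is exactly $Z_n$ (its simple subcoalgebras are the $n$ vertices of $Z_n$, and arrows in the link quiver come from the paths of length one), so that the link quiver of the coalgebra $C_d(n)^{\oplus k}$ is a disjoint union of $k$ copies of $Z_n$. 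Link-indecomposability of $H_{(1)}$ therefore forces $k=1$, giving $H_{(1)}\cong C_d(n)$ as required.
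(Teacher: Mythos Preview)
Your proposal is correct and essentially matches the paper's approach, which itself is a one-line reduction to the proof of Theorem \ref{thm:finitecorep} combined with the \cite{LY06} lemma. The only point worth making explicit is that your final step yields, a priori, only a \emph{coalgebra} isomorphism $H_{(1)}\cong C_d(n)$; to upgrade this to a Hopf algebra isomorphism you should invoke the second assertion of the \cite{LY06} lemma, namely $H_{(1)}\cong C_d(n)\#\k(G/N)$, and observe that $k=1$ forces $G=N$ and hence the smash product is trivial.
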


\section{Examples}\label{section6}
In this section, we work over an algebraically closed field $\k$ of characteristic zero.
Let us first give a example which is of finite corepresentation type.
\begin{example}\label{example6.1}
Let $H$ be the Hopf algebra of dimension 16 appeared in \cite[Theorem 5.1]{CDMM04}. As an algebra, $H$ is generated by $c, b, x, y$ with relations:
$$c^2=1,\;\; b^2=1,\;\; x^2=\frac{1}{2}(1+c+b-cb),$$
$$cb=bc,\;\; xc=bx,\;\; xb=cx,$$
$$y^2=0,\;\; yc=-cy,\;\; yb=-by,\;\; yx=\sqrt{-1}cxy.$$
The coalgebra structure and antipode are given by:
$$\Delta(c)=c\otimes c,\;\; \varepsilon(c)=1,\;\; S(c)=c,$$
$$\Delta(b)=b\otimes b,\;\; \varepsilon(b)=1,\;\; S(b)=b,$$
$$\Delta(x)=\frac{1}{2}(x\otimes x+bx\otimes x+x\otimes cx-bx\otimes cx),\;\; \varepsilon(x)=1,\;\; S(x)=x,$$
$$\Delta(y)=c\otimes y+y\otimes 1,\;\; \varepsilon(y)=0,\;\; S(y)=-c^{-1}y.$$
Denote $E=\operatorname{span}\{x, bx, cx, bcx\}$, then $\mathcal{S}=\{\k1, \k c, \k b, \k bc,  E\}$. We give the corresponding multiplicative matrix $\E$ of $E$, where
$$
\E=\frac{1}{2}\left(\begin{array}{cc}
x+bx&x-bx\\
cx-bcx&cx+bcx
 \end{array}\right).
$$
We know that $\Bbb{Z}\mathcal{S}$ is a unital based ring and its multiplication table is shown below:
\begin{center}
\scalebox{1.0}{
\begin{tabular}{|c|c|c|c|c|c|c|c|c|c|c|c|c|c|c|}
\hline left action & $\k 1$ & $\k  c$ & $\k  b$ & $\k  bc$ & $E$\\
\hline$\k 1$ &$\k 1$ &$\k  c$&$k  b$ &$\k  bc$ &$E$\\
\hline$\k  c$ & $\k  c$& $\k  1$& $\k  bc$ &$\k  b$ & $E$\\
\hline$\k  b$ & $\k  b$& $\k  bc$&$\k  1$ &$\k c$ &$E$\\
\hline $\k  bc$ &$\k  bc$& $\k  b$   & $\k  c$  & $\k 1$ & $E$\\
\hline $E$&$E$&$E$&$E$&$E$&$\k 1+\k  c+\k  b+\k  bc$\\
\hline
\end{tabular}}\\
\title{TABLE 6.1. The multiplication table of $\Bbb{Z}\mathcal{S}$}
\end{center}
And in this example, $\mathcal{P}=\{(y), (cy), (by), (bcy), \X\}$, where
$$
\X=\frac{1}{2}\left(\begin{array}{cc}
xy+bxy&xy-bxy\\
bcxy-cxy&-cxy-bcxy
 \end{array}\right)
$$
is a non-trivial $(\E, \E)$-primitive matrix.
In this example, the link quiver of $H$ is shown below:
$$
\begin{tikzpicture}
\filldraw [black] (6,0) circle (2pt) node[anchor=south]{$\k  c$};
\filldraw [black] (6,-1.5) circle (2pt)node[anchor=north]{$\k  1$};
\filldraw [black] (9,0) circle (2pt) node[anchor=south]{$\k bc$};
\filldraw [black] (9,-1.5) circle (2pt)node[anchor=north]{$\k  b$};
\filldraw [black] (13,-0.7) circle (2pt)node[anchor=west]{$E$};
\filldraw [black] (11.2,-0.7) circle (0pt)node[anchor=west]{$\X$};
\draw[thick, ->] (5.9,-1.4) .. controls (5.67,-1) and (5.67,-0.5) .. node[anchor=east]{$(y)$}(5.9,-0.1) ;
\draw[thick, ->] (6.1,-0.1) .. controls (6.33,-0.5) and(6.33,-1) .. node[anchor=west]{$(cy)$}(6.1,-1.4);
\draw[thick, ->] (8.9,-1.4) .. controls (8.67,-1) and (8.67,-0.5).. node[anchor=east]{$(by)$}(8.9,-0.1);
\draw[thick, ->] (9.1,-0.1) .. controls (9.33,-0.5) and (9.33,-1) ..  node[anchor=west]{$(bcy)$}(9.1,-1.4);
\draw[thick, ->] (12.9,-0.6) arc (0:340:0.6);
\end{tikzpicture}
$$
It follows from Theorem \ref{coro:equ} that $H$ is of finite corepresentation type. Moreover, due to $$ad_r(\frac{x+bx}{2})(c)=b \notin H_{(1)},$$ we know that
$H_{(1)}$ is not normal in $H$.
Thus this example gives a negative answer to \cite[Question 4.13]{Li22a}.
\end{example}
Next we introduce an example which is of infinite corepresentation type.
\begin{example}
Let $H$ be the Hopf algebra of dimensional 32 which is generated by $z, y, t, p_1, p_2$ satisfying the following relations:
$$z^{2}=1,\;\; y^{2}=1,\;\; t^{2}=1,\;\;  z y=y z,\;\;  t z=z t,\;\; t y=y t,$$
$$z p_{1}=p_{1} z,\;\; y p_{1}=p_{1} y,\;\; t p_{1}=-p_{1} t,\;\; z p_{2}=p_{2} z,\;\; y p_{2}=p_{2} y,\;\; t p_{2}=-p_{2} t,$$
$$p_{1}^{2}=\lambda\left(1-z\right),\;\; p_{2}^{2}=-\lambda\left(1-z\right),\;\;  p_{1} p_{2}+p_{2} p_{1}=0. $$
The coalgebra structure and antipode are given by:
$$\Delta(z)=z \otimes z,\;\; \Delta(y)=y \otimes y,\;\; \varepsilon(z)=\varepsilon(y)=1,$$
$$\Delta(t)=\frac{1}{2}\left[(1+y) t \otimes t+(1-y) t \otimes z t\right],\;\;  \varepsilon(t)=1,$$
$$S(z)=z,\;\;  S(y)=y,\;\;  S(t)=\frac{1}{2}\left[(1+y) t+(1-y) z t\right], $$
$$\Delta\left(p_{1}\right)=p_{1} \otimes 1+\frac{1}{2}\left(1+z\right) t \otimes p_{1}+\frac{1}{2}\left(1-z\right) y t \otimes p_{2},$$
$$\Delta\left(p_{2}\right)=p_{2} \otimes 1+\frac{1}{2}\left(1+z\right) y t \otimes p_{2}+\frac{1}{2}\left(1-z\right) t \otimes p_{1},$$
$$\varepsilon(p_1)=\varepsilon(p_2)=0,$$
$$S(p_1)=-\frac{1}{4}\left[(1+y)t+(1-y)zt\right]\left[(1+z)p_1+y(1-z)p_2\right],$$
$$S(p_2)=-\frac{1}{4}\left[(1+y)t+(1-y)zt\right]\left[y(1+z)p_2+(1-z)p_1\right].$$
Let $z=x^2$, it is straightforward to verify that $H$ is a Hopf subalgebra of $\mathfrak{U}_{17}(I_{17})$ in \cite[Definition 6.17]{ZGH21}. And in fact, $H$ is a Hopf subalgebra of Hopf algebra ${\mathfrak{U}_{17}(I_{17})}_{(1)}$, where ${\mathfrak{U}_{17}(I_{17})}_{(1)}$ denote the link-indecomposition component containing $\k1$.\\
Denote $E=\operatorname{span}\{t, zt, yt, zyt\}$, then $\mathcal{S}=\{\k 1, \k  z, \k  y, \k  zy,  E\}$. We give the corresponding multiplicative matrix $\E$ of $E$, where
$$
\E=\frac{1}{2}\left(\begin{array}{cc}
t+yt&t-yt\\
zt-zyt&zt+zyt
 \end{array}\right).
$$
Then $\Bbb{Z}\mathcal{S}$ is a unital based ring and its multiplication table is shown below:
\begin{center}
\scalebox{1.0}{
\begin{tabular}{|c|c|c|c|c|c|c|c|c|c|c|c|c|c|c|}
\hline left action & $\k 1$ & $\k  y$ & $\k  z$ & $\k  zy$ & $E$\\
\hline$\k 1$ &$\k 1$ &$\k  y$&$k  z$ &$\k  zy$ &$E$\\
\hline$\k  y$ & $\k  y$& $\k  1$& $\k zy$ &$\k  z$ & $E$\\
\hline$\k z$ & $\k  z$& $\k  zy$&$\k 1$ &$\k  y$ &$E$\\
\hline $\k  zy$ &$\k  zy$& $\k  z$   & $\k y$  & $\k 1$ & $E$\\
\hline $E$&$E$&$E$&$E$&$E$&$\k 1+\k  z+\k  y+\k  zy$\\
\hline
\end{tabular}}\\
\title{TABLE 6.2.  The multiplication table of $\Bbb{Z}\mathcal{S}$}
\end{center}
And in this example, $\mathcal{P}^\prime=\{\X_1, \X_2, \X_3, \X_4, \X_5, \X_6, \X_7, \X_8\}$, where
$$
\X_1=\frac{1}{2}\left(\begin{array}{c}
p_1+p_2\\
p_1-p_2
 \end{array}\right),
\X_2=\frac{1}{2}\left(\begin{array}{c}
p_1z-p_2z\\
p_1z+p_2z
 \end{array}\right),
$$
$$
 \X_3=\frac{1}{2}\left(\begin{array}{c}
p_1y+p_2y\\
p_2y-p_1y
 \end{array}\right),
 \X_4=\frac{1}{2}\left(\begin{array}{c}
p_2zy-p_1zy\\
p_1zy+p_2zy
 \end{array}\right),
$$
$$
\X_5=\frac{1}{4}\left(\begin{array}{cc}
(p_1+p_2)(1+y)t+(p_1-p_2)(1-y)zt&(p_1+p_2)(1-y)t+(p_1-p_2)(1+y)zt
 \end{array}\right),
$$
$$
\X_6=\frac{1}{4}\left(\begin{array}{cc}
(p_1+p_2)(1-y)zt+(p_1-p_2)(1+y)t&(p_1+p_2)(1+y)zt+(p_1-p_2)(1-y)t
 \end{array}\right),
$$
$$
\X_7=\frac{1}{8}\left(\begin{array}{cc}
(p_1-p_2)(1+y)t-(p_1+p_2)(1-y)z t&(p_1-p_2)(1-y)t-(p_1+p_2)(1+y)z t
 \end{array}\right),
 $$
 $$
\X_8=\frac{1}{8}\left(\begin{array}{cc}
(p_1-p_2)(1-y)z t+(p_1+p_2)(1+y)t&(p_1+p_2)(1+y)z t-(p_1+p_2)(1-y)t
 \end{array}\right).
 $$
The link quiver of $H$ is shown below:
$$
\begin{tikzpicture}
\filldraw [black] (0,0) circle (2pt) node[anchor=south]{$E$};
\filldraw [black] (0,2) circle (2pt)node[anchor=south]{$\k  1$};
\filldraw [black] (0,-2) circle (2pt) node[anchor=north]{$\k  z$};
\filldraw [black] (2,0) circle (2pt)node[anchor=west]{$\k y$};
\filldraw [black] (-2,0) circle (2pt)node[anchor=east]{$\k  zy$};
\draw[thick, ->] (-0.1,0.2) .. controls (-0.6,0.67) and (-0.6,1.34) .. node[anchor=west]{$\X_5$}(-0.1,1.8) ;
\draw[thick, ->] (0.1,1.8) .. controls (0.6,1.34) and (0.6,0.67) .. node[anchor=west]{$\X_1$}(0.1,0.2) ;
\draw[thick, ->] (-0.1,-1.8) .. controls  (-0.6,-1.34)and  (-0.6,-0.67)..  node[anchor=east]{$\X_2$}(-0.1,-0.2);
\draw[thick, ->] (0.1,-0.2) .. controls (0.6,-0.67) and (0.6,-1.34) .. node[anchor=east]{$\X_6$}(0.1,-1.8) ;
\draw[thick, ->] (0.2,0.1) .. controls (0.67,0.6) and (1.34,0.6) .. node[anchor=north]{$\X_8$}(1.8,0.1) ;
\draw[thick, ->] (1.8,-0.1) .. controls (1.34,-0.6) and (0.67,-0.6) ..node[anchor=north]{$\X_3$}(0.2,-0.1) ;
\draw[thick, ->] (-1.8,0.1) .. controls (-1.34,0.6) and (-0.67,0.6) ..  node[anchor=south]{$\X_4$}(-0.2,0.1);
\draw[thick, ->]  (-0.2,-0.1).. controls (-0.67,-0.6) and (-1.34,-0.6)  ..node[anchor=south]{$\X_7$}(-1.8,-0.1)  ;
\end{tikzpicture}
$$
According to Theorem \ref{coro:equ}, we know that $H$ is of infinite corepresentation type.
\end{example}

\section*{}
\subsection*{Author Contributions}
All authors contributed to all aspects of this project.
\subsection*{Funding}
The second author was supported by Zhejiang Provincial Natural Science Foundation of China [grant number LQ23A010003]. The third author was supported by NSFC [grant number 12271243].
\subsection*{Availability of data and materials}
Not applicable.
\section*{Declarations}
\subsection*{Competing interests}
The authors declare no competing interests.
\subsection*{Ethical Approval} Not applicable.

\end{document}